\newtheorem{theorem}{Theorem}[section]
\newtheorem{claim}[theorem]{Claim}
\newtheorem{proposition}[theorem]{Proposition}
\newtheorem{corollary}[theorem]{Corollary}
\newtheorem{lemma}[theorem]{Lemma}
\newtheorem{problem}[theorem]{Problem}
\theoremstyle{definition}
\newtheorem{remark}[theorem]{Remark}
\newtheorem{definition}[theorem]{Definition}
\newtheorem{example}[theorem]{Example}
\newtheorem*{definition*}{Definition}
\newcommand{\IN}{\mathbb N}
\newcommand{\IR}{\mathbb R}
\newcommand{\F}{\mathcal F}
\newcommand{\U}{\mathcal U}
\newcommand{\C}{\mathcal C}
\newcommand{\V}{\mathcal V}
\newcommand{\E}{\mathcal E}
\newcommand{\e}{\varepsilon}
\newcommand{\w}{\omega}
\newcommand{\Fil}{\mathrm{\varphi}}
\newcommand{\Ra}{\Rightarrow}
\newcommand{\Haus}{\mathsf{T_{\!2}S}}
\newcommand{\Zero}{\mathsf{T_{\!z}S}}
\newcommand{\Tau}{\mathcal T}
\newcommand{\korin}[2]{\!\sqrt[#1]{\!#2}}
\newcommand{\TS}{\mathcal T_{\mathcal D}}
\newcommand*{\defeq}{\stackrel{\mathsmaller{\mathsf{def}}}{=}}
\newcommand{\two}{\mathbbm 2}
\newcommand{\Lambdae}{\Lambda^{\!e}}
\newcommand{\lesseq}{\lesssim}%{\preccurlyeq}\ll
\newcommand{\Zeta}{\mathcal B}
\title{Injectively closed commutative semigroups}
\author{Taras Banakh}
\address{T.Banakh: Ivan Franko National University of Lviv (Ukraine) and Jan Kochanowski University in Kielce (Poland)}
\email{t.o.banakh@gmail.com}
\subjclass[2020]{22A15, 20M14, 54H99}
\keywords{commutative semigroup, injectively $\C$-closed semigroup}
\begin{document}
\begin{abstract} Let $\C$ be a class of topological semigroups. A semigroup $X$ is  {\em injectively $\C$-closed} if $X$ is closed in each topological semigroup $Y\in\C$ containing $X$ as a subsemigroup. 
Let $\mathsf{T_{\!2}S}$ (resp. $\mathsf{T_{\!z}S}$) be the class of Hausdorff (and  zero-dimensional) topological semigroups. We prove that a commutative semigroup $X$ is injectively $\mathsf{T_{\!2}S}$-closed if and only if $X$ is injectively $\mathsf{T_{\!z}S}$-closed if and only if $X$ is bounded, chain-finite, group-finite, nonsingular and not Clifford-singular.
\end{abstract}
\maketitle

\section{Introduction and Main Results}

In many cases,  completeness properties of various objects of General Topology or  Topological Algebra can be characterized externally as closedness in ambient objects. For example, a metric space $X$ is complete if and only if $X$ is closed in any metric space containing $X$ as a subspace. A uniform space $X$ is complete if and only if $X$ is closed in any uniform space containing $X$ as a uniform subspace. A topological group $G$ is Ra\u\i kov complete  if and only if it is closed in any topological group containing $G$ as a subgroup.

On the other hand, for topological semigroups there are no reasonable notions of (inner) completeness. Nonetheless we can define many completeness properties of semigroups via their closedness in ambient topological semigroups.
%This paper is a continuation of the papers \cite{BB}, \cite{BB2}, \cite{ACVS} containing inner characterizations of various closedness properties of (discrete) topological semigroups.

A {\em topological semigroup} is a topological space $X$ endowed
with a continuous associative binary operation $X\times X\to
X$, $(x,y)\mapsto xy$.

\begin{definition*} Let $\C$ be a class of topological semigroups.
A topological
semigroup $X$ is called
\begin{itemize}
\item {\em $\C$-closed} if for any isomorphic topological
embedding $h:X\to Y$ to a topological semigroup $Y\in\C$
the image $h[X]$ is closed in $Y$;
\item {\em injectively $\C$-closed} if for any injective continuous homomorphism $h:X\to Y$ to a topological semigroup $Y\in\C$ the image $h[X]$ is closed in $Y$;
\item {\em absolutely $\C$-closed} if for any continuous homomorphism $h:X\to Y$ to a topological semigroup $Y\in\C$ the image $h[X]$ is closed in $Y$.
\end{itemize}
\end{definition*}

For any topological semigroup we have the implications:
$$\mbox{absolutely $\C$-closed $\Ra$ injectively $\C$-closed $\Ra$ $\C$-closed}.$$

\begin{definition*} A semigroup $X$ is defined to be ({\em injectively, absolutely}) {\em $\C$-closed\/} if so is $X$ endowed with the discrete topology.
\end{definition*}

We will be interested in the (absolute, injective) $\C$-closedness for the classes:
\begin{itemize}
\item $\mathsf{T_{\!1}S}$ of topological semigroups satisfying the separation axiom $T_1$;
\item $\Haus$ of Hausdorff topological semigroups;
\item $\Zero$ of Hausdorff zero-dimensional topological
semigroups.
\end{itemize}
A topological space satisfies the separation axiom $T_1$ if all its finite subsets are closed.
A topological space is {\em zero-dimensional} if it has a base of
the topology consisting of {\em clopen} (=~closed-and-open) sets.

Since $\Zero\subseteq\Haus\subseteq\mathsf{T_{\!1}S}$, for every semigroup we have the implications:
$$
\xymatrix{
\mbox{absolutely $\mathsf{T_{\!1}S}$-closed}\ar@{=>}[r]\ar@{=>}[d]&\mbox{absolutely $\mathsf{T_{\!2}S}$-closed}\ar@{=>}[r]\ar@{=>}[d]&\mbox{absolutely $\mathsf{T_{\!z}S}$-closed}\ar@{=>}[d]\\
\mbox{injectively $\mathsf{T_{\!1}S}$-closed}\ar@{=>}[r]\ar@{=>}[d]&\mbox{injectively $\mathsf{T_{\!2}S}$-closed}\ar@{=>}[r]\ar@{=>}[d]&\mbox{injectively $\mathsf{T_{\!z}S}$-closed}\ar@{=>}[d]\\
\mbox{$\mathsf{T_{\!1}S}$-closed}\ar@{=>}[r]&\mbox{$\mathsf{T_{\!2}S}$-closed}\ar@{=>}[r]&\mbox{$\mathsf{T_{\!z}S}$-closed.}
}
$$

$\C$-Closed topological groups for various classes $\C$ were investigated by many authors
%including  Arhangel’skii, Banakh, Choban, Dikranjan, Goto and Luka\'sc
~\cite{AC,AC1,Ban,DU,G,L}. In particular, the closedness of commutative topological groups in the class of Hausdorff topological semigroups was investigated
%by Keyantuo and Zelenyuk
in~\cite{Z1,Z2}; $\mathcal{C}$-closed topological semilattices were investigated
%by Gutik, Repov\v{s}, Stepp and the authors
in~\cite{BBm, BBh, BBR, GutikPagonRepovs2010, GutikRepovs2008, Stepp69, Stepp75}.
For more information about complete topological semilattices and pospaces we refer to the survey~\cite{BBc}. This paper is a continuation of the papers \cite{CCUS}, \cite{ICT1S}, \cite{BB}, \cite{BBm}, \cite{ACVS}, devoted to finding inner characterization of various closedness properties of discrete semigroups. In order to formulate such inner characterizations, let us recall some properties of semigroups.

%A subset $C$ of a semigroup $X$ is called a {\em chain} if $xy\in\{x,y\}$ for any $x,y\in X$. A semigroup is called {\em chain-finite} if each chain in $X$ is finite. A {\em semilattice} is a commutative semigroup $X$ whose any element $x\in X$ is an {\em idempotent} (which means that $xx=x$). Every semilattice carries a natural partial order $\le$ defined by $x\le y$ iff $xy=x=yx$. Note that a commutative semigroup $S$ is chain-finite if and only if the semilattice $E(S)$ of idempotents of $S$ is chain-finite in the order sense. Chain-finite semilattices play a crucial role in the theory of categorically closed semigroups, see \cite{BBm,BB, BBc}.

A semigroup $X$ is called
\begin{itemize}
\item {\em unipotent} if $X$ has a unique idempotent;
\item {\em chain-finite} if any infinite set $I\subseteq X$ contains two elements  $x,y\in I$ such that $xy\notin\{x,y\}$;
\item {\em singular} if there exists an infinite set $A\subseteq X$ such that $AA$ is a singleton;
\item {\em periodic} if for every $x\in X$ there exists $n\in\IN$ such that $x^n$ is an idempotent;
\item {\em bounded} if there exists $n\in\IN$ such that for every $x\in X$ the $n$-th power $x^n$ is an idempotent;
\item {\em group-finite} if every subgroup of $X$ is finite;
\item {\em group-bounded} if every subgroup of $X$ is bounded.
\end{itemize}
We recall that an element $x$ of a semigroup is an {\em idempotent} if $xx=x$.

%It is clear that each bounded semigroup is periodic but not vice versa.
The following theorem (proved in \cite{BB}) characterizes $\C$-closed commutative semigroups.

\begin{theorem}[Banakh--Bardyla]\label{t:C} Let $\C$ be a class of topological semigroups such that $\mathsf{T_{\!z}S}\subseteq\C\subseteq \mathsf{T_{\!1}S}$. A commutative semigroup $X$ is $\C$-closed if and only if $X$ is chain-finite, nonsingular,  periodic, and group-bounded.
\end{theorem}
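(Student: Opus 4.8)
The plan is to exploit the monotonicity of $\C$-closedness in the class $\C$. Since every member of $\Zero$ is $T_1$, the hypothesis reads $\Zero\subseteq\C\subseteq\mathsf{T_{\!1}S}$, and being closed in a larger class is a stronger requirement; thus $\mathsf{T_{\!1}S}$-closed $\Ra$ $\C$-closed $\Ra$ $\Zero$-closed for every admissible $\C$. Consequently it suffices to prove the two extreme implications: \emph{(necessity)} if $X$ is $\Zero$-closed then $X$ is chain-finite, nonsingular, periodic and group-bounded; and \emph{(sufficiency)} if $X$ has these four properties then $X$ is $\mathsf{T_{\!1}S}$-closed. Together they pinch every intermediate $\C$, and as a by-product yield the collapse ``$\mathsf{T_{\!1}S}$-closed $\Leftrightarrow$ $\Zero$-closed'' for commutative $X$.

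For \emph{necessity} I would argue by contraposition, treating the four properties one at a time and, for each failure, exhibiting a zero-dimensional Hausdorff topological semigroup $Y=X\cup\{\infty\}$ in which $X$ sits as a non-closed discrete subsemigroup. Keep every point of $X$ isolated and let the neighbourhoods of the single extra point $\infty$ be $\{\infty\}\cup S$ for $S$ ranging over a free filter $\F$ on $X$; each such set is clopen, so $Y$ is automatically Hausdorff and zero-dimensional, and $X$ is not closed because $\infty\in\overline X$. Continuity of the multiplication (extended by reading off the $\infty$-products as $\F$-limits) forces $\F$ to be translation-stable, i.e. $x^{-1}T:=\{s\in X: xs\in T\}\in\F$ for all $x\in X$ and $T\in\F$, together with a compatibility condition on $\F\cdot\F$ governing the value of $\infty\cdot\infty$. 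The witnessing configuration then supplies the filter: the tails of an infinite cyclic subsemigroup $\langle a\rangle\cong(\IN,+)$ when $X$ is not periodic (with $\infty$ a zero); the infinite set $A$ with $AA=\{c\}$ when $X$ is singular (so that $\infty\cdot\infty=c\notin\{\infty\}$, which is exactly why the naive one-point compactification must be refined to this coarser filter topology); an unbounded subgroup when $X$ is not group-bounded; and an infinite set $I$ with $xy\in\{x,y\}$ for all distinct $x,y\in I$ when $X$ is not chain-finite. The delicate point throughout is to arrange that each translate $s\mapsto sx$ be eventually constant along $\F$, so that $\infty\cdot x$ is a well-defined element of $Y$; refining $\F$ to an ultrafilter handles this, after which associativity is checked on the finitely many cases involving $\infty$.

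\emph{Sufficiency} is the hard direction and the main obstacle. Here I would embed $X$ in an arbitrary $Y\in\mathsf{T_{\!1}S}$, assume for contradiction that some $y\in\overline X\setminus X$, and manufacture inside $X$ one of the four forbidden configurations. The lever is the structure theory of commutative periodic semigroups: $X$ is a semilattice of its archimedean components, each unipotent with a maximal subgroup $G_e$, group-boundedness bounding the exponents of the $G_e$. Choosing a net $(x_i)$ in $X$ converging to $y$ and replacing each $x_i$ by its idempotent power, I would run a pigeonhole/Ramsey analysis (chain-finiteness being itself a Ramsey-type constraint) to extract a well-behaved subnet: chain-finiteness bounds the chains of idempotents that can appear, group-boundedness bounds the subgroups, and nonsingularity forbids an infinite set of approximants whose products collapse to a point. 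The combination is engineered to force the net to be eventually constant, i.e. $y\in X$, the desired contradiction.

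I expect the bulk of the work, and the genuine difficulty, to lie in this last step: converting the purely algebraic finiteness conditions into a topological closedness statement, and in particular the bookkeeping that simultaneously blocks all four escape routes of the limit point $y$ through the semilattice-of-components decomposition. The necessity side is technically lighter but still requires the careful verification that each ad hoc filter topology makes the multiplication continuous.
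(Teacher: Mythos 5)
A preliminary remark: this paper never proves Theorem~\ref{t:C} --- it is imported verbatim from \cite{BB} --- so your attempt can only be measured against that cited proof and against the closely related machinery the present paper builds for its injective analogue. Your global architecture is right (and is the standard one): since $\Zero\subseteq\C\subseteq\mathsf{T_{\!1}S}$, it suffices to prove necessity against $\Zero$ and sufficiency for $\mathsf{T_{\!1}S}$. The gaps are inside both halves.

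For necessity, everything rests on one-point extensions $Y=X\cup\{\infty\}$ with neighborhoods of $\infty$ given by a free filter $\F$, and the crucial issue --- well-definedness of $\infty\cdot x$ for \emph{every} $x\in X$ --- is dismissed with ``refining $\F$ to an ultrafilter handles this.'' It does not. An ultrafilter $\U$ forces a translate $s\mapsto xs$ to be constant on a member of $\U$ only when that translate has finite range on some member of $\U$; for $x$ outside the witnessing set ($a^{\IN}$, the singular set $A$, the unbounded subgroup, the infinite chain) nothing controls the pushforward $x\U$, which is then typically a free ultrafilter that is neither principal nor finer than $\U$, hence converges nowhere in $X\cup\{\infty\}$, and the multiplication cannot be continuously extended: the required $Y$ simply does not exist. (Inside the witnessing set things do work --- e.g.\ $xA\subseteq AA=\{c\}$ for $x\in A$ in the singular case --- which is exactly why the one-point picture looks plausible.) Nor can you escape by topologizing only the subsemigroup generated by the witness: deducing non-closedness of $X$ from non-closedness of a subsemigroup needs heredity of $\C$-closedness, which the paper derives as a \emph{corollary} of Theorem~\ref{t:C}, so that route is circular. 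This is precisely why the real constructions --- in \cite{BB}, and in Sections~\ref{s:b1}--\ref{s:Cs} here for the injective case --- abandon one-point extensions in favour of adjoining whole families of ultrafilter points and building neighborhood systems from $e$-bases, shift sets $\frac be$, and sets like $\nabla(U,\e)$, with the continuity verification (the part you defer) occupying most of the proof.

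For sufficiency, your text names the desired conclusion rather than a mechanism: ``a pigeonhole/Ramsey analysis \dots engineered to force the net to be eventually constant'' is the goal restated, and a net converging to $y\in\overline{X}\setminus X$ cannot literally be eventually constant in $X$ anyway --- one must instead extract one of the four forbidden configurations, which is where all the work lies. More importantly, the plan ignores the specific obstruction that makes the $\mathsf{T_{\!1}S}$ case genuinely harder than the Hausdorff one: in a $T_1$ but non-Hausdorff topological semigroup limits are not unique and the closure of the discrete commutative $X$ need not be commutative (nor structurally controlled), so the semilattice-of-archimedean-components analysis cannot be applied to $\overline X$. Compare Proposition~\ref{p:if} in this paper, which may legitimately replace $Y$ by $\overline X$ and use commutativity of the closure only because there $Y$ is assumed Hausdorff. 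Any correct proof of the $\mathsf{T_{\!1}S}$ sufficiency must cope with this, and your sketch does not acknowledge it.
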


For unipotent commutative semigroups, this characterization was simplified in \cite{CCUS} as follows.

\begin{theorem}[Banakh--Vovk]\label{t:CUS} Let $\C$ be a class of topological semigroups such that $\mathsf{T_{\!z}S}\subseteq\C\subseteq \mathsf{T_{\!1}S}$. A unipotent commutative semigroup $X$ is $\C$-closed if and only if $X$ is nonsingular and bounded.
\end{theorem}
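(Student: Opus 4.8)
The plan is to derive Theorem~\ref{t:CUS} from the Banakh--Bardyla characterization (Theorem~\ref{t:C}). Both statements use the same class $\C$ and both require nonsingularity, so it suffices to prove, for a unipotent commutative semigroup $X$, the two implications
\[
\text{$X$ bounded}\ \Longrightarrow\ \text{$X$ chain-finite, periodic and group-bounded,}
\]
and conversely that a chain-finite, nonsingular, periodic, group-bounded $X$ is bounded. Adding the common hypothesis of nonsingularity to the first implication then makes the conjunction of conditions in Theorem~\ref{t:C} coincide with ``nonsingular and bounded'', which is exactly Theorem~\ref{t:CUS}. Throughout I write $e$ for the unique idempotent of $X$ and set $H_e:=eX$.

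First I would record the structural facts. For any $x$ with $ex=x$, boundedness (or, in the converse direction, periodicity together with unipotence) gives $x^n=e$, so every element of $H_e=eX$ is invertible with identity $e$; hence $H_e$ is a group and the Rees quotient $N:=X/H_e$ is a \emph{nil} semigroup (each $\bar x$ has a power equal to the zero $0=\overline{H_e}$). Two elementary observations drive the first implication: in a nil semigroup distinct nonzero elements $a,b$ satisfy $ab\notin\{a,b\}$ (from $ab=a$ one derives $a=ab^k=0$ for large $k$), and in the group $H_e$ two distinct elements different from $e$ also satisfy $ab\notin\{a,b\}$. Given an infinite set $I\subseteq X$, a short case analysis according to whether $I$ meets $X\setminus H_e$ in at least two points or is almost contained in $H_e$ then always produces a pair $x,y\in I$ with $xy\notin\{x,y\}$; this proves chain-finiteness, while periodicity and group-boundedness follow at once from boundedness.

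For the converse I would first reduce boundedness of $X$ to boundedness of its two ``layers''. Group-boundedness supplies $N_0\in\IN$ with $h^{N_0}=e$ for all $h\in H_e$. The heart of the matter is the claim that nonsingularity forces $N$ to be \emph{uniformly} nilpotent, i.e. that there is $k\in\IN$ with $x^k\in H_e$ for every $x\in X$. Granting this, $x^{kN_0}=(x^k)^{N_0}=e$ for all $x$, so $X$ is bounded and we are done.

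The main obstacle is precisely this uniform-nilpotency claim. Suppose $N$ is not uniformly nilpotent; choosing elements whose nilpotency index in $N$ tends to infinity and passing to suitable high powers $y_i$, one arranges $y_i^2\in H_e$ while the $\bar y_i\in N$ remain nonzero and pairwise distinct. The goal is to thin $\{y_i\}$ to an infinite set $A$ whose products $aa'$ (squares included) all coincide in a single element, which would exhibit $X$ as singular and contradict the hypothesis. When $H_e$ is finite this is a direct Ramsey argument on the coloring $\{i,j\}\mapsto y_iy_j\in H_e$. The delicate case is an infinite (though bounded) kernel $H_e$, where this coloring uses infinitely many colors; here I expect the argument to exploit that a bounded abelian group is locally finite, together with the freedom to replace each $y_i$ by a product $y_ih_i$ with $h_i\in H_e$, in order to collapse all the cross-products to one value. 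Converting an unbounded family of nilpotent indices into a genuine singular witness in $X$ --- rather than merely in the quotient $N$ --- is the step I expect to require the most care.
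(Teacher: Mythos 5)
Your overall architecture is legitimate: Theorem~\ref{t:CUS} is not actually proved in this paper (it is imported from \cite{CCUS}, where the hard direction ``$\C$-closed $\Ra$ bounded'' is obtained by topological constructions with ultrafilters and $e$-bases of the kind appearing in Sections~\ref{s:b1}--\ref{s:Cs} here), so deriving it algebraically from Theorem~\ref{t:C} would be a genuinely different and non-circular route, and your first implication (bounded $\Ra$ chain-finite, periodic, group-bounded, for unipotent commutative $X$) is correct as sketched. The problem is the converse, exactly at the ``uniform nilpotency'' claim, and it is not a missing detail but a flaw in the architecture of that step.

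The data you retain after ``passing to suitable high powers'' --- an infinite family $(y_i)$ with $y_i\notin H_e$, $y_i^2\in H_e$, pairwise distinct modulo $H_e$ --- is strictly too weak to contradict nonsingularity. Indeed, let $H=\bigoplus_{n\in\w}\IZ/2\IZ$, let $A$ be a disjoint copy of $H$ with a bijection $\varphi:A\to H$, and let $X=H\cup A$ carry the multiplication extending that of $H$ by $a*h\defeq\varphi(a)h$ and $a*a'\defeq\varphi(a)\varphi(a')$. This $X$ is commutative, unipotent with $H_e=H$, of exponent $2$ (hence bounded, periodic, group-bounded), chain-finite and nonsingular, and $A$ itself is an infinite family with exactly the properties of your $(y_i)$. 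Since this $X$ satisfies every hypothesis of the implication you are proving, no thinning or collapsing argument applied to such a family --- Ramsey, pigeonhole on squares, or replacements $y_i\mapsto y_ih_i$ with $h_i\in H_e$ --- can ever produce an infinite set $A'$ with $A'A'$ a singleton: in this $X$ no such set exists. Any correct proof must therefore exploit the unbounded nilpotency indices themselves, for instance the long chains $x,x^2,\dots,x^{m-1}$, whose internal products $x^ix^j=x^{i+j}$ are controlled; that is precisely the information your reduction discards before the combinatorics begins. Two further, smaller, defects: in the finite-$H_e$ case your coloring $\{i,j\}\mapsto y_iy_j\in H_e$ is not well defined, because cross products of two index-$2$ elements need not lie in $H_e$ (and you say nothing about the pairs whose product escapes $H_e$); and a singular witness requires the squares $y_i^2$ to coincide with the common value of the cross products, which Ramsey plus pigeonhole alone does not give (this last mismatch happens to be repairable --- constancy of cross products forces $ey$ to be constant and then forces the squares to agree with the cross products --- but the main gap above is not repairable within the stated approach).
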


\begin{definition*}Let $\C$ be a class of topological semigroups. A semigroup $X$ is defined to be {\em $\C$-discrete} (or else {\em $\C$-nontopologizable}) if for any injective homomorphism $h:X\to Y$ to a topological semigroup $Y\in\C$ the image $h[X]$ is a discrete subspace of $Y$.
\end{definition*}

The study of topologizable and nontopologizable semigroups is a classical topic in Topological Algebra that traces its history back to Markov's problem \cite{Markov} of topologizabilioty of infinite groups, which was resolved by \cite{Shelah}, \cite{Hesse} and \cite{Ol}. For some other results on topologizability of semigroups, see \cite{BGP,BM,BPS,DS2,vD,DI,GLS,Gutik,KOO,Kotov,Taimanov}.

The following characterization of injectively $\mathsf{T_{\!1}S}$-closed semigroups was proved in \cite[3.2]{CCCS}.

\begin{theorem}[Banakh--Bardyla] A semigroup $X$ is inejctively $\mathsf{T_{\!1}S}$-closed if and only if it is $\mathsf{T_{\!1}S}$-closed and $\mathsf{T_{\!1}S}$-discrete.
\end{theorem}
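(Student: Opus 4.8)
The plan is to prove the two implications separately, treating $X$ as a discrete semigroup, so that ``injective continuous homomorphism'' and ``injective homomorphism'' coincide. Throughout I abbreviate $\C=\mathsf{T_{\!1}S}$.

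First I would dispatch the easy direction: $\C$-closedness together with $\C$-discreteness implies injective $\C$-closedness. Let $h\colon X\to Y$ be an injective homomorphism into some $Y\in\C$. By $\C$-discreteness the image $h[X]$ is a discrete subspace of $Y$; since $X$ is discrete as well, the corestriction $h\colon X\to h[X]$ is a continuous bijection between discrete spaces, hence a homeomorphism, so $h$ is an isomorphic topological embedding. Then $\C$-closedness gives that $h[X]$ is closed in $Y$, as required. For the converse, injective $\C$-closedness implies $\C$-closedness immediately, since every isomorphic topological embedding is in particular an injective continuous homomorphism (this is the vertical implication recorded in the diagram above). So the whole content of the theorem is concentrated in deriving $\C$-discreteness from injective $\C$-closedness.

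That remaining implication I would prove by contraposition. Suppose $X$ is not $\C$-discrete, so there is an injective homomorphism $h\colon X\to Y$ with $Y\in\C$ whose image $S\defeq h[X]$, with the subspace topology, is not discrete; fix a non-isolated point $s_*\in S$. Viewing $S$ as a $T_1$ topological semigroup in its own right, the goal is to construct a $T_1$ topological semigroup $Z\in\C$ containing $S$ as a non-closed subsemigroup: composing $h$ with the inclusion $S\hookrightarrow Z$ then yields an injective continuous homomorphism with non-closed image, contradicting injective $\C$-closedness. The construction duplicates $s_*$. Let $Z=S\cup\{z\}$ with a new point $z\notin S$, let $\pi\colon Z\to S$ fix $S$ pointwise and send $z\mapsto s_*$, and define the product on $Z$ by $a*b\defeq\pi(a)\,\pi(b)\in S$; associativity is inherited from $S$ since $\pi$ restricts to the identity on $S$, and $S$ is visibly a subsemigroup. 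I topologize $Z$ by declaring $S$ an open subspace and giving $z$ the basic neighborhoods $(V\setminus\{s_*\})\cup\{z\}$, where $V$ ranges over neighborhoods of $s_*$ in $S$. Then $z\in\overline{S}$ precisely because $s_*$ is non-isolated (each $V\setminus\{s_*\}$ is nonempty), so $S$ is not closed in $Z$, and a short check shows $Z$ is $T_1$ since each $Z\setminus\{s\}$ is open.

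The step I expect to require the most care is verifying continuity of $*$ at the pairs involving $z$. Here the key identity is $\pi\big((V\setminus\{s_*\})\cup\{z\}\big)=V$: a neighborhood $W$ of a product $s_*\cdot b$ pulls back, through continuity of multiplication in $S$, to neighborhoods $V\ni s_*$ and $B\ni b$ with $V\cdot B\subseteq W$, and then $\big((V\setminus\{s_*\})\cup\{z\}\big)*B=V\cdot B\subseteq W$; the cases $(a,z)$ and $(z,z)$ are handled identically, while the case $a,b\in S$ reduces to continuity in the open subspace $S$. This makes $Z$ a $T_1$ topological semigroup and completes the construction. I would note finally that $Z$ is not Hausdorff, as $z$ and $s_*$ cannot be separated, which is exactly why the statement is specific to the class $\mathsf{T_{\!1}S}$ rather than $\mathsf{T_{\!2}S}$ or $\mathsf{T_{\!z}S}$.
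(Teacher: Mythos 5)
Your proof is correct. Note, however, that this paper does not contain a proof of the statement at all: it is quoted from \cite[3.2]{CCCS}, so there is no in-paper argument to compare yours against. Your reduction of the whole theorem to the implication ``injectively $\mathsf{T_{\!1}S}$-closed $\Rightarrow$ $\mathsf{T_{\!1}S}$-discrete,'' proved by doubling a non-isolated point $s_*$ of the image (extending the multiplication through the retraction $\pi$ and giving the duplicate point the punctured neighborhoods of $s_*$), is sound and complete: the $T_1$ property of $S$ is what makes $V\setminus\{s_*\}$ open, non-isolatedness of $s_*$ is what makes $S$ non-closed in $Z$, and your closing remark that $Z$ is $T_1$ but not Hausdorff correctly identifies why this characterization is specific to $\mathsf{T_{\!1}S}$ and fails for $\mathsf{T_{\!2}S}$ and $\mathsf{T_{\!z}S}$ (as Example~\ref{ex:main} of the paper shows).
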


Theorem~\ref{t:C} implies that each subsemigroup of a $\C$-closed commutative semigroup is $\C$-closed. On the other hand, quotient semigroups of $\C$-closed commutative semigroups are not necessarily $\C$-closed, see Example 1.8 in \cite{BB}. This motivated the authors of \cite{BB} to introduce the notions of ideally and projectively $\C$-closed semigroups. %First we recall the definition of a quotient semigroup.

Let us recall that a {\em congruence} on a semigroup $X$ is an equivalence relation $\approx$ on $X$ such that for any elements $x\approx y$ of $X$ and any $a\in X$ we have $ax\approx ay$ and $xa\approx ya$. For any congruence $\approx$ on a semigroup $X$, the quotient set $X/_\approx$ has a unique semigroup structure such that the quotient map $X\to X/_\approx$ is a semigroup homomorphism. The semigroup $X/_\approx$ is called the {\em quotient semigroup} of $X$ by the congruence $\approx$~.
A subset $I$ of a semigroup $X$ is called an {\em ideal} in $X$ if $IX\cup XI\subseteq  I$. Every ideal $I\subseteq X$ determines the congruence $(I\times I)\cup \{(x,y)\in X\times X:x=y\}$ on $X\times X$. The quotient semigroup of $X$ by this congruence is denoted by $X/I$ and called the {\em quotient semigroup} of $X$ by the ideal $I$. If $I=\emptyset$, then the quotient semigroup $X/\emptyset$ can be identified with the semigroup $X$.

\begin{definition*} A semigroup $X$ is called
\begin{itemize}
\item {\em projectively $\C$-closed} if for any congruence $\approx$ on $X$ the quotient semigroup $X/_{\approx}$ is $\C$-closed;
\item {\em ideally $\C$-closed} if for any ideal $I\subseteq X$ the quotient semigroup $X/I$ is $\C$-closed.
\end{itemize}
\end{definition*}

It is easy to see that for every semigroup the following implications hold:
$$\mbox{absolutely $\C$-closed $\Ra$ projectively $\C$-closed $\Ra$ ideally $\C$-closed $\Ra$ $\C$-closed.}$$
Observe that a semigroup $X$ is absolutely $\C$-closed if and only if for any congruence $\approx$ on $X$ the semigroup $X/_\approx$ is injectively $\C$-closed.

Projectively and ideally $\C$-closed commutative semigroups were characterized in \cite{BB}. To formulate those characterizations, we need to introduce the next portion of definitions.

For a semigroup $X$, let $E(X)\defeq\{x\in X:xx=x\}$ be the set of idempotents of $X$. For an idempotent $e$ of a semigroup $X$, let $H_e$ be the maximal subgroup of $X$ that contains $e$. The union $H(X)=\bigcup_{e\in E(X)}H_e$ of all subgroups of $X$ is called the {\em Clifford part} of $S$.

A semigroup $X$ is called
\begin{itemize}
\item {\em Clifford}  if $X=H(X)$;
\item {\em Clifford+finite} if $X\setminus H(X)$ is finite;
\item {\em Clifford-finite} if $H(X)$ is finite;
\item {\em Clifford-singular} if there exists an infinite set $A\subseteq X\setminus H(X)$ such that $AA\subseteq H(X)$.
\end{itemize}

Ideally and projectively $\C$-closed commutative semigroups were characterized in  \cite{BB} as follows.

\begin{theorem}[Banakh--Bardyla]\label{t:mainP} Let $\C$ be a class of topological semigroups such that $\mathsf{T_{\!z}S}\subseteq\C\subseteq \mathsf{T_{\!1}S}$. For a commutative semigroup $X$ the following conditions are equivalent:
\begin{enumerate}
\item $X$ is projectively $\C$-closed;
\item $X$ is ideally $\C$-closed;
\item the semigroup $X$ is chain-finite, group-bounded and Clifford+finite.
\end{enumerate}
\end{theorem}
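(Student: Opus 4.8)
The plan is to establish the cycle of implications $(1)\Ra(2)\Ra(3)\Ra(1)$, leaning throughout on Theorem~\ref{t:C}, which identifies $\C$-closedness of a commutative semigroup with the conjunction of chain-finiteness, nonsingularity, periodicity and group-boundedness. The implication $(1)\Ra(2)$ is immediate: every ideal $I\subseteq X$ induces the Rees congruence whose quotient is exactly $X/I$, so if all congruence quotients are $\C$-closed, then in particular all Rees quotients are.

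For $(2)\Ra(3)$, I would first apply ideal $\C$-closedness to the empty ideal: $X/\emptyset=X$ is $\C$-closed, so by Theorem~\ref{t:C} the semigroup $X$ is chain-finite and group-bounded (and also nonsingular and periodic). It remains to derive Clifford+finiteness, and this is the heart of the direction. Arguing by contraposition, suppose $X\setminus H(X)$ is infinite. Using periodicity (each $x$ has an idempotent power $x^{n}\in E(X)\subseteq H(X)$) together with the fact that the Clifford part $H(X)=\bigsqcup_{e\in E(X)}H_e$ is a semilattice of groups, I would construct an ideal $I$ for which the Rees quotient $X/I$ contains an infinite set $A$ disjoint from $I$ with $A\cdot A\subseteq I$; in $X/I$ the ideal $I$ becomes a zero element $0$, so $AA=\{0\}$ witnesses that $X/I$ is singular, hence not $\C$-closed by Theorem~\ref{t:C}, contradicting $(2)$. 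Producing this ideal so that $A$ survives (stays outside $I$) while all products $AA$ fall into $I$ is the first main obstacle; the natural attempt is to let $I$ absorb suitably high powers and products of the non-Clifford elements while keeping the elements themselves outside.

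For $(3)\Ra(1)$, I would show that chain-finiteness, group-boundedness and Clifford+finiteness together force every quotient $X/_{\approx}$ (with quotient homomorphism $q$) to satisfy all four conditions of Theorem~\ref{t:C}. Several of these are easy and pass to homomorphic images. Group-boundedness and Clifford+finiteness survive, since the image of a subgroup is a subgroup (so the bounding exponent persists) and $q(H(X))$ lies in the Clifford part, whence $(X/_{\approx})\setminus H(X/_{\approx})\subseteq q(X\setminus H(X))$ is finite. Moreover Clifford+finiteness alone already yields nonsingularity: if some infinite $A$ had $AA=\{c\}$, then cofinitely many $a\in A$ would lie in $H(X)$, and two distinct such $a_1,a_2$ would lie in a common maximal subgroup $H_e$ (because $c=a_1^2=a_2^2$ lies in a unique $H_e$), so cancelling $a_1$ in $a_1a_2=a_1^2$ gives $a_2=a_1$, a contradiction; and group-boundedness together with Clifford+finiteness yields periodicity. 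Since these three standing conditions are inherited by quotients, nonsingularity and periodicity hold for every $X/_{\approx}$ as well.

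The remaining and genuinely delicate point, the second main obstacle, is that chain-finiteness of every quotient is not automatic, since collapsing by a congruence can create new idempotents. Here I would first record the clean reformulation that, for a commutative semigroup, chain-finiteness is equivalent to the semilattice $E(X)$ having no infinite chain: any infinite $I$ with $xy\in\{x,y\}$ for all $x,y\in I$ consists of idempotents (take $y=x$) and is totally ordered under $x\le y\Leftrightarrow xy=x$, i.e.\ is a chain in $E(X)$. The task then becomes to show that, under the standing hypotheses, no congruence quotient can acquire an infinite chain of idempotents. Exploiting that $X$ is a semilattice of bounded groups together with finitely many extra elements, I would analyze an arbitrary congruence through its effect on the idempotent semilattice and on the finitely many non-Clifford elements, aiming to show that any infinite chain in $E(X/_{\approx})$ pulls back to an infinite chain in $E(X)$ and hence contradicts chain-finiteness of $X$. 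Controlling the contribution of the finite non-Clifford part and the behaviour of the semilattice-of-groups structure under arbitrary congruences is where the real work of this direction lies.
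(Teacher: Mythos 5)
First, a point of reference: this theorem is not proved in the paper at all — it is quoted from \cite{BB} — so your proposal can only be judged on its own correctness, not against an internal argument. Its skeleton is sound: the cycle $(1)\Ra(2)\Ra(3)\Ra(1)$, the use of Theorem~\ref{t:C} applied to the empty ideal, your cancellation argument showing that Clifford+finiteness implies nonsingularity, and the derivation of periodicity from group-boundedness plus Clifford+finiteness are all correct. But the two places you label ``obstacles'' are not side issues; they are the entire mathematical content of the theorem, and you prove neither. In $(2)\Ra(3)$, the plan ``let $I$ absorb suitably high powers and products of the non-Clifford elements'' does not confront the main difficulty, which is squares: singularity of $X/I$ needs an infinite $A$ with $A\cap I=\emptyset$ and $AA\subseteq I$ \emph{including} each $a^2$, and for a non-Clifford $a$ the square need not land anywhere convenient. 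A workable version must (i) replace each $a$ by its last power $a^{k_a}$ lying outside $H(X)$, so that its square falls into $H_{\pi(a)}$; (ii) use Ramsey plus chain-finiteness to arrange that the idempotents $\pi(a)$ are either all equal or form an infinite antichain; (iii) exhibit concrete ideals (in the antichain case something like $\pi^{-1}[E(X)\setminus{\uparrow}P]\cup\bigcup_{p\in P}H_pX^1$, in the constant case $H_eX^1$), verifying via the fact that $z=ez$ together with $z^n\in H_e$ forces $z\in H_e$ that the chosen elements stay outside the ideal; and (iv) still dispose of the residual case where pairwise products of the chosen elements remain outside $H(X)$, which your sketch never mentions. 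None of this is in your text.

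In $(3)\Ra(1)$ there is, besides the unfinished chain-finiteness of quotients, an actual error: group-boundedness does \emph{not} pass to quotients ``since the image of a subgroup is a subgroup,'' because a subgroup of $X/_\approx$ need not be the image of any subgroup of $X$ — the additive semigroup $\IN$ has only trivial subgroups, yet every finite cyclic group is a quotient of it. Under your standing hypotheses the conclusion is true, but the proof must use them: periodicity guarantees that every idempotent of $Y=X/_\approx$ equals $q(e)$ for some $e\in E(X)$; for a fixed idempotent $f\in E(Y)$ the set $S=\{e\in E(X):q(e)=f\}$ is a subsemilattice, which by chain-finiteness of $X$ has a least element $e_{\min}$; then $H_f\subseteq q(H_{e_{\min}})\cup q(X\setminus H(X))$, so the abelian group $H_f$ either equals the bounded group $q(H_{e_{\min}})$ or is finite, hence bounded either way. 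Chain-finiteness of $Y$ needs a similar repair: given a strictly decreasing chain $f_1>f_2>\cdots$ in $E(Y)$, literal preimages of the $f_i$ need not form a chain; one lifts each $f_i$ to an idempotent $e_i\in E(X)$ (periodicity again) and passes to the products $e_1e_2\cdots e_i$, which do form a decreasing chain in $E(X)$ mapping onto the $f_i$, contradicting chain-finiteness of $X$. As it stands, your proposal correctly reduces the theorem to Theorem~\ref{t:C} and settles the routine implications, but both hard directions remain unestablished, and one of the steps you present as easy rests on a false principle.
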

%\begin{theorem}\label{ref} For a countable semigroup $X$ with finite-to-one shifts the following conditions are equivalent:
%\begin{enumerate}
%\item $X$ is injectively $\mathsf{T_{\!1}S}$-closed;
%\item $X$ is $\mathsf{T_{\!1}S}$-nontopologizable;
%\item $X$ is discrete in the Zariski $T_1$ topology $\mathfrak Z_X'$.
%\end{enumerate}
%\end{theorem}

 %In Section~\ref{??} we shall prove the following sufficeint condition of $\mathsf{T_{\!z}S}$-topologizability of semigroups.

%\begin{theorem} A semigroup $X$ is  $\mathsf{T_{\!z}S}$-topologizable if the semilattice $E(X)\cap Z(X)$ is chain-finite and infinite.
%\end{theorem}

Absolutely $\C$-closed commutative semigroups were characterized in \cite{ACVS}. The characterizations essentially depend on the class $\C$, where we distinguish two cases: $\C=\mathsf{T_{\!1}S}$ and $\mathsf{T_{\!z}S}\subseteq\C\subseteq\mathsf{T_{\!2}S}$.

\begin{theorem}[Banakh-Bardyla] A commutative semigroup $X$ is absolutely $\mathsf{T_{\!1}S}$-closed if and only if $X$ is finite.
\end{theorem}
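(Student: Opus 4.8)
The plan is to prove the two implications separately, with the converse carrying essentially all the weight. The forward implication is immediate: if $X$ is finite, then for every continuous homomorphism $h\colon X\to Y$ into a $T_1$ topological semigroup the image $h[X]$ is a finite subset of a $T_1$-space, hence closed, so $X$ is absolutely $\mathsf{T_{\!1}S}$-closed. For the converse I would argue the contrapositive using two facts already recorded in the excerpt: the observation that $X$ is absolutely $\mathsf{T_{\!1}S}$-closed if and only if every quotient $X/_\approx$ is injectively $\mathsf{T_{\!1}S}$-closed, and the Banakh--Bardyla characterization that injectively $\mathsf{T_{\!1}S}$-closed $=$ $\mathsf{T_{\!1}S}$-closed and $\mathsf{T_{\!1}S}$-discrete. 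Hence, for an infinite commutative $X$, it suffices to exhibit a single quotient $Z=X/_\approx$ that fails to be $\mathsf{T_{\!1}S}$-discrete or fails to be $\mathsf{T_{\!1}S}$-closed: pre-composing a witnessing homomorphism with the quotient map $X\to Z$ then yields a continuous homomorphism out of $X$ whose image is not closed.

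First I would split via Theorem~\ref{t:mainP}. If $X$ is not projectively $\mathsf{T_{\!1}S}$-closed, then by definition some quotient is already not $\mathsf{T_{\!1}S}$-closed and we are done. So I may assume $X$ is projectively $\mathsf{T_{\!1}S}$-closed, i.e.\ chain-finite, group-bounded and Clifford+finite. Then $X\setminus H(X)$ is finite, so the Clifford part $H(X)=\bigcup_{e\in E(X)}H_e$ is infinite, and exactly one of two situations occurs: some maximal subgroup $H_e$ is infinite, or all $H_e$ are finite and hence $E(X)$ is infinite.

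In the group case I would use the retraction homomorphism $X\to eX$, $x\mapsto ex$, onto the commutative monoid $eX$ whose identity is $e$ and whose group of units is $H_e$. Collapsing the ideal of non-units to a zero by the Rees congruence gives a homomorphism $X\to H_e\cup\{0\}$ onto the group-with-zero $H_e^0$, which is infinite since $H_e$ is. As $H_e$ is an infinite bounded abelian group it carries a non-discrete zero-dimensional Hausdorff group topology (for instance from a dense embedding of an infinite elementary $p$-subgroup into a compact power $(\IZ/p)^{\kappa}$); declaring the adjoined zero isolated makes $H_e^0$ a non-discrete $T_1$ topological semigroup. Thus $H_e^0$ is not $\mathsf{T_{\!1}S}$-discrete, which is the required quotient.

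In the remaining case $E(X)$ is infinite, and since an archimedean component contains at most one idempotent, the idempotents inject into the maximal semilattice quotient $\Lambda$ of $X$, so $\Lambda$ is an infinite semilattice. If $\Lambda$ is not chain-finite then, because semilattices are automatically nonsingular, periodic and group-bounded, Theorem~\ref{t:C} shows $\Lambda$ is not $\mathsf{T_{\!1}S}$-closed and we finish. Otherwise $\Lambda$ is an infinite chain-finite semilattice, hence has no infinite chain and therefore an infinite antichain; the goal is to extract a quotient isomorphic to the flat semilattice $S_\infty=\{0\}\cup\{a_n:n\in\IN\}$ with $a_ia_j=0$ for $i\ne j$, which carries the non-discrete zero-dimensional topology whose only non-isolated point is $0$ and whose neighbourhoods of $0$ are cofinite among the $a_n$. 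When $\Lambda$ has infinitely many maximal elements this is immediate (Rees-collapse the ideal of non-maximal elements). The main obstacle is the general case: one must first retract onto a principal ideal $\{x\wedge m:x\in\Lambda\}$ and then, via a stabilization/Ramsey argument on the pairwise meets inside a chain-finite semilattice, produce an infinite antichain with a common meet and collapse everything below it. Verifying that this delivers a genuine quotient of $X$ — rather than merely of a subsemilattice — and that it remains infinite is the delicate technical step; once $S_\infty$ is realized as a quotient it is not $\mathsf{T_{\!1}S}$-discrete, which completes the proof.
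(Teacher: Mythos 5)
First, a framing remark: the paper itself does not prove this theorem --- it is quoted from \cite{ACVS} --- so your proposal can only be measured against the results the paper records. Your general framework is sound: the forward implication is indeed trivial, the reduction of absolute $\mathsf{T_{\!1}S}$-closedness to injective $\mathsf{T_{\!1}S}$-closedness of all quotients is exactly the observation stated in the paper, the split via Theorem~\ref{t:mainP} is legitimate, and the infinite-subgroup case works (modulo the small repair that your dense copy of $\bigoplus_{\w}\IZ/m$ inside the compact power $(\IZ/m)^{\w}$ topologizes only a subgroup; by Pr\"ufer's theorem it can be taken to be a direct summand of the bounded abelian group $H_e$, and the topology is then extended to all of $H_e$ as a product with the discrete complement).

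The genuine gap is in the remaining case, where $E(X)$ is infinite and you aim to realize the flat semilattice $S_\infty=\{0\}\cup\{a_n:n\in\IN\}$ (with $a_ia_j=0$ for $i\ne j$) as a quotient of $X$. The step you defer as ``delicate'' is not merely delicate --- it is impossible in general. A homomorphic image of a semilattice with a largest element again has a largest element, and more generally in $S_\infty$ no element lies above two distinct nonzero elements, whereas $S_\infty$ itself has no top; so if $\Lambda$ has a unit (or any element lying above infinitely many members of each infinite antichain), every homomorphism $\Lambda\to S_\infty$ has image of size at most two. A concrete counterexample to your plan is $X=\Lambda=\{1,0\}\cup\{a_n:n\in\w\}$, the infinite antichain with adjoined top and bottom: it is infinite, chain-finite, commutative, projectively $\mathsf{T_{\!1}S}$-closed by Theorem~\ref{t:mainP}, and every congruence on it that identifies $1$ with any other element has finite quotient, while every congruence that keeps $1$ alone yields a quotient with a top --- so $S_\infty$ is never a quotient, and no Ramsey or stabilization argument can produce it. The repair is to drop $S_\infty$ and show instead that the infinite chain-finite semilattice $\Lambda$ (already a quotient of $X$) fails to be $\mathsf{T_{\!1}S}$-discrete; in the example above, the topology in which $a_n\to 0$ and all other points are isolated is a Hausdorff semilattice topology witnessing this. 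In fact the theorems quoted in the paper make the whole converse immediate, with no construction at all: absolute $\mathsf{T_{\!1}S}$-closedness implies injective $\mathsf{T_{\!1}S}$-closedness, which by Theorem~\ref{t:iT1} forces $H(X)$ to be finite, while it also implies projective $\mathsf{T_{\!1}S}$-closedness, which by Theorem~\ref{t:mainP} forces $X\setminus H(X)$ to be finite; hence $X$ is finite.
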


\begin{theorem}[Banakh--Bardyla]\label{t:aC} Let $\C$ be a class of topological semigroups such that $\mathsf{T_{\!z}S}\subseteq\C\subseteq\mathsf{T_{\!2}S}$. A commutative semigroup $X$ is absolutely $\C$-closed if and only if $X$ is chain-finite, bounded, group-finite and Clifford+finite.
\end{theorem}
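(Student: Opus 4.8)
The plan is to deduce the theorem from the inner characterization of \emph{injectively} $\C$-closed commutative semigroups together with Theorem~\ref{t:mainP}. The pivot is the observation recorded above that a semigroup is absolutely $\C$-closed precisely when all of its quotient semigroups are injectively $\C$-closed. Since injective $\C$-closedness only gets stronger as $\C$ grows and agrees at the extremes $\Zero$ and $\Haus$, for every class $\C$ with $\Zero\subseteq\C\subseteq\Haus$ a commutative semigroup is injectively $\C$-closed if and only if it is bounded, chain-finite, group-finite, nonsingular and not Clifford-singular. Thus the task becomes to show that every quotient of $X$ has these five properties exactly when $X$ is chain-finite, bounded, group-finite and Clifford+finite.

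The forward implication is immediate. If $X$ is absolutely $\C$-closed, then it is injectively $\C$-closed (apply the definition to the identity congruence), hence bounded, chain-finite and group-finite; and it is projectively $\C$-closed, hence Clifford+finite by Theorem~\ref{t:mainP}. This already yields the four asserted properties.

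For the converse, suppose $X$ is chain-finite, bounded, group-finite and Clifford+finite, fix a congruence $\approx$, and let $\pi\colon X\to Q\defeq X/_{\approx}$ be the quotient homomorphism. Boundedness is inherited: if $x^n\in E(X)$ for all $x$, then $\pi(x)^n=\pi(x^n)\in E(Q)$. As group-finite implies group-bounded, Theorem~\ref{t:mainP} makes $X$ projectively $\C$-closed, so $Q$ is $\C$-closed and therefore, by Theorem~\ref{t:C}, chain-finite and nonsingular. Finally, $\pi$ sends each maximal subgroup of $X$ onto a subgroup of $Q$, so $\pi(H(X))\subseteq H(Q)$ and hence $Q\setminus H(Q)\subseteq\pi(X\setminus H(X))$ is finite; thus $Q$ is Clifford+finite and in particular not Clifford-singular.

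It remains to show that $Q$ is group-finite, and this I expect to be the main obstacle. Assume some maximal subgroup $H_f$ of $Q$ is infinite. Because $X\setminus H(X)$ is finite, all but finitely many elements of $H_f$ lie in $\pi(H(X))$, and since each $H_e$ (for $e\in E(X)$) is finite, the set $I\defeq\{e\in E(X):\pi(e)=f\}$ is infinite. Now $I$ is a subsemilattice of $E(X)$ contained in a single $\approx$-class (for $e,e'\in I$ the element $ee'$ is an idempotent with $\pi(ee')=f$), so $I$ is downward directed; as $X$ is chain-finite, $I$ contains no infinite descending chain and therefore has a least element $\bot$. For each $e\in I$ we have $\bot\le e$, so $H_eH_\bot\subseteq H_{e\bot}=H_\bot$, and since $\pi(\bot)=f$ is the identity of $H_f\supseteq\pi(H_e)$ we get $\pi(a)=\pi(a\bot)\in\pi(H_\bot)$ for every $a\in H_e$; hence $\pi(H_e)\subseteq\pi(H_\bot)$. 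Consequently $H_f\subseteq\pi(H_\bot)\cup\pi(X\setminus H(X))$ is finite, a contradiction. So every maximal subgroup of $Q$ is finite, $Q$ satisfies all five properties and is injectively $\C$-closed, and as $\approx$ was arbitrary, $X$ is absolutely $\C$-closed.
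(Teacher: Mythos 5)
Your step-by-step reasoning is internally sound: the observation that absolute $\C$-closedness amounts to injective $\C$-closedness of all quotients is stated in the paper, your forward direction is exactly how the paper derives $(1)\Rightarrow(3)$ in Corollary~\ref{c:aC} (via Theorems~\ref{t:main-iC} and \ref{t:mainP}), and your verification that every quotient $Q=X/_{\approx}$ inherits the five properties of Theorem~\ref{t:main-iC} is correct. In particular the group-finiteness of $Q$ --- via the infinite semilattice $I=\{e\in E(X):\pi(e)=f\}$, its least element $\bot$ (which exists because a chain-finite meet-semilattice is well-founded and downward directed), and the inclusion $H_f\subseteq\pi[H_\bot]\cup\pi[X\setminus H(X)]$ --- is a correct and genuinely useful closure property.

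Nevertheless there is a genuine gap: the argument is circular relative to this paper. The paper never proves Theorem~\ref{t:aC}; it is imported from \cite{ACVS}, and in Corollary~\ref{c:aC} the implication $(3)\Rightarrow(1)$ is obtained precisely by citing Theorem~\ref{t:aC}, not by any argument of the present paper. Your converse direction instead rests on the ``if'' half of Theorem~\ref{t:main-iC}; but the paper's proof of that half, Proposition~\ref{p:if}, invokes Theorem~\ref{t:aC} three times --- for the closedness of $E(X)$ in $Y$, for the absolute $\mathsf{T_{\!2}S}$-closedness of the Clifford part $H(X)$ (Lemma~\ref{l:H-closed}), and for the absolute $\mathsf{T_{\!2}S}$-closedness of the chain-finite semilattice $E(X)\setminus{\uparrow}e$ (Lemma~\ref{l:up-clopen}). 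So deriving Theorem~\ref{t:aC} from Theorem~\ref{t:main-iC} presupposes the very statement being proved. Your reduction would become a legitimate proof only if the special cases needed in Proposition~\ref{p:if} (chain-finite semilattices and bounded chain-finite group-finite Clifford semigroups) were established independently --- and that is exactly the external content that \cite{ACVS} supplies and that a blind proof of Theorem~\ref{t:aC} would have to reconstruct.
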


This paper completes a series of papers  \cite{ICT1S}, \cite{CCUS} devoted to characterizations of injectively $\C$-closed commutative semigroups. The paper \cite{CCUS} treats unipotent semigroups and contains the following characterization:

\begin{theorem}[Banakh--Vovk]\label{t:iCUS} Let $\C$ be a class of topological semigroups such that $\mathsf{T_{\!z}S}\subseteq\C\subseteq\mathsf{T_{\!1}S}$. A commutative unipotent semigroup $X$ is injectively $\C$-closed if and only if $X$ is bounded, nonsingular and group-finite.
\end{theorem}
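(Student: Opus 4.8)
The plan is to prove the two implications separately, organizing the argument around the principle---made precise for the class $\mathsf{T_{\!1}S}$ in the characterization of injectively $\mathsf{T_{\!1}S}$-closed semigroups recalled above---that injective $\C$-closedness decouples into $\C$-closedness plus $\C$-discreteness. Since $\mathsf{T_{\!z}S}\subseteq\C\subseteq\mathsf{T_{\!1}S}$, the weakest hypothesis to exploit in the necessity direction is injective $\mathsf{T_{\!z}S}$-closedness, while the strongest conclusion to establish in the sufficiency direction is injective $\mathsf{T_{\!1}S}$-closedness; proving these two extreme cases yields the statement for every intermediate $\C$.

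For necessity, assume $X$ is injectively $\C$-closed. Every isomorphic topological embedding is in particular an injective continuous homomorphism, so $X$ is $\C$-closed, and Theorem~\ref{t:CUS} gives that $X$ is nonsingular and bounded. It remains to derive group-finiteness. Arguing by contraposition, suppose $X$ contains an infinite subgroup; as $X$ is unipotent this subgroup lies in the maximal subgroup $H_e$ of the unique idempotent $e$, so $H_e$ itself is infinite. I would then equip the infinite abelian group $H_e$ with a non-discrete zero-dimensional Hausdorff group topology that is not Ra\u\i kov-complete, and realize $X$ as a (non-closed) subsemigroup of a zero-dimensional Hausdorff topological semigroup built from the completion of $H_e$ together with the $H_e$-action on $X\setminus H_e$. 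The resulting injective homomorphism into a member of $\mathsf{T_{\!z}S}\subseteq\C$ has non-closed image, contradicting injective $\C$-closedness; hence $H_e$, and so every subgroup, is finite.

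For sufficiency, assume $X$ is bounded, nonsingular and group-finite; by the reduction above it suffices to prove that $X$ is both $\mathsf{T_{\!1}S}$-closed and $\mathsf{T_{\!1}S}$-discrete. The first is immediate from Theorem~\ref{t:CUS}. For the second, first record the structural consequences of the hypotheses: since $X$ is bounded and unipotent there is $n\in\IN$ with $x^n=e$ for all $x\in X$, from which one checks that $eX=H_e$ is a group, and by group-finiteness this Clifford part $G:=H_e$ is finite. Now let $h\colon X\to Y$ be an injective homomorphism into a $T_1$ topological semigroup; identifying $X$ with $h[X]$, I must show the subspace topology on $X$ is discrete. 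Suppose some $x_0\in X$ were a limit of a net in $X\setminus\{x_0\}$. Multiplying by $e$ and using that $G$ is finite (hence closed and discrete in $Y$), the net may be assumed to lie in a single fibre $Z_g:=\{x\in X:ex=g\}$ with $x_0\in Z_g$, and this fibre is therefore infinite. The plan is then to reach a contradiction by combining boundedness---which forces the powers $x_0,x_0^2,\dots$ to descend into the finite group $G$ and lets one run an induction on the power of $x_0$---with nonsingularity, which rules out an infinite fibre collapsing onto a single accumulation value in the manner of a null semigroup. Once discreteness is established, $h$ is a topological embedding of the discrete semigroup $X$, and $\mathsf{T_{\!1}S}$-closedness forces $h[X]$ to be closed; thus $X$ is injectively $\mathsf{T_{\!1}S}$-closed and a fortiori injectively $\C$-closed.

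The main obstacle is the $\mathsf{T_{\!1}S}$-discreteness step in the sufficiency direction: in an arbitrary (merely $T_1$) ambient semigroup the induced topology on $X$ can be badly behaved, and one must convert an accumulation point into a genuinely semigroup-theoretic obstruction. The two hypotheses enter in complementary ways---nonsingularity excludes null-type accumulation inside a fibre (as the compact convergent-sequence null semigroup already shows it must), while boundedness together with the finiteness of $G$ permits an inductive descent on powers terminating in the finite group---and making this descent precise, with uniform control of the fibres $Z_g$ under the $G$-action, is the technical heart of the proof. The explicit zero-dimensional topologization of an arbitrary infinite abelian group in the necessity direction is a secondary difficulty, requiring a short case analysis according to the structure (torsion-free part, bounded torsion part, or Pr\"ufer subgroups) of $H_e$.
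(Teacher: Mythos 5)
First, for calibration: this paper never proves Theorem~\ref{t:iCUS}; it imports it from \cite{CCUS}, so your argument has to stand on its own, and it does not yet do so. The necessity half is essentially right, up to one inversion. After Theorem~\ref{t:CUS} gives boundedness, $X$ is periodic and unipotent, so $X=\korin{\IN}{H_e}$ and Lemma~\ref{l:C-ideal} makes $H_e$ an \emph{ideal} of $X$; consequently each $x\in X$ acts on $H_e$, and on any group completion of it, by the translation $g\mapsto (xe)g$. That is what lets you glue $X\setminus H_e$ (as isolated points) to the completion and obtain a well-defined, continuous, associative operation. Your phrase ``the $H_e$-action on $X\setminus H_e$'' is backwards --- $H_e\cdot(X\setminus H_e)\subseteq H_e$, so $H_e$ does not act on $X\setminus H_e$ at all --- and without the ideal property the product of a completion point with a point of $X\setminus H_e$ has nowhere to live. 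Also, since boundedness is already in hand, $H_e$ is a bounded abelian group, i.e.\ a direct sum of cyclic groups sitting densely in their compact product; no torsion-free/Pr\"ufer case analysis is needed.

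The genuine gap is the $\mathsf{T_{\!1}S}$-discreteness step of the sufficiency half, which you explicitly leave as a plan, and the plan as stated cannot be completed. The reduction to a single fibre $Z_g$ is fine (finite sets are closed and discrete in $T_1$ spaces, so a net witnessing non-isolation eventually stays in one fibre and has infinite range). But from there, joint continuity of multiplication at $(x_0,x_0)$ only yields: for every open $W\ni x_0^2$ there is an infinite set $A\subseteq Z_g$ with $AA\subseteq W$. In a merely $T_1$ ambient semigroup, neighborhoods neither separate points nor can be intersected along a filter, so this never produces an infinite set whose set of pairwise products is an actual \emph{singleton}; nonsingularity therefore cannot be invoked ``in the manner of a null semigroup.'' A correct proof has to convert neighborhood-confinement into genuine equations, for instance by forcing the relevant products and powers into the finite group $G=H_e$, where $T_1$-separation does upgrade ``converges to $g$'' into ``eventually equals $g$,'' and then running the combinatorial machine: dichotomies between free and principal translated ultrafilters, Ramsey-type extractions, and care with the translations $x\mapsto cx$, which are far from injective (for $c\in G$ they collapse all of $X$ into $G$). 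This is exactly the kind of argument carried out in Proposition~\ref{p:if} of this paper and in \cite{CCUS}; nothing in your outline substitutes for it, and ``uniform control of the fibres $Z_g$ under the $G$-action'' is a restatement of the problem rather than a method for solving it.

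Finally, note that if you are permitted to use the theorems this paper quotes, the statement follows in a few lines and the hard step disappears: in a commutative unipotent semigroup every subgroup lies in $H_e=H(X)$, so group-finiteness coincides with Clifford-finiteness; hence sufficiency is precisely the implication $(4)\Ra(1)$ of Theorem~\ref{t:iT1}, while necessity follows from Theorem~\ref{t:CUS} (boundedness and nonsingularity) together with Theorem~\ref{t:Z}(4) (group-finiteness, since $Z(X)=X$ for commutative $X$ and injective $\C$-closedness implies injective $\mathsf{T_{\!z}S}$-closedness). A from-scratch proof, which is what you attempt, must instead reproduce the discreteness argument of \cite{CCUS}, and that is the part your proposal leaves open.
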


The paper \cite{ICT1S} contains the following characterization of injectively $\mathsf{T_{\!1}S}$-closed commutative semigroups.

\begin{theorem}[Banakh]\label{t:iT1} For a commutative semigroup $X$ the following conditions are equivalent:
\begin{enumerate}
\item $X$ is injectively $\mathsf{T_{\!1}S}$-closed;
\item $X$ is $\mathsf{T_{\!1}S}$-closed and $\mathsf{T_{\!1}S}$-discrete;
\item $X$ is $\mathsf{T_{\!z}S}$-closed and $\mathsf{T_{\!z}S}$-discrete;
\item $X$ is bounded, nonsingular and Clifford-finite.
\end{enumerate}
\end{theorem}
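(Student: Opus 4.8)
The plan is to prove the cyclic chain of implications $(1)\Ra(2)\Ra(3)\Ra(4)\Ra(2)$, taking the equivalence $(1)\Leftrightarrow(2)$ as a black box: it is precisely the Banakh--Bardyla characterization \cite[3.2]{CCCS} recalled above, valid for arbitrary (not necessarily commutative) semigroups. The implication $(2)\Ra(3)$ is then free, since $\mathsf{T_{\!z}S}\subseteq\mathsf{T_{\!1}S}$ forces both $\mathsf{T_{\!1}S}$-closedness and $\mathsf{T_{\!1}S}$-discreteness to descend to the smaller class. Thus everything reduces to $(3)\Ra(4)$ and $(4)\Ra(2)$. Throughout I would exploit that, by Theorem~\ref{t:C} applied to both $\C=\mathsf{T_{\!1}S}$ and $\C=\mathsf{T_{\!z}S}$, the two closedness notions coincide and are equivalent to ``chain-finite, nonsingular, periodic and group-bounded''; so the closedness half of $(2)$ and $(3)$ is one and the same condition, and the genuine work lies in reconciling it, via the discreteness clauses, with condition $(4)$.

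The closedness bookkeeping is routine. For the closedness half of $(4)\Ra(2)$ I would check the four properties of Theorem~\ref{t:C}: boundedness (a uniform exponent $n$ with $x^n\in E(X)$) yields periodicity and group-boundedness at once, nonsingularity is assumed, and chain-finiteness follows from Clifford-finiteness by the short argument that if $I\subseteq X$ is infinite with $xy\in\{x,y\}$ for all $x,y\in I$, then setting $y=x$ makes every element of $I$ idempotent, so $I\subseteq E(X)\subseteq H(X)$, contradicting finiteness of $H(X)$. Conversely, for $(3)\Ra(4)$, Theorem~\ref{t:C} already delivers nonsingularity (and chain-finiteness, periodicity, group-boundedness) from $\mathsf{T_{\!z}S}$-closedness; what must still be extracted from discreteness is the upgrade of periodicity to boundedness and of group-boundedness to Clifford-finiteness.

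The discreteness clauses are the heart of the matter. For the forward part of $(4)\Ra(2)$, i.e. $\mathsf{T_{\!1}S}$-discreteness, I would fix an exponent $n$ witnessing boundedness and, given an injective homomorphism $h\colon X\to Y$ into a $T_1$ semigroup, consider the continuous $n$-th power map on $Y$. Its values on $h[X]$ land in the finite set $h[E(X)]$, whose points are closed in $Y$; hence the fibers $Z_e=\{z\in h[X]:z^n=h(e)\}$ are finitely many, relatively clopen, and partition $h[X]$. By commutativity each $Z_e$ is a subsemigroup with $e$ as its only idempotent, hence unipotent, and it inherits boundedness, nonsingularity and (from Clifford-finiteness) group-finiteness. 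The unipotent characterization (Theorem~\ref{t:iCUS}) together with $(1)\Leftrightarrow(2)$ shows each $Z_e$ is $\mathsf{T_{\!1}S}$-discrete, hence discrete in $Y$; since the blocks are open in $h[X]$, discreteness assembles to all of $h[X]$, giving condition $(2)$ and closing the cycle.

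I expect the main obstacle to be $(3)\Ra(4)$, which must be argued contrapositively by \emph{constructing} non-discrete topologies: if $X$ is unbounded (but periodic), or if the Clifford part $H(X)$ is infinite, one has to exhibit an injective homomorphism into some zero-dimensional Hausdorff semigroup whose image is not discrete, thereby violating $\mathsf{T_{\!z}S}$-discreteness. For the Clifford-infinite case this is reasonably concrete, as an infinite group-bounded Clifford semigroup topologizes non-discretely through an embedding of its (finite) subgroups into a compact product; the unbounded case is more delicate and is where I anticipate the real technical effort, since one must manufacture a zero-dimensional semigroup topology accumulating along elements of growing index. Once these two topologizability constructions are in place, the contrapositive of $(3)\Ra(4)$ follows, completing the equivalences.
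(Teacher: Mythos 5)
Your proposal is evaluated against a slightly unusual baseline: the present paper does not prove Theorem~\ref{t:iT1} at all --- it imports it from \cite{ICT1S} --- but it does recall (in Section~\ref{s:prelim}) exactly the machinery with which that omitted proof is carried out, namely the shift-neighborhoods $\Lambdae(b;V)$, the $e$-bases $\mathcal H[e)$ and $\mathcal Z[e)$ of Examples~\ref{ex:He} and \ref{ex:Ze}, and Theorem~\ref{t:TS}. Within your cyclic scheme, the parts you actually argue are correct: $(1)\Leftrightarrow(2)$ is a legitimate black box from \cite{CCCS}, $(2)\Ra(3)$ is indeed free, and your proof of $(4)\Ra(2)$ is complete and rather elegant --- partitioning $h[X]$ into the finitely many relatively clopen unipotent blocks $Z_e=\{z\in h[X]:z^n=h(e)\}$, $e\in E(X)$, and quoting Theorem~\ref{t:iCUS} together with the \cite{CCCS} equivalence on each block does yield $\mathsf{T_{\!1}S}$-discreteness, while Theorem~\ref{t:C} settles closedness. (One small caveat: your one-line chain-finiteness argument ``set $y=x$'' presumes that a chain may use equal pairs, so that its elements are idempotents; if the two elements are required to be distinct, you need the extra observation that in a bounded commutative semigroup an infinite chain $x_1\prec x_2\prec\cdots$ forces the idempotents $x_i^n$ to form an infinite chain, which again contradicts Clifford-finiteness. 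This is fixable in three lines.)

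The genuine gap is $(3)\Ra(4)$, and it is not a peripheral one: it is the entire content of the theorem, namely that $\mathsf{T_{\!z}S}$-discreteness upgrades periodicity to boundedness and group-boundedness/chain-finiteness to Clifford-finiteness. You correctly reduce this to a topologization statement --- every commutative semigroup that is chain-finite, nonsingular, periodic and group-bounded (Theorem~\ref{t:C}) but unbounded or with infinite $H(X)$ must admit an injective homomorphism into a zero-dimensional Hausdorff topological semigroup with non-discrete image --- but you do not construct such topologies, and your sketch for the Clifford-infinite case would fail as stated. Under $(3)$ you only know $X$ is group-bounded; its subgroups may well all be finite (with $E(X)$ infinite), in which case embedding them into a compact product leaves them discrete, so no non-isolation can come from there; and in any case topologizing a subgroup, or even all of $H(X)$, is not the task: one needs a non-discrete Hausdorff zero-dimensional topology on (an injective image of) \emph{all} of $X$ in which multiplication by the elements outside $H(X)$ remains continuous. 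That is precisely what the $e$-base machinery does: $\mathcal H[e)$ produces a non-isolated idempotent when $E(X)$ is infinite (chain-finiteness giving well-foundedness, hence regularity of the base), and $\mathcal Z[e)$ produces non-isolation when $X$ is unbounded, because the sets $\{z^m:z\in Z(X)\cap\tfrac ee\setminus\pi^{-1}[{\uparrow}F]\}$ then stay nontrivial for all $m$. The constructions of Sections~\ref{s:b1}--\ref{s:Cs} of this paper (well-definedness of an extended operation, Hausdorffness, a long continuity case analysis) show the scale of verification such a topologization requires; until you supply the analogous constructions, the implication $(3)\Ra(4)$, and with it the theorem, remains unproved.
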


In this paper we characterize injectively $\C$-closed commutative semigroups for classes $\C$ with $\mathsf{T_{\!z}S}\subseteq\C\subseteq\mathsf{T_{\!2}S}$, and prove the following main result.

\begin{theorem}\label{t:main-iC} Let $\C$ be a class of topological semigroups such that $\mathsf{T_{\!z}S}\subseteq\C\subseteq\mathsf{T_{\!2}S}$. A commutative semigroup $X$ is injectively $\C$-closed if and only if $X$ is bounded, chain-finite, group-finite, nonsingular and not Clifford-singular.
\end{theorem}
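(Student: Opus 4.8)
The plan is to prove the two extremal implications and let the inclusion $\Zero\subseteq\C\subseteq\Haus$ do the rest: since injective $\Haus$-closedness implies injective $\C$-closedness, which implies injective $\Zero$-closedness (the arrows in the diagram above), it suffices to show that an injectively $\Zero$-closed commutative semigroup enjoys the five listed properties, and conversely that a commutative semigroup with these five properties is injectively $\Haus$-closed. All intermediate classes then collapse to the same characterization.

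For necessity, suppose $X$ is injectively $\Zero$-closed. Then $X$ is in particular $\Zero$-closed, so Theorem~\ref{t:C} already gives that $X$ is chain-finite, nonsingular, periodic and group-bounded. It remains to upgrade \emph{periodic} to \emph{bounded}, \emph{group-bounded} to \emph{group-finite}, and to exclude \emph{Clifford-singular}. Each of these I would prove contrapositively, by exhibiting an injective continuous homomorphism of $X$ into a zero-dimensional Hausdorff topological semigroup whose image fails to be closed. If some maximal subgroup $H_e$ is infinite, it is a bounded infinite abelian group and hence carries a non-discrete precompact zero-dimensional group topology; completing it produces a limit point, and the real task is to propagate this non-closed embedding of $H_e$ to all of $X$. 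If $X$ is unbounded (though periodic) or Clifford-singular, one instead adjoins a single limit point to a witnessing sequence — an unbounded cyclic part in the first case, the Clifford-singular set $A$ with $AA\subseteq H(X)$ in the second — and defines the extended multiplication through limits landing in the Clifford part $H(X)$. The unipotent building blocks here are controlled by Theorem~\ref{t:iCUS}.

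For sufficiency, assume $X$ is bounded, chain-finite, group-finite, nonsingular and not Clifford-singular, fix an injective continuous homomorphism $h\colon X\to Y$ with $Y$ Hausdorff, and identify $X$ with $h[X]$. Suppose toward a contradiction that some $z\in\overline X\setminus X$, and pick a net $(x_i)$ in $X$ with $x_i\to z$. Writing $n$ for a bound with $x^n\in E(X)$ for all $x\in X$, the power map $w\mapsto w^n$ is a continuous homomorphic retraction of $\overline X$ onto its semilattice of idempotents, and $(x_i^n)\to z^n$. Each archimedean component $K_e=\{x\in X: x^n=e\}$ is a unipotent commutative semigroup inheriting boundedness, nonsingularity and group-finiteness from $X$, so by Theorem~\ref{t:iCUS} it is injectively $\Haus$-closed and therefore closed in $Y$; consequently a net lying cofinally in a single component cannot converge outside $X$. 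If only finitely many idempotents $e_i=x_i^n$ occur, a subnet lies in one component and we are done, so I may assume the $e_i$ are pairwise distinct. The remaining work is to show that $z^n$ lies in $E(X)$ and then that $z$ lies in $K_{z^n}$: the first step uses that the idempotent semilattice $E(X)$ is chain-finite, hence (by the theory of closed semilattices) closed in $Y$; the second descends along the idempotent order, which is well-founded by chain-finiteness, at each stage either landing the net in a fixed component or producing, via nonsingularity and the absence of Clifford-singular sets, an infinite forbidden configuration.

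The crux — and the genuine obstacle — is exactly this multi-component analysis in the sufficiency direction, where the converging net meets infinitely many distinct unipotent components and no single application of Theorem~\ref{t:iCUS} applies. Here the hypotheses \emph{not Clifford-singular} and \emph{chain-finite} must be used in tandem with continuity and Hausdorffness to rule out the limit point: one extracts from $(x_i)$ an infinite subset whose pairwise products are either eventually constant (excluded by nonsingularity) or fall into $H(X)$ (excluded by non-Clifford-singularity), with the Hausdorff axiom forcing the offending double limits to collide — as in the elementary computation for a null semilattice, where $z\,x_i\to z^2$ and $z\,x_i$ tends to a fixed value simultaneously, forcing $z\in X$. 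This is precisely where $\Haus$ allows weakening the Clifford-finiteness required in the $\mathsf{T_{\!1}S}$ characterization (Theorem~\ref{t:iT1}) to group-finiteness together with non-Clifford-singularity; by contrast, verifying associativity, continuity and zero-dimensionality of the one-point and completion extensions in the necessity direction is routine but technical.
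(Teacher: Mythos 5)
Your overall architecture matches the paper's: reduce to the extremal classes $\Zero$ and $\Haus$, harvest chain-finiteness, nonsingularity, periodicity and group-boundedness from Theorem~\ref{t:C} (the paper uses Theorem~\ref{t:Z}), and split the remaining work into a sufficiency argument inside an ambient Hausdorff semigroup and a necessity argument by constructing non-closed embeddings. But the proposal has two genuine gaps, and the more serious one is precisely the part you dismiss as ``routine but technical.'' In the necessity direction, upgrading \emph{periodic} to \emph{bounded} and excluding \emph{Clifford-singular} cannot be done by ``adjoining a single limit point to a witnessing sequence.'' If $(z_n)$ is the witnessing sequence converging to a new point $\lambda$, then continuity of translations forces every product $x\lambda$ with $x$ in a maximal subgroup $H_s$ to exist in the extension as well, and these are in general pairwise distinct; this is exactly why the paper adjoins the whole family $\{xz_\U:x\in H_s\}$ of new points (identified along cosets of a normal subgroup $G=\{x\in H_s:xz_\U=z_\U\}$), and why Sections~\ref{s:b1}, \ref{s:b2} and \ref{s:Cs} — the bulk of the paper — are devoted to choosing the sequence via Ramsey's theorem and minimal idempotents $e<s$, building regular $e$-bases $\mathcal Z[e)$, $\mathcal H[e)$, the ideals $I$ and the sets $\nabla(U,\e)$, and then verifying Hausdorffness, zero-dimensionality and joint continuity of $*$ through a long case analysis (Lemmas~\ref{l:T-zerodim}, \ref{l:conti}, \ref{l:conti2}). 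A one-point extension will fail either continuity of multiplication or the Hausdorff axiom, so executing your plan as stated would not produce a proof. (For group-finiteness the paper does not construct anything at all: it quotes Theorem~\ref{t:Z}(4); your precompact-group-topology idea leaves open exactly the ``propagation to all of $X$'' that is the hard point.)

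In the sufficiency direction your sketch is much closer to the paper's Proposition~\ref{p:if}, and you correctly isolate the crux: a net (or ultrafilter) meeting infinitely many unipotent components $X\cap\pi^{-1}(e)$, each of which is closed and discrete by Theorems~\ref{t:iCUS} and \ref{t:iT1}. But the resolution is only gestured at. The paper's actual mechanism is: first replace the ultrafilter by one containing a set $W$ with $W^{{\uparrow}2}\subseteq H(X)$ (Lemma~\ref{l:improve}); then take the idempotent $\mu$ minimal among those $e$ for which ${\Downarrow}e\setminus H(X)$ lies in some free convergent ultrafilter (chain-finiteness gives minimality); then prove the dichotomy that for $a$ with $e\le\pi(a)<\mu$ either $aU\subseteq H(X)$ or $aU=\{a\lambda\}$ for some $U\in\U$ (Claim~\ref{cl:dichotomy}, using minimality of $\mu$); and finally run two separate inductive constructions, one for each horn, each extracting an infinite set $A\subseteq X\setminus H(X)$ with $AA\subseteq H(X)$, contradicting non-Clifford-singularity. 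Your phrase ``pairwise products are either eventually constant (excluded by nonsingularity) or fall into $H(X)$'' names the right target configuration but skips the minimality/dichotomy machinery that actually produces it, so as written this direction is also a plan rather than a proof.
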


Theorems~\ref{t:iT1} and \ref{t:main-iC} imply that the injective $\C$-closedness if preserved by subsemigroups of commutative semigroups.

\begin{corollary}\label{c:her} Let $\C$ be a class of topological semigroups such that either $\C=\mathsf{T_{\!1}S}$ or  $\mathsf{T_{\!z}S}\subseteq\C\subseteq\mathsf{T_{\!2}S}$. Any subsemgroup of an injectively $\C$-closed commutative semigroup is injectively $\C$-closed.
\end{corollary}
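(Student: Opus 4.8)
The plan is to deduce the corollary purely from the intrinsic characterizations already established. By Theorem~\ref{t:iT1} (for $\C=\mathsf{T_{\!1}S}$) and Theorem~\ref{t:main-iC} (for $\Zero\subseteq\C\subseteq\Haus$), injective $\C$-closedness of a commutative semigroup is equivalent to the conjunction of finitely many purely algebraic conditions. Hence it suffices to prove that each of these conditions passes to subsemigroups. I would therefore fix an injectively $\C$-closed commutative semigroup $X$, an arbitrary subsemigroup $Y\subseteq X$, and verify that $Y$ inherits each property on the relevant list: \emph{bounded}, \emph{nonsingular}, \emph{Clifford-finite} in the $\mathsf{T_{\!1}S}$ case, and \emph{bounded}, \emph{chain-finite}, \emph{group-finite}, \emph{nonsingular}, \emph{not Clifford-singular} in the case $\Zero\subseteq\C\subseteq\Haus$.

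Most of these are routine, since the multiplication on $Y$ is the restriction of that on $X$ and idempotency is an internal equation. \emph{Boundedness} transfers because the same exponent $n$ witnessing $x^n\in E(X)$ for all $x\in X$ works for all $y\in Y$. \emph{Nonsingularity} and \emph{chain-finiteness} transfer by contraposition: any infinite witnessing set inside $Y$ is also an infinite witnessing set inside $X$, as products are computed identically. \emph{Group-finiteness} transfers because every subgroup of $Y$ is a subgroup of $X$; the same observation gives $H(Y)\subseteq H(X)$, whence $H(X)$ finite forces $H(Y)$ finite, settling \emph{Clifford-finiteness}. This disposes of the $\mathsf{T_{\!1}S}$ case entirely, and of all but one condition in the remaining case.

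The main obstacle is to show that \emph{not Clifford-singular} is hereditary; here the inclusion $H(Y)\subseteq H(X)$ points the wrong way, so the whole point is the following lemma, whose proof uses boundedness: if $X$ is bounded and $Y\subseteq X$ is a subsemigroup, then $H(X)\cap Y\subseteq H(Y)$. To prove it, take $a\in Y$ lying in the maximal subgroup $H_e$ of $X$; boundedness gives $a^n\in E(X)\cap H_e=\{e\}$, so $e=a^n\in Y$, and since $e$ is the identity of $H_e$ we get $a^{n+1}=a\cdot a^n=ae=a$. Thus $\langle a\rangle=\{a,a^2,\dots,a^n\}$ is a finite cyclic \emph{group} with identity $e$ contained in $Y$, so $a\in H(Y)$. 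Granting the lemma, suppose toward a contradiction that $Y$ is Clifford-singular, witnessed by an infinite $A\subseteq Y\setminus H(Y)$ with $AA\subseteq H(Y)$. The lemma gives $Y\setminus H(Y)\subseteq Y\setminus H(X)\subseteq X\setminus H(X)$, so $A\subseteq X\setminus H(X)$, while $AA\subseteq H(Y)\subseteq H(X)$; hence $A$ witnesses Clifford-singularity of $X$, contradicting Theorem~\ref{t:main-iC}. Therefore $Y$ is not Clifford-singular, and applying the relevant characterization to $Y$ completes the plan.
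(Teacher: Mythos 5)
Your proposal is correct and takes exactly the route the paper intends: Corollary~\ref{c:her} is presented there as an immediate consequence of Theorems~\ref{t:iT1} and \ref{t:main-iC}, i.e., of the fact that every condition in the two characterizations is inherited by subsemigroups. Your lemma that $H(X)\cap Y\subseteq H(Y)$ whenever $X$ is bounded (so that $Y\setminus H(Y)\subseteq X\setminus H(X)$) correctly supplies the one non-obvious step, the heredity of being not Clifford-singular, which the paper leaves implicit.
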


\begin{remark}  Corollary~\ref{c:her} is specific for commutative semigroups and does not generalize to noncommutative groups: by Theorem 1.10 in \cite{CCCS},  every countable bounded group $G$ without elements of order 2 is a subgroup of an absolutely $\mathsf{T_{\!1}S}$-closed countable simple bounded group $X$. If the group $G$ has infinite center, then $G$ is not injectively $\mathsf{T_{\!z}S}$-closed by Theorem~\ref{t:Z}(4) below. On the other hand, $G$ is a subgroup of the absolutely $\mathsf{T_{\!1}S}$-closed group $X$.
%by Theorem 2.3 of \cite{KOO}, every countable bounded group $G$ without elements of order 2 is a subgroup of a countable simple bounded group $X$, which is $\mathsf{TG}$-nontopologizable for the class $\mathsf{TG}$ of Hausdorff topological groups. Since each semigroup topology on a bounded group is a group topology, the $\mathsf{TG}$-nontoplogizable group $X$ is not $\mathsf{T_{\!1}S}$-topologizable. By Theorem 1.1 of \cite{CCCS}, the bounded countable group $X$ is $\mathsf{T_{\!1}S}$-closed and by Theorem~\ref{t:iT1}, the $\mathsf{T_{\!1}S}$-nontopologizable $\mathsf{T_{\!1}S}$-closed group $X$ is injectively $\mathsf{T_{\!1}S}$-closed. Being simple, the injectively $\mathsf{T_{\!1}S}$-closed semigroup $X$ is absolutely $\mathsf{T_{\!1}S}$-closed. If the group $G$ is infinite and commutative, then $G$ is not group-finite and nence $G$ is not injectively $\mathsf{T_{\!z}S}$-closed by Theorem~\ref{t:main-iC}. On the other hand, $G$ is a subgroup of the absolutely $\mathsf{T_{\!1}S}$-closed group $X$.
\end{remark}

By Theorem~\ref{t:C} (and \ref{t:iCUS}) the (injective) $\mathsf{T_{\!1}S}$-closedness and the (injective) $\mathsf{T_{\!2}S}$-closedness are equivalent for (unipotent) commutative semigroups. The following  example shows that the injective $\mathsf{T_{\!1}S}$-closedness is not equivalent to the injective $\mathsf{T_{\!2}S}$-closedness for commutative semigroups.

\begin{example}\label{ex:main} Let $X$ be the set $\w$ endowed with the semigroup operation $*$ defined by
$$x*y=\begin{cases}
1&\mbox{if $x,y\in \{2n+3:n\in\w\}$ and $x\ne y$};\\
2n&\mbox{if $\{x,y\}\subseteq\{2n,2n+1\}$ for some $n\in\w$};\\
0&\mbox{otherwise}.
\end{cases}
$$The semigroup $X$ has the following properties:
\begin{enumerate}
\item $X$ is injectively $\mathsf{T_{\!2}S}$-closed;
\item $X$ is $\mathsf{T_{\!1}S}$-closed;
\item $X$ is not injectively $\mathsf{T_{\!1}S}$-closed;
\item the set $I=\{0,1\}$ is an ideal in $X$;
\item the quotient semigroup $X/I$ is $\mathsf{T_{\!1}S}$-closed but not injectively $\mathsf{T_{\!z}S}$-closed;
\item the set $J= I\cup E(X)$ is an ideal in $X$ and the quotient semigroup $X/J$ is not $\mathsf{T_{\!z}S}$-closed.
\end{enumerate}
\end{example}

The properties (1)--(6) of Example~\ref{ex:main} will be proved in Section~\ref{s:ex}.

Theorems~\ref{t:main-iC} and \ref{t:mainP} imply the following characterizations of absolutely $\C$-closed commutative semigroups that completes Theorem~\ref{t:aC}.

\begin{corollary}\label{c:aC} Let $\C$ be a class of topological semigroups such that $\mathsf{T_{\!z}S}\subseteq\C\subseteq\mathsf{T_{\!2}S}$. For a commutative semigroup $X$ the following conditions are equivalent:
 \begin{enumerate}
\item $X$ is absolutely $\C$-closed;
\item $X$ is projectively $\C$-closed and injectively $\C$-closed;
\item $X$ is bounded, chain-finite, group-finite and Clifford+finite.
\end{enumerate}
\end{corollary}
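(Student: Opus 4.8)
The plan is to prove the cycle of implications $(1)\Ra(2)\Ra(3)\Ra(1)$. The implication $(1)\Ra(2)$ is purely formal: the paper has already recorded that every absolutely $\C$-closed semigroup is both projectively $\C$-closed (via the chain absolutely $\Ra$ projectively $\Ra$ ideally $\Ra$ $\C$-closed) and injectively $\C$-closed (via the chain absolutely $\Ra$ injectively $\Ra$ $\C$-closed), so $(1)$ yields $(2)$ at once.

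For $(2)\Ra(3)$ I would simply intersect two property lists. Since $\mathsf{T_{\!z}S}\subseteq\C\subseteq\mathsf{T_{\!2}S}\subseteq\mathsf{T_{\!1}S}$, both Theorem~\ref{t:mainP} and Theorem~\ref{t:main-iC} apply to $X$. Projective $\C$-closedness gives, by Theorem~\ref{t:mainP}, that $X$ is chain-finite, group-bounded and Clifford+finite; injective $\C$-closedness gives, by Theorem~\ref{t:main-iC}, that $X$ is bounded, chain-finite, group-finite, nonsingular and not Clifford-singular. Keeping boundedness and group-finiteness from the injective side, Clifford+finiteness from the projective side, and chain-finiteness from either, one reads off exactly the four conditions in $(3)$.

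The real content is $(3)\Ra(1)$, which is precisely the ``if'' direction of Theorem~\ref{t:aC}, so I would invoke it directly. It is instructive to see why a self-contained derivation from Theorems~\ref{t:main-iC} and \ref{t:mainP} does not quite close the loop on its own. Using the observation recorded before Theorem~\ref{t:mainP}, $X$ is absolutely $\C$-closed if and only if every quotient $X/_\approx$ is injectively $\C$-closed, i.e.\ (by Theorem~\ref{t:main-iC}) every $X/_\approx$ is bounded, chain-finite, group-finite, nonsingular and not Clifford-singular. Assuming $(3)$, one checks that boundedness passes to quotients (as $[x]^n=[x^n]$), that Clifford+finiteness passes to quotients (the quotient map sends each $H_e$ into $H_{[e]}$, so $X/_\approx\setminus H(X/_\approx)$ is contained in the finite image of $X\setminus H(X)$), that non-Clifford-singularity then follows since a finite set $X/_\approx\setminus H(X/_\approx)$ has no infinite subset, and that chain-finiteness and nonsingularity of every quotient follow because $X$ is projectively $\C$-closed by Theorem~\ref{t:mainP} and hence every $X/_\approx$ is $\C$-closed, so Theorem~\ref{t:C} applies.

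The one property that resists this route, and which I expect to be the main obstacle, is group-finiteness of the quotients: it does not follow from the other four quotient-level properties. Indeed, an infinite abelian group of exponent $2$ is bounded, chain-finite, nonsingular and not Clifford-singular, yet fails to be group-finite, so passing to quotients one cannot in general upgrade the group-boundedness supplied by Theorem~\ref{t:mainP} to group-finiteness. Establishing that group-finiteness of all quotients does hold under hypothesis $(3)$ is exactly the nontrivial content packaged in Theorem~\ref{t:aC}, and this is why I would cite Theorem~\ref{t:aC} for $(3)\Ra(1)$ rather than attempt to reprove it here.
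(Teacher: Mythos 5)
Your proof is correct and takes essentially the same route as the paper's own: $(1)\Ra(2)$ is formal, $(2)\Ra(3)$ is obtained by intersecting the characterizations of Theorems~\ref{t:mainP} and \ref{t:main-iC}, and $(3)\Ra(1)$ is cited from Theorem~\ref{t:aC}, which is exactly the paper's three-line argument. Your supplementary discussion of why a self-contained derivation of $(3)\Ra(1)$ via quotients stalls at group-finiteness (with the exponent-$2$ group as a counterexample to deducing it from the remaining properties) is accurate, but it is commentary rather than part of the proof.
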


\begin{proof} The implication $(1)\Ra(2)$ is trivial, $(2)\Ra(3)$ follows from Theorems~\ref{t:mainP} and \ref{t:main-iC}, and the implication $(3)\Ra(1)$ follows from Theorem~\ref{t:aC}.
\end{proof}

For a semigroup $X$, let
$$Z(X)\defeq\{z\in X:\forall x\in X\;\;(xz=zx)\}$$
be the {\em center} of $X$.

The following theorem collects the results of the papers \cite{BB}, \cite{ACVS}, \cite{ICT1S} and describes some properties of the center of a semigroup possessing various closedness properties.

\begin{theorem}[Banakh--Bardyla]\label{t:Z}  Let $X$ be a semigroup.
\begin{enumerate}
\item If $X$ is $\mathsf{T_{\!z}S}$-closed, then $Z(X)$ is chain-finite, periodic and nonsingular.
\item If $X$ is $\mathsf{T_{\!z}S}$-closed and $X\cdot H(X)\subseteq Z(X)$, then $X$ is group-bounded and $Z(X)$ is $\mathsf{T_{\!1}S}$-closed.
\item If $X$ is viable and ideally $\mathsf{T_{\!z}S}$-closed, then $Z(X)$ is projectively $\mathsf{T_{\!1}S}$-closed.
\item If $X$ is injectively $\mathsf{T_{\!z}S}$-closed, then $Z(X)$ is group-finite and  $\mathsf{T_{\!1}S}$-closed.
\item If $X$ is injectively $\mathsf{T_{\!1}S}$-closed, then $Z(X)$ is injectively $\mathsf{T_{\!1}S}$-closed.
\item If $X$ is absolutely $\mathsf{T_{\!1}S}$-closed, then $Z(X)$ is  absolutely $\mathsf{T_{\!1}S}$-closed.
\end{enumerate}
\end{theorem}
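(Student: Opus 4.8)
My plan is to treat the six items uniformly by splitting each conclusion into an \emph{algebraic core} and a \emph{topological core}, and by feeding the already-recorded characterizations of closed commutative semigroups back into the argument. Since $Z(X)$ is a commutative subsemigroup of $X$, every statement that asserts a \emph{closedness} property of $Z(X)$ can be rephrased, through the matching characterization theorem, as a list of purely algebraic requirements on $Z(X)$: for the $\C$-closedness conclusions in (1) and (2) I would invoke Theorem~\ref{t:C}; for the injective $\mathsf{T_{\!1}S}$-closedness in (5), Theorem~\ref{t:iT1}; for the projective $\mathsf{T_{\!1}S}$-closedness in (3), Theorem~\ref{t:mainP}; and for the absolute $\mathsf{T_{\!1}S}$-closedness in (6), the characterization of absolutely $\mathsf{T_{\!1}S}$-closed commutative semigroups as precisely the finite ones. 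After this reduction the whole theorem comes down to showing that the closedness hypotheses on $X$ force the relevant algebraic conditions (chain-finiteness, periodicity, nonsingularity, group-boundedness, group-finiteness, Clifford-finiteness, and ultimately finiteness) on the commutative semigroup $Z(X)$.

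Each algebraic condition I would establish by contraposition, producing from a witness to its failure a topological semigroup in the relevant class in which $X$ is not closed (or, for the injective statements, not discretely embedded). The template is always the same. A failure yields an explicit infinite configuration inside $Z(X)$: an infinite chain $\{a_n\}$ with $a_ma_n\in\{a_m,a_n\}$ when chain-finiteness fails, a copy of $(\IN,+)$ when periodicity fails, an infinite set $A$ with $AA$ a single point when nonsingularity fails, or an unbounded (respectively infinite) subgroup $H_e\subseteq Z(X)$ for the group-boundedness (respectively group-finiteness) statements. I then adjoin to $X$ one or more limit points and extend both the topology and the multiplication. The decisive point is that the configuration lies in the \emph{center}: a prospective limit $a_\infty$ of a central net $(a_i)$ is obliged to satisfy $a_\infty x=x a_\infty=\lim_i a_i x$ for all $x\in X$, and centrality of the $a_i$ is exactly what makes these one-sided translations cohere, so that the extended operation is single-valued, associative and separately continuous, while a zero-dimensional refinement keeps the ambient semigroup inside $\Zero\subseteq\Haus$. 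Closedness of $X$ is then violated because the adjoined points lie in the closure of $X$ but not in $X$.

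For the items whose conclusion is itself a closedness property of $Z(X)$ I would close the loop as follows. In (2), once (1) has supplied that $Z(X)$ is chain-finite, periodic and nonsingular, the hypothesis $X\cdot H(X)\subseteq Z(X)$ forces every subgroup $H_e=eH_e\subseteq X\cdot H(X)$ into $Z(X)$, so the group-boundedness construction carried out inside $Z(X)$ yields group-boundedness of $X$; then chain-finite $+$ periodic $+$ nonsingular $+$ group-bounded makes $Z(X)$ be $\mathsf{T_{\!1}S}$-closed by Theorem~\ref{t:C}. In (4), the group-finiteness construction gives that every subgroup of $Z(X)$ is finite, hence group-bounded, and $\mathsf{T_{\!1}S}$-closedness of $Z(X)$ again follows from Theorem~\ref{t:C} together with the part-(1) properties (note that injective $\Zero$-closedness implies $\Zero$-closedness). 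Item (5) reduces by Theorem~\ref{t:iT1} to boundedness, nonsingularity and Clifford-finiteness of $Z(X)$; here I would exploit the equivalence in Theorem~\ref{t:iT1} of injective $\mathsf{T_{\!1}S}$-closedness with $\Zero$-closedness \emph{plus} $\Zero$-discreteness, using the discreteness half to topologize an infinite central Clifford part non-discretely and thereby force $H(Z(X))$ to be finite. Item (6) reduces, via the finiteness characterization, to showing $Z(X)$ is finite: starting from the conditions supplied by (5) I would build a continuous (non-injective) homomorphism out of $X$ collapsing onto a quotient controlled by $Z(X)$ whose image fails to be closed unless $Z(X)$ is finite.

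The main obstacle, and where centrality earns its keep, is the verification that the adjoined-limit semigroups really are Hausdorff zero-dimensional topological semigroups: one must choose the topology on the extension fine enough to separate points and furnish a clopen base, yet coarse enough that multiplication stays continuous at the new points. The subtlest cases are the group statements in (2) and (4), where the adjoined limit must interact correctly with an entire unbounded or infinite subgroup rather than with a single monotone sequence, and item (3), where the undefined-here hypotheses of being \emph{viable} and ideally $\Zero$-closed have to be exploited through the quotients $X/I$ by ideals $I$ and matched against Theorem~\ref{t:mainP}; there the crux is the analysis of how $Z(X)$ sits inside the ideal quotients, rather than any single limit construction.
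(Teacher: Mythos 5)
First, a point of context: this paper does not actually prove Theorem~\ref{t:Z}; it is explicitly presented as a compilation of results imported from \cite{BB}, \cite{ACVS} and \cite{ICT1S}, so there is no in-paper proof to compare against. Your overall template --- translate each closedness conclusion about $Z(X)$ through the characterization theorems (Theorems~\ref{t:C}, \ref{t:mainP}, \ref{t:iT1}), then establish the resulting algebraic conditions contrapositively by adjoining limit points of central witnesses --- is indeed the strategy of the cited papers, and it is the same family of techniques that Sections~4--6 of this paper deploy (central ultrafilter limits $xz_\U$, $e$-bases, $\Lambdae$-shifts). So the plan is pointed in the right direction. Two smaller slips: ``viable'' is in fact defined in Section~2 of the paper (and exploited via Theorem~\ref{t:viable} and Lemma~\ref{l:Tamura}), and your phrase ``separately continuous'' would not suffice --- the classes $\mathsf{T_{\!z}S}\subseteq\mathsf{T_{\!2}S}\subseteq\mathsf{T_{\!1}S}$ consist of topological semigroups with \emph{jointly} continuous multiplication, and joint continuity at pairs of adjoined points is precisely the hard verification (compare Lemma~\ref{l:conti} here); moreover for items (1)--(2), which refute plain $\C$-closedness, the discrete semigroup $X$ must embed as a \emph{discrete} subspace of the ambient $Y$, a constraint your extension must respect.

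The genuine gaps are in items (5) and (6). For (5), Theorem~\ref{t:iT1}(4) requires $Z(X)$ to be \emph{bounded}, and your witness list (infinite chain, copy of $(\IN,+)$, singular set, unbounded or infinite subgroup) cannot produce this: a commutative semigroup can be periodic, chain-finite, nonsingular and group-finite yet unbounded --- otherwise ``bounded'' would be redundant in Theorem~\ref{t:iCUS} --- so unboundedness of $Z(X)$ need not be witnessed by any subgroup, and your subgroup-based construction has nothing to topologize. Handling exactly this case is what costs the present paper its Sections~4 and~5 (Propositions~\ref{p:vsim-grobikam-grobik} and~\ref{p2:vsim-grobikam-grobik}, with the antichain sequences $(z_k)$, the sets $\nabla(U,\e)$ and the regularity analysis), and the $\mathsf{T_{\!1}S}$-analogue needed for (5) is likewise nontrivial in \cite{ICT1S}; your sketch supplies no mechanism for it. For (6), the reduction to ``$Z(X)$ is finite'' is correct, but the proposed route --- a quotient of $X$ ``controlled by $Z(X)$'' --- faces a real obstruction you do not address: congruences on $Z(X)$ do not extend to congruences on $X$, and the center is not functorial under quotients (the image of $Z(X)$ in $X/_\approx$ merely lies inside $Z(X/_\approx)$), so there is no general way to make a failure of closedness of a quotient localize to the center. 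Item (3) is similarly deferred: the Clifford+finite condition of Theorem~\ref{t:mainP}(3) for $Z(X)$ is the genuinely new content there, and ``analyze how $Z(X)$ sits inside the ideal quotients'' names the problem without solving it. In short: right framework, but the three hardest conclusions (boundedness in (5), finiteness in (6), Clifford+finiteness in (3)) are exactly the ones the sketch leaves unproved.
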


Theorem~\ref{t:Z}(4--6) motivates the following open problem.

\begin{problem} Let $\C$ be a class of topological semigroups such that $\mathsf{T_{\!z}S}\subseteq\C\subseteq\mathsf{T_{\!2}S}$. Is the center of any (injectively) $\C$-closed semigroup (injectively) $\C$-closed?
\end{problem}

The paper is organized as follows. In Section~\ref{s:prelim} we recall some known information and prove some lemmas that will be used in the proof of Theorem~\ref{t:main-iC}. Section~\ref{s:if} contains the proof of the ``if'' part of Theorem~\ref{t:main-iC}. In Sections~\ref{s:b1}, \ref{s:b2} and \ref{s:Cs} we prove three ingredients of the ``only if'' part of Theorem~\ref{t:main-iC}. All pieces are composed together in Section~\ref{s:pf} containing the proof of Theorem~\ref{t:main-iC}. In Section~\ref{s:ex} we check the properties (1)--(6) of the semigroup $X$ from Example~\ref{ex:main}.

\section{Preliminaries}\label{s:prelim}

We denote by $\w$ the set of finite ordinals, by $\IN\defeq\w\setminus\{0\}$ the set of positive integer numbers, and by $\IR_+$ the set of positive real numbers. 
For a set $X$ we denote by $[X]^{<\w}$ the family of all finite subsets of $X$. 

A subset $A\subseteq X$ of a topological space $(X,\Tau)$ is defined to be
\begin{itemize}
\item {\em $\Tau$-open}  if $A\in\Tau$;
\item  {\em $\Tau$-closed} if $X\setminus A\in\Tau$;
\item {\em $\Tau$-clopen} if $A$ is $\Tau$-open and $\Tau$-closed.
\end{itemize}
A topological space $X$ is called $T_0$ if for any distinct points $x,y\in X$ there is an open set $U$ in $X$ such that $U\cap\{x,y\}$ is a singleton. It is easy to see that each $T_0$ zero-dimensional space is Hausdorff (and moreover, Tychonoff).

A {\em poset} is a set $X$ endowed with a partial order $\le$. For an element $a$ of a poset $X$, let  
$${\downarrow}a\defeq\{x\in X :x\le a\}\quad\mbox{and}\quad{\uparrow}a\defeq\{x\in X:a\le x\}$$
be the {\em lower} and {\em upper sets} of $a$ in $X$, respectively. 

For two elements $x,y$ of a poset $X$ we write $x<y$ if $x\le y$ and $x\ne y$.

A subset $A$ of a poset $X$ is called 
\begin{itemize}
\item a {\em chain} if for any $x,y\in A$ either $x\le y$ or $y\le x$;
\item an {\em antichain} if for any $x,y\in A$ with $x\le y$ we have $x=y$.
\end{itemize}

A poset $X$ is called
\begin{itemize}
\item {\em chain-finite} if each chain in $X$ is finite;
\item {\em well-founded} if every nonempty set $A\subseteq X$ contains an element $a\in A$ such that $A\cap{\downarrow}a=\{a\}$.
\end{itemize}
It is easy to see that each chain-finite poset is well-founded.

%An element $z\in X$ of a semigroup $X$ is called {\em central} if $z\in Z(X)\defeq\{z\in X:\forall x\in X\;\;(zx=xz)\}$.

For a semigroup $X$, let $E(X)\defeq\{x\in X:xx=x\}$ be the set of idempotents of $X$. The set $E(X)$ carries the {\em natural partial order} $\le$ defined by $x\le y$ for $x,y\in E(X)$ iff $xy=yx=x$.

A semigroup $X$ is called 
\begin{itemize}
\item a {\em band} if $X=E(X)$;
\item a {\em semilattice} if $X$ is a commutative band.
\end{itemize}
Each semilattice is considered as a poset endowed with the natural partial order $\le$. It is easy to see that a semilattice is chain-finite as a semigroup if and only if it is chain-finite as a poset. 

For a semigroup $X$, the intersection
$$EZ(X)\defeq E(X)\cap Z(X)=E(Z(X))$$is a semilattice called the {\em central semilattice} of $X$. An element $z\in X$ of a semigroup $X$ is called {\em central} if $z\in Z(X)\defeq\{z\in X:\forall x\in X\;\;(zx=xz)\}$.

By $\two$ we denote the two-element semilattice $\{0,1\}$ endowed with the operation of minimum. This operation induces a natural partial order on $\two$, which coincides with the linear order on $\two$, inherited from the real line. 

For any semigroup $X$ the {\em binary quasiorder} $\lesseq$ on $X$ is the reflexive transitive relation on $X$ defined by $x\lesseq y$ iff $h(x)\le h(y)$ for any homomorphism $h:X\to \two$. %We write $x\Updownarrow y$ if $x\lesseq y$ and $y\lesseq x$.

The obvious properties of the natural partial order $\le$ on $\two$ imply the following properties of the binary quasiorder $\lesseq$.

\begin{lemma}\label{l:quasiorder} For any elements $x,y,z$ of a semigroup $X$, the following properties hold:
\begin{enumerate}
\item if $x\lesseq y$, then $xz\lesseq yz$ and $zx\lesseq zy$;
\item $xy\lesseq yx\lesseq xy$;
\item $x\lesseq x^2\lesseq x$;
\item $xy\lesseq x$ and $xy\lesseq y$.
\end{enumerate}
\end{lemma}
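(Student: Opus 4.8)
The plan is to reduce every assertion to an elementary fact about the minimum operation on $\two=\{0,1\}$. The single observation driving the whole argument is that, because the operation on $\two$ is the minimum, \emph{every} homomorphism $h:X\to\two$ satisfies $h(ab)=h(a)\cdot h(b)=\min\{h(a),h(b)\}$ for all $a,b\in X$. Once this is recorded, each of the four properties is obtained by unwinding the definition of $\lesseq$ (namely, $u\lesseq v$ iff $h(u)\le h(v)$ for every homomorphism $h:X\to\two$) and invoking the corresponding property of $\min$ on the two-element chain $\{0,1\}$.

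Concretely, I would fix an arbitrary homomorphism $h:X\to\two$ and set $a=h(x)$, $b=h(y)$, $c=h(z)$, all lying in $\{0,1\}$. For (1), the assumption $x\lesseq y$ gives $a\le b$, and since $\min$ is monotone in each argument we get $\min\{a,c\}\le\min\{b,c\}$, i.e. $h(xz)\le h(yz)$; as $h$ was arbitrary this yields $xz\lesseq yz$, and the symmetric computation gives $zx\lesseq zy$. For (2), commutativity of $\min$ gives $h(xy)=\min\{a,b\}=\min\{b,a\}=h(yx)$, so the two required inequalities hold (in fact with equality) for every $h$. For (3), idempotence of $\min$ gives $h(x^2)=\min\{a,a\}=a=h(x)$, again yielding both inequalities. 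For (4), the inequalities $\min\{a,b\}\le a$ and $\min\{a,b\}\le b$ express that $\min$ is a lower bound, so $h(xy)\le h(x)$ and $h(xy)\le h(y)$, whence $xy\lesseq x$ and $xy\lesseq y$.

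Since each of these pointwise inequalities holds for every homomorphism $h:X\to\two$, and the quasiorder $\lesseq$ is by definition the conjunction over all such $h$, the stated conclusions follow. I do not expect any genuine obstacle: the lemma is purely formal, and the only point requiring (minimal) care is the correct use of the homomorphism property together with the fact that the binary operation on $\two$ is the minimum rather than, say, ordinary multiplication of integers; the remaining content is merely the commutativity, idempotence, monotonicity, and lower-bound properties of $\min$ on a two-element chain.
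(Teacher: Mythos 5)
Your proof is correct and follows exactly the route the paper intends: the paper states the lemma as an immediate consequence of ``the obvious properties of the natural partial order $\le$ on $\two$,'' and your argument simply spells out that every homomorphism $h:X\to\two$ sends products to minima, after which each item reduces to monotonicity, commutativity, idempotence, and the lower-bound property of $\min$ on $\{0,1\}$. There is no gap; you have just made explicit what the paper leaves as obvious.
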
 

%More information on the binary quasiorder can be found in \cite{BH}.

For an element $a\in X$ and a subset $A\subseteq X$ of a semigroup $X$, consider the sets
$$
{\Uparrow}a\defeq\{x\in X:a\lesseq x\},\quad{\Downarrow}a\defeq\{x\in X:x\lesseq a\}
,\quad{\Updownarrow}a\defeq \{x\in X:a\lesseq x\lesseq a\}={\Uparrow}a\cap{\Downarrow}a
$$
and
$$
{\Uparrow}A\defeq\bigcup_{a\in A}{\Uparrow}a,\quad {\Downarrow}A\defeq\bigcup_{a\in A}{\Downarrow}a,\quad\quad  {\Updownarrow}A\defeq\bigcup_{a\in A}{\Updownarrow}a.
$$
Given two elements $x,y$ of a semigroup, we write $x\Updownarrow y$ if ${\Updownarrow}x={\Updownarrow}y$. By \cite{BH}, the equivalence relation $\Updownarrow$ coincides with the smallest semilattice congruence on $X$. The quotient semilattice $X/_{\Updownarrow}$ is called the {\em semilattice reflection} of $X$. By $q:X\to X/_{\Updownarrow}$ we shall denote the quotient homomorphism of $X$ onto its semilattice reflection. It follows that $${\Uparrow}a=\{x\in X:q(x)q(a)=q(a)\}=q^{-1}[{\uparrow}q(a)]\quad\mbox{and}\quad{\Downarrow}a=\{x\in X:q(x)q(a)=q(x)\}=q^{-1}[{\downarrow}q(a)]$$for every $a\in X$.

%For an element $a\in X$ let 
%$${\Downarrow}a\defeq q^{-1}[{\uparrow}q(a)]=\{x\in X:q(x)q(a)=q(x)\}\quad\mbox{and}\quad {\Uparrow}a\defeq q^{-1}[{\uparrow}q(a)]=\{x\in X:q(x)q(a)=q(a)\}.$$
%The intersection ${\Downarrow}a\cap{\Uparrow}a$ coincides with the equivalence class ${\Updownarrow}a=q^{-1}\{q(a)\}$ of $a$ by the smallest semilattice congruence.

A semigroup $X$ is called
\begin{itemize}
%\item {\em an $E$-semigroup} if $E(X)$ is a subsemigroup of $X$;
%\item {\em $E$-commutative} if $xy=yx$ for any idempotents $x,y\in E(X)$;
\item {\em $E$-central} if $ex=xe$ for any $e\in E(X)$ and $x\in X$;
\item {\em $E_{\Uparrow}$-central} if $ex=xe$ for any $e\in E(X)$ and $x\in{\Uparrow}e$;
\item {\em $E$-hypercentral} if for any $x,y\in X$ with $xy=e\in E(X)$ we have $xe=ex$ and $ye=ey$;
\item {\em $E$-separated} if for any distinct idempotents $x,y\in E(X)$ there exists a  homomorphism $h:X\to \two$ such that $h(x)\ne h(y)$;
%\item {\em $E$-saturated} if for each $x\in X$ there exists an idempotent $e\in E(X)$ such that $e \Updownarrow x$;
\item {\em viable} if for any elements $x,y\in X$ with  $\{xy,yx\}\subseteq E(X)$ we have $xy=yx$.
\end{itemize}

Viable semigroups were introduced and studied by Putcha and Weissglass \cite{PW} who proved that every viable semigroup is $E$-separated. The following characterization of viable semigroups was obtained by the author in \cite{BanE}.

\begin{theorem}[Banakh]\label{t:viable} For a semigroup $X$ the following conditions are equivalent:
\begin{enumerate}
\item $X$ is viable;
\item $X$ is $E_{\Uparrow}$-central;
\item $X$ is $E$-hypercentral;
\item $X$ is $E$-separated.
\end{enumerate}
\end{theorem}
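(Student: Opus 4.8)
The final statement is Theorem~\ref{t:viable}, the equivalence of four conditions on a semigroup $X$: viability, $E_\Uparrow$-centrality, $E$-hypercentrality, and $E$-separatedness. Since this result is attributed to the author's earlier paper \cite{BanE}, I would prove it by establishing a cycle of implications, say $(1)\Ra(2)\Ra(3)\Ra(1)$ together with $(1)\Leftrightarrow(4)$, or alternatively the cycle $(4)\Ra(1)\Ra(2)\Ra(3)\Ra(4)$. The cleanest route seems to be to close a single cycle through all four conditions. The key objects are idempotents $e\in E(X)$ and the quasiorder $\lesseq$ from Lemma~\ref{l:quasiorder}, together with the homomorphisms $h\colon X\to\two$ that define it.

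First I would prove $(1)\Ra(2)$. Assume $X$ is viable, fix $e\in E(X)$ and $x\in{\Uparrow}e$, and aim to show $ex=xe$. The point of the hypothesis $x\in{\Uparrow}e$, i.e. $e\lesseq x$, is that it forces the products $ex$ and $xe$ to be idempotents: since $e=e^2$ and $e\lesseq x$, repeated use of Lemma~\ref{l:quasiorder} gives $ex\Updownarrow e$ and similarly $exex\Updownarrow e$, and I would exploit this to show $\{ex\cdot xe,\,xe\cdot ex\}$ or more directly $\{(ex)(xe),(xe)(ex)\}\subseteq E(X)$, so that viability applies to the pair $(ex,xe)$ and yields $(ex)(xe)=(xe)(ex)$; combined with idempotency of $ex,xe$ this should collapse to $ex=xe$. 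This is the step where one must be careful: the precise algebraic manipulation showing that the relevant products land in $E(X)$ is where viability is actually consumed, and I expect this to be the main obstacle.

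Next, $(2)\Ra(3)$. Assume $X$ is $E_\Uparrow$-central and suppose $xy=e\in E(X)$; I must show $xe=ex$ and $ye=ey$. The idea is that $e=xy\lesseq x$ and $e=xy\lesseq y$ by Lemma~\ref{l:quasiorder}(4), so both $x$ and $y$ lie in ${\Uparrow}e$, whence $E_\Uparrow$-centrality directly gives $ex=xe$ and $ey=ye$, which is exactly the conclusion of $E$-hypercentrality. This step should be short. Then $(3)\Ra(1)$: assume $E$-hypercentrality and take $x,y$ with $\{xy,yx\}\subseteq E(X)$; writing $e=xy$ and $f=yx$, hypercentrality gives that $e$ commutes with $x$ and $y$ and likewise $f$, and a standard computation (e.g. $xy=x(yx)y\cdot\text{(idempotent manipulations)}$, using $xe=ex$, $yf=fy$, etc.) should force $xy=yx$, establishing viability.

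Finally, the equivalence with $(4)$. For $(1)\Ra(4)$, this is the Putcha--Weissglass result quoted in the text (every viable semigroup is $E$-separated), so I would either invoke it or reprove it: given distinct $x,y\in E(X)$, viability lets one build a homomorphism into $\two$ separating them, most naturally via the semilattice reflection $q\colon X\to X/_\Updownarrow$ and a character of the semilattice $X/_\Updownarrow$ distinguishing $q(x),q(y)$ (a point where one checks that viability makes the idempotents inject into the semilattice reflection). For the reverse $(4)\Ra(1)$, assume $E$-separatedness and take $x,y$ with $xy,yx\in E(X)$; any homomorphism $h\colon X\to\two$ satisfies $h(xy)=h(x)h(y)=h(yx)$ since $\two$ is commutative, so $xy$ and $yx$ are not separated by any such $h$, and $E$-separatedness then forces $xy=yx$. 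This direction is essentially immediate from commutativity of $\two$. Assembling these implications closes the equivalence; the substantive content is concentrated in $(1)\Ra(2)$ and in verifying that the idempotents embed into the semilattice reflection for $(1)\Ra(4)$.
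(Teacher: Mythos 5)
You should first note that the paper contains no proof of Theorem~\ref{t:viable} at all: it is imported from \cite{BanE}, so your proposal can only be measured against the cited source, not an in-paper argument. Within your outline, the implications $(2)\Ra(3)$, $(3)\Ra(1)$ and $(4)\Ra(1)$ are correct and essentially as easy as you claim: $(2)\Ra(3)$ follows from Lemma~\ref{l:quasiorder}(4) exactly as you say; for $(3)\Ra(1)$, writing $e=xy$, $f=yx$, the commutations given by $E$-hypercentrality yield $e=x(yx)y=xfy=f(xy)=fe$, symmetrically $f=ef$, and $ef=x(yf)=x(fy)=(fx)y=fe$, whence $e=f$; and $(4)\Ra(1)$ is immediate from the commutativity of $\two$. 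Invoking Putcha--Weissglass for $(1)\Ra(4)$ is also legitimate, since the paper itself quotes that result.

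The genuine gap is $(1)\Ra(2)$, which is the only hard implication and precisely where your sketch fails. For $x\in{\Uparrow}e$ the elements $ex$ and $xe$ need not be idempotents: in any nontrivial group $G$ with identity $e$, every homomorphism $G\to\two$ is constant, so ${\Uparrow}e=G$ while $E(G)=\{e\}$; thus the premise behind ``combined with idempotency of $ex,xe$'' is false. Nor can viability be applied to the pair $(ex,xe)$: that would require $(ex)(xe)=ex^2e$ and $(xe)(ex)=xex$ to lie in $E(X)$, which already fails in $(\IZ,+)$ with $e=0$, $x=1$; and even where it applies it yields only $ex^2e=xex$, which does not collapse to $ex=xe$. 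The hypothesis $e\lesseq x$ is stated through \emph{all} homomorphisms to $\two$ and supplies no algebraic identity to feed into viability; extracting one is exactly the content of Tamura's theorem (restated as Lemma~\ref{l:Tamura}: $e\in Z({\Uparrow}e)$ for viable $X$), whose known proofs (\cite{Tamura82}, \cite{BH}, \cite{BanE}) pass through the identification of $\Updownarrow$ with the least semilattice congruence and a structural analysis of the class ${\Updownarrow}e$ --- none of which appears in your plan. Incidentally, part of your cycle can be repaired cheaply: viable $\Ra$ $E$-hypercentral has a direct proof --- if $xy=e\in E(X)$, put $a=ex$, $b=ye$; then $ab=e(xy)e=e$ and $ba=yex$ satisfies $(ba)^2=ye(xy)ex=yex$, so viability applied to $(a,b)$ gives $yex=e$, whence $xe=x(yex)=(xy)(ex)=ex$ and $ey=(yex)y=ye(xy)=ye$ --- which yields $(1)\Ra(3)\Ra(1)$ and $(1)\Leftrightarrow(4)$ without condition (2). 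But condition (2) remains unreachable by these elementary manipulations, so your proposed equivalence does not close without the Tamura-type input.
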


Theorem~\ref{t:viable} implies that every $E$-central semigroup is viable. In particular, every commutative semigroup is viable.

For an element $a$ of a semigroup $X$ the set
$$H_a\defeq\{x\in X:(xX^1=aX^1)\;\wedge\;(X^1x=X^1a)\}$$
is called the {\em $\mathcal H$-class} of $a$.
Here $X^1\defeq X\cup\{1\}$ where $1$ is an element such that $1x=x=x1$ for all $x\in X^1$.

By Corollary 2.2.6 \cite{Howie}, for every idempotent $e\in E(X)$ its $\mathcal H$-class $H_e$ coincides with the maximal subgroup of $X$, containing the idempotent $e$.
The union
$$H(X)\defeq \bigcup_{e\in E(X)}H_e$$is called the {\em Clifford part} of $X$.
The Clifford part is not necessarily a subsemigroup of $X$.

On the other hand,
the {\em central Clifford part}
$$H_Z(X)\defeq \bigcup_{e\in EZ(X)}H_e$$is a subsemigroup of $X$, see Lemma~2.1 in \cite{ICT1S}.

The following lemma is proved in \cite[2.3]{ICT1S}.

\begin{lemma}\label{l:CH} For any semigroup $X$ and elements $x,y\in H(X)$ with $xy=yx$ we have $xy\in H(X)$.
\end{lemma}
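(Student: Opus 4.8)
The plan is to reduce membership in the Clifford part to a statement about Green's relation $\mathcal H$ and then verify that condition by a short direct computation exploiting $xy=yx$. Recall that an element $a$ of a semigroup lies in $H(X)$ (i.e. in some maximal subgroup $H_e$) if and only if $a\mathrel{\mathcal H}a^2$; since $a^2X^1\subseteq aX^1$ and $X^1a^2\subseteq X^1a$ always hold, this is equivalent to the two one-sided divisibility conditions $a\in a^2X^1$ and $a\in X^1a^2$, which give $aX^1=a^2X^1$ and $X^1a=X^1a^2$. The nontrivial implication — that $a\mathrel{\mathcal H}a^2$ forces the $\mathcal H$-class $H_a$ to contain an idempotent and hence be a group — is exactly the Clifford--Miller form of Green's theorem, available from \cite{Howie}. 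So it suffices to prove $xy\in(xy)^2X^1$ and $xy\in X^1(xy)^2$.

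Now fix idempotents $e,f\in E(X)$ with $x\in H_e$ and $y\in H_f$, and let $x',y'$ denote the inverses of $x,y$ in the groups $H_e,H_f$, so that $xx'=x'x=e$, $yy'=y'y=f$, together with the local-identity relations $ex=xe=x$, $ex'=x'e=x'$, $fy=yf=y$, $fy'=y'f=y'$. Commutativity gives $(xy)^2=x(yx)y=x(xy)y=x^2y^2$, so it is enough to divide $xy$ by $x^2y^2$ on each side. The heart of the matter is that the correct divisor is $y'x'$, taken in the \emph{reversed} order: I would compute $x^2y^2\,(y'x')=x^2\,(y^2y')\,x'=x^2yx'=x(xy)x'=x(yx)x'=xy(xx')=xye=xy$, using $y^2y'=y$, then $xy=yx$, and finally $xe=x$. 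Dually, $(y'x')\,x^2y^2=y'(x'x^2)y^2=y'xy^2=xy$, where $x'x^2=x$ and the last equality again only rearranges an interior $xy$ into $yx$ and absorbs $f$ as a one-sided unit via $fy=y$.

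These two identities show $xy\in x^2y^2X^1=(xy)^2X^1$ and $xy\in X^1x^2y^2=X^1(xy)^2$, whence $xy\mathrel{\mathcal H}(xy)^2$ and therefore $xy\in H(X)$. I expect the main obstacle to be bookkeeping rather than conceptual: because $e$ and $f$ are only the identities of their own $\mathcal H$-classes and need not commute with $x,y$ or with each other, one cannot cancel them freely, and the computation hinges on discovering that the divisor must be $y'x'$ (not $x'y'$) so that the inner factors collapse via $y^2y'=y$ and $x'x^2=x$ \emph{before} the idempotents would ever have to interact. Getting the order and the grouping of parentheses right while using only the genuinely available relations $xy=yx$, $xe=x$, $ex=x$, $fy=y$, $yf=y$ is the single place where care is required.
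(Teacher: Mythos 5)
Your proof is correct, and it is worth noting at the outset that the paper itself contains no proof of this lemma: it is quoted from \cite[2.3]{ICT1S}, so there is no in-paper argument to compare against line by line. Judged on its own, your argument is complete. The reduction of membership in $H(X)$ to the relation $xy\mathrel{\mathcal H}(xy)^2$ is exactly the standard mechanism (Green's theorem, \cite{Howie}), and the equivalence of $a\mathrel{\mathcal H}a^2$ with the two one-sided conditions $a\in a^2X^1$ and $a\in X^1a^2$ is right, since $a^2X^1\subseteq aX^1$ and $X^1a^2\subseteq X^1a$ hold automatically. Your two computations
$x^2y^2(y'x')=xy$ and $(y'x')x^2y^2=xy$ are valid, and the key virtue you identify is real: they never require $ef=fe$, nor $xf=fx$, nor $xy'=y'x$ --- none of which follows from $xy=yx$ alone --- because the inner factors $y^2y'$ and $x'x^2$ collapse before any idempotent has to cross another letter. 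The only imprecision is in the final absorption steps: $xye=xy$ and the terminal simplification of $y'xy^2$ each require one further application of $xy=yx$ before the idempotent can act as a one-sided unit (namely $xye=(yx)e=y(xe)=yx=xy$, and $y'(xy)y=y'(yx)y=(y'y)(xy)=f(xy)=f(yx)=(fy)x=xy$); your displayed chains compress this, though your prose acknowledges it. That is cosmetic, not a gap: the lemma is proved.
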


The following important lemma is due to Tamura \cite{Tamura82} (see also \cite[4.4]{BH} and \cite[4.2]{BanE} for alternative proofs).

\begin{lemma}[Tamura]\label{l:Tamura} For every idempotent $e$ in a viable semigroup $X$, the maximal subgroup $H_e$ is an ideal in the subsemigroup ${\Uparrow}e$ of $X$ and $e\in Z({\Uparrow}e)$.
\end{lemma}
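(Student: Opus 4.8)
The statement bundles three assertions about $T\defeq{\Uparrow}e$: that $T$ is a subsemigroup of $X$, that $e\in Z(T)$, and that $H_e$ is an ideal in $T$. My plan is to dispose of the first two at once and then reduce the third to the single claim that $eT=H_e$. For the subsemigroup property I would use the semilattice reflection $q\colon X\to X/_{\Updownarrow}$ together with the identity ${\Uparrow}a=q^{-1}[{\uparrow}q(a)]$: for $x,y\in T$ we have $q(e)\le q(x)$ and $q(e)\le q(y)$ in the semilattice $X/_{\Updownarrow}$, whence $q(e)\le q(x)q(y)=q(xy)$ and so $xy\in T$. The centrality $e\in Z(T)$ is exactly $E_{\Uparrow}$-centrality of $X$ at $e$, which holds because $X$ is viable (Theorem~\ref{t:viable}).

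Next I would set up the reduction. Since $e$ is a central idempotent of $T$, the set $M\defeq eT=Te=eTe$ is a submonoid of $X$ with identity $e$; moreover $q(e)\le q(x)$ forces $q(ex)=q(e)$, so $M\subseteq{\Updownarrow}e$, and by $E$-separatedness of the viable semigroup $X$ the idempotent $e$ is the \emph{unique} idempotent of the $\Updownarrow$-class ${\Updownarrow}e$. Every $h\in H_e$ satisfies $h=eh=he$ and $q(e)=q(h)q(h^{-1})\le q(h)$, i.e. $e\lesseq h$, so $h\in M$; conversely the units of $M$ form a subgroup of $X$ containing $e$, hence lie in $H_e$ by maximality. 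Thus $H_e$ is precisely the group of units of $M$, and granting $M=H_e$ (equivalently, that $M$ is a group) the ideal property is immediate: for $x\in T$ and $h\in H_e$ one writes $xh=(xe)h$ and $hx=h(ex)$ with $xe,ex\in M=H_e$, so both products lie in the subgroup $H_e$.

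It therefore remains to prove that $M$ is a group, and here lies the real work. I would invoke the elementary fact that a monoid in which every element has a right inverse is a group, so it suffices to produce, for each $m=ex\in M$, an element $w\in M$ with $mw=e$. Since $m\in{\Updownarrow}e$ we have $m\Updownarrow e$ with $e$ idempotent, and the key point is the divisibility statement $e\in mT$, that is, $e=mc$ for some $c$. Granting this, $c$ satisfies $q(e)\le q(c)$, hence $c\in T$ and $w\defeq ec\in M$, while $mw=(me)c=mc=e$ using $me=m$. This yields a right inverse for every element of $M$, so $M$ is a group and $M=H_e$, completing the proof.

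The main obstacle is exactly this divisibility $m\Updownarrow e\Rightarrow e\in mT$: it asserts that in the archimedean component ${\Updownarrow}e$ the idempotent $e$ is divisible by every element, which is the substantive content of Tamura's structure theory of archimedean semigroups containing an idempotent. The only subtlety in importing the classical commutative form of this fact is that ${\Updownarrow}e$ need not be commutative; I would handle this by localizing at the central idempotent $e$, noting that any subsemigroup generated by $e$ together with finitely many elements of $M$ is commutative (as $e$ commutes with all of $T$), which suffices to run the archimedean argument and extract the divisor $c$.
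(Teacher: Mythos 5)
Your preparatory steps are all correct: the subsemigroup property of ${\Uparrow}e$ via the semilattice reflection, the centrality $e\in Z({\Uparrow}e)$ as an instance of $E_{\Uparrow}$-centrality (Theorem~\ref{t:viable}), the identification of $H_e$ with the group of units of the monoid $M=e\,{\Uparrow}e$, and the reduction of the ideal property to the single claim that $M$ is a group (equivalently, that $e\in m\,{\Uparrow}e$ for every $m\in M$). Be aware, though, that the paper itself does not prove this lemma: it is imported from Tamura \cite{Tamura82}, with alternative proofs indicated in \cite[4.4]{BH} and \cite[4.2]{BanE}. So the entire mathematical weight of your write-up rests on the one step you defer to the last paragraph, and that is exactly where the proposal breaks down.

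The gap is your justification of the divisibility $e\in m\,{\Uparrow}e$. You claim that ``any subsemigroup generated by $e$ together with finitely many elements of $M$ is commutative (as $e$ commutes with all of $T$)''. This is false: centrality of the single element $e$ gives $ex=xe$ for $x\in{\Uparrow}e$, but imposes no relation between two elements $x,y\in M$. A nonabelian group $G$ is a viable semigroup in which every homomorphism $G\to\two$ is constant, so ${\Uparrow}e=G=M$, and the subsemigroup generated by $e$ and two noncommuting elements of $M$ is noncommutative (here the lemma holds trivially, but your commutativity claim does not). Moreover, even where $M$ is commutative, the localization cannot ``run the archimedean argument'': the Tamura--Kimura description of the smallest semilattice congruence by mutual divisibility of powers is a property of the \emph{ambient} semigroup. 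The relation $m\Updownarrow e$ is defined through homomorphisms $X\to\two$ and does not survive passage to a finitely generated subsemigroup $S\subseteq X$, since a homomorphism $S\to\two$ separating $m$ from $e$ need not extend to $X$; equivalently, the divisor $c$ with $mc=e$ witnessing archimedeanness lives in $X$, not in $\langle e,m\rangle$, so after localizing there is no hypothesis left to exploit. Since, granted your (correct) reductions, the divisibility claim is equivalent to the lemma itself, the proposal as written proves only the easy shell and leaves the core unestablished; to fill it one needs the genuinely noncommutative analysis of semilattice-indecomposable semigroups with a unique idempotent (which is what ${\Updownarrow}e$ is, by the $E$-separatedness you already noted) carried out in \cite{Tamura82}, or the direct arguments of \cite[4.4]{BH} and \cite[4.2]{BanE}.
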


For a subset $A$ of a semigroup $X$, let
$$\korin{\IN}{A}\defeq\bigcup_{n\in\IN}\korin{n}{A}\quad\mbox{where}\quad\korin{n}{A}\defeq\{x\in X:x^n\in A\}.$$Observe that $\korin{\IN}{A}=\{x\in X:x^\IN\cap A\ne\emptyset\}$ where $$x^\IN\defeq\{x^k:k\in\IN\}$$is the {\em monogenic semigroup} generated by $x$.

 For a point $a\in X$, the set $\korin{\IN}{\{a\}}$ will be denoted by $\korin{\IN}{\,a}$. It is easy to see that $\korin{\IN}{\,a}\subseteq{\Updownarrow}a$ for every $a\in X$. The sets $\korin{\IN}{E(X)}$ and $\korin{\IN}{H(X)}$ are called {\em the periodic part} and {\em the eventually Clifford part} of $X$, respectively.

A semigroup $X$ is called {\em eventually Clifford} if $X=\korin{\IN}{H(X)}$.
It is clear that each periodic semigroup is eventually Clifford (but not vice versa). 

The following two lemmas are proved in \cite[3.1]{BB} and \cite[2.5]{ICT1S}, respectively.

\begin{lemma}\label{l:C-ideal} For any idempotent $e$ of a semigroup we have $(\korin{\IN}{H_e}\cdot H_{e})\cup(H_{e}\cdot \korin{\IN}{H_e}\,)\subseteq H_{e}.$
\end{lemma}

\begin{lemma}\label{l:pi-well-defined} Let $x$ be an element of a semigroup $X$ such that $x^n\in H_e$ for some $n\in\IN$ and $e\in E(X)$. Then $x^m\in H_e$ for all $m\ge n$.
\end{lemma}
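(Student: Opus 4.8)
The plan is to derive this as a one-step induction from Lemma~\ref{l:C-ideal}, which already packages the relevant Green's-relations structure of the $\mathcal H$-class $H_e$. The starting observation is that the hypothesis $x^n\in H_e$ says precisely that $x\in\korin{\IN}{H_e}$, since $\korin{\IN}{H_e}=\{y\in X:y^k\in H_e\text{ for some }k\in\IN\}$. With $x$ now recognized as an element of $\korin{\IN}{H_e}$, the first inclusion in Lemma~\ref{l:C-ideal}, namely $\korin{\IN}{H_e}\cdot H_e\subseteq H_e$, specializes to $x\cdot H_e\subseteq H_e$.

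With this containment in hand, I would prove $x^m\in H_e$ for all $m\ge n$ by induction on $m$. The base case $m=n$ is exactly the hypothesis. For the inductive step, assuming $x^m\in H_e$ for some $m\ge n$, I would write $x^{m+1}=x\cdot x^m$ and note that the left factor lies in $\korin{\IN}{H_e}$ while the right factor lies in $H_e$; hence $x^{m+1}\in\korin{\IN}{H_e}\cdot H_e\subseteq H_e$ by Lemma~\ref{l:C-ideal}. This closes the induction and yields the claim. (Symmetrically one could use the factorization $x^{m+1}=x^m\cdot x$ together with the second inclusion $H_e\cdot\korin{\IN}{H_e}\subseteq H_e$; either version works, and in a commutative or even noncommutative setting the stated lemma supplies exactly the inclusion needed.)

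The key subtlety — and really the only one — is the bookkeeping of which factor must stay in $\korin{\IN}{H_e}$ and which in $H_e$ at each step: it is essential to peel off a single copy of $x$ rather than to split $x^{m+1}$ as $x^k\cdot x^{m+1-k}$ for general $k$, since one cannot assume a priori that intermediate powers $x^k$ with $k<n$ lie in $\korin{\IN}{H_e}$ in a form usable here. By always pairing the element $x\in\korin{\IN}{H_e}$ with a power already known to lie in $H_e$, the induction keeps both hypotheses of Lemma~\ref{l:C-ideal} satisfied at every stage.

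I do not expect a genuine obstacle: the substantive work about the maximal subgroup $H_e$ behaving like an ideal relative to its radical $\korin{\IN}{H_e}$ is entirely contained in Lemma~\ref{l:C-ideal}, so the present statement is a short corollary. The proof is therefore essentially the observation $x\in\korin{\IN}{H_e}$ followed by the routine induction described above.
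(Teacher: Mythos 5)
Your proof is correct. Note that this paper does not actually reprove Lemma~\ref{l:pi-well-defined}: it is imported by citation (Lemma 2.5 of \cite{ICT1S}), so there is no in-paper proof to compare line by line. But your derivation is exactly in the spirit of how the paper itself uses Lemma~\ref{l:C-ideal} (compare the proof of Lemma~\ref{l:up-k}, where $x^k=x^px^{k-p}\in e\korin{\IN}{H_e}\subseteq H_e$ is deduced the same way), it is logically sound, and it creates no circularity within this paper, since Lemma~\ref{l:C-ideal} is stated (and cited from \cite{BB}) before Lemma~\ref{l:pi-well-defined}. One small correction to your commentary: your ``key subtlety'' is not actually a subtlety. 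Every power of $x$ lies in $\korin{\IN}{H_e}$, because $(x^k)^n=(x^n)^k\in H_e$ (the group $H_e$ being closed under products), so an arbitrary splitting $x^{m+1}=x^k\cdot x^{m+1-k}$ would also be usable. In fact this observation shortens your argument to a single non-inductive step: for $m>n$ one has $x^m=x^{m-n}\cdot x^n\in\korin{\IN}{H_e}\cdot H_e\subseteq H_e$ directly by Lemma~\ref{l:C-ideal}, with $m=n$ being the hypothesis. Either way, the substantive content is entirely carried by Lemma~\ref{l:C-ideal}, as you say.
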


For a semigroup $X$, let $\pi:\korin{\IN}{H(X)}\to E(X)$ be the map assigning to each $x\in\korin{\IN}{H(X)}$ the unique idempotent $\pi(x)\in E(X)$ such that $x^n\in H_{\pi(x)}$ for some $n\in\IN$. Lemma~\ref{l:pi-well-defined} ensures that the map $\pi:\korin{\IN}{H(X)}\to E(X)$ is well-defined.

\begin{lemma}\label{l:piZ} $\pi[Z(X)\cap\korin{\IN}{H(X)}]\subseteq EZ(X)$.
\end{lemma}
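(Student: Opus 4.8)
The plan is to take an arbitrary $z\in Z(X)\cap\korin{\IN}{H(X)}$, set $e\defeq\pi(z)\in E(X)$, and prove that $e$ is central, so that $e\in E(X)\cap Z(X)=EZ(X)$ and hence $\pi[Z(X)\cap\korin{\IN}{H(X)}]\subseteq EZ(X)$. By the definition of $\pi$ (well-defined by Lemma~\ref{l:pi-well-defined}) there is $n\in\IN$ with $z^n\in H_e$, and since $z\in Z(X)$, an easy induction on $n$ shows that $z^n$ is central as well, i.e.\ $z^nx=xz^n$ for all $x\in X$. I would then introduce the inverse $g\defeq(z^n)^{-1}$ of $z^n$ in the maximal subgroup $H_e$, so that I have at my disposal the group relations $z^ng=gz^n=e$ and $z^ne=ez^n=z^n$, together with the centrality identity $z^nx=xz^n$.

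The heart of the argument is to fix an arbitrary $x\in X$ and evaluate the symmetric product $exe$ in two different ways, each time writing $e=z^ng=gz^n$ and pushing the central factor $z^n$ through. On one side, using $z^{2n}g=z^n(z^ng)=z^ne=z^n$, one gets
\[
exe=(gz^n)\,x\,(z^ng)=g(xz^n)z^ng=gx\,z^{2n}g=gx\,z^n=g(z^nx)=(gz^n)x=ex,
\]
while pushing $z^n$ the other way gives, symmetrically,
\[
exe=(z^ng)\,x\,(gz^n)=z^n(gxg)z^n=z^{2n}gxg=z^nxg=(xz^n)g=x(z^ng)=xe.
\]
Combining the two computations yields $ex=exe=xe$. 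Since $x\in X$ was arbitrary, $e\in Z(X)$, and as $e\in E(X)$ we conclude $e\in EZ(X)$, as required.

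The step I expect to be the real obstacle is precisely the one circumvented above: a direct attempt to compare $ex$ and $xe$ reduces only tautologically to the assertion that the group inverse $g$ is central, and $g$ need \emph{not} be central merely because $z^n$ is. The trick that breaks this circularity is to compute the symmetric product $exe$, where the central element $z^n$ can be collected on the outside and made to absorb $g$ via $z^{2n}g=z^n$, thereby eliminating $g$ from each expression. The only facts I should verify carefully are the harmless ones — that $z^n$ is genuinely central (induction from $z\in Z(X)$) and that $e$ is a two-sided identity on $z^n\in H_e$ — everything else being bookkeeping with associativity.
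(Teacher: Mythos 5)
Your proof is correct, but it follows a different route from the paper's. The paper's proof is a one-liner by citation: since $z^n\in H_{\pi(z)}\cap Z(X)$, it invokes Lemma~2.2 of the external paper \cite{ICT1S}, which asserts that $H_{\pi(z)}\cap Z(X)$ is a subgroup of $H_{\pi(z)}$; a subgroup shares the identity of the ambient group, so $\pi(z)\in H_{\pi(z)}\cap Z(X)\subseteq E(X)\cap Z(X)=EZ(X)$. You instead prove the needed centrality of $e=\pi(z)$ from scratch: your symmetric evaluation of $exe$, writing $e=z^ng=gz^n$ and absorbing the inverse via $z^{2n}g=z^n$, is precisely the computation that makes the cited subgroup lemma work (it is the content of ``the identity of $H_e$ is central whenever $H_e$ meets $Z(X)$''), and your remark about why one cannot directly compare $ex$ with $xe$ --- that this would beg the question of whether $g$ is central --- correctly identifies the pitfall your $exe$ trick circumvents. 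All the algebra checks out: $z^n\in Z(X)$ by induction, $z^ne=ez^n=z^n$ and $z^ng=gz^n=e$ hold since $z^n\in H_e$, and both chains of equalities are legitimate applications of associativity and centrality of $z^n$. What the two approaches buy: the paper's is shorter but leans on an external reference, while yours is fully self-contained within elementary semigroup algebra and in fact re-derives the relevant fragment of that reference; either is acceptable, though in the paper's context (where \cite{ICT1S} is already a standing prerequisite) the citation is the economical choice.
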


\begin{proof} Given any $z\in Z(X)\cap\korin{\IN}{H(X)}$, find $n\in\IN$ such that $z^n\in H_{\pi(x)}$. Since $z^n\in Z(X)$, Lemma 2.2 of \cite{ICT1S} implies that $H_{\pi(x)}\cap Z(X)$ is a subgroup of $H_{\pi(x)}$ and hence $\pi(x)\in H_{\pi(x)}\cap Z(X)\subseteq E(X)\cap Z(X)=EZ(X)$.
\end{proof}

\begin{lemma}\label{l:pi1} For any idempotent $e$ of a viable semigroup $X$ we have $$\pi^{-1}[{\uparrow}e]=\korin{\IN}{H(X)}\cap{\Uparrow}e\quad\mbox{and}\quad\pi^{-1}[{\downarrow}e]=\korin{\IN}{H(X)}\cap{\Downarrow}e.$$
\end{lemma}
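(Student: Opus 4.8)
The plan is to reduce both set–equalities to a single order-theoretic statement about idempotents. Write $q\colon X\to X/_{\Updownarrow}$ for the semilattice reflection; by the description of ${\Uparrow}a$ and ${\Downarrow}a$ recorded above, $a\lesseq x$ is equivalent to $q(a)\le q(x)$ in the semilattice $X/_{\Updownarrow}$. The first ingredient I would establish is that
$$q(x)=q(\pi(x))\qquad\text{for every }x\in\korin{\IN}{H(X)}.$$
Indeed, Lemma~\ref{l:quasiorder}(3) gives $x\Updownarrow x^2$, hence $q(x)=q(x^n)$ for all $n\in\IN$; picking $n$ with $x^n\in H_{\pi(x)}$ and noting that every $g\in H_e$ satisfies $q(g)=q(e)$ (from $ge=g$ and $gg^{-1}=e$, with $g^{-1}$ the group inverse, one gets $q(g)\le q(e)\le q(g)$), I conclude $q(x)=q(x^n)=q(\pi(x))$.

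Granting the key claim that
$$e\le f\iff e\lesseq f\qquad\text{for all }e,f\in E(X),$$
both equalities drop out. For the first, fix $x\in\korin{\IN}{H(X)}$; using $q(x)=q(\pi(x))$ together with ${\Uparrow}e=q^{-1}[{\uparrow}q(e)]$, the membership $x\in{\Uparrow}e$ amounts to $q(e)\le q(\pi(x))$, i.e.\ to $e\lesseq\pi(x)$, which by the key claim is the same as $e\le\pi(x)$, i.e.\ as $x\in\pi^{-1}[{\uparrow}e]$. The second equality is the mirror image, applying the key claim to the pair $(\pi(x),e)$: the membership $x\in{\Downarrow}e$ amounts to $q(\pi(x))\le q(e)$, i.e.\ to $\pi(x)\lesseq e$, hence to $\pi(x)\le e$, i.e.\ to $x\in\pi^{-1}[{\downarrow}e]$.

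It remains to prove the key claim, which I expect to be the main obstacle. The implication $e\le f\Ra e\lesseq f$ is easy and holds in any semigroup, since $ef=fe=e$ forces $h(e)=h(e)h(f)\le h(f)$ for every homomorphism $h\colon X\to\two$. For the converse I would use viability in two ways. Assuming $e\lesseq f$, i.e.\ $f\in{\Uparrow}e$, Tamura's Lemma~\ref{l:Tamura} yields $e\in Z({\Uparrow}e)$ and hence $ef=fe$; a short computation then shows $ef$ is idempotent and satisfies $ef\le e$ and $ef\le f$. On the other hand $e\lesseq f$ gives $q(ef)=q(e)q(f)=q(e)$, so $ef\Updownarrow e$; since a viable semigroup is $E$-separated (Theorem~\ref{t:viable}) and two idempotents not separated by any homomorphism to $\two$ must coincide, we get $ef=e$, whence $e=ef\le f$. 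The delicate point is precisely this last step: upgrading the quasiorder inequality $e\lesseq f$ to the genuine relation $ef=fe=e$ relies essentially on viability, once through the commutativity furnished by Tamura's lemma and once through the $E$-separation that forces the idempotent $ef$ to equal $e$.
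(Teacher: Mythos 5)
Your proof is correct, but it organizes the argument differently from the paper, essentially inverting the logical order of Lemmas~\ref{l:pi1} and \ref{l:pi2}. The paper proves Lemma~\ref{l:pi1} directly and then deduces Lemma~\ref{l:pi2} from it: for $x\in\korin{\IN}{H(X)}\cap{\Uparrow}e$ it observes $\pi(x)\in{\Updownarrow}x\subseteq{\Uparrow}e$ and then invokes the \emph{ideal} half of Tamura's Lemma~\ref{l:Tamura} --- $H_e$ is an ideal in ${\Uparrow}e$ --- to conclude $\pi(x)e=e\pi(x)\in H_e$; since this product is an idempotent of the group $H_e$, it equals $e$. Your ``key claim'' is exactly the content of Lemma~\ref{l:pi2}, which you establish first and independently, using only the \emph{centrality} half of Tamura's lemma (to get $ef=fe$, hence $ef\in E(X)$ with $ef\le e$ and $ef\le f$) and replacing the group-ideal argument by the $E$-separation property of viable semigroups from Theorem~\ref{t:viable}: since $q(ef)=q(e)$ and no homomorphism to $\two$ separates the idempotents $ef$ and $e$, they coincide. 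You then recover Lemma~\ref{l:pi1} from the identity $q(x)=q(\pi(x))$ on $\korin{\IN}{H(X)}$, which the paper uses only implicitly through the inclusion $\pi(x)\in{\Updownarrow}x$. There is no circularity, since neither your key claim nor the paper's proof of Lemma~\ref{l:pi1} presupposes the other lemma. What each route buys: the paper's argument is shorter and exploits the full strength of Tamura's lemma in one stroke; yours isolates the purely order-theoretic statement that the natural partial order on $E(X)$ is the restriction of the binary quasiorder $\lesseq$, and shows this follows from $E$-separation together with commutation alone --- a slightly more conceptual decomposition at the cost of one extra reduction step.
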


\begin{proof} If $x\in\pi^{-1}[{\uparrow}e]$, then $e\pi(x)=e$ and hence $x\in\korin{\IN}{H(X)}\cap{\Updownarrow}\pi(x)\subseteq\korin{\IN}{H(X)}\cap{\Uparrow}e$ by Lemma~\ref{l:quasiorder}. If $x\in\korin{\IN}{H(X)}\cap{\Uparrow}e$, then $\pi(x)\in {\Updownarrow}x\subseteq{\Uparrow}e$. By Lemma~\ref{l:Tamura}, $\pi(x)e=e\pi(x)\in H_e$ and hence $\pi(x)e=\pi(x)e=e$ as $e$ is the unique idempotent of the group $H_e$. Therefore, $\pi(x)\in{\uparrow}e$.
\smallskip

If $x\in\pi^{-1}[{\downarrow}e]$, then $e\pi(x)=\pi(x)$ and hence $x\in\korin{\IN}{H(X)}\cap{\Updownarrow}\pi(x)\subseteq\korin{\IN}{H(X)}\cap{\Downarrow}e$, by Lemma~\ref{l:quasiorder}. If $x\in\korin{\IN}{H(X)}\cap{\Downarrow}e$, then $e\in{\Uparrow}x={\Uparrow}\pi(x)$. By Lemma~\ref{l:Tamura}, $\pi(x)e=e\pi(x)\in H_{\pi(x)}$ and hence $\pi(x)e=\pi(x)e=\pi(x)$ as $\pi(x)$ is the unique idempotent of the group $H_{\pi(x)}$. Therefore, $\pi(x)\le e$ and $x\in\pi^{-1}[{\downarrow}e]$.
\end{proof}

Lemma~\ref{l:pi1} implies 

\begin{lemma}\label{l:pi2} Let $X$ be a viable semigroup. For any $e\in E(X)$ we have $${\uparrow}e=E(X)\cap{\Uparrow}e\quad\mbox{and}\quad {\downarrow}e=E(X)\cap{\Downarrow}e.$$
\end{lemma}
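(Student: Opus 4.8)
The plan is to deduce Lemma~\ref{l:pi2} directly from Lemma~\ref{l:pi1} by intersecting both displayed identities with the set of idempotents $E(X)$. The two facts that make this work are: (i) every idempotent lies in the eventually Clifford part, i.e. $E(X)\subseteq\korin{\IN}{H(X)}$, and (ii) the projection $\pi$ restricts to the identity map on $E(X)$.

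First I would verify (i) and (ii). For any $f\in E(X)$, the idempotent $f$ belongs to its own maximal subgroup $H_f\subseteq H(X)$, so $f=f^1\in H(X)$ and hence $f\in\korin{1}{H(X)}\subseteq\korin{\IN}{H(X)}$; this gives $E(X)\subseteq\korin{\IN}{H(X)}$. Moreover, since $f^1=f\in H_f$, the defining property of $\pi$ forces $\pi(f)=f$, so $\pi|_{E(X)}=\mathrm{id}_{E(X)}$. I should also recall that $\uparrow e$ and $\downarrow e$, being computed with the natural partial order on $E(X)$, are subsets of $E(X)$.

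Next I would intersect the first identity of Lemma~\ref{l:pi1} with $E(X)$. On the left, for $f\in E(X)$ we have $f\in\pi^{-1}[{\uparrow}e]$ iff $\pi(f)=f\in{\uparrow}e$, so $E(X)\cap\pi^{-1}[{\uparrow}e]={\uparrow}e\cap E(X)={\uparrow}e$ by (ii) and ${\uparrow}e\subseteq E(X)$. On the right, using (i) we get
$$
E(X)\cap\big(\korin{\IN}{H(X)}\cap{\Uparrow}e\big)=\big(E(X)\cap\korin{\IN}{H(X)}\big)\cap{\Uparrow}e=E(X)\cap{\Uparrow}e.
$$
Equating the two sides yields ${\uparrow}e=E(X)\cap{\Uparrow}e$. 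Repeating the same computation with the second identity of Lemma~\ref{l:pi1} (replacing ${\uparrow}$ by ${\downarrow}$ and ${\Uparrow}$ by ${\Downarrow}$) gives ${\downarrow}e=E(X)\cap{\Downarrow}e$.

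There is no genuine obstacle here: the statement is a bookkeeping corollary of Lemma~\ref{l:pi1}, and the only points needing care are the two elementary observations (i) and (ii) about how $\pi$ and $\korin{\IN}{H(X)}$ behave on idempotents. Viability of $X$ is used only through Lemma~\ref{l:pi1}, which already incorporates it.
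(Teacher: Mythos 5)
Your proposal is correct and takes exactly the paper's route: the paper derives Lemma~\ref{l:pi2} as an immediate consequence of Lemma~\ref{l:pi1} (stating only ``Lemma~\ref{l:pi1} implies'' with no further detail), and your argument supplies precisely the omitted bookkeeping --- intersecting both identities of Lemma~\ref{l:pi1} with $E(X)$, using that $E(X)\subseteq\korin{\IN}{H(X)}$, that $\pi$ fixes every idempotent, and that ${\uparrow}e,{\downarrow}e\subseteq E(X)$. Nothing is missing.
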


Now we recall some information on the topologization of semigroups with the help of remote bases.

Let $X$ be a semigroup. Given two elements $e,b\in X$, consider the set
$$\tfrac{b}e\defeq\{x\in X:xe=b\}$$which can be thought as the set of all left shifts that  move $e$ to $b$. If the set $\frac be$ is not empty, then $\frac be\cdot e=\{b\}$ and for any subset $U\subseteq X$ containing $e$, the set $\frac{b}e\cdot U$ contains $b$.

The assignment $$U\mapsto \Lambdae(b;U)\defeq\{b\}\cup\big(\tfrac{b}e\cdot U\big)$$ will be referred to as the {\em  $e$-to-$b$ shift} of $U$. The following lemma taken from \cite[Lemma 3.1]{ICT1S} describes some properties of $e$-to-$b$-shifts.

\begin{lemma}\label{l:Lamb} Let $e$ be an idempotent of a semigroup $X$. For any elements $a,b\in X$ and subsets $U,V,W\subseteq X$ the following statements hold:
\begin{enumerate}
\item If $V\subseteq W$, then $\Lambdae(b;V)\subseteq \Lambdae(b;W)$.
\item $\Lambdae(b;\frac ee)\subseteq\tfrac{be}{e}$.
\item If $b\ne be$, then $\Lambdae(b;\frac ee)=\{b\}$.
\item If $a\in\Lambdae(b;\frac ee)\setminus\{b\}$, then $\Lambdae(a;\frac ee)=\{a\}$.
\item If $a\ne b$ and $\Lambdae(a;\frac ee)\cap\Lambdae(b;\frac ee)\ne \emptyset$, then either $\Lambdae(a;\frac ee)=\{a\}$ or $\Lambdae(b;\frac ee)=\{b\}$.
\item If $V\subseteq W$, then $a\cdot\Lambdae(b;V)\subseteq \Lambdae(ab;W)$.
\item If $Ub\subseteq bW$ and $be=eb$, then $\Lambdae(a;U)\cdot b\subseteq\Lambdae(ab;W)$;
\item If $e\in Z(X)$, $V\subseteq W$, $Ub\subseteq bW$ and $\forall y\in \frac{b}e\;(UyV\subseteq yW)$, then
 $\Lambdae(a;U)\cdot\Lambdae(b;V)\subseteq \Lambdae(ab;W)$.
\end{enumerate}
\end{lemma}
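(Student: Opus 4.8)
The plan is to verify the eight statements by direct computation from the two defining identities $\tfrac be=\{x\in X:xe=b\}$ and $\Lambdae(b;U)=\{b\}\cup(\tfrac be\cdot U)$, proceeding essentially in the stated order because the later items reuse the earlier ones. The single fact that will be used repeatedly is an idempotency remark: if $\tfrac be\ne\emptyset$, then picking $x$ with $xe=b$ gives $be=(xe)e=x(ee)=xe=b$; contrapositively, $b\ne be$ forces $\tfrac be=\emptyset$. Item~(1) is just the monotonicity of $U\mapsto\tfrac be\cdot U$. Item~(3) is immediate from the remark, since $b\ne be$ gives $\tfrac be=\emptyset$ and hence $\Lambdae(b;\tfrac ee)=\{b\}$. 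For~(2) I would split $\Lambdae(b;\tfrac ee)=\{b\}\cup(\tfrac be\cdot\tfrac ee)$: the point $b$ lies in $\tfrac{be}e$ trivially, and for $x\in\tfrac be$, $y\in\tfrac ee$ one computes $(xy)e=x(ye)=xe=b=be$, so $xy\in\tfrac{be}e$.

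Items~(4) and~(5) concern the collision structure of the shifts $\Lambdae(\cdot\,;\tfrac ee)$. For~(4), writing $a=xy$ with $xe=b$, $ye=e$ (possible since $a\in\Lambdae(b;\tfrac ee)\setminus\{b\}$ means $a\in\tfrac be\cdot\tfrac ee$) gives $ae=x(ye)=xe=b$; if $a=ae$ then $a=b$, a contradiction, so $a\ne ae$, and~(3) applied to $a$ yields $\Lambdae(a;\tfrac ee)=\{a\}$. Item~(5) is then a short case analysis on an element $c$ of the intersection: if $c\notin\{a,b\}$, the computation in~(4) gives $ce=a$ and $ce=b$, forcing $a=b$; the boundary cases $c=a$ and $c=b$ reduce directly to~(4).

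Items~(6)--(8) handle products. For~(6), any $axv$ with $x\in\tfrac be$, $v\in V$ satisfies $(ax)e=a(xe)=ab$, so $ax\in\tfrac{ab}e$ and $axv\in\tfrac{ab}e\cdot W$. For~(7), given $x\in\tfrac ae$ and $u\in U$, I would use $Ub\subseteq bW$ to write $ub=bw$ with $w\in W$, so that $xub=(xb)w$; then the hypothesis $be=eb$ gives $(xb)e=x(be)=x(eb)=(xe)b=ab$, whence $xb\in\tfrac{ab}e$ and $xub\in\tfrac{ab}e\cdot W$. Item~(8) is the composite: expanding $\Lambdae(a;U)\cdot\Lambdae(b;V)$ produces the four pieces $\{ab\}$, $a\cdot\tfrac be V$, $\tfrac ae U\cdot b$, and $\tfrac ae U\cdot\tfrac be V$; the first is trivial, the second is absorbed by~(6), and the third by~(7) (applicable since $e\in Z(X)$ yields $be=eb$).

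The main obstacle is the fourth piece of~(8). Take $x\in\tfrac ae$, $u\in U$, $y\in\tfrac be$, $v\in V$; from the hypothesis $UyV\subseteq yW$ write $uyv=yw$ with $w\in W$, so $xuyv=(xy)w$. It then remains to show $xy\in\tfrac{ab}e$, i.e.\ $(xy)e=ab$, and this is exactly where the centrality of $e$ is indispensable: using $ey=ye$ and $ee=e$ one computes $ab=(xe)(ye)=xeye=x(ey)e=x(ye)e=xye=(xy)e$. With this identity in hand, $xuyv=(xy)w\in\tfrac{ab}e\cdot W\subseteq\Lambdae(ab;W)$, completing the last and most delicate piece.
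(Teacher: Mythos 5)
Your proof is correct. The paper does not actually prove Lemma~\ref{l:Lamb} but imports it from \cite[Lemma 3.1]{ICT1S}; your argument is the standard direct verification from the definitions $\tfrac be=\{x\in X:xe=b\}$ and $\Lambdae(b;U)=\{b\}\cup\big(\tfrac be\cdot U\big)$ --- in particular the repeated use of the observation that $\tfrac be\ne\emptyset$ forces $b=be$, and the centrality computation $(xy)e=xeye=(xe)(ye)=ab$ handling the fourth piece $\tfrac ae U\cdot\tfrac be V$ in item~(8) --- which is exactly what the cited proof amounts to.
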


\begin{definition}\label{d:remote}  Let $X$ be a semigroup and $e$ be a central idempotent in $X$. An {\em $e$-base} on $X$ is a family $\Zeta$ of subsemigroups of $X$ such that for every $U,W\in\Zeta$ there exists $V\in\Zeta$ such that $VV\subseteq V\subseteq U\cap W\subseteq\frac ee\cap Z(X)$.
\smallskip

Given an $e$-base $\Zeta$, let  $\Tau_\Zeta$ be the topology on $X$, consisting of all sets $W\subseteq X$ such that for every $x\in W$ there exist a set $U\in\Zeta$ such that $x\in\Lambdae(x;U)\subseteq W$. The topology $\Tau_\Zeta$ will be referred to as {\em the topology generated by the $e$-base $\Zeta$}.
\end{definition}

Lemma~\ref{l:Lamb}(1,4) implies the following lemma.

\begin{lemma}\label{l:base-e} Let $e$ be a central idempotent in a semigroup $X$ and $\Zeta$ be an $e$-base. For every $x\in X$ the family $$\mathcal B_x\defeq\big\{\Lambdae(x;V):V\in\Zeta\big\}$$ is a neighborhood base of the topology $\Tau_\Zeta$ at $x$.
\end{lemma}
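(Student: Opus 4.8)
The plan is to verify the two defining properties of a neighborhood base: that each $\Lambdae(x;V)$ with $V\in\Zeta$ is a $\Tau_\Zeta$-neighborhood of $x$, and that every $\Tau_\Zeta$-open set containing $x$ contains a member of $\mathcal B_x$. The second property is immediate from the very definition of $\Tau_\Zeta$ in Definition~\ref{d:remote}: if $W\in\Tau_\Zeta$ and $x\in W$, then by definition there is $U\in\Zeta$ with $x\in\Lambdae(x;U)\subseteq W$, and $\Lambdae(x;U)\in\mathcal B_x$; since every neighborhood of $x$ contains an open set around $x$, this property follows. So the whole content of the lemma lies in the first property, and the cleanest route is to prove the stronger statement that each $\Lambdae(x;V)$ is itself $\Tau_\Zeta$-open.

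First I would record the one structural fact I need about $\Zeta$: since $\Zeta$ is an $e$-base, taking $U=W=V$ in Definition~\ref{d:remote} gives $V=V\cap V\subseteq\tfrac ee\cap Z(X)$, so in particular $V\subseteq\tfrac ee$ for every $V\in\Zeta$. Combined with Lemma~\ref{l:Lamb}(1), this yields $\Lambdae(b;V)\subseteq\Lambdae(b;\tfrac ee)$ for every $b\in X$.

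Next I would fix $x\in X$ and $V\in\Zeta$ and verify the openness condition at each point $y\in\Lambdae(x;V)$, i.e. produce $W\in\Zeta$ with $y\in\Lambdae(y;W)\subseteq\Lambdae(x;V)$; in fact $W=V$ will always work. If $y=x$, this is trivial, since $x\in\Lambdae(x;V)\subseteq\Lambdae(x;V)$. If $y\ne x$, then $y\in\Lambdae(x;V)\setminus\{x\}\subseteq\Lambdae(x;\tfrac ee)\setminus\{x\}$ by the previous paragraph, so Lemma~\ref{l:Lamb}(4) gives $\Lambdae(y;\tfrac ee)=\{y\}$. Applying Lemma~\ref{l:Lamb}(1) once more together with $V\subseteq\tfrac ee$, I obtain $\{y\}\subseteq\Lambdae(y;V)\subseteq\Lambdae(y;\tfrac ee)=\{y\}$, whence $\Lambdae(y;V)=\{y\}\subseteq\Lambdae(x;V)$, as $y\in\Lambdae(x;V)$. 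In either case the required inclusion holds with $W=V$, so $\Lambdae(x;V)\in\Tau_\Zeta$.

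Finally I would assemble the two properties: each $\Lambdae(x;V)\in\mathcal B_x$ is a $\Tau_\Zeta$-open set containing $x$, hence a neighborhood of $x$, and by the first paragraph every neighborhood of $x$ contains some member of $\mathcal B_x$; together these say exactly that $\mathcal B_x$ is a neighborhood base of $\Tau_\Zeta$ at $x$. The main (indeed essentially only) obstacle is the case $y\ne x$ in the openness verification, where the ``collapsing'' behaviour $\Lambdae(y;\tfrac ee)=\{y\}$ of the $e$-to-$y$ shift recorded in Lemma~\ref{l:Lamb}(4) is what forces $\Lambdae(y;V)$ down to the singleton $\{y\}$; everything else is bookkeeping directly from the definitions.
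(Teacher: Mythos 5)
Your proof is correct and follows exactly the route the paper intends: the paper's entire ``proof'' is the remark that Lemma~\ref{l:Lamb}(1,4) implies the statement, and your argument is precisely the fleshed-out version of that --- using the inclusion $V\subseteq\frac ee$ from the $e$-base definition together with Lemma~\ref{l:Lamb}(1) to reduce to Lemma~\ref{l:Lamb}(4), whose collapsing property $\Lambdae(y;\frac ee)=\{y\}$ shows each $\Lambdae(x;V)$ is $\Tau_\Zeta$-open, while the base property is immediate from the definition of $\Tau_\Zeta$.
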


\begin{definition}\label{d:regular} Let $X$ be a semigroup and $e$ be a central idempotent in $X$. An $e$-base $\Zeta$  is defined to be {\em regular} if for any element $b\in X$ with $b\ne be$, there exists a set $V\in \Zeta$ such that $b\notin \frac{be}{e}\cdot V$.
\end{definition}

The statements (1)--(4) of the following theorem are proved in Theorem 4.4 of \cite{ICT1S}, the statements (5a) and (5b) are proved in Propositions 4.5 and 4.6 of \cite{ICT1S}, respectively.

\begin{theorem}\label{t:TS} Let $X$ be a  semigroup, $e$ be a central idempotent in $X$ and $\Zeta$ be an $e$-base. Then
\begin{enumerate}
\item $(X,\Tau_{\Zeta})$ is a topological semigroup;
\item $(X,\Tau_{\Zeta})$ is a $T_0$ topological space with discrete subspace of non-isolated points.
\item If the $e$-base $\Zeta$ is regular, then for every point $b\in X$, any subset $B\subseteq \Lambdae(b;\frac ee)$ containing $b$ is closed in the topology $\Tau_{\Zeta}$.
\item If the $e$-base $\Zeta$ is regular, then every topology $\tau$ on $X$ with $\Tau_\Zeta\subseteq\tau$ is Hausdorff and zero-dimensional.
\item The $e$-base $\Zeta$ is regular if one of the following conditions is satisfied:
\begin{enumerate}
\item the poset $EZ(X)$ is well-founded and for every finite set $F\subseteq  EZ(X)\setminus{\downarrow}e$, there exists a set $V\in\Zeta$ such that $V\subseteq H(X)\cap Z(X)\setminus\pi^{-1}[{\uparrow}F]$;
\item the semigroup $X$ is nonsingular and eventually Clifford, the poset $E(X)$ is well-founded, and for every $n\in\IN$ and finite set $F\subseteq E(X)\setminus{\downarrow}e$, there exists a set $V\in\Zeta$ such that $V\subseteq \{z^m:z\in  Z(X)\cap\frac ee \setminus\pi^{-1}[{\uparrow} F],\;m\ge n\}$.
\end{enumerate}
\end{enumerate}
\end{theorem}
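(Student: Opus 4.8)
The plan is to derive (1)--(4) from the shift calculus of Lemma~\ref{l:Lamb} together with the neighborhood-base description of Lemma~\ref{l:base-e}, and to reduce the two regularity criteria in (5) to a minimal-idempotent argument. For (1), Lemma~\ref{l:base-e} already identifies $\{\Lambdae(x;V):V\in\Zeta\}$ as a neighborhood base at each $x$, so only continuity of the multiplication needs checking. Given $a,b\in X$ and $W\in\Zeta$, I use Definition~\ref{d:remote} to pick $V\in\Zeta$ with $VV\subseteq V\subseteq W$; since $V\subseteq Z(X)\cap\frac ee$, centrality makes the hypotheses $V\subseteq W$, $Vb\subseteq bW$ and $VyV\subseteq yW$ (for $y\in\frac be$) of Lemma~\ref{l:Lamb}(8) automatic, giving $\Lambdae(a;V)\cdot\Lambdae(b;V)\subseteq\Lambdae(ab;W)$. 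For (2), Lemma~\ref{l:Lamb}(3) shows each $b\ne be$ is isolated, while Lemma~\ref{l:Lamb}(4) shows every point of $\Lambdae(b;V)\setminus\{b\}$ is isolated; thus $\Lambdae(b;V)$ contains no non-isolated point other than $b$, which yields both $T_0$ and the discreteness of the set of non-isolated points.

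For (3) I fix $b$ and $B$ with $b\in B\subseteq\Lambdae(b;\frac ee)$ and show each $x\notin B$ has a basic neighborhood missing $B$. By Lemma~\ref{l:Lamb}(2), $\Lambdae(x;V)\subseteq\frac{xe}e$ and $B\subseteq\frac{be}e$, so the case $xe\ne be$ is immediate; if $xe=be$ but $x\ne xe$ then $x$ is isolated by Lemma~\ref{l:Lamb}(3); and the only remaining case is $x=be$ with $b\ne be$, where $\Lambdae(b;\frac ee)=\{b\}$ forces $B=\{b\}$, so regularity (Definition~\ref{d:regular}) supplies $V$ with $b\notin\frac{be}e\cdot V$, whence $b\notin\Lambdae(be;V)=\Lambdae(x;V)$. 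Applying (3) with $B=\{x\}$ makes singletons closed, so the space is $T_1$ and each $\Lambdae(x;V)$ is clopen. For (4), any $\tau\supseteq\Tau_\Zeta$ is Hausdorff because distinct points can always be separated by a clopen $\Lambdae(x;V)$ (or an isolated singleton) and its complement. For zero-dimensionality of $\tau$, given a $\tau$-open $O\ni x$ I set $C\defeq O\cap\Lambdae(x;V)$: this is $\tau$-open, and its complement is the union of the clopen $X\setminus\Lambdae(x;V)$ with $\Lambdae(x;V)\setminus O$, the latter lying in $\Lambdae(x;V)\setminus\{x\}$ and hence being a union of isolated singletons, so $C$ is a $\tau$-clopen neighborhood of $x$ inside $O$.

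For (5a) I argue by contradiction, assuming regularity fails at some $b\ne be$, i.e. $b\in\frac{be}e\cdot V$ for all $V\in\Zeta$. The set $S\defeq\{f\in EZ(X):bf=b\}$ is nonempty, closed under the central idempotent product, and hence downward directed; as $EZ(X)$ is well-founded it has a least element $f_0$, and $f_0\notin{\downarrow}e$ since $f_0\le e$ would give $be=b(f_0e)=bf_0=b$. Applying the hypothesis to $F=\{f_0\}$ yields $V\subseteq H(X)\cap Z(X)\setminus\pi^{-1}[{\uparrow}f_0]$ and, by failure, a factorization $b=yv$ with $y\in\frac{be}e$ and $v\in V$. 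Writing $g\defeq\pi(v)\in EZ(X)$ (Lemma~\ref{l:piZ}) and using $v\in H_g$, I get $bg=yvg=yv=b$, so $g\in S$ and therefore $f_0\le g$; but $v\notin\pi^{-1}[{\uparrow}f_0]$ means $f_0\not\le g$, a contradiction.

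For (5b) the skeleton is identical, but the remote factors now have the form $z^m$, so $b\in\korin{\IN}{H(X)}$ (eventual Cliffordness) and $\pi(b)$ is defined. The key computation is $\pi(b)\le\pi(z)$ for every instance $b=yz^m$: passing to a high power $b^l\in H_{\pi(b)}$ and using centrality of $z$ together with $z^{mj}\in H_{\pi(z)}$ for large $j$ (Lemmas~\ref{l:pi-well-defined} and \ref{l:C-ideal}) gives $b^l\pi(z)=b^l$, and cancelling in the group $H_{\pi(b)}$ yields $\pi(b)\pi(z)=\pi(b)$. When $\pi(b)\not\le e$ the set $F=\{\pi(b)\}$ is admissible and $\pi(b)\le\pi(z)$ contradicts $\pi(z)\notin{\uparrow}\pi(b)$, exactly as in (5a). \textbf{The main obstacle is the degenerate case $\pi(b)\le e$}, equivalently $b\lesseq e$ with $b\ne be$ (by Lemma~\ref{l:pi1}); here $\pi(b)\le e$ does not force $be=b$ in a general semigroup, so the minimal-idempotent argument breaks down. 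This is exactly where I expect nonsingularity to be essential: using Lemma~\ref{l:pi1} and the resulting equality of high powers $(be)^l=b^l$, an infinite family of remote witnesses $z^m$ to the failure should produce an infinite set $A$ with $AA$ a singleton, contradicting nonsingularity and forcing $b=be$.
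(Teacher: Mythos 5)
Your arguments for (1)--(4) and (5a) are correct, and they follow the route one would expect; note that the present paper does not prove Theorem~\ref{t:TS} at all but quotes it from Theorem~4.4 and Propositions~4.5--4.6 of \cite{ICT1S}, so the comparison is with that source. In particular, your use of Lemma~\ref{l:Lamb}(8) with $U=V$ chosen by Definition~\ref{d:remote} (centrality of $V$ making $Vb\subseteq bW$ and $VyV\subseteq yW$ automatic), the isolation facts from Lemma~\ref{l:Lamb}(3,4), the three-case analysis in (3) with regularity entering only when $x=be\ne b$, and the observation that $O\cap\Lambdae(x;V)$ is $\tau$-clopen because $\Lambdae(x;V)$ is $\Tau_\Zeta$-clopen while $\Lambdae(x;V)\setminus\{x\}$ consists of isolated points, are all sound. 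In (5a) the only cosmetic remark is that the nonemptiness of $S$ requires the same computation you perform for $F=\{f_0\}$, applied first with $F=\emptyset$.

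The genuine gap is the case $\pi(b)\le e$ in (5b), which you flag yourself but whose proposed repair does not work as sketched. First, the reduction via Lemma~\ref{l:pi1} is unavailable: that lemma assumes viability, which is not among the hypotheses of (5b). Second, the data at hand do not produce an infinite set $A$ with $AA$ a singleton. What a degenerate witness actually gives is this: if $z\in Z(X)\cap\frac ee$ and $\pi(z)\le e$, then for $k$ past the entry index one has $z^k=z^k\pi(z)=(z^ke)\pi(z)=e\pi(z)=\pi(z)$ and hence $\pi(z)=\pi(z)e=z^ke=e$, so $z^k=e$ for all large $k$; consequently a witness $b=yz^m$ with $\pi(z)=e$ must have $m$ strictly below the entry index of $z$ (otherwise $z^m=e$ and $b=be$), so the indices of the witnesses tend to infinity with $n$. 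But within a single such $z$ the sets whose square collapses to $\{e\}$, such as $\{z^j:\lceil k/2\rceil\le j<k\}$, are finite, while for two distinct witnesses $z,z'$ the cross products $z^iz'^{i'}$ are completely uncontrolled; your correct equalities $\pi(b)\le\pi(z)$ and $b^l=(be)^l$ for large $l$ are consistent with $b\ne be$ and give no product collapse on any infinite set. So nonsingularity has no visible point of application along the route you indicate.

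There is also a second obstruction in the same case that your sketch does not address: witnesses with $\pi(z)\not\le e$ cannot be excluded either. Unlike in (5a), where $v\in H(X)$ lies in $H_{\pi(v)}$ and one gets the exact identity $b\pi(v)=b$, here $z^m$ need not belong to $H_{\pi(z)}$, so one only obtains the weaker relation $\pi(b)\le\pi(z)$; the set of witness idempotents carries no closure property, hence is not directed, a minimal element of it (from well-foundedness of $E(X)$) need not be a least element, the set $F=\{\pi(b)\}$ is inadmissible since $\pi(b)\in{\downarrow}e$, and no admissible finite $F$ can exclude a possibly infinite antichain of witness idempotents in ${\uparrow}\pi(b)\setminus{\downarrow}e$. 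Handling both phenomena simultaneously is exactly the content of Proposition~4.6 of \cite{ICT1S}, and your proposal, which is complete and correct up to this point, does not reconstruct it.
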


Finally, we present two important examples of $e$-bases. Those examples are taken from Theorem 5.3 of \cite{ICT1S}.

\begin{example}\label{ex:He} For any central idempotent $e$ of a semigroup $X$, the family
$$\mathcal H[e)\defeq\big\{Z(X)\cap H(X)\cap\tfrac ee\setminus\pi^{-1}[{\uparrow}F]:F\in[EZ(X)\setminus{\downarrow}e]^{<\w}\big\}$$
is an $e$-base on $X$. If the poset $EZ(X)$ is well-founded, then the $e$-base $\mathcal H[e)$ is regular.
\end{example} 

\begin{example}\label{ex:Ze} For any central idempotent $e$ of a semigroup $X$ the family
$$\mathcal Z[e)\defeq\big\{\{z^n:z\in Z(X)\cap\tfrac ee\cap\pi^{-1}[E(X)\setminus {\uparrow}F]\}:n\in\IN,\;F\in[E(X)\setminus{\downarrow}e]^{<\w}\big\}$$
is an $e$-base on $X$. If the poset $E(X)$ is well-founded and the semigroup $X$ is nonsingular and eventualy Clifford, then the $e$-base $\mathcal Z[e)$ is regular.
\end{example}

Finally, we recall some basic information on filters. A nonempty family $\F$ of nonempty subsets of a set $X$ is called a {\em filter} on $X$ if $\F$ is closed under taking intersections and supersets. For a filter $\F$ on a set $X$, a subfamily $\mathcal B\subseteq\F$ is called a {\em base} for $\F$ if $\F=\{F\subseteq X:\exists B\in\mathcal B\;\;(B\subseteq F)\}$. A filter $\F$ on $X$ is called {\em free} if $\bigcap\F=\emptyset$.

For a set $X$ we denote by $\Fil(X)$ the set of all filters on $X$. The set $\Fil(X)$ is partially ordered by the inclusion relation $\subseteq$. Maximal elements of the poset $\Fil(X)$ are called {\em ultrafilters}. By the Kuratowski--Zorn Lemma, each filters can be enlarged to an ultrafilter. The set of ultrafilters on $X$ is denoted by $\beta(X)$. Each element $x\in X$ can be identified with the {\em principal ultrafilter} $\F_x=\{F\subseteq X:x\in F\}$. So, $X$ can be identified with the subset of principal ultrafilters in $\beta(X)$. Therefore, we have the chain of inclusions $X\subseteq\beta(X)\subseteq\Fil(X)$. We shall often use the following characteristic property of ultrafilters, see \cite[1.8]{Prot}, \cite[3.6]{HS}.

\begin{lemma}\label{l:ultra} A filter $\U$ on a set $X$ is an ultrafilter if and only if for any finite family $\F$ of sets with $\bigcup\F\in\U$, there exists a set $F\in\F\cap\U$.
\end{lemma}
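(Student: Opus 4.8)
The plan is to prove both implications by reducing them to the classical complementation criterion for ultrafilters: \emph{a filter $\U$ on $X$ is maximal if and only if for every $A\subseteq X$ either $A\in\U$ or $X\setminus A\in\U$.} Once this criterion is isolated, both directions of the lemma follow quickly, so I would establish it first and then apply it twice.

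For the criterion, the nontrivial half is that maximality forces the dichotomy. Suppose $\U$ is an ultrafilter and $A\notin\U$; I would split into cases according to whether $A$ meets every member of $\U$. If $A\cap F\ne\emptyset$ for all $F\in\U$, then $\{F\cap A:F\in\U\}$ is a filter base: it is closed under finite intersections since $(F_1\cap A)\cap(F_2\cap A)=(F_1\cap F_2)\cap A$ is again of this form, and every member is nonempty precisely by the case hypothesis. Hence it generates a filter $\V\supseteq\U$ with $A=X\cap A\in\V$; maximality gives $\V=\U$, so $A\in\U$, contradicting $A\notin\U$. Therefore the other case holds: some $F\in\U$ satisfies $A\cap F=\emptyset$, i.e. $F\subseteq X\setminus A$, and closure under supersets yields $X\setminus A\in\U$. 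The converse half is immediate: if the dichotomy holds and $\U\subseteq\V$ for a filter $\V$, then no $A\in\V$ can have $X\setminus A\in\U\subseteq\V$ (else $\emptyset\in\V$), so $A\in\U$; thus $\V=\U$ and $\U$ is maximal.

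With the criterion available, the forward direction of the lemma goes by contraposition. Given a finite family $\F=\{A_1,\dots,A_n\}$ with $\bigcup\F\in\U$, assume no $A_i$ lies in $\U$. The criterion gives $X\setminus A_i\in\U$ for every $i$, so the finite intersection $\bigcap_{i}(X\setminus A_i)=X\setminus\bigcup_i A_i$ lies in $\U$; together with $\bigcup_i A_i\in\U$ this forces $\emptyset\in\U$, which is impossible. Hence some $A_i\in\F\cap\U$. The reverse direction is a one-line application: assuming the finite-union property, feed it the two-element family $\F=\{A,X\setminus A\}$, whose union is $X\in\U$, to obtain $A\in\U$ or $X\setminus A\in\U$; the converse half of the criterion then certifies that $\U$ is maximal, i.e. an ultrafilter.

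I expect the only genuine obstacle to be the filter-enlargement step in the forward half of the criterion, namely verifying that $\{F\cap A:F\in\U\}$ is a \emph{proper} filter base, where properness rests exactly on the assumption that $A$ meets every member of $\U$. Everything else is routine manipulation of the filter axioms (closure under supersets and finite intersections) combined with De Morgan's law. One should also note the harmless edge cases where $\F$ is empty (then $\bigcup\F=\emptyset\notin\U$, so the hypothesis is vacuous) or contains $\emptyset$ (irrelevant, since $\emptyset\notin\U$ for any filter).
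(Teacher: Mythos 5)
Your proof is correct and complete. Note that the paper itself does not prove Lemma~\ref{l:ultra} at all: it is stated as a known fact with references to \cite[1.8]{Prot} and \cite[3.6]{HS}, where it appears as the standard characterization of ultrafilters. Your route --- first establishing the complementation dichotomy ($A\in\U$ or $X\setminus A\in\U$ for every $A\subseteq X$) as equivalent to maximality, then getting the forward direction by De Morgan from $X\setminus A_i\in\U$ for all $i$, and the reverse direction by feeding the two-element family $\{A,X\setminus A\}$ into the finite-union property --- is exactly the classical textbook argument behind those citations. The one step you flagged as the genuine content, properness of the filter base $\{F\cap A:F\in\U\}$ under the hypothesis that $A$ meets every member of $\U$, is handled correctly, and your remarks on the degenerate cases (empty $\F$, or $\emptyset\in\F$) close the remaining loopholes.
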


We say that a filter $\F$ on a topological space $X$ is {\em convergent} if $\F$ {\em converges} to some point $x\in X$, which means that any neighborhood $O_x$ of $x$ in $X$ belongs to the filter $\F$. If the space $X$ is Hausdorff, then the point $x$ is unique. It is called the {\em limit} of the filter $\F$ and is denoted by $\lim\F$.

\section{The proof of the ``if'' part of Theorem~\ref{t:main-iC}}\label{s:if}

The ``if'' part of Theorem~\ref{t:main-iC} is established in the following (a bit more informative) proposition.

\begin{proposition}\label{p:if} Let $Y$ be a Hausdorff topological semigroup and $X$ be a commutative subsemigroup of\/ $Y$. If $X$ is chain-finite, group-finite, bounded, nonsinglar and not Clifford-singular, then $X$ is closed in $Y$ and $X\setminus H(X)$ is a closed discrete subspace of $X$.
\end{proposition}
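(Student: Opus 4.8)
The plan is to show that $X\setminus H(X)$ is closed and discrete in $Y$, that $H(X)$ is closed in $Y$, and then combine these to get that $X=H(X)\cup(X\setminus H(X))$ is closed in $Y$. Since $X$ is bounded, chain-finite, group-finite and nonsingular, it is in particular $\Haus$-closed by Theorem~\ref{t:C} (boundedness gives periodicity, and group-finiteness gives group-boundedness), so the overall closedness of $X$ is expected to follow once we control the behavior of points of $Y\setminus X$ in the closure of $X$. The key structural fact I would exploit is that $X$, being commutative hence viable, and eventually Clifford (indeed periodic, by boundedness), admits the map $\pi:X\to E(X)$ of Lemma~\ref{l:pi1}, and that $H(X)=H_Z(X)$ is a subsemigroup here since $X$ is commutative.

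First I would analyze $X\setminus H(X)$. The hypothesis that $X$ is \emph{not} Clifford-singular should be used to show that no point of $Y$ can be an accumulation point of $X\setminus H(X)$. Suppose toward a contradiction that some $y\in Y$ lies in the closure of an infinite subset $A\subseteq X\setminus H(X)$. Using continuity of multiplication and the fact that $X$ is bounded (so $a^n\in E(X)\subseteq H(X)$ for a fixed $n$ and all $a$), together with chain-finiteness and group-finiteness to pass to an infinite subset on which $\pi$ and the relevant finite data are constant, I would try to force $AA\subseteq H(X)$ after thinning $A$ — contradicting non-Clifford-singularity. Concretely, I expect to use the product map to show $yy=\lim_{a,a'\in A} aa'$ must land in a region forcing the products $aa'$ into the (closed, by the next step) Clifford part, and then extract the infinite witness set. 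This gives that $X\setminus H(X)$ is closed discrete in $Y$.

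Next I would show $H(X)$ is closed in $Y$. Since $X$ is a bounded commutative semigroup with every subgroup finite, $H(X)$ is a Clifford semigroup that is chain-finite, group-finite and bounded; by Theorem~\ref{t:iCUS} applied on each maximal subgroup, or more directly by the $e$-base machinery of Theorem~\ref{t:TS} and Example~\ref{ex:He}, such a Clifford part behaves like a closed-discrete-over-a-well-founded-semilattice object. The idea is: for a candidate limit point $y\in\overline{H(X)}\setminus H(X)$, use the central idempotents and the well-foundedness of $EZ(X)$ (from chain-finiteness) to separate the $\mathcal H$-classes, showing any convergent net in $H(X)$ must eventually lie in a single coset $H_e$, which is finite by group-finiteness, hence already contains its limit. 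I would handle the idempotents via Lemma~\ref{l:pi2} and Tamura's Lemma~\ref{l:Tamura} to keep the limit inside the appropriate upper set.

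Finally I would assemble the pieces: $X=H(X)\sqcup(X\setminus H(X))$ with both pieces closed in $Y$, so $X$ is closed; and $X\setminus H(X)$ is closed discrete in $Y$, hence in $X$. The main obstacle I anticipate is the first step — ruling out accumulation of $X\setminus H(X)$ — because it requires simultaneously controlling products $aa'$ under the limit while extracting the infinite Clifford-singular witness; the delicate part is thinning the infinite set (using chain-finiteness to bound antichains/chains in the semilattice reflection and group-finiteness to bound $\mathcal H$-classes) so that the limiting product lands in $H(X)$ uniformly, which is exactly what non-Clifford-singularity is designed to prohibit.
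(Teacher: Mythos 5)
Your top-level decomposition agrees with the paper's: show $H(X)$ is closed in $Y$, show $X\setminus H(X)$ admits no accumulation point, and assemble. But both of your two main steps have problems, and the first one is a genuine gap at the heart of the proposition.

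The key step --- ruling out accumulation points of $X\setminus H(X)$ --- cannot be completed by ``thinning $A$ so that $AA\subseteq H(X)$,'' and the obstacle is not merely technical. Even after arranging (as the paper does in Lemma~\ref{l:improve}, via a power-shifted ultrafilter and Lemma~\ref{l:up-k}) that squares of the relevant elements lie in $H(X)$, there is no way to force the \emph{pairwise} products $aa'$ into $H(X)$ by shrinking $A$: the convergence $aa'\to y^2$ gives nothing, since $H(X)$ being closed says nothing about points of $X\setminus H(X)$ accumulating at a point of $H(X)$. The paper's mechanism is different: it works with a free convergent ultrafilter $\U$ containing $X\setminus H(X)$, uses well-foundedness of $E(X)$ (from chain-finiteness) to choose a \emph{minimal} idempotent $\mu$ such that some such ultrafilter contains ${\Downarrow}\mu\setminus H(X)$, and then proves a dichotomy (Claim~\ref{cl:dichotomy}): for $a$ with $\pi(a)<\mu$, either $aU\subseteq H(X)$ for some $U\in\U$, or the translated ultrafilter $a\U$ is \emph{principal} with value $a\lambda$, where $\lambda=\lim\U$. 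This splits the argument into two inductive constructions. In the first case the Clifford-singularity witness is indeed a thinned subset $\{a_n\}_{n\in\w}$ of $X\setminus H(X)$ with $a_na_m\in H(X)$ --- roughly what you envision. But in the second case, which your plan does not anticipate and cannot avoid, the witness set is \emph{not} a subset of $A$ at all: it is $\{a_n\lambda\}_{n\in\w}$, the products with the limit point $\lambda\in Y$, and the inclusion $(a_n\lambda)(a_m\lambda)=a_na_m\lambda^2\in H(X)$ is obtained from $a_na_m=a_n\lambda\in\korin{\IN}{e}$, $\lambda^2\in H_e$, and Lemma~\ref{l:C-ideal}. Without the minimality of $\mu$, the dichotomy, and this second construction, the contradiction with non-Clifford-singularity cannot be reached.

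Your treatment of the closedness of $H(X)$ is also off target, though here the fix is easy. Applying Theorem~\ref{t:iCUS} ``on each maximal subgroup'' only shows each $H_e$ (a finite set) is closed, which says nothing about the infinite union $H(X)$; and the $e$-base machinery of Theorem~\ref{t:TS} and Example~\ref{ex:He} constructs topologies on abstract semigroups for the ``only if'' direction --- it does not prove closedness inside a given ambient semigroup $Y$. The paper's route is one line: $H(X)$ is a subsemigroup (Lemma~\ref{l:CH}) that is chain-finite, bounded, group-finite and Clifford (hence trivially Clifford+finite), so it is absolutely $\mathsf{T_{\!2}S}$-closed by Theorem~\ref{t:aC} and therefore closed in $Y$. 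You would also need the paper's preliminary normalizations (pass to $\overline{X}$, so $Y$ is commutative; $E(X)=E(Y)=\pi[Y]$ for the continuous homomorphism $\pi:y\mapsto y^p$; clopenness of ${\Uparrow}e$, Lemma~\ref{l:up-clopen}; and closed discreteness of $X\cap\pi^{-1}(e)$, Lemma~\ref{cl:cl-disc}) to make any of the limiting arguments go through.
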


\begin{proof} Replacing $Y$ by the closure of $X$ in $Y$, we can assume that $X$ is dense in $Y$. The commutativity of $X$ and the density of $X$ in the Hausdorff topological semigroup $Y$ imply that the semigroup $Y$ is commutative.

Since the semigroup $X$ is chain-finite, so is the semilattice $E(X)$. By Theorem~\ref{t:aC}, $E(X)$ is closed in $Y$. Since the semigroup $X$ is bounded, there exists a number $p\in\IN$ such that $x^p\in E(X)$ for all $x\in X$. The commutativity of $Y$ ensures that the map $\pi:Y\to Y$, $\pi:y\mapsto y^p$, is a continuous homomorphism. Taking into account that $\pi[X]\subseteq E(X)$, we conclude that $$E(X)\subseteq E(Y)\subseteq  \pi[Y]=\pi[\overline X]\subseteq \overline{\pi[X]}=\overline{E(X)}=E(X)$$and hence $E(X)=E(Y)=\pi[Y]$.

\begin{lemma}\label{l:H-closed} The Clifford part $H(X)$ of $X$ is closed in $Y$.
\end{lemma}

\begin{proof} By Lemma~\ref{l:CH}, the Clifford part $H(X)$ of $X$ is a subsemigroup of $X$. By Theorem~\ref{t:aC}, the bounded chain-finite group-finite Clifford semigroup $H(X)$ is absolutely $\mathsf{T_{\!2}S}$-closed and hence $H(X)$ is closed in $Y$.
\end{proof}

\begin{lemma}\label{cl:cl-disc} For every $e\in E(X)$ the set $X\cap \pi^{-1}(e)$ is closed and discrete in $Y$.
\end{lemma}

\begin{proof} Since $\pi:Y\to Y$ is a homomorphism, for every $e\in E(X)$ the preimage $\pi^{-1}(e)$ is a subsemigroup of $Y$ and hence $X\cap \pi^{-1}(e)$ is a subsemigroup of $X$. Since $X$ is bounded, nonsingular and group-finite, so is its unipotent subsemigroup $X\cap\pi^{-1}(e)$.  By Theorem~\ref{t:iCUS}, the  semigroup $X\cap \pi^{-1}(e)$ is injectively $\mathsf{T_{\!1}S}$-closed and by Theorem~\ref{t:iT1}, $X\cap \pi^{-1}(e)$ is closed and discrete in  $Y$.
\end{proof}

\begin{lemma}\label{l:up-clopen} For every idempotent $e\in E(X)$ the set ${\Uparrow}e=\{y\in Y:\pi(y)e=e\}$ is clopen in $Y$.
\end{lemma}

\begin{proof} First observe that the set ${\uparrow}e=\{x\in E(X):xe=e\}$ is closed in $E(X)$ being the preimage of the singleton $\{e\}$ under the continuous map $E(X)\to E(X)$, $x\mapsto xe$. Since the semilattice $E(X)$ is chain-finite, so is its subsemilattice $E(X)\setminus{\uparrow}e$. By Theorem~\ref{t:aC}, the chain-finite semilattice $E(X)\setminus{\uparrow}e$ is absolutely $\mathsf{T_{\!2}S}$-closed and hence $E(X)\setminus{\uparrow}e$ is closed in $E(X)\subseteq Y$ and the set ${\uparrow}e$ is open in $E(X)$. Then the set ${\Uparrow}e=\pi^{-1}[{\uparrow}e]$ is clopen in $Y$ being the preimage of the clopen set ${\uparrow}e$ under the continuous map $\pi$.
\end{proof}

For a number $n\in\IN$ and subset $A\subseteq X$, let
$$A^{{\uparrow}n}\defeq\{a^n:a\in A\}.$$

\begin{lemma}\label{l:up-k} For every  $k>p$ we have $Y^{{\uparrow}k}\subseteq H(X)$.
\end{lemma}

\begin{proof} First observe that for every $k>p$, $x\in X$ and the idempotent $e=x^p=\pi(x)$ we have $x^k=x^px^{k-x}\in e\korin{\IN}{H_e}\subseteq H_e\subseteq H(X)$ by Lemma~\ref{l:C-ideal}. The continuity of the map $Y\to Y$, $y\mapsto y^k$ and Lemma~\ref{l:H-closed} imply that $Y^{{\uparrow}k}=\overline X^{{\uparrow}k}\subseteq\overline{X^{{\uparrow}k}}\subseteq \overline{H(X)}=H(X)$.
\end{proof}

\begin{lemma}\label{l:improve} Let $S$ be a subsemigroup of $X$. If some free convergent ultrafilter $\U$ on $Y$ contains  the set $S\setminus H(X)$, then there exists a free convergent ultrafilter $\V$ on $Y$ such that $S\setminus H(X)\in\V$ and $V^{{\uparrow}2}\subseteq H(X)$ for some $V\in\V$.
\end{lemma}

\begin{proof}  Let $k\in\IN$ be the largest number such that for every $U\in\U$ the set $U^{{\uparrow}k}$ is not contained in $H(X)$. Lemma~\ref{l:up-k} ensures that the number $k$ is well-defined. By the maximality of $k$, there exists a set $U\in\U$ such that $U^{{\uparrow}(k+1)}\subseteq H(X)$.   The choice of $k$ ensures that the set $U'=\{x\in U:x^k\in H(K)\}$ does not belong to the ultrafilter $\U$. Then $U\setminus U'\in\U$. Replacing the set $U$ by  $S\cap U\setminus U'$ we can assume that $U^{{\uparrow}k}\subseteq S\setminus H(X)$.

We claim that for every $V\in\U$ the set $V^{{\uparrow}k}$ is infinite. In the opposite case we can apply Lemma~\ref{l:ultra} and find a set $V\in\U$ such that $V\subseteq U$ and $V^{{\uparrow}k}$ is a singleton. Then $V\subseteq X\cap \pi^{-1}(e)$ where $e$ is the unique element of the singleton $\pi[V^{{\uparrow}k}]$. By Lemma~\ref{cl:cl-disc}, the subset $X\cap\pi^{-1}(e)$ is closed and discrete in $Y$. So, it cannot belong to the free convergent ultrafilter $\U$. This contradiction shows the family $\{V^{{\uparrow}k}:V\in\U\}$ consists of infinite sets and hence is a base of some free ultrafilter $\V$ that converges to $(\lim\U)^k$.

It follows from $U^{{\uparrow} k}\subseteq S\setminus H(X)$ that $S\setminus H(X)\in\V$. On the other hand, for every $u\in U$ we have $u^{k+1}\in H(X)$ and hence $u^{2k}\in H(X)$ by Lemma~\ref{l:pi-well-defined}. Then for every element $v\in U^{{\uparrow} k}\in\V$ we have $v^2\in H(X)$.
\end{proof}

\begin{lemma}\label{l:key} No free convergent ultrafilter on $Y$ contains the set $X\setminus H(X)$.
\end{lemma}

\begin{proof} To derive a contradiction, assume that some free convergent ultrafilter on $Y$ contains the set $X\setminus H(X)$. For every $e\in E(X)$, let ${\Downarrow}e=\{x\in X:\pi(x)e=\pi(x)\}$. Let $M$ be the set of all idempotents $e\in E(X)$ for which there exists a free convergent ultrafilter on $Y$ that contains the set ${\Downarrow}e\setminus H(X)$. If $M=\emptyset$, then put $\mu=1$ and ${\Downarrow}\mu=Y$ where $1\notin Y$ is an element such that $1y=y=y1$ for all $y\in Y$. If $M$ is not empty, then there exists an element $\mu\in M$ such that $M\cap{\downarrow}\mu=\{\mu\}$ where ${\downarrow}\mu=\{x\in E(X):x\mu=x\}$. The element $\mu$ exists since the semilattice $E(X)$ is chain-finite. By Lemma~\ref{l:improve}, there exists a free convergent ultrafilter $\U$ on $Y$ such that  ${\Downarrow}\mu\setminus H(X)\in\U$ and $W^{{\uparrow}2}\subseteq H(X)$ for some set $W\in\U$ with $W\subseteq {\Downarrow}\mu\setminus H(X)$. Let $\lambda$ be the limit of the ultrafilter $\U$ in $Y$. Since the Clifford part $H(X)$ of $X$ is closed in $Y$, $$\lambda^2\in\overline{W^{{\uparrow}2}}\subseteq\overline{H(X)}=H(X).$$
Let $e=\pi(\lambda)$. By Lemma~\ref{l:up-clopen}, the set ${\Uparrow}e$ is clopen in $Y$. Since ${\Uparrow}e$ contains $\lambda=\lim\U$, we can replace $W$ by the subset $W\cap {\Uparrow}e$ and assume that $W\subseteq{\Uparrow}e$.

By Lemma~\ref{cl:cl-disc}, the set $\pi^{-1}(\mu)$ does not belong to the free convergent ultrafilter $\U$. So, we can replace $W$ by $W\setminus\pi^{-1}(\mu)$ and assume that $$W\subseteq{\Uparrow}e\cap{\Downarrow}\mu\setminus\pi^{-1}(\mu).$$

\begin{claim}\label{cl:dichotomy} For every $a\in X$ with $e\le\pi(a)<\mu$, there exists a set $U\in\U$ such that either  $aU\subseteq H(X)$ or $aU=\{a\lambda\}$.
\end{claim}

\begin{proof} If $H(X)\in a\U$, then $aU\subseteq H(X)$ for some $U\in\U$ and we are done. So, assume that $H(X)\notin a\U$. Observe that the set $aW\subseteq{\Downarrow}\pi(a)$ belongs to the ultrafilter $a\U$ and hence ${\Downarrow}\pi(a)\setminus H(X)\in a\U$. Since $\pi(a)<\mu$, the minimality of $\mu$ ensures that the ultrafilter $a\U$ is not free and so $a\U$ is principal. Then $\lim a\U=a\lim\U=a\lambda$ and there exists a set $U\in\U$ such that $aU=\{a\lambda\}$. %It follows from $e\le\pi(a)$ and $W\subseteq{\Uparrow}e$ that $aW\subseteq {\Uparrow}e$ and hence $\pi(\lambda)=e\le\pi(a\U)=\pi(a\lambda)\le\pi(\lambda)$ and finally, $\pi(\lambda)=e=\pi(a\U)$.
\end{proof}

\begin{claim}\label{cl:PinU} The set $P\defeq\{a\in W:H(X)\notin a\U\}$ belongs to the ultrafilter $\U$.
\end{claim}

\begin{proof} Assuming that $P\notin \U$, we conclude that the set $W\setminus P=\{x\in W:H(X)\in a\U\}$ belongs to $\U$.

We shall inductively construct sequences of points $(a_n)_{n\in\w}$ and sets $(U_n)_{n\in\w}$ such that $a_0\in U_0=W\setminus P$ and for every $n\in\w$ the following conditions are satisfied:
\begin{itemize}
\item[{\rm(i)}] $a_n\in U_n\in\U$,
\item[{\rm(ii)}] $U_{n+1}\subseteq U_n\setminus\{a_n\}$;
\item[{\rm(iii)}] $a_nU_{n+1}\subseteq H(X)$.
\end{itemize}

We start the inductive construction letting $U_0=W\setminus P$ and $a_0$ be any point of $U_0$. Assume that for some $n\in\w$ we have constructed the sets $U_0,\dots,U_n$ and the point $a_0,\dots,a_n$. Since $a_n\in U_n\subseteq U_0=W\setminus P$, the set $H(X)$ belongs to the ultrafilter $a_n\U$. So, we can choose a set $U_{n+1}\in\U$ satisfying the inductive conditions (ii),(iii), and then choose any element $a_{n+1}\in U_{n+1}$. This completes the inductive step.
\smallskip

The inductive conditions (i) and (ii) ensure that $A=\{a_n\}_{n\in\w}$ is an infinite subset of $U_0\subseteq W\subseteq X\setminus H(X)$ such that $a_na_m\in H(X)$ for all $n<m$. Since the semigroup $X$ is commutative, we conclude that $a_na_m\in H(X)$ for any distinct $n,m\in\w$. On the other hand, the choice of the set $W$ with $W^{{\uparrow}2}\subseteq H(X)$ ensures that $a_n^2\in H(X)$ for all $n\in\w$. Therefore, $A$ is an infinite subset of $X\setminus H(X)$ such that $AA\subseteq H(X)$, which implies that the semigroup $X$ is Clifford-singular. But this contradicts our assumption.
\end{proof}

By Claim~\ref{cl:PinU}, $P\in\U$. We shall inductively construct sequences of points $(a_n)_{n\in\w}$ and sets $(U_n)_{n\in\w}$ such that $a_0\in U_0=P$ and for every $n\in\w$ the following conditions are satisfied:
\begin{itemize}
\item[{\rm(iv)}] $a_n\in U_n\in\U$;
\item[{\rm(v)}] $U_{n+1}\subseteq U_n$;
\item[{\rm(vi)}] $a_nU_{n+1}=\{a_n\lambda\}\in X\setminus H(X)$;
\item[{\rm(vii)}] $a_n\lambda\notin U_{n+1}U_{n+1}$.
\end{itemize}

We start the inductive construction letting $U_0=P$ and $a_0$ be any point of $U_0$. Assume that for some $n\in\w$ we have constructed the sets $U_0,\dots,U_n$ and the point $a_0,\dots,a_n$. Since $a_n\in U_n\subseteq U_0=P$, the ultrafilter $a_n\U$ is principal and $H(X)\not\subseteq a_n\U$, see Claim~\ref{cl:dichotomy}. Then we can find a set $U_{n+1}\in\U$ such that  $a_nU_{n+1}$ is a singleton. Since the ultrafilter $\U$ converges to $\lambda$, the principal ultrafilter $a_n\U$ converges to $a_n\lambda$ and hence $a_nU_{n+1}=\{a_n\lambda\}$. It follows from $H(X)\notin a_n\U$ that $a_n\lambda\notin H(X)$. Since $\lambda^2\in H(X)$, we can replace $U_{n+1}$ by a smaller set in $\U$ and assume that $U_{n+1}\subseteq U_n$ and $U_{n+1}U_{n+1}\cap\{a_n\lambda\}=\emptyset$. Choose any element $a_{n+1}\in U_{n+1}$ and observe that $a_{n+1}$ and $U_{n+1}$ satisfy the inductive assumptions (iv)--(vii). This completes the inductive step.

After completing the inductive construction, consider the set $A=\{a_n\lambda\}_{n\in\w}\subseteq X\setminus H(X)$. We claim that this set is infinite. Indeed, for any numbers $n<m$ we have $a_m\in U_m\subseteq U_{n-1}$ and hence $\{a_m\lambda\}=a_mU_{m+1}\subseteq  U_{n-1}U_{n-1}\subseteq X\setminus\{a_n\lambda\}$ by the inductive conditions (vi) and (vii). For every $n\in \w$ we have $a_n\in U_n\subseteq W\subseteq{\Uparrow}e$ and hence $\pi(a_n\lambda)=\pi(\lambda)=e$. For any $n<m$ we have
 $a_n a_{m}\in a_nU_{n+1}=\{a_n\lambda\}\subseteq \korin{\IN}{e}$ and finally $$a_n\lambda a_m\lambda= a_na_m\lambda^2\in \korin{\IN}{e}\cdot H_e\subseteq H_e\subseteq H(X),$$by Lemma~\ref{l:C-ideal}. The commutativity of $X$ implies that $a_n\lambda a_m\lambda\in H(X)$ for any distinct $n,m\in\w$. On the other hand, for every $n\in\w$ the choice of the set $W$ with $W^{{\uparrow}2}\subseteq H(X)$ ensures that
 $$a_n\lambda a_n\lambda=a_n^2\lambda^2\in H(X)\cdot H_e\subseteq H(X).$$
 Therefore, $A=\{a_n\lambda\}_{n\in\w}$ is an infinite subet of $X\setminus H(X)$ such that $AA\subseteq H(X)$, which implies that $X$ is Clifford-singular. But this contradicts our assumption.
 \end{proof}

Now we are ready to prove that the set $X$ is closed in $Y$ and the set $X\setminus H(X)$ is closed and discrete in $X$. Assuming that $X$ is not closed in $Y$, we can find a point $y\in \bar X\setminus X$. Let $\F$ be the  filter generated by the base $O_y\cap X$ where $O_y$ runs over neighborhoods of $y$ in $Y$. By Lemma~\ref{l:H-closed}, the Clifford part $H(X)$ is closed in $Y$. Then  the set $X\setminus H(X)$ belongs to the filter $\F$. By the Kuratowski--Zorn Lemma, the filter $\F$ is contained in some ultrafilter $\U$. The ultrafilter $\U$ is free and converges to $y$. Also $X\setminus H(X)\in\F\subseteq\U$. But the existence of such an ultrafilter $\U$ is forbidden by Lemma~\ref{l:key}.

Next, we show that the set $X\setminus H(X)$ is closed and discrete in $X$. In the opposite case, we could choose a free ultrafilter $\U$ with $X\setminus H(X)\in\U$ that converges to some point of the space $X$. But the existence of such an ultrafilter contradicts  Lemma~\ref{l:key}.
\end{proof}

\section{A boundedness property of  injectively $\mathsf{T_{\!z}S}$-closed semigroups}\label{s:b1}

In this section we prove one boundedness property of an injectively $\mathsf{T_{\!z}S}$-closed commutative semigroup.  First we prove that the injective $\C$-closedness is preserved by attaching an external unit. 

A semigroup $X$ is called {\em unital} if it contains an element $1$ such that $1x=x=x1$ for all $x\in  X$.

For a semigroup $X$ we denote by $X^1$ the unital semigroup $X^1=\{1\}\cup X$ where $1\notin X$ is an element such that $x1=x=1x$ for all $x\in X^1$.

\begin{lemma}\label{l:+1} Let $\C$ be a class of $T_1$ topological semigroups. For every injectively $\C$-closed semigroup $X$ the unital semigroup $X^1$ is injectively $\C$-closed.
\end{lemma}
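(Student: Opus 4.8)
The plan is to unwind the definition directly: I would take an arbitrary injective continuous homomorphism $h\colon X^1\to Y$ into a topological semigroup $Y\in\C$ and show that the image $h[X^1]$ is closed in $Y$, since this is exactly what the injective $\C$-closedness of $X^1$ demands (recall both $X$ and $X^1$ carry the discrete topology).

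First I would restrict $h$ to the subsemigroup $X\subseteq X^1$. The restriction $h|_X\colon X\to Y$ is again a homomorphism (because $X$ is a subsemigroup of $X^1$ and $h$ is a homomorphism), it is injective (because $h$ is), and it is continuous (because $X$ carries the discrete topology). Thus $h|_X$ is an injective continuous homomorphism from $X$ into a member $Y$ of $\C$, so the injective $\C$-closedness of $X$ yields that $h[X]$ is closed in $Y$.

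Next I would observe that $h[X^1]=h[X]\cup\{h(1)\}$, so that $h[X^1]$ is obtained from the closed set $h[X]$ by adjoining the single point $h(1)$. This is where the hypothesis that $\C$ consists of $T_1$ semigroups is used: since $Y$ is a $T_1$ space, every singleton, in particular $\{h(1)\}$, is closed in $Y$. Hence $h[X^1]=h[X]\cup\{h(1)\}$ is a union of two closed sets and is therefore closed in $Y$. As $h$ was an arbitrary injective continuous homomorphism into a member of $\C$, this proves that $X^1$ is injectively $\C$-closed.

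The argument is essentially immediate, and I do not expect a genuine obstacle; the only point requiring attention is recognizing that the $T_1$ separation axiom is precisely what renders the adjoined unit $h(1)$ harmless. Without closed singletons one could not in general guarantee that attaching the extra point keeps the image closed, so the $T_1$ hypothesis in the statement is exactly the feature that makes the reduction to the closedness of $h[X]$ work.
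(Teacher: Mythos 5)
Your proposal is correct and follows exactly the paper's argument: restrict the injective continuous homomorphism to $X$, use the injective $\C$-closedness of $X$ to get that $h[X]$ is closed, and use the $T_1$ axiom to adjoin the closed singleton $\{h(1)\}$. The only difference is that you spell out explicitly why the restriction $h{\restriction}_X$ is an injective continuous homomorphism, which the paper leaves implicit.
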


\begin{proof} Let $h:X^1\to Y$ be any injective homomorphism to a topological semigroup $Y\in\C$. Since $X$ is injectively $\C$-closed, the image $h[X]$ is closed in $Y$. Since the singleton $\{h(1)\}$ is closed in the $T_1$ space $Y$, the image $h[X^1]=h[X]\cup\{h(1)\}$ is closed in $Y$, witnessing that the semigroup $X^1$ is injectively $\C$-closed.
\end{proof}

\begin{proposition}\label{p:vsim-grobikam-grobik} Let $X$ be an injectively $\mathsf{T_{\!z}S}$-closed semigroup. If the poset $E(X)$ is well-founded and the semigroup $X$ is viable, nonsingular and eventually Clifford, then $Z(X)\subseteq\sqrt[l]{H(X)}$ for some $l\in\IN$.
\end{proposition}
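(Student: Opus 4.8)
The plan is to argue by contradiction, exploiting the external reformulation of injective closedness: a discrete semigroup is injectively $\mathsf{T_{\!z}S}$-closed precisely when every isomorphic copy of it sitting as a subsemigroup of a zero-dimensional Hausdorff topological semigroup is closed. So, assuming $Z(X)\not\subseteq\korin{l}{H(X)}$ for every $l\in\IN$, I would manufacture a zero-dimensional Hausdorff topological semigroup $Y$ containing (a unital enlargement of) $X$ as a \emph{non-closed} subsemigroup, contradicting the hypothesis.

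First I would set up the witnesses. Since $X=\korin{\IN}{H(X)}$ is eventually Clifford, the index $t(z)\defeq\min\{n\in\IN:z^n\in H(X)\}$ is well defined for each $z\in Z(X)$ (it is monotone by Lemma~\ref{l:pi-well-defined}), and the failure of the conclusion produces a sequence $(z_k)_{k\in\w}$ in $Z(X)$ with $t(z_k)\to\infty$. By Lemma~\ref{l:piZ} the idempotents $\pi(z_k)$ all lie in the central semilattice $EZ(X)$, and by Theorem~\ref{t:Z}(4) the center $Z(X)$ is group-finite, so every central maximal subgroup $H_{\pi(z_k)}\cap Z(X)$ is finite. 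By Lemma~\ref{l:+1} the unital semigroup $X^1$ is again injectively $\mathsf{T_{\!z}S}$-closed, so I may work inside $X^1$ and use its unit, which is convenient for recognizing the generating sets of a remote base as genuine neighborhoods.

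The heart of the construction is to adjoin to $X^1$ a fresh central idempotent $\eta\notin X$, to extend the multiplication so that $\eta$ lies below the relevant central elements in the binary quasiorder (i.e. $w\eta=\eta$) while staying a central idempotent, and then to topologize $Y$ by the $\eta$-base $\mathcal Z[\eta)$ of Example~\ref{ex:Ze}. Using that $X$ is nonsingular, eventually Clifford and that $E(X)$ is well-founded, I would verify that the enlarged $Y$ retains all three properties (nonsingularity, eventual Cliffordness, and well-foundedness of $E(Y)$), so that Theorem~\ref{t:TS}(5b) makes $\mathcal Z[\eta)$ a regular $\eta$-base and Theorem~\ref{t:TS}(1,4) turns $(Y,\Tau_{\mathcal Z[\eta)})$ into a zero-dimensional Hausdorff topological semigroup. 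It then remains to check that $\eta$ is non-isolated and lies in $\overline{X}$: every basic neighborhood $\Lambdae(\eta;V)$ must meet $X$, which amounts to producing, for each $n\in\IN$ and each finite forbidden set $F$, a central $w\in X$ with $w\eta=\eta$, with $\pi(w)\notin{\uparrow}F$, and with $w^{\,n}\notin H(X)$ (the unbounded indices $t(z_k)$ being what supplies such powers). This would give $\eta\in\overline{X}\setminus X$ and contradict the injective $\mathsf{T_{\!z}S}$-closedness of $X^1$.

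The main obstacle is exactly this non-isolation requirement together with the bookkeeping of idempotents. The central elements feeding the neighborhoods of $\eta$ must survive the filtering by \emph{every} finite $F\subseteq E(Y)\setminus{\downarrow}\eta$, i.e. their $\pi$-images must avoid ${\uparrow}F$; yet if $w\eta=\eta$ then $w^{\,m}\eta=\eta$ for all $m$, and since $w^{\,m}\in H_{\pi(w)}$ for large $m$ and this central group is finite, one is forced into $\pi(w)\,\eta=\eta$, which is incompatible with protecting $\pi(w)$ inside ${\downarrow}\eta$. Hence a naive global adjunction is obstructed exactly when all witnesses share one idempotent (unbounded nilpotent chains landing on a single $\pi(z_k)$). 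I expect to resolve this by a dichotomy on the distribution of $\{\pi(z_k)\}$ in the well-founded semilattice $EZ(X)$: when these idempotents are spread out, $\eta$ is adjoined as a fresh lower bound of an infinite family of central idempotents admitting no common lower bound in $X$; in the concentrated case one must instead drive the construction by nonsingularity, which forces the pairwise products of the witnesses to remain spread out and thereby supplies the infinitely many independent central elements needed to make $\eta$ non-isolated. Carrying out this case split, together with the routine but delicate verification that the adjunction preserves nonsingularity and the well-foundedness of $E(Y)$ so that Theorem~\ref{t:TS} keeps applying, is the technical core of the proof.
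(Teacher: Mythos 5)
Your plan is genuinely different from the paper's, and it contains a fatal gap: with the topology you specify, the adjoined idempotent $\eta$ is \emph{always isolated}, no matter how you extend the multiplication, so the construction can never contradict injective $\Zero$-closedness. Here is the concrete obstruction. In any semigroup $Y=X^1\cup\{\eta\}$ with $\eta\eta=\eta$, associativity alone forces the set $A\defeq\{x\in X^1:x\eta=\eta\}$ to be a subsemigroup of $X^1$ (if $x,y\in A$ then $(xy)\eta=x(y\eta)=x\eta=\eta$), and in $Y$ one has $\tfrac{\eta}{\eta}=A\cup\{\eta\}$. By Lemma~\ref{l:base-e}, the sets $\Lambda^{\eta}(\eta;V)=\{\eta\}\cup\bigl(\tfrac{\eta}{\eta}\cdot V\bigr)$ with $V\in\mathcal Z[\eta)$ form a neighborhood base at $\eta$, and such a neighborhood meets $X^1$ only if $V$ contains some $z^n$ with $z\in Z(X^1)\cap A$ and $\pi(z)\notin{\uparrow}F$. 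Now use that $X$ is injectively $\Zero$-closed, hence $\Zero$-closed, so by Theorem~\ref{t:Z}(1) the semigroup $Z(X)$ is chain-finite and periodic. The set $P\defeq\pi[Z(X^1)\cap A]\subseteq EZ(X^1)$ is closed under meets: $Z(X^1)\cap A$ is a subsemigroup, and $\pi(zz')=\pi(z)\pi(z')$ for commuting $z,z'$ (apply $q$ and use the injectivity of $q{\restriction}_{E(X)}$, exactly as in the paper's proof of Lemma~\ref{l:cdiv}(2)). A nonempty meet-closed subset of the chain-finite semilattice $EZ(X^1)$ has a \emph{least} element $m$: any minimal element $m$ of $P$ satisfies $m\pi(z)\in P$ and $m\pi(z)\le m$, hence $m\le\pi(z)$ for all $z$. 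By periodicity $m=\pi(z_0)=z_0^N$ for some $z_0\in Z(X^1)\cap A$, so $m\in A$, so $m\eta=\eta\ne m$ and $m\notin{\downarrow}\eta$. Therefore $F\defeq\{m\}$ is an admissible forbidden set in Example~\ref{ex:Ze}, and for it every $z\in Z(X^1)\cap A$ has $\pi(z)\in{\uparrow}m={\uparrow}F$, so $V=\{\eta\}$ and $\Lambda^{\eta}(\eta;V)=\{\eta\}$. (If $Z(X^1)\cap A=\emptyset$, the same conclusion is immediate.) Thus $\eta$ is isolated in $\Tau_{\mathcal Z[\eta)}$ and $X$ stays closed in $Y$. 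Your proposed dichotomy (spread-out versus concentrated witness idempotents) cannot repair this: however the witnesses are distributed, the meet-closure of their idempotents always has a minimum, and that minimum always sits in $A$ rather than in ${\downarrow}\eta$. This is also a sharper version of the obstruction you yourself sensed but misdiagnosed: the problem is not that $\pi(w)\eta=\eta$ conflicts with ${\downarrow}\eta$ (the relation $\eta\le\pi(w)$ is exactly what one wants), but that the forced minimum $m$ of the $\pi$-images can be used as a one-element forbidden set.

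This obstruction explains why the paper's proof adjoins no idempotent at all. After the Ramsey thinning and the minimality choice of $e$ (Claim~\ref{cl:antichain} and the surrounding setup), it fixes a free ultrafilter $\U$ on $\w$ and adjoins the whole orbit $\{xz_\U:x\in H_s\}$ of translates of the ultrafilter $z_\U$ by the finite maximal group $H_s$; these new points are \emph{not} idempotents, since the multiplication is defined so that $(xz_\U)*(yz_\U)=xeye\in H_e\subseteq X$, and the need for an entire $H_s$-orbit (rather than one point) reflects the nontrivial action of $H_s$ on the limit. Moreover the topology used is not $\Tau_{\mathcal Z[e)}$ itself but the hand-built topology of Definition~\ref{d:Tau}, which mixes the $e$-base neighborhoods with the sets $\nabla(U)$ and the special neighborhoods $\Lambda(xz_\U;U)$; Hausdorffness, zero-dimensionality and continuity of $*$ are then verified directly in Lemmas~\ref{l:T-zerodim} and \ref{l:conti}. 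So the engine of the paper's argument (ultrafilter limits carried by a group orbit, plus a custom topology) is essentially disjoint from the engine you propose (a fresh idempotent plus the off-the-shelf $e$-base machinery), and the latter provably cannot run.
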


%viable $E$-saturated semigroup such that the poset $E(X)$ is well-founded.
%\begin{enumerate}
%\item If each maximal subgroup of $X$ is polybounded, then the semigroup $H(X)%\cap Z(X)$ is bounded.
%\item If $X$ is nonsingular and eventually Clifford,
%\item If $X$ is nonsingular, periodic and group-bounded, then $Z(X)$ is bounded.
%\end{enumerate}
%\end{proposition}

\begin{proof} By Lemma~\ref{l:+1}, we lose no generality assuming that the semigroup $X$ is unital and hence contains an element $1$ such that $x1=x=1x$ for all $x\in X$. Let $\pi:X\to E(X)$ be the function assigning to each $x\in X$ the unique idempotent $\pi(x)\in E(X)$ such that $x\in \korin{\IN}{H_{\pi(x)}}$. Since $X$ is eventually Clifford, the map $\pi$ is well-defined.

Let $q:X\to X/_{\Updownarrow}$ be the quotient homomorphism of $X$ onto its semilattice reflection. Since $X$ is viable and eventually Clifford, the restriction $q{\restriction}_{E(X)}:E(X)\to X/_{\Updownarrow}$ is bijective and for every idempotent $e\in E(X)$, the set ${\Updownarrow}e=q^{-1}(q(e))=\korin{\IN}{H_e}=\pi^{-1}(\pi(e))$ is a unipotent subsemigroup of $X$. It follows that $\pi(x)\in{\Updownarrow}x=\korin{\IN}{H_{\pi(x)}}$ for every $x\in X$.
%By Lemma~\ref{l:pi2}, the bijective map $q{\restriction}_{E(X)}:E(X)\to X_{\Updownarrow}$ is an isomorphism of the posets $E(X)$ and $X/_{\Updownarrow}$. Consequently, the semilattice $X/_{\Updownarrow}$ is well-founded.

By Theorem~\ref{t:Z}(1,4), the semigroup $Z(X)$ is chain-finite, periodic, nonsingular and group-finite.

\begin{claim}\label{cl:Z-korin-bounded} For every idempotent $e\in E(X)$, the commutative subsemigroup $Z(X)\cap\korin{\IN}{H_e}$ of $X$ is bounded.
\end{claim}

\begin{proof} Since $Z(X)$ is chain-finite, periodic, nonsingular and group-finite,  the unipotent subsemigroup $Z(X)\cap\korin{\IN}{H_e}$ of $Z(X)$ also is  chain-finite, periodic, nonsingular and group-finite. By Theorems~\ref{t:C} and \ref{t:CUS}, the unipotent semigroup $Z(X)\cap\korin{\IN}{H_e}$ is $\mathsf{T_{\!1}S}$-closed and bounded.
\end{proof}

To derive a contradiction, assume that $Z(X)\not\subseteq\korin{\ell}{H(X)}$ for all $\ell\in\IN$. The periodicity of $Z(X)$ implies that $Z(X)\subseteq \bigcup_{e\in EZ(X)}{\Updownarrow}e\subseteq\bigcup_{e\in EZ(X)}{\Uparrow}e$. 
Since the central semilattice $EZ(X)=E(Z(X))$ is chain-finite and $Z(X)\not\subseteq \korin{\ell}{H(X)}$ for all $\ell\in\IN$, there exists an idempotent $e\in EZ(X)$ such that 
\begin{itemize}
\item $Z(X)\cap {\Uparrow}e\not\subseteq\korin{\ell}{H(X)}$ for all $\ell\in\IN$, and
\item for every idempotent $u\in EZ(X)$ with $e<u$ there exists $\ell\in\IN$ such that $Z(X)\cap{\Uparrow}u\subseteq\korin{\ell}{H(X)}$. 
\end{itemize}
%For a bounded subsemigroup $B$ of $X$ let $\exp(B)$ be the smallest number $n\in\IN$ such that $x^n=\pi(x)$ for all $x\in B$.

For every $n\in\IN$, let $$Z_{n!}\defeq\{z^{n!}:z\in Z(X)\cap\tfrac ee\}.$$

\begin{claim}\label{cl:antichain} There exists a sequence $(z_k)_{k\in\w}\in\prod_{k\in\w}Z_{k!}$ such that
\begin{enumerate}
\item $z_k\notin{\Uparrow}z_n$ and $\pi(z_k)\not\le\pi(z_n)$ for any distinct numbers $k,n\in\w$;
\item $z_k\notin H(X)$ and $z_k^2\in H(X)$ for every $k\in\w$.
\end{enumerate}
\end{claim}

\begin{proof} By the choice of $e$, for every $k\in\w$ there exists an element $\zeta_k\in Z(X)\cap {\Uparrow}e\setminus\korin{k!}{H(X)}$. Let $[\w]^2$ be the family of two-element subsets of $\w$. Consider the function $\chi:[\w]^2\to\{0,1,2\}$ defined by
$$\chi(\{n,m\})=\begin{cases}0&\mbox{if $\pi(\zeta_n)=\pi(\zeta_m)$};\\
1&\mbox{if $\pi(\zeta_n)<\pi(\zeta_m)$ or $\pi(\zeta_m)<\pi(\zeta_n)$};\\
2&\mbox{otherwise}
\end{cases}
$$
By the Ramsey Theorem 5 \cite{Ramsey}, there exists an infinite set $\Omega\subseteq \w$ such that $\chi[[\Omega]^2]=\{c\}$ for some $c\in\{0,1,2\}$. If $c=0$, then the set $\{\pi(z_n)\}_{n\in\Omega}$ contains a unique idempotent $u$ and hence $\{\zeta_k\}_{k\in\Omega}\subseteq Z(X)\cap \korin{\IN}{H_u}$. By Claim~\ref{cl:Z-korin-bounded}, the semigroup $Z\cap\korin{\IN}{H_u}$ is bounded and hence $Z(X)\cap\korin{\IN}{H_u}\subseteq\korin{\ell}{u}$ for some $\ell\in\IN$. Then for any $k\in\Omega$ with $k\ge \ell$ we have $\zeta_k^{k!}=(z_k^\ell)^{k!/\ell}=u^{k!/\ell}=u\in  E(X)\subseteq H(X)$, which contradicts the choice of $\zeta_k\notin \korin{k!}{H(X)}$. Therefore, $c\ne 0$. If $c=1$, then the set $\{\pi(\zeta_k)\}_{k\in\Omega}$ is an infinite chain in $EZ(X)$ which is not possible as $Z(X)$ is chain-finite. Therefore, $c=2$ and hence $\{\pi(\zeta_k)\}_{k\in\Omega}$ is an infinite antichain in the poset $EZ(X)$. Lemma~\ref{l:pi2} ensures that $\pi(z_k)\notin{\Uparrow}\pi(z_n)$ and hence $z_k\notin{\Uparrow}z_n$ for any distinct numbers $k,n\in\Omega$. 
\smallskip

By Claim~\ref{cl:Z-korin-bounded}, the semigroup $Z(X)\cap\korin{\IN}{H_e}$ is bounded and hence $Z(X)\cap\korin{\IN}{H_e}\subseteq\korin{\ell}{e}$ for some $\ell$.
Replacing $\Omega$ by a smaller infinite set, we can assume that $\ell\le\min\Omega$.
For every $k\in\Omega$, choose a unique number $p_k\in\IN$ such that $\zeta_k^{k!p_k}\notin H(X)$ but $\zeta_k^{k!(p_k+1)}\in H(X)$. The number $p_k$ exists since $\zeta_k^{k!}\notin H(X)$ and the semigroup $Z(X)$ is periodic. Lemma~\ref{l:pi-well-defined} implies that $(\zeta_k^{k!})^{2p_k}\in H(X)$.

By Lemma~\ref{l:Tamura}, $H_e$ is an ideal in ${\Uparrow}e$. This implies $\zeta_ke\in Z(X)\cap H_e$ and hence $\zeta_k^{k!p_k}e=\big((\zeta_ke)^\ell\big)^{k!p_k/\ell}=e$ and  $\zeta_k^{k!p_k}\in\frac ee$. 

 Write the infinite set $\Omega$ as $\{n_k\}_{k\in\w}$ for some strictly increasing sequence $(n_k)_{k\in\w}$. Put $z_k\defeq\zeta_{n_k}^{n_k!p_{n_k}}$ for every $k\in\w$ and observe that the sequence $(z_k)_{k\in\w}$ belongs to $\prod_{k\in\w}Z_{k!}$ and satisfies the conditions (1), (2) of Claim~\ref{cl:antichain}. 
\end{proof}

Fix any free ultrafilter $\U$ on the set $\w$. Consider the set $$E_\U\defeq\big\{u\in E(X):\{n\in\w:z_nu=z_n\}\in\U\big\}$$ and observe that $E_\U$ contains the element  $1\in E(X)$ and hence $E_\U$ is not empty. Since the poset $E(X)$ is well-founded, there exists an idempotent $s\in E_\U$ such that $E_\U\cap{\downarrow}s=\{s\}$. 

\begin{claim}\label{cl:s-min} $E_\U\subseteq{\uparrow}s$.
\end{claim}

\begin{proof}  Take any idempotent $u\in E_\U$. By the definition of $E_\U\ni u,s$, the set $$U\defeq\{n\in\w:z_nu=z_n\}\cap\{n\in\w:z_ns=z_n\}$$ belongs to the ultrafilter $\U$. Observe that for every $n\in U$ we have $z_nus=z_ns=z_n$ and hence $z_n(us)^k=z_n$ for every $k\in\IN$. Since $X$ is eventually Clifford, there exists $k\in\IN$ such that $(us)^k\in H_{\pi(us)}$ and hence $(us)^k=(us)^k\pi(us)$.
Then for every $n\in U$ we have
$$z_n=z_n(us)^k=z_n(us)^k\pi(us)=z_n\pi(us)$$and hence $\pi(us)\in E_\U$. It follows from $\pi(us)\Updownarrow us\lesseq s$ and Lemma~\ref{l:pi2} that $\pi(us)\le s$ and hence $\pi(us)=s$ by the minimality of $s$. Also $s=\pi(us)\Updownarrow us\lesseq u$ and Lemma~\ref{l:pi2} imply $s\le u$ and $u\in{\uparrow}s$.
\end{proof}     

By the definition of the set $E_\U$, the set $$U_s\defeq\{n\in\w:z_ns=z_n\}$$ belongs to the free ultrafilter $\U$ and hence is infinite.

\begin{claim}\label{cl:e<s} $e<\pi(z_n)<s$ for every $n\in U_s$.
\end{claim}

\begin{proof} Observe that $z_n\in \frac ee$ implies $z_ne=e$ and $e\lesseq z_n= z_ns\lesseq s$, see Lemma~\ref{l:quasiorder}. Then $e\lesseq z_n\Updownarrow \pi(z_n)\lesseq s$ and hence $e\le \pi(z_n)\le s$ by Lemma~\ref{l:pi2}. Assuming that $\pi(z_n)=e$ (or $\pi(z_n)=s$), we can take any $m\in U_s\setminus\{n\}$ and conclude that $\pi(z_n)=e\le\pi(z_m)$ (or $\pi(z_m)\le s=\pi(z_n)$), which contradicts Claim~\ref{cl:antichain}(1).
\end{proof}

Let $$\mathcal Z[e)\defeq\big\{\{z^{n}:z\in Z(X)\cap \tfrac ee\setminus\pi^{-1}[{\uparrow}F]\}:n\in\IN,\;F\in[E(X)\setminus{\downarrow}e]^{<\w}\big\}$$be the $e$-base from Example~\ref{ex:Ze}. Lemma~\ref{l:pi1} ensures that each set $V\in\mathcal Z[e)$ is a subsemigroup of $X$. Since the poset $E(X)$ is well-founded and the semigroup $X$ is nonsingular and eventually Clifford, the $e$-base $\mathcal Z[e)$ is regular and hence any topology $\Tau\supseteq \Tau_{\mathcal Z[e)}$ on $X$ is Hausdorff and zero-dimensional, according to Example~\ref{ex:Ze} and Theorem~\ref{t:TS}. 

\begin{claim}\label{cl:zn-converge} For every $V\in \mathcal Z[e)$ the set $\{n\in U_s:z_n\notin V\}$ is finite.
\end{claim}

\begin{proof} By the definition of the family $\mathcal Z[e)\ni V$, there exist $l\in\IN$ and $F\in[E(X)\setminus{\downarrow}e]^{<\w}$ such that $V=\{z^l:z\in Z(X)\cap\frac ee\setminus\pi^{-1}[{\uparrow}F]\}$. 

We claim that for every $f\in F$, there exists $\ell(f)\in\IN$ such that $Z(X)\cap{\Uparrow}e\cap {\Uparrow}f\subseteq\korin{\ell(f)}{H(X)}$. Since the poset $E(X)$ is well-founded, the nonempty subsemilattice $L\defeq EZ(X)\cap {\Uparrow}e\cap{\Uparrow}f\ni 1$ of $EZ(X)$ contains an idempotent $e_f\in L$ such that $L\subseteq {\uparrow}e_f\subseteq {\Uparrow}e_f$. It follows from $e_f\in L\subseteq {\Uparrow}e$ that $e\lesseq e_f$ and hence $e\le e_f$, see Lemma~\ref{l:pi2}. Assuming that $e=e_f$, we conclude that $f\in{\Downarrow}e_f={\Downarrow}e$, which contradicts the choice of $f\in F\subseteq E(X)\setminus{\downarrow}e=E(X)\setminus{\Downarrow}e$, see Lemma~\ref{l:pi2}. Therefore, $e<e_f$. Now the choice of the idempotent $e$ ensures that   $Z(X)\cap{\Uparrow}e_f\subseteq\korin{\ell(f)}{H(X)}$ for some $\ell(f)\in\IN$. We claim that $Z(X)\cap {\Uparrow}e\cap{\Uparrow}f\subseteq{\Uparrow}e_f$. Indeed, take any $x\in Z(X)\cap{\Uparrow}e\cap{\Uparrow}f$. By Lemma~\ref{l:piZ}, $\pi(x)\in EZ(X)\cap {\Uparrow}e\cap{\Uparrow}f=L\subseteq {\Uparrow}e_f$ and hence $x\in{\Updownarrow}\pi(x)\subseteq {\Uparrow}e_f$.

Choose $m\in\IN$ such that $m\ge \ell(f)$ for every $f\in F$. We claim that for every $k\ge m$ we have $\pi(z_k)\notin {\uparrow}F$. In the opposite case we could find  $k\ge m$ and $f\in F$ such that $\pi(z_k)\in{\uparrow}f$ and hence $z_k\in {\Updownarrow}\pi(z_k)\subseteq{\Uparrow}f$. Then $z_k\in Z(X)\cap\frac ee\cap{\Uparrow}f\subseteq Z(X)\cap{\Uparrow}e\cap{\Uparrow}f\subseteq Z(X)\cap {\Uparrow}e_f\subseteq\korin{\ell(f)}{H(X)}$ and hence $z_k^{\ell(f)}\in H(X)$. Since $\ell(f)\le m\le k$, we conclude that $z_k^{k!}=(z_k^{\ell(f)})^{k!/\ell(f)}\in H_{\pi(z_k)}^{k!/\ell(f)}\subseteq H_{\pi(z_k)}\subseteq H(X)$, which contradicts the choice of $z_k$ in Claim~\ref{cl:antichain}(2). This contradiction shows that $z_k\notin\pi^{-1}[{\uparrow}F]$ and hence for every $k\ge \max\{m,l\}$ we have $$z_k\in Z_{k!}\setminus\pi^{-1}[{\uparrow}F]=\{z^{k!}:z\in Z(X)\cap\tfrac ee\setminus\pi^{-1}[{\uparrow}F]\}\subseteq
\{z^l:z\in Z(X)\cap\tfrac ee\setminus\pi^{-1}[{\uparrow}F]\}=V.$$Then the set $\{n\in U_s:z_n\notin V\}\subseteq\{n\in U_s:n\le \max\{m,l\}\}$ is finite.
\end{proof}

%Consider the ideal $$I\defeq X\setmnius\bigcup_{n\in U_s}{\Uparrow}z_n$$ in $X$.
%Let $\Phi=(\Phi_x)_{x\in X}$ be the $e$-remote base defined by
%$$\Phi_x\defeq\begin{cases}
%\mathcal Z[e)_x&\mbox{if $x\in X\setminus{\Uparrow}e$};\\
%\{V\cap H_Z(X):V\in \mathcal Z[e)_x\}
%&\mbox{if $x\in H_e$};\\
%\{\{e\}\}&\mbox{otherwise}
%\end{cases}
%$$
%%Taking into account that $H_Z(X)$ is a subsemigroup of $X$ and $X\setminus{\Uparrow}e$ and $(X\setminus{\Uparrow}e)\cup H_e$ are ideals in $X$, we can show that $\Phi$ is indeed an $e$-remote base. The regularity of the $e$-remote base $\mathcal Z[e)$ implies the regularity of the remote base $\Phi$. Let $\Tau_\Phi$ be the topology on $X$ consisting of the sets $W\subseteq X$ such that for every $x\in W$ there exits $V\in\Phi_x$ such that $\Lambdae(x;V)\subseteq W$. By Theorem~\ref{t:TS}, $(X,\Tau_\Phi)$ is a Hausdorff zero-dimensional topological semigroup. To derive a contradition with the injective $\mathsf{T_{\!z}S}$-closedness, it suffices to find a Hausdorff zero-dimensional topological semigroup $Y$ containing $(X,\Tau_\Phi)$ as a non-closed subsemigroup.

For any subset $U\subseteq \w$ consider the set $$z_U\defeq\{z_n:n\in U\}\subseteq Z(X)\cap\tfrac ee.$$
Let $z_\U$ be the free ultrafilter on $X$ generated by the base $\{z_U:U\in\U\}$. The choice of $U_s$ ensures that $sz_n=z_n$ for all $n\in U_s\in\U$ and hence $sz_\U=z_\U$. The ultrafilter $z_\U$ is {\em central} in the sense that $xz_\U=z_\U x$ for all $x\in X$.

Observe that for every $x\in H_s$ we have
$$z_\U=sz_\U=x^{-1}xz_\U,$$
which implies that the ultrafilter $xz_\U$ is free.

Lemma~\ref{l:Tamura} implies the following useful fact.

\begin{claim}\label{cl:xs} For every $x\in{\Uparrow}s$ we have $xs\in H_s$ and $xz_\U=(xs)z_\U\in\{yz_\U:y\in H_s\}$.
\end{claim}

In the maximal group $H_s$ consider the subgroup
$$G\defeq\{x\in H_s:xz_\U=z_\U\}.$$
Since the ultrafilter $z_\U$ is central, for any $x\in H_s$ and any $g\in G$ we have $xgx^{-1}z_\U=xgz_\U x^{-1}=xz_\U x^{-1}=xx^{-1}z_\U=sz_\U=z_\U$ and hence $xgx^{-1}\in G$, which means that $G$ is a normal subgroup in $H_s$.

\begin{claim}\label{cl:well-defined} Let $x,y\in H_s$ be two elements such that $xz_\U=yz_\U$. Then
\begin{enumerate}
\item $xG=yG$;
\item $xe=ye$;
\item $xtz_\U=ytz_\U$ for every $t\in{\Uparrow}s$.
\end{enumerate}
\end{claim}

\begin{proof} 1. Let $y^{-1}$ be the inverse element to the element $y$ in the group $H_s$. Multiplying the equality $xz_\U=yz_\U$ by $y^{-1}$, we obtain $y^{-1}xz_\U=y^{-1}yz_\U=s z_\U=z_\U$ and hence $y^{-1}x\in G$ and finally, $xG=yG$.
\smallskip

2. Since $xz_\U=yz_\U$ and $U_s\in\U$, there exist numbers $n,m\in U_s$ such that $xz_n=yz_m$. The choice of the elements $z_n,z_m\in\frac ee$ ensures that $z_ne=e=z_me$. Then $xe=xz_ne=yz_me=ye$.
\smallskip

3. Choose any $t\in{\Uparrow}s$. By Claim~\ref{cl:xs}, $ts\in H_s$. Since $xG=yG$, there exists $g\in G$ such that $x=yg$. Since $G$ is a normal subgroup in $H_s$, there exists $g'\in G$ such that $g(ts)=(ts)g'=tg'$. Then
$$xtz_\U=xtsz_\U=ygtsz_\U=ytg' z_\U=ytz_\U.$$
\end{proof}

Consider the set
$$Y\defeq X\cup\{xz_\U:x\in H_s\}.$$
 Extend the semigroup operation of the semigroup $X$ to a binary operation $*:Y\times Y\to Y$ defined by
$$a*b=\begin{cases}
ab&\mbox{if $a,b\in X$};\\
ayz_\U&\mbox{if $a\in {\Uparrow}s$ and $b=yz_\U$ for some $y\in H_s$;}\\
aye&\mbox{if $a\in X\setminus{\Uparrow}s$ and $b=y z_\U$ for some $y\in H_s$;}\\
xb z_\U&\mbox{if $a=x z_\U$ for some $x\in H_s$ and $b\in {\Uparrow}s$;}\\
xeb&\mbox{if $a=xz_\U$ for some $x\in H_s$ and $b\in X\setminus{\Uparrow}s$};\\
xeye&\mbox{if $a=xz_\U$ and $b=yz_\U$ for some $x,y\in H_s$.}
\end{cases}
$$
Claim~\ref{cl:well-defined} ensures that the operation $*$ is well-defined.
Now we define a Hausdorff zero-dimensional topology $\Tau$ on $Y$ making the binary operation $*$ continuous. 

Consider the ideal $$I\defeq X\setminus\bigcup_{n\in U_s}{\Uparrow}z_n$$in $X$.

\begin{claim}\label{cl:nmI} For every distinct numbers $n,m\in\w$ we have $z_nz_m\in I$.
\end{claim}

\begin{proof} Assuming that $z_nz_m\notin I$, we conclude that $z_nz_m\in{\Uparrow}z_k$ for some $k\in U_s$. Then $z_n,z_m\in{\Uparrow}z_nz_m\subseteq{\Uparrow}z_k$ and hence $n=k=m$ by Claim~\ref{cl:antichain}(1), which contradicts the choice of the numbers $n,m$.
\end{proof}

For every $x\in X$ let
$$l_x\defeq
\begin{cases}
1&\mbox{if $x\notin{\Uparrow}s$};\\
2&\mbox{if $x\in{\Uparrow}s$};
\end{cases}
$$
and
$$U_x=\begin{cases}
U_s&\mbox{if $x\in{\Uparrow}s$};\\
\{n\in U_s:\pi(x)z_n\ne z_n\}&\mbox{if $x\notin{\Uparrow}s$}.
\end{cases}
$$

\begin{claim}\label{cl:UxU} For every $x\in X$ the set $U_x$ belongs to the ultrafilter $\U$.
\end{claim}

\begin{proof} If  $x\in{\Uparrow}s$, then the inclusion $U_x=U_s\in\U$ is evident. So, assume that $x\notin{\Uparrow}s$. Then $\pi(x)\notin{\Uparrow}s$ and by Claim~\ref{cl:s-min}, $\pi(x)\notin E_\U$ and $\{n\in\w:z_n\pi(x)=z_n\}\notin\U$. Since $\U$ is an ultrafilter, $\{n\in\w:z_n\pi(x)=z_n\}\notin\U$ implies $\{n\in \w:z_n\pi(x)\ne z_n\}\in\U$ and hence $U_x=\{n\in U_s:z_n\pi(x)\ne z_n\}\in\U$.
\end{proof}

For every $U\in\U$, consider the set
$$\nabla(U)\defeq I\cup \{xz_n^k:x\in X,\;n\in U\cap U_x,\;k\ge l_x\}.$$

\begin{claim}\label{cl:nabla} For every $x\in H_s$ we have
$xz_{U_s}\cap\nabla(\w)=\emptyset$.
\end{claim}

\begin{proof} Let $q:X\to X_{\Updownarrow}$ be the quotient homomorphism to the semilattice reflection of $X$. Observe that for every $n\in U_s$ we have
$$q(xz_n)=q(x)q(z_n)=q(s)q(z_n)=q(sz_n)=q(z_n)$$and hence $xz_n\in{\Updownarrow}z_n$ and $xz_n\notin I$. Assuming that $xz_{U_s}\cap\nabla(\w)\ne\emptyset$, we could find $y\in X$ and numbers $n\in U_s$ and $m\in U_y$ such that $xz_n=yz_m^k$ for some $k\ge l_y$. Applying the homomorphism $q$ to the equality $xz_n=yz_m^k$, we conclude that $$q(z_n)=q(sz_n)=q(s)q(z_n)=q(x)q(z_n)=q(xz_n)=q(yz_m^k)=q(y)q(z_m)\le q(z_m)$$and hence $n=m$ by Claim~\ref{cl:antichain}(1).
Then $xz_n=yz_m^k=yz_n^k$ and hence $$z_n=sz_n=x^{-1}xz_n=x^{-1}yz^k_n.$$

Assuming that $y\in {\Uparrow}s$, we conclude that $k\ge l_y=2$ and $z_n^k\in H_{\pi(z_n)}$ by Claim~\ref{cl:antichain}(2) and Lemma~\ref{l:C-ideal}. Then  $z_n=x^{-1}yz_n^k\in H_sH_{\pi(z_n)}\subseteq H_{\pi(z_n)}\subseteq H(X)$ by Claim~\ref{cl:e<s} and Lemma~\ref{l:Tamura}, which  contradicts the choice of $z_n$ in Claim~\ref{cl:antichain}(2).

This contradiction shows that $y\notin{\Uparrow}s$. Then $n=m\in U_s\cap U_y$ and $\pi(y)z_n\ne z_n$, by the definition of $U_y$.  On the other hand, $z_n=x^{-1}yz_n^k$ implies
$$z_n=x^{-1}yz_n^k=x^{-1}yz_nz_n^{k-1}=(x^{-1}y)^2z_n^kz_n^{k-1}=(x^{-1}y)^2z_nz_n^{2(k-1)}.$$Proceeding by induction, we can show that $z_n=(x^{-1}y)^jz_nz_n^{j(k-1)}$ for every $j\in\IN$. Since the semigroup $X$ is eventually Clifford, there exists $j\in\IN$ such that $(x^{-1}y)^j\in H_{\pi(x^{-1}y)}$. Lemma~\ref{l:Tamura} implies that $\pi(y)\pi(x^{-1}y)=\pi(x^{-1}y)$. Then
$$
z_n=(x^{-1}y)^jz_nz_n^{j(k-1)}=\pi(x^{-1}y)(x^{-1}y)^jz_nz_n^{j(k-1)}=\pi(x^{-1}y)z_n=\pi(y)\pi(x^{-1}y)z_n=\pi(y)z_n,
$$
which contradicts the inclusion $n\in U_y$.
\end{proof}

\begin{definition}\label{d:Tau}
Let $\Tau$ be the topology on $Y$ consisting of the sets $W\subseteq Y$ that satisfy the following conditions:
\begin{enumerate}
\item for any $x\in W\cap X$ with $x=xe\notin H_e$, there exists $V\in\mathcal Z[e)$ such that $\Lambdae(x;V)\setminus{\Uparrow}e\subseteq W$;
\item for any $x\in W\cap H_e$ there exists $V\in\mathcal Z[e)$ and $U\in\U$ such that $\Lambdae(x;V)\cap \nabla(U)\subseteq W$;
\item for any $x\in H_s$ with $xz_\U\in W$ there exists $U\in\U$ such that $xz_U\subseteq W$.
\end{enumerate}
\end{definition}

%Let $$\Phi\defeq\begin{cases}
%\mathcal H[e),&\mbox{if $X$ is singular};\\
%\mathcal Z[e),&\mbox{otherwise}
%\end{cases}
%$$
%By Theorem~\ref{t:TS-main}, the family $\Phi$ is a regular $e$-base for $X$ and $\Tau_\Phi$ is a zero-dimensional semigroup topology on $X$ such that for every $x\in X$ the family $$\{\Lambdae(x;V,F):V\in\Phi,\;F\in [E(X)]^{<\w}\}$$is a neighborhood base at $x$, consisting of clopen sets in $(X,\Tau_\Phi)$.

It is clear that $X$ is a nonclosed dense subset in $(Y,\Tau)$. It remains to prove that the topological space $(Y,\Tau)$ is Hausdorff and zero-dimensional, and the binary operation $*:Y\times Y\to Y$ is continuous with respect to the topology $\Tau$. This will be done in the following two lemmas.

\begin{lemma}\label{l:T-zerodim} The topological space $(Y,\Tau)$ is Hausdorff and zero-dimensional.
\end{lemma}

\begin{proof} We divide the proof of this lemma into a series of claims. 

\begin{claim}\label{cl:singleton-T} For every $a\in X$ with $a\ne ae$ the singleton $\{a\}$ is $\Tau$-clopen.
\end{claim}

\begin{proof} The singleton $\{a\}$ is  $\Tau$-open by the definition of the topology $\Tau$. To see that $\{a\}$ is $\Tau$-closed, consider the complement $W\defeq Y\setminus\{a\}$. Since the topology $\Tau_{\mathcal Z[e)}$ is Hausdorff, for every $x\in X\setminus\{a\}$ there exists a set $V\in \mathcal Z[e)$ such that $\Lambdae(x;V)\subseteq X\setminus\{a\}\subseteq W$. This witnesses that the set $W$ satisfies the conditions (1), (2) of Definition~\ref{d:Tau}. For every $x\in H_s$, the ultrafilter $xz_\U$ is free and hence there exists a set $U\in\U$ such that $a\notin xz_U$ and hence $xz_U\subseteq W$, so the condition (3) of Definition~\ref{d:Tau} is satisfied too and hence $\{a\}$ is $\Tau$-clopen.
\end{proof}

For every $x\in H_s$ and $U\in\U$, consider the subset
$$\Lambda(xz_\U;U)\defeq \{x z_\U\}\cup \{xz_n:n\in U\cap U_s\}$$
of $Y$.

\begin{claim}\label{cl:ultra-T} For every $a\in H_s$, the family $$\mathcal B_{az_\U}\defeq \{\Lambda(az_\U;U):U\in\U\}$$ consists of $\Tau$-clopen sets and is a neighborhood base of the topology $\Tau$ at $az _\U$.
\end{claim}

\begin{proof} First we show that for every $U\in\U$, the set $\Lambda(az_\U;U)$ is $\Tau$-open. Observe that for every $x\in X\cap\Lambda(az_\U;U)$, there exists $n\in U\cap U_s$ such that $x=az_n$. By Claim~\ref{cl:e<s} and Lemma~\ref{l:pi2}, $z_n\notin{\Updownarrow}e$ and hence $q(z_n)\ne q(e)$. It follows from $z_n\in\frac ee$ and $e<\pi(z_n)<s$ that
\begin{multline*}
q(xe)=q(az_ne)=q(ae)=q(a)q(e)=q(s)q(e)=q(se)=q(e)\\
\ne q(z_n)=q(sz_n)=q(s)q(z_n)=q(a)q(z_n)=q(az_n)=q(x)
\end{multline*}
 and hence $xe\ne x$. Then the conditions (1),(2) of Definition~\ref{d:Tau} are satisfied. To check the condition (3), take any $x\in H_s$ such that $xz_\U\in \Lambda(az_\U;U)$. Then $xz_\U=az_\U$ and hence there exists a set $U'\in\U$ such that $xz_{U'}\subseteq az_{U\cap U_s}\subseteq\Lambda(az_\U;U)$. Therefore, the set $\Lambda(sz_\U;U)$ satisfies the conditions (1)--(3) of Definition~\ref{d:Tau} and hence it is $\Tau$-open.

To see that $\Lambda(az_\U;U)$ is $\Tau$-closed, consider the set $W\defeq Y \setminus\Lambda(az_\U;U)$. We should check that the set $W$ satisfies the conditions (1)--(3) of Definition~\ref{d:Tau}.
\smallskip

1) Given any point $x\in X\cap W$ with $xe=x\notin H_e$, choose any set $V\in\mathcal Z[e)$ and observe that $\Lambdae(x;V)\setminus{\Uparrow}e\subseteq X\setminus{\Uparrow}e\subseteq X\setminus\Lambda(az_\U;U)\subseteq W$.
\smallskip

2) Given any point $x\in H_e\cap W$, take any $V\in\mathcal Z[e)$. By Claim~\ref{cl:nabla}, $\nabla(\w)\cap \Lambda(az_\U;U)=\emptyset$ and hence $\Lambdae(x;V)\cap \nabla(\w)\subseteq W$.
\smallskip

3) Finally take any point $x\in H_s$ with $xz_\U\in W$. Then $xz_\U\ne az_\U$. We claim that the set $U'=\{n\in U_s:xz_n\notin \Lambda(az_\U;U)\}$ belongs to the ultrafilter $\U$. In the opposite case, the set $U''=\{n\in U_s:xz_n\in \Lambda(az_\U;U)\}$ belongs to the ultrafilter $\U$. For every $n\in U''$, we can find $m\in U\cap U_s$ such that $xz_n=az_m$. Applying to this equality the homomorphism $q:X\to X/_{\Updownarrow}$, we conclude that $$q(z_n)=q(sz_n)=q(s)q(z_n)=q(x)q(z_n)=
q(xz_n)=q(az_m)=q(a)q(z_m)=q(s)q(z_m)=q(sz_m)=q(z_m)
$$ and hence $n=m$ by Claim~\ref{cl:antichain}(1). Then $xz_n=az_n$ for all $n\in U''\in\U$, which implies $xz_\U=az_\U$. But this contradicts the choice of $x$. This contradiction shows that the set $U'$ belongs to the ultrafilter $\U$ and hence $xz_{U'}\subseteq W$, witnessing that the condition (3) of Definition~\ref{d:Tau} is satisfied.
\smallskip

Therefore, the set $\Lambda(az_\U;U)$ is $\Tau$-clopen. Finally, we show that the family $\mathcal B_{az_\U}$ is a neighborhood base of the topology $\Tau$ at $a z_\U$. Given any neighborhood $W\in\Tau$ of $az_\U$, by Definition~\ref{d:Tau}, there exists a set $U\in\U$ such that $az_U\subseteq W$ and hence $\Lambda(az_\U;U)\subseteq\{az_\U\}\cup az_U\subseteq W$.
\end{proof}

The definition of the topology $\Tau$, Lemmas~\ref{l:Tamura}, \ref{l:base-e} and  Theorem~\ref{t:TS} imply the following

\begin{claim}\label{cl:notHe-T} For every $x\in X$ with $x=xe\notin H_e$ the family
$$\mathcal B_x=\{\Lambda^e(x;V)\setminus{\Uparrow}e:V\in\mathcal Z[e)\}$$ consists of $\Tau$-clopen sets and is a neighborhood base of the topology $\Tau$ at $x$.
\end{claim}

\begin{claim}\label{cl:He-T} For every $x\in H_e$ the family
$$\mathcal B_x=\{\Lambda^e(x;V)\cap\nabla(U):V\in\mathcal Z[e),\;U\in\U\}$$ consists of $\Tau$-clopen sets and is a neighborhood base of the topology $\Tau$ at $x$.
\end{claim}

\begin{proof} Fix any $V\in\mathcal Z[e)$ and $U\in\U$. By Definition~\ref{d:remote} and Lemma~\ref{l:Lamb}(2), $\Lambdae(x;V)\subseteq \Lambdae(x;\frac ee)\subseteq \frac{xe}{e}\subseteq{\Uparrow}e$. Observe that for every $y\in H_e\cap \frac{xe}e$, we have $y=ye=xe=x$. This observation and the definition of the topology $\Tau$ imply that the set $ \Lambdae(x;V)\cap\nabla(U)$ is $\Tau$-open. To show that this set is $\Tau$-closed, consider its complement $W\defeq Y\setminus (\Lambdae(x;V)\cap\nabla(U))$. By Theorem~\ref{t:TS}(3) and Example~\ref{ex:Ze}, the set $W\cap X$ is $\Tau_{\mathcal Z[e)}$-open, which implies that $W$ satisfies the conditions (1), (2) of Definition~\ref{d:Tau}.

To check the condition (3), fix any $y\in H_s$ with $xz_\U\in W$. By Claim~\ref{cl:nabla}, $\nabla(U)\cap\Lambda(yz_\U;U_s)\subseteq \nabla(\w)\cap \Lambda(yz_\U;U_s)=\emptyset$ and hence
$\Lambda(yz_\U;U_s)\subseteq Y\setminus \nabla(U)\subseteq W$.
\smallskip

Therefore, the set $\Lambdae(x;V)\cap\nabla(U)$ is $\Tau$-clopen. The definition of the topology $\Tau$ ensures that $\mathcal B_x$ is a neighborhood base of the topology $\Tau$ at $x$.
\end{proof}

Claims~\ref{cl:singleton-T}--\ref{cl:He-T} imply that the topology $\Tau$ satisfies the separation axiom $T_0$ and has a base consisting of $\Tau$-clopen sets. This implies that the topological space $(Y,\Tau)$ is Hausdorff and zero-dimensional.
\end{proof}

\begin{lemma}\label{l:conti} The binary operation $*:Y\times Y\to Y$ is continuous.
\end{lemma}

\begin{proof} Fix any elements $a,b\in Y$ and a neighborhood $O_{ab}\in\Tau$ of the element $a*b$. We should find neighborhoods $O_a,O_b\in\Tau$ of $a,b$ such that $O_a*O_b\subseteq O_{ab}$. According to the definition of the operation $*$, we will consider six cases.
\smallskip

1. First assume that $a,b\in X$. Then $a*b=ab$. This case has three subcases.
\smallskip

1.1. If $ab\ne abe$, then $ae\ne a$ and $be\ne b$ and hence $a,b$ are isolated points of $(X,\Tau)$ according to Claim~\ref{cl:singleton-T}. Then  the neighborhoods $O_a=\{a\}$ and $O_n=\{b\}$ have the desired property: $O_a*O_b=\{a*b\}\subseteq O_{ab}$.
\smallskip

1.2. Next, assume that $ab=abe$ and $ab\notin H_e$. By Claim~\ref{cl:notHe-T}, there exists a set $V\in\mathcal Z[e)$ such that $\Lambdae(ab;V)\setminus{\Uparrow}e\subseteq O_{ab}$. If $a,b\in{\Uparrow}e$, then $ab\in{\Uparrow}e$ and $ab=abe\in H_e$ by Lemma~\ref{l:Tamura}, which contradicts our assumption. So, $a\notin{\Uparrow}e$ or $b\notin{\Uparrow}e$. 

1.2.1. If $a\notin{\Uparrow}e$, then by Claim~\ref{cl:notHe-T} and Lemma~\ref{l:Lamb}(8), for the neighborhoods $O_a\defeq\Lambdae(a;V)\setminus{\Uparrow}e\in\Tau$ and $O_b\defeq\Lambdae(b;V)\in\Tau$ we have $$O_a*O_b\subseteq\Lambdae(ab;V)\setminus{\Uparrow}e
\subseteq O_{ab}.$$

1.2.2. If $b\notin{\Uparrow}e$, then by Claim~\ref{cl:notHe-T} and Lemma~\ref{l:Lamb}(8), for the neighborhoods $O_a\defeq\Lambdae(a;V)\in\Tau$ and $O_b\defeq\Lambdae(b;V)\setminus{\Uparrow}e\in\Tau$ we have $$O_a*O_b\subseteq\Lambdae(ab;V)\setminus{\Uparrow}e
\subseteq O_{ab}.$$
\smallskip

1.3. Finally, assume that $ab\in H_e$. In this case, by Claim~\ref{cl:He-T}, there exists $V\in\mathcal Z[e)$ and $U\in\U$ such that $U\subseteq U_a\cap U_b$ and  $\Lambdae(ab;V)\cap\nabla(U)\subseteq O_{ab}$.

If $a\ne ae$, then $O_a\defeq\{a\}$ is a $\Tau$-open neighborhood of $a$, by Claim~\ref{cl:singleton-T}. If $a=ae$, then $ab\in H_e$ implies $a\in{\Uparrow}e$ and hence $a=ae\in H_e$ by Lemma~\ref{l:Tamura}. In this case, Claim~\ref{cl:He-T} ensures that $$O_a\defeq \Lambdae(a;V)\cap\nabla(U)$$is a $\Tau$-open neighborhood of $a$.

By analogy, define the $\Tau$-open neighborhood
$$
O_b\defeq\begin{cases}\{b\},&\mbox{if $b\ne be$};\\
\Lambdae(b;V)\cap\nabla(U),&\mbox{otherwise};\\
\end{cases}
$$
of $b$.

We claim that $O_a*O_b\subseteq O_{ab}$. Given any elements $x\in O_a$ and $y\in O_b$, we should prove that $xy\in O_{ab}$. By Lemma~\ref{l:Lamb}(8) and Example~\ref{ex:Ze}, $$xy\in O_aO_b\subseteq \Lambdae(a;V)\cdot\Lambdae(b;V)\subseteq\Lambdae(ab;VV)\subseteq  \Lambdae(ab;V).$$
If $xy\in I$, then
$$xy\in \Lambdae(ab;V)\cap I\subseteq \Lambdae(ab;V)\cap\nabla(U)\subseteq O_{ab}$$ and we are done. So, we assume that $xy\notin I$. In this case $x,y\notin I$.

Next, we consider four subcases.
\smallskip

1.3.1. If $a\ne ae$ and $b\ne be$, then $O_a=\{a\}$ and $O_b=\{b\}$ and $xy=ab\in O_{ab}$.
\smallskip

1.3.2. Assume that $a\ne ae$ and $b=be$. Then $O_a=\{a\}$, $x=a\ne ae$ and hence $x=a\notin H_e$. Also $y\in O_b=\Lambdae(b;V)\cap\nabla(U)$. It follows from $y\in \nabla(U)\setminus I$ that $y=tz_n^k$ for some $t\in X$, $n\in  U\cap U_t$, and $k\ge l_t$.
\smallskip

1.3.2.1. If $t\in {\Uparrow}s$, then $U_t=U_s$ and $l_t=2$.
\smallskip

1.3.2.1.1. If $a\in{\Uparrow}s$, then $xt=at\in {\Uparrow}s$, $n\in  U\cap U_t=U\cap U_s=U\cap U_{xt}$, $l_{xt}=2=l_t\le k$ and hence
$$xy=xtz_n^k\in \Lambdae(ab;V)\cap \nabla(U)\subseteq O_{ab}.$$
\smallskip

1.3.2.1.2. If $a\notin{\Uparrow}s$, then $xt=at\notin{\Uparrow}s$ and $l_{at}=l_{xt}=1$.  
We claim that $\pi(at)z_n\ne z_n$. Indeed, assuming that $\pi(at)z_n=z_n$ and applying Lemma~\ref{l:pi2}, we conclude that $\pi(at)=\pi(a)\pi(at)$ and hence $z_n=\pi(at)z_n=\pi(a)\pi(at)z_n=\pi(a)z_n$, which contradicts the inclusion $n\in U\subseteq U_a$. This contradiction shows that $\pi(at)z_n\ne z_n$ and hence $n\in U\cap U_{at}$ and $xy=atz_n^k\in \Lambdae(ab;V)\cap\nabla(U)\subseteq O_{ab}$.
\smallskip

1.3.2.2. If $t\notin{\Uparrow}s$, then $l_t=1$ and $n\in U_t$ implies $\pi(t)z_n\ne z_n$. Repeating the argument from the case 1.3.2.1.2, we can show that $\pi(at)z_n\ne z_n$. Then $at\notin{\Uparrow}s$, $l_{at}=1$, and $n\in U\cap U_{at}$. Now the definition of $\nabla(U)$ ensures that $xy=atz_n^k\in \Lambdae(ab;V)\cap\nabla(U)\subseteq O_{ab}$.
\smallskip

1.3.3. The case $a=ae$ and $b\ne be$ can be considered by analogy with the case 1.3.2.
\smallskip

1.3.4. Finally, assume that $a=ae$ and $b=be$. In this case the assumpton $ab\in H_e$ of the case 1.3 implies $a,b\in{\Uparrow}e$ and hence $a=ae\in H_e$ and $b=be\in H_e$, by Lemma~\ref{l:Tamura}. The inclusions $x,y\in\nabla(U)\setminus I$ imply that $x=tz_n^k$ for some $t\in X$, $n\in U\cap U_t$ and $k\ge l_t$, and $y=t'z_{n'}^{k'}$ for some $t'\in X$, $n'\in U\cap U_{t'}$ and $k'\ge l_{t'}$. If $n\ne n'$, then by Claim~\ref{cl:nmI},  $xy=tz_n^kt'z_{n'}^{k'}=tt'z_n^kz_{n'}^{k'}\in I$, which contradicts our assumption. So, $n=n'$ and $k+k'\ge 2$.
\smallskip

1.3.4.1. If $t,t'\in{\Uparrow}s$, then $tt'\in {\Uparrow}s$, $l_{tt'}=2=l_{t}=l_{t'}$ and $n=n'\in U\cap U_t\cap U_{t'}=U\cap U_s=U\cap U_{tt'}$. Then
$xy=tz_n^kt'z_n^{k'}=tt'z_n^{k+k'}\in \Lambdae(ab;V)\cap\nabla(U)\subseteq O_{ab}.$
\smallskip

1.3.4.2. If $t\notin{\Uparrow}s$, then $tt'\notin{\Uparrow}s$ and $n\in U_t$ imply $\pi(t)z_n\ne z_n$ and $\pi(tt')z_n\ne z_n$ (as $\pi(tt')=\pi(t)\pi(tt')$ by Lemma~\ref{l:pi2}). Then $n\in U\cap U_{tt'}$ and $xy=tz_n^kt'z_n^{k'}=tt'z_n^{k+k'}\in\Lambdae(ab;V)\cap\nabla(U)\subseteq O_{ab}.$
\smallskip

1.3.4.3. The case $t'\notin{\Uparrow}s$ can be treated by analogy with the case 1.3.4.2.

This completes the analysis of the case 1.
\smallskip

2. Assume that $a\in{\Uparrow}s$ and $b=yz_\U$ for some $y\in H_s$. In this case $a*b=ayz_\U$ and by Claim~\ref{cl:ultra-T}, there exist a set $U\in\U$ such that $\Lambda(ayz_\U;U)\subseteq O_{ab}$. It follows from $a\in{\Uparrow}s$ and $e<s$ that $ae\ne a$ and hence $O_a\defeq\{a\}$ is a $\Tau$-open neighborhood of $a$.  Consider the neighborhood $O_b\defeq \Lambda(yz_\U;U)$ of $b$ and observe that $O_a*O_b=\Lambda(ayz_\U,U)\subseteq O_{ab}$.
\smallskip

3. Assume that $a\in X\setminus{\Uparrow}s$ and $b=yz_\U$ for some $y\in H_s$. In this case $a*b=aye\in X$. 
By Claims~\ref{cl:notHe-T} and \ref{cl:He-T}, there exist $V\in\mathcal Z[e)$ and $U\in\U$ satisfying the following conditions:
\begin{itemize}
\item $U\subseteq U_a$;
\item if $aye\notin H_e$, then $\Lambdae(aye;V)\setminus{\Uparrow}e\subseteq O_{ab}$;
\item if $aye\in H_e$, then $\Lambdae(aye;V)\cap\nabla(U)\subseteq O_{ab}$.
\end{itemize}
By Claim~\ref{cl:zn-converge}, the set $U'\defeq\{n\in U_s:z_n\in V\}$ belongs to the ultrafilter $\U$.
Replacing  the set $U$ by $U\cap U'$, we can assume that $U\subseteq U'$. Then for every $n\in U\subseteq U',$ the element $yz_n\in\Lambda(yz_\U;U)$ belongs to the set $\frac{ye}e\cdot V\subseteq \Lambdae(ye;V)$.

Let $O_b\defeq\Lambda(yz_\U;U)$ and $$O_a\defeq \begin{cases}
\{a\}&\mbox{if $a\ne ae$};\\
\Lambdae(a;V)\setminus{\Uparrow}e&\mbox{if $a=ae\notin H_e$};\\
\Lambdae(a;V)\cap \nabla(U)&\mbox{if $a\in H_e$}.
\end{cases}
$$
By Claims~\ref{cl:singleton-T}-- %\ref{cl:ultra-T}, \ref{cl:notHe-T} and 
\ref{cl:He-T}, the sets $O_a,O_b$ are $\Tau$-open. %Claim~\ref{cl:e<s} implies that $O_a\subseteq X\setminus{\Uparrow}s$. 
To show that $O_a*O_b\subseteq O_{ab}$, take any elements $a'\in O_a$ and $b'\in O_b$, and consider two cases.
\smallskip

3.1. Assume that $b'=yz_\U$ and consider two subcases.

3.1.1. If $aye\notin H_e$, then $aye\notin{\Uparrow}e$, by Lemma~\ref{l:Tamura}. It follows from $ye\in H_se\subseteq H_e$ and $aye\notin{\Uparrow}e$ that $a\notin{\Uparrow}e$. Then $a'*b'\in O_aye\subseteq(\Lambda(a;V)\setminus{\Uparrow}e)\cdot ye\subseteq \Lambdae(aye;V)\setminus {\Uparrow}e\subseteq O_{ab}$, see Lemma~\ref{l:Lamb}(7). 
\smallskip

3.1.2. If $aye\in H_e$, then $a'*b'=a'ye\in (\Lambdae(a;V)ye)\cap{\Downarrow}e\subseteq\Lambdae(aye;V)\cap I\subseteq\Lambdae(aye;V)\cap\nabla(U)\subseteq O_{ab}$.

3.2. Next, assume that $b'\ne yz_\U$ and hence $b'=yz_n$ for some $n\in U\subseteq U_a\cap U'$.

3.2.1. If $aye\notin H_e$, then $aye\notin{\Uparrow}e$ and  $a\notin{\Uparrow}e$. Then $a'*b'\in O_ayz_n\subseteq(\Lambdae(a;V) \setminus{\Uparrow}e)\cdot\Lambdae(ye;V)\subseteq \Lambdae(aye;V)\setminus {\Uparrow}e\subseteq O_{ab}$. 
\smallskip

3.2.2. If $aye\in H_e$ and $a'*b'\in I$, then $a'*b'=a'yz_n\in \big(\Lambdae(a;V)\cdot\Lambdae(ye;V)\big)\cap I\subseteq\Lambdae(aye;V)\cap I\subseteq\Lambdae(aye;V)\cap\nabla(U)\subseteq O_{ab}$.
\smallskip

3.2.3. Finally, assume that $aye\in H_e$ and $a'*b'\notin I$. Then $a'\notin I$. It follows from $aye\in H_e$ that $a\in{\Uparrow}e$. 

3.2.3.1. If $a\ne ae$, then $a'\in O_a=\{a\}$ and hence $a'*b'=ayz_n$. It follows from $a\notin{\Uparrow}s$ that $ay\notin{\Uparrow}s$ and $l_{ay}=1$. Also $a\notin{\Uparrow}s$ and $n\in U\subseteq U_a$ imply $\pi(a)z_n\ne z_n$. Using Lemma~\ref{l:pi2}, we can show that $\pi(ay)\le\pi(a)$ and hence $\pi(a)\pi(ay)=\pi(ay)$. Assuming that $\pi(ay)z_n=z_n$, we conclude that $z_n=\pi(ay)z_n=\pi(a)\pi(ay)z_n=\pi(a)z_n$, which contradicts $n\in U_a$. This contradiction shows that $\pi(ay)z_n\ne z_n$ and hence $n\in U_{ay}$. Then $a'*b'=ayz_n\in(a\Lambdae(ye;V))\cap\nabla(U)\subseteq \Lambdae(aye;V)\cap\nabla(U)\subseteq O_{ab}$ as $n\in U\cap U_{ay}$ and $1=l_{ay}$.
\smallskip

3.2.3.2. If $a=ae$, then $a=ae\in({\Uparrow}e)e\subseteq H_e$ by Lemma~\ref{l:Tamura} and hence $a'\in O_a=\Lambdae(a;V)\cap\nabla(U)$. It follows from $a'\in \nabla(U)\setminus I$ that $a'=x'z_{n'}^{k'}$ for some $x'\in X$, $n'\in U\cap U_{x'}$ and $k'\ge l_{x'}$. If $n\ne n'$, then $a'*b'=x'z_{n'}^{k'}yz_n=x'yz_{n'}^{k'}z_n\in I$ by Claim~\ref{cl:nmI}, which contradicts or assumption. This contradiction shows that $n=n'$.
\smallskip

We claim that $n\in U_{x'y}$. Indeed, if $x'\in{\Uparrow}s$, then $x'y\in{\Uparrow}s$ and hence $n=n'\in U_{x'}=U_s=U_{yx'}$.
If $x'\notin{\Uparrow}s$, then $x'y\notin{\Uparrow}s$ and hence $n=n'\in U_{x'}$ implies $\pi(x')z_n\ne z_n$. Lemma~\ref{l:pi2} ensures that $\pi(x'y)=\pi(x')\pi(x'y)$. Assuming that $\pi(x'y)z_n=z_n$, we conclude that $z_n=\pi(x'y)z_n=\pi(x')\pi(x'y)z_n=\pi(x')z_n$, which contradicts the inclusion $n\in U_{x'}$. This contradiction shows that $\pi(x'y)z_n\ne z_n$ and hence $n\in U_{x'y}$. 

Then $a'*b'=x'z_n^{k'}yz_n=x'yz_n^{k'+1}\in\nabla(U)$ as $n\in U\cap U_{yx'}$ and $k'+1\ge 2\ge l_{x'y}$. Finally, $a'*b'=a'yz_n\in (\Lambdae(a;V)\cdot\Lambdae(ye;V))\cap\nabla(U)\subseteq \Lambdae(aye;V)\cap\nabla(U)\subseteq O_{ab}.$
\smallskip

4. Assume that $a=xz_\U$ for some $x\in H_s$ and $b\in {\Uparrow}s$. This case can be considered by analogy with case 2.
\smallskip

5. Assume that $a=x z_\U$ for some $x\in H_s$ and $b\in X\setminus{\Uparrow}s$.
 This case can be considered by analogy with case 3.
\smallskip

6. Finally, assume that $a=xz_\U$ and $b=yz_\U$ for some $x,y\in H_s$. In this case $a*b=xeye=xye\in H_e$. By Claim~\ref{cl:He-T}, there exist $V\in\mathcal Z[e)$ and $U\in\U$ such that $\Lambdae(xye;V)\cap\nabla(U)\subseteq O_{ab}$. By Claim~\ref{cl:zn-converge}, there exists a set $U'\in\U$ such that $U'\subseteq U\cap U_s$ and for every $n\in U'$ we have $z_n\in V$.  Then for every $n\in U'$ and $g\in H_s$ we have
$$gz_n\in\tfrac{ge}e\cdot V\subseteq\Lambdae(ge;V).$$

By Claim~\ref{cl:ultra-T}, the sets $O_a\defeq\Lambda(x z_\U;U')$ and $O_b\defeq\Lambda(y z_\U;U')$ are $\Tau$-open. We claim that $O_a*O_b\subseteq O_{ab}$. Given any elements $a'\in O_a$ and $b'\in O_b$, we should check that $a'*b'\in O_{ab}$. Consider four cases.

6.1. If $a'=x z_\U$ and $b'=y z_\U$, then $a'*b'=a*b\in O_{ab}$.

6.2. If $a'=xz_\U$ and $b'\ne yz_\U$, then $b'=yz_n$ for some $n\in U'\subseteq U_s$. By the definition of the binary operation $*$ and Lemma~\ref{l:Lamb}(8),
$$a'*b'=xeyz_n\in (xe\Lambda(ye;V))\cap I\subseteq \Lambda(xeye;V)\cap\nabla(U)\subseteq O_{ab}.$$

6.3. The case of $a'\ne xz_\U$ and $b'=yz_\U$ can be considered by analogy with the case 6.2.

6.4. If $a'\ne xz_\U$ and $b'\ne yz_\U$, then $a'=xz_n$ and $b'=yz_m$ for some $n,m\in U'\subseteq U_s$. If $n\ne m$, then $a'*b'=xz_nyz_m\in I$ by Claim~\ref{cl:nmI} and hence
$$a'*b'=xz_nyz_m\in \big(\Lambda(xe;V)\cdot\Lambda(ye;V)\big)\cap I\subseteq\Lambdae(xeye;V)\cap\nabla(U)\subseteq O_{ab}.$$
If $n=m$, then $a'*b'=xz_nyz_m=xyz_n^2$. Since $n\in U'\subseteq U_s=U_{xy}$, we have  
$$a'*b'=xz_nyz_m=xyz_n^2\in \big(\Lambdae(xe;V)\cdot\Lambdae(ye;V)\big)\cap\nabla(U)\subseteq \Lambdae(xeye;V)\cap\nabla(U)\subseteq O_{ab}.$$
\end{proof}

\begin{lemma}\label{l:non-IC} $(Y,\Tau)$ is a  Hausdorff zero-dimensional topological semigroup containing $X$ as a nonclosed dense subsemigroup.
\end{lemma}

\begin{proof} By Lemma~\ref{l:T-zerodim}, the space $(Y,\Tau)$ is Hausdorff and zero-dimensional and by Lemma~\ref{l:conti}, the binary operation $*:Y\times Y\to Y$ is continuous. The density of $X$ in the Hausdorff topological space $(Y,\Tau)$ and the associativity of the semigroup operation $*{\restriction}_{X\times X}$ of $X$ imply that the binary operation $*$ is associative and hence $(Y,\Tau)$ is a topological semigroup containing $X$ as a dense subsemigroup.
\end{proof}

Lemma~\ref{l:non-IC} implies that the semigroup $X$ is not injectively $\mathsf{T_{\!z}S}$-closed, which contradicts our assumption. This contradiction shows that $Z(X)\subseteq\korin{\ell}{H(X)}$ for some $\ell\in\IN$.
\end{proof}

\section{Another boundedness property of injectively $\mathsf{T_{\!z}S}$-closed semigroups}\label{s:b2}

In this section we prove that the Cliford part of an injectively $\mathsf{T_{\!z}S}$-closed commutative semigroup $X$ is bounded. In fact, we shall derive this result from a more general result treating injectively $\mathsf{T_{\!z}S}$-closed viable semigroups with some extra properties.

A semigroup $X$ is defined to be {\em projectively $Z$-bounded} if there exists $n\in\IN$ such that for every surjective homomorphism $h:X\to Y$ to a semigroup $Y$ we have $Z(Y)\subseteq \korin{n}{E(Y)}$. It is easy to see that each bounded semigroup is projectively $Z$-bounded.

\begin{proposition}\label{p2:vsim-grobikam-grobik} Let $X$ be an injectively $\mathsf{T_{\!z}S}$-closed semigroup. If the poset $E(X)$ is well-founded, the semigroup $X$ is viable, eventually Clifford, and all maximal subgroups of $X$ are projectively $Z$-bounded, then the semigroup $Z(X)\cap H(X)$ is bounded.
\end{proposition}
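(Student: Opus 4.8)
The plan is to argue by contradiction, constructing a Hausdorff zero-dimensional topological semigroup $Y\supseteq X$ in which $X$ is dense but not closed, in the spirit of the proof of Proposition~\ref{p:vsim-grobikam-grobik}. By Lemma~\ref{l:+1} I may assume that $X$ is unital. By Theorem~\ref{t:Z}(1,4) the center $Z(X)$ is chain-finite, periodic, nonsingular and group-finite; in particular each subgroup $Z(X)\cap H_e$ of $H_e$ is finite, and by Lemma~\ref{l:piZ} we have $Z(X)\cap H(X)=\bigcup_{e\in EZ(X)}(Z(X)\cap H_e)$, a union of finite groups. Assuming that $Z(X)\cap H(X)$ is not bounded, the finiteness of each $Z(X)\cap H_e$ (so each has bounded exponent) forces the exponents to be unbounded only across infinitely many idempotents; hence I can choose central idempotents $e_k\in EZ(X)$ and central elements $x_k\in Z(X)\cap H_{e_k}$ whose orders tend to infinity. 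Applying the Ramsey argument of Claim~\ref{cl:antichain} together with the chain-finiteness of $EZ(X)$, I may assume in addition that $(e_k)_{k\in\w}$ is an antichain in $EZ(X)$, so that $e_ke_n<e_k,e_n$ and $x_kx_n\in H_{e_ke_n}$ for all distinct $k,n\in\w$ by Lemma~\ref{l:CH}.

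Next I fix a free ultrafilter $\U$ on $\w$ and, using the well-foundedness of $E(X)$, select a minimal idempotent $s$ in the set $E_\U\defeq\{u\in E(X):\{k:x_ku=x_k\}\in\U\}=\{u\in E(X):e_k\le u\text{ for }\U\text{-many }k\}$, which is nonempty since it contains the adjoined unit $1$. As in Claim~\ref{cl:s-min} one checks $E_\U\subseteq{\uparrow}s$, and Lemma~\ref{l:Tamura} gives $s\in Z({\Uparrow}s)$ and that $H_s$ is an ideal in ${\Uparrow}s$. The new feature, compared with Proposition~\ref{p:vsim-grobikam-grobik}, is that $x_k^2\in H_{e_k}$ stays far from the lower part: since the orders $m_k\to\infty$, the ultralimit $x_\U\defeq\lim_\U x_k$ has infinite order, so adjoining it forces adjoining a whole central group built from the ultralimits of the $x_k$ together with the orbit $\{y\,x_\U:y\in H_s\}$. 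I let $Y$ be the union of $X$ with this structure and specify the multiplication by the same ``above $s$'' versus ``below $s$'' case distinction as in the definition of $*$ in Proposition~\ref{p:vsim-grobikam-grobik}, resolving all products meeting an idempotent through Lemma~\ref{l:Tamura}. Well-definedness of $*$ is verified as in Claim~\ref{cl:well-defined}, using that the subgroup $G\defeq\{y\in H_s:y\,x_\U=x_\U\}$ is normal in $H_s$ and that $x_\U$ is central.

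For the topology I use the Clifford $e$-base $\mathcal H[e)$ from Example~\ref{ex:He} rather than the base $\mathcal Z[e)$ employed in Proposition~\ref{p:vsim-grobikam-grobik}: since $EZ(X)$ is well-founded, being a subposet of the well-founded poset $E(X)$, the base $\mathcal H[e)$ is automatically regular, and Theorem~\ref{t:TS}(3,4) guarantees that any topology refining $\Tau_{\mathcal H[e)}$ is Hausdorff and zero-dimensional; this is precisely why the present proposition needs no nonsingularity hypothesis. Neighborhoods of the adjoined points are prescribed through $\U$ and the $\nabla$-type sets of Proposition~\ref{p:vsim-grobikam-grobik}, and one shows, as in Claims~\ref{cl:ultra-T}--\ref{cl:He-T}, that the resulting base consists of clopen sets. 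The hypothesis that $H_s$ is \emph{projectively} $Z$-bounded enters exactly here: the adjoined central group is a quotient of $H_s$ by $G$, and applying projective $Z$-boundedness to the quotient homomorphism $H_s\to H_s/G$ bounds $Z(H_s/G)$, keeping the central directions of the adjoined group of uniformly bounded exponent and hence compatible with the $\mathcal H[e)$-topology. This is the substitute for the nonsingularity used before.

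Finally, continuity of $*:Y\times Y\to Y$ is checked by the same six-case analysis as in Lemma~\ref{l:conti}; combined with the density of $X$, this makes $(Y,\Tau)$ a Hausdorff zero-dimensional topological semigroup containing $X$ as a dense non-closed subsemigroup, contradicting the injective $\mathsf{T_{\!z}S}$-closedness of $X$, and so $Z(X)\cap H(X)$ must be bounded. I expect the main obstacle to be this continuity verification, and specifically the subcases where a product of two adjoined points must land in the correct adjoined group: unlike in Proposition~\ref{p:vsim-grobikam-grobik}, where such products collapsed back into $H(X)\subseteq X$, here they remain among the new points, so tracking them requires exactly the control over the center of $H_s/G$ furnished by projective $Z$-boundedness.
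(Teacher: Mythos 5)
Your high-level strategy (argue by contradiction, build the antichain of central witnesses, adjoin the translates of an ultrafilter, and topologize via the Clifford $e$-base $\mathcal H[e)$, which is regular as soon as $EZ(X)$ is well-founded, so that no nonsingularity is needed) is the same as the paper's, and your remark about why $\mathcal H[e)$ replaces $\mathcal Z[e)$ is correct. However, there is a genuine gap at the technical heart of the argument. You propose to use ``the $\nabla$-type sets of Proposition~\ref{p:vsim-grobikam-grobik}'', i.e.\ $\nabla(U)=I\cup\{xz_n^k:x\in X,\;n\in U\cap U_x,\;k\ge l_x\}$ with no upper bound on the exponent $k$. In Proposition~\ref{p:vsim-grobikam-grobik} the disjointness $H_sz_{U_s}\cap\nabla(\w)=\emptyset$ (Claim~\ref{cl:nabla}), which is what makes the adjoined points separable from $H_e$ and the topology Hausdorff, rests on the fact that there $z_n\notin H(X)$ while $z_n^2\in H(X)$. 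In the present proposition the witnesses lie in $Z(X)\cap H(X)$ and are periodic: if $m$ is the order of $z_n$ in $H_{\pi(z_n)}$, then $z_n^{m+1}=z_n$, so for every $x\in H_s$ we get $xz_n=xz_n^{m+1}\in\nabla(\w)$ and the disjointness fails outright. This is precisely why the paper replaces $\nabla(U)$ by the truncated sets $\nabla(U,\e)\defeq I\cup\{xz_n^k:x\in X,\;n\in U\cap U_x,\;l_x\le k<\e n\}$ with $\e\le\frac1{p_s}$; without this truncation the construction does not yield a Hausdorff topology, so your proof as written breaks down.

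Correspondingly, you misidentify where projective $Z$-boundedness enters. It is not applied to the quotient $H_s\to H_s/G$ to ``bound the center of the adjoined group'': in the paper (as in Proposition~\ref{p:vsim-grobikam-grobik}) the adjoined set is only $\{xz_\U:x\in H_s\}$, and the product of two adjoined points collapses back into $X$ via $xz_\U*yz_\U=xeye\in H_e$ --- so your closing worry that such products ``remain among the new points'' has matters backwards, and adjoining ``a whole central group built from the ultralimits of the $x_k$'' is neither done nor needed (it would create exactly the difficulties you fear). Projective $Z$-boundedness is instead used inside Claim~\ref{cl2:nabla}, applied to the retraction homomorphism $H_s\to H_s\pi(z_n)\subseteq H_{\pi(z_n)}$, $g\mapsto g\pi(z_n)$: from a hypothetical equality $xz_n=yz_n^k$ one extracts the central element $x^{-1}y\pi(z_n)=z_n^{1-k}\in Z(H_s\pi(z_n))$, whose $p_s$-th power is the idempotent $\pi(z_n)$, giving $z_n^{p_s}=z_n^{kp_s}$; since $z_n$ has at least $n$ distinct powers (Claim~\ref{cl2:antichain}(2)) this forces $kp_s>n$, contradicting $k<\e n\le n/p_s$. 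Thus the exponent truncation in $\nabla(U,\e)$ and the projective $Z$-boundedness of $H_s$ work jointly in this one claim; your proposal contains neither mechanism in usable form.
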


\begin{proof} By Lemma~\ref{l:+1}, we lose no generality assuming that the semigroup $X$ is unital and hence contains an element $1$ such that $x1=x=1x$ for all $x\in X$. By Lemma~\ref{l:CH}, the set $Z(X)\cap H(X)$ is a subsemigroup of $X$.  

Let $q:X\to X/_{\Updownarrow}$ be the quotient homomorphism of $X$ to its semilattice reflection. Since the semigroup $X$ is viable and eventually Clifford, the restriction $q{\restriction}_{E(X)}:E(X)\to X/_{\Updownarrow}$ is bijective. 

Let $\pi:X\to E(X)$ be the function assigning to each $x\in X$ a unique idempotent $\pi(x)$ such that $x^\IN\cap H_{\pi(x)}$. It follows that 
$$\textstyle{\Updownarrow}x={\Updownarrow}\pi(x)=\korin{\IN}{H_{\pi(x)}}\quad\mbox{for every $x\in X$}.$$

By Theorem~\ref{t:Z}, the semigroup $Z(X)$ is chain-finite, periodic, nonsingular, and group-finite. 

To derive a contradiction, assume that the semigroup $Z(X)\cap H(X)$ is unbounded. 
Since the semilattice $EZ(X)=E(Z(X))$ is chain-finite and the semigroup $Z(X)\cap H(X)$ is not bounded, there exists an idempotent $e\in EZ(X)$ such that the semigroup $Z(X)\cap H(X)\cap {\Uparrow}e$ is not bounded, but for every idempotent $u\in EZ(X)$ with $e<u$, the semigroup $Z(X)\cap H(X)\cap{\Uparrow}u$ is bounded. %For a bounded subsemigroup $B$ of $X$ let $\exp(B)$ be the smallest number $n\in\IN$ such that $x^n=\pi(x)$ for all $x\in B$.

For every $n\in\IN$, let $$Z_{n!}\defeq\{z^{n!}:z\in Z(X)\cap H(X)\cap\tfrac ee\}.$$

\begin{claim}\label{cl2:antichain} There exists a sequence $(z_k)_{k\in\w}\in\prod_{k\in\w}Z_{k!}$ such that
\begin{enumerate}
\item $z_k\notin{\Uparrow}z_n$ and $\pi(z_k)\not\le \pi(z_n)$ for any distinct numbers $k,n\in\w$;
\item $z_k^i\ne z_k^j$ for any $k\in\w$ and distinct numbers $i,j\in\{1,\dots,k\}$.
\end{enumerate}
\end{claim}

\begin{proof} Since the semigroup $Z(X)$ is periodic, the semigroup $Z(X)\cap H_e$ is a periodic subgroup of $Z(X)$. Since $Z(X)$ is group-finite, the subgroup $Z(X)\cap H_e$ is finite and hence for the number $p=|Z(X)\cap H_e|$ we have $z^p=e$ for all $z\in Z(X)\cap H_e$. Since the semigroup $Z(X)\cap H(X)\cap{\Uparrow}e$ is unbounded, for every $k\in\w$ there exists an element $g_k\in Z(X)\cap H(X)\cap {\Uparrow}e$ such that for any distinct positive numbers $i,j\le p(k+1)!$ we have $g_k^i\ne g_k^j$. Let $[\w]^2$ be the family of two-element subsets of $\w$. Consider the function $\chi:[\w]^2\to\{0,1,2\}$ defined by
$$\chi(\{n,m\})=\begin{cases}0&\mbox{if $\pi(g_n)=\pi(g_m)$};\\
1&\mbox{if $\pi(g_n)<\pi(g_m)$ or $\pi(g_m)<\pi(g_n)$};\\
2&\mbox{otherwise}
\end{cases}
$$
By the Ramsey Theorem 5 \cite{Ramsey}, there exists an infinite set $\Omega\subseteq \w$ such that $\chi[[\Omega]^2]=\{c\}$ for some $c\in\{0,1,2\}$. If $c=0$, then the set $\{\pi(g_n)\}_{n\in\Omega}$ contains a unique idempotent $u$ and hence the set $\{g_k\}_{k\in\Omega}\subseteq Z(X)\cap H_u$ is finite (since $Z(X)$ is periodic and group-finite). By the Pigeonhole Principle, for any $k>|Z(X)\cap H_u|$ there are two numbers $i<j\le k$ such that $g_k^i=g_k^j$, which contradicts the choice of $g_k$. Therefore, $c\ne 0$. If $c=1$, then the set $\{\pi(g_k)\}_{k\in\Omega}$ is an infinite chain in $EZ(X)$ which is not possible as $Z(X)$ is chain-finite. Therefore, $c=2$ and hence $\{\pi(g_k)\}_{k\in\Omega}$ is an infinite antichain in $EZ(X)$. Write the infinite set $\Omega$ as $\{n_k\}_{k\in\w}$ for some strictly increasing sequence $(n_k)_{k\in\w}$. For every $k\in\w$ put $z_k=g_{n_k}^{k!p}\in Z_{k!}$. Then $z_ke=g_{n_k}^{k!p}e=(g^{k!}_{n_k}e)^{p}=e$, because $g^{k!}_{n_k}e\in Z(X)\cap H_e$ by Lemma~\ref{l:Tamura}. Hence condition (1) of Claim~\ref{cl:antichain} is satisfied. It is easy to see that the sequence $(z_k)_{k\in\w}$ also satisfies condition (2).
\end{proof}

Fix any free ultrafilter $\U$ on the set $\w$. Consider the set $$E_\U\defeq\big\{u\in E(X):\{n\in\w:z_nu=z_n\}\in\U\big\}$$ and observe that $E_\U$ contains the element  $1\in E(X)$ and hence $E_\U$ is not empty. Since the poset $E(X)$ is well-founded, there exists an idempotent $s\in E_\U$ such that $E_\U\cap{\downarrow}s=\{s\}$. By analogy with Claim~\ref{cl:s-min}, we can prove that $E_\U\subseteq{\uparrow}s$.

By the definition of the set $E_\U$, the set $$U_s\defeq\{n\in\w:z_ns=z_n\}$$ belongs to the free ultrafilter $\U$ and hence is infinite. By analogy with Claim~\ref{cl:e<s} we can prove 

\begin{claim}\label{cl2:e<s} $e<\pi(z_n)<s$ for every $n\in U_s$.
\end{claim}

Let $$\mathcal H[e)\defeq\big\{Z(X)\cap H(X)\cap \tfrac ee\setminus\pi^{-1}[{\uparrow}F]:F\in[ZE(X)\setminus{\downarrow}e]^{<\w}\big\}$$be the $e$-base from Example~\ref{ex:He}. Definition~\ref{d:remote} ensures that each set $V\in\mathcal Z[e)$ is a subsemigroup of $X$. Since the poset $EZ(X)$ is well-founded, the $e$-base $\mathcal H[e)$ is regular and hence any topology $\Tau\supseteq \Tau_{\mathcal H[e)}$ on $X$ is Hausdorff and zero-dimensional, according to Example~\ref{ex:He} and Theorem~\ref{t:TS}. 

\begin{claim}\label{cl2:zn-converge} For every $V\in \mathcal H[e)$ the set $\{n\in U_s:z_n\notin V\}$ is finite.
\end{claim}

\begin{proof} By the definition of the family $\mathcal H[e)\ni V$, there exists a finite set $F\subseteq EZ(X)\setminus{\downarrow}e$ such that $V=Z(X)\cap H(X)\cap\frac ee\setminus\pi^{-1}[{\uparrow}F]=Z(X)\cap H(X)\cap\frac ee\setminus{\Uparrow}F$, see Lemma~\ref{l:pi1}. 

We claim that for every $f\in F$, the semigroup $Z(X)\cap H(X)\cap \frac ee\cap {\Uparrow}f$ is bounded. Since the semilattice $EZ(X)$ is chain-finite, the nonempty subsemilattice $L\defeq EZ(X)\cap \frac ee\cap{\Uparrow}f\ni 1$ of $EZ(X)$ contains an idempotent $e_f\in L$ such that $L\subseteq {\uparrow}e_f\subseteq {\Uparrow}e_f$. It follows from $e_f\in L\subseteq \frac ee\subseteq{\Uparrow}e$ that $e\lesseq e_f$ and hence $e\le e_f$, see Lemma~\ref{l:pi2}. Assuming that $e=e_f$, we conclude that $f\in{\Downarrow}e_f={\Downarrow}e$, which contradicts the choice of $f\in F\subseteq E(X)\setminus{\downarrow}e=E(X)\setminus{\Downarrow}e$, see Lemma~\ref{l:pi2}. Therefore, $e<e_f$. Now the choice of the idempotent $e$ ensures that the semigroup $Z(X)\cap H(Z)\cap{\Uparrow}e_f$ is bounded. Then the semigroup $$Z(X)\cap H(X)\cap\tfrac ee\cap{\Uparrow}f\subseteq Z(X)\cap H(X)\cap{\Uparrow}e_f$$is bounded and hence there exists $n_f\in\IN$ such that $z^{n_f}=z^{2n_f}$ for all $z\in Z(X)\cap H(X)\cap\frac ee\cap{\Uparrow}f$. Choose $m\in\IN$ such that $m\ge 2n_f$ for all $f\in F$. For every $k\ge m$ and $f\in F$ we have $z_k^{n_f}\ne z_k^{2n_f}$ by Claim~\ref{cl2:antichain}(2) and hence $z_k\notin Z(X)\cap H(X)\cap\frac ee\cap {\Uparrow}f$. Consequently, for every $k\ge m$ we have $$z_k\in Z(X)\cap H(X)\cap \tfrac ee\setminus{\Uparrow}F=V$$and hence the set $\{n\in U_s:z_n\notin V\}\subseteq\{n\in U_s:n\le m\}$ is finite.
\end{proof}

For any subset $U\subseteq \w$ consider the set $$z_U\defeq\{z_n:n\in U\}\subseteq Z(X)\cap H(X)\cap\tfrac ee.$$
Let $z_\U$ be the free ultrafilter on $X$ generated by the base $\{z_U:U\in\U\}$. The choice of $U_s$ ensures that $sz_n=z_n$ for all $n\in U_s\in\U$ and hence $sz_\U=z_\U$. The ultrafilter $z_\U$ is {\em central} in the sense that $xz_\U=z_\U x$ for all $x\in X$.

Observe that for every $x\in H_s$ we have
$$z_\U=sz_\U=x^{-1}xz_\U,$$
which implies that the ultrafilter $xz_\U$ is free.

Lemma~\ref{l:Tamura} implies the following fact.

\begin{claim}\label{cl2:xs} For every $x\in{\Uparrow}s$ we have $xs\in H_s$ and $xz_\U=(xs)z_\U\in\{yz_\U:y\in H_s\}$.
\end{claim}

In the maximal group $H_e\subseteq X^1$ consider the subgroup
$$G\defeq\{x\in H_s:xz_\U=z_\U\}.$$
Since the ultrafilter $z_\U$ is central, for any $x\in H_s$ and any $g\in G$ we have $xgx^{-1}z_\U=xgz_\U x^{-1}=xz_\U x^{-1}=xx^{-1}z_\U=sz_\U=z_\U$ and hence $xgx^{-1}\in G$, which means that $G$ is a normal subgroup in $H_s$.

By analogy with Claim~\ref{cl:well-defined} we can prove the following 

\begin{claim}\label{cl2:well-defined} Let $x,y\in H_s$ be two elements such that $xz_\U=yz_\U$. Then
\begin{enumerate}
\item $xG=yG$;
\item $xe=ye$;
\item $xtz_\U=ytz_\U$ for every $t\in{\Uparrow}s$.
\end{enumerate}
\end{claim}

Consider the set $$Y\defeq X\cup\{x z_\U:x\in H_s\}.$$ Extend the semigroup operation of the semigroup $X$ to a binary operation $*:Y\times Y\to Y$ defined by
$$a*b=\begin{cases}
ab&\mbox{if $a,b\in X$};\\
ayz_\U&\mbox{if $a\in {\Uparrow}s$ and $b=yz_\U$ for some $y\in H_s$;}\\
aye&\mbox{if $a\in X\setminus{\Uparrow}s$ and $b=y z_\U$ for some $y\in H_s$;}\\
xb z_\U&\mbox{if $a=x z_\U$ for some $x\in H_s$ and $b\in {\Uparrow}s$;}\\
xeb&\mbox{if $a=xz_\U$ for some $x\in H_s$ and $b\in X\setminus{\Uparrow}s$};\\
xeye&\mbox{if $a=xz_\U$ and $b=yz_\U$ for some $x,y\in H_s$.}
\end{cases}
$$
Claim~\ref{cl2:well-defined} ensures that the operation $*$ is well-defined.
Now we define a zero-dimensional topology $\Tau$ on $Y$ making the binary operation $*$ continuous.

Consider the ideal $$I=X\setminus\bigcup_{n\in U_s}{\Uparrow}z_n$$in $X$. By analogy with Claim~\ref{cl:nmI}, we can prove the following

\begin{claim}\label{cl2:nmI} For every distinct numbers $n,m\in\w$ we have $z_nz_m\in I$.
\end{claim}

For every $x\in X$ let
$$l_x\defeq
\begin{cases}
1&\mbox{if $x\notin{\Uparrow}s$};\\
2&\mbox{if $x\in{\Uparrow}s$};
\end{cases}
$$
and
$$U_x=\begin{cases}
U_s&\mbox{if $x\in{\Uparrow}s$};\\
\{n\in U_s:\pi(x)z_n\ne z_n\}&\mbox{if $x\notin{\Uparrow}s$}.
\end{cases}
$$
By analogy with Claim~\ref{cl:UxU} we can prove that for every $x\in X$ the set $U_x$ belongs to the ultrafilter $\U$.

For every $U\in\U$ and $\e\in\IR_+$, consider the set
$$\nabla(U,\e)\defeq I\cup \{xz_n^k:x\in X,\;n\in U\cap U_x,\;l_x\le k<\e n\}.$$

By our assumption, maximal subgroups of $X$ are projectively $Z$-bounded. In particular, the maximal subgroup $H_s$ is projecively $Z$-bounded. Then there exists a number $p_s\in \IN$ such that for every surjective homomorphism $h:H_s\to S$ to a semigroup $S$ we have $Z(S)\subseteq\korin{p_s}{E(S)}$. 

\begin{claim}\label{cl2:nabla} 
$H_sz_{U_s}\cap\nabla(\w,\tfrac1{p_s})=\emptyset$. 
\end{claim}

\begin{proof} Assuming that $H_sz_{U_s}\cap\nabla(\w,\tfrac1{p_s})\ne\emptyset$, we could find $x\in H_s$, $y\in X$ and numbers $n\in U_s$ and $m\in U_y$ such that $xz_n=yz_m^k$ for some $k\in[l_y,\tfrac1{p_s}m)$. Applying the homomorphism $q$ to the equality $xz_n=yz_m^k$, we conclude that $$q(z_n)=q(sz_n)=q(s)q(z_n)=q(x)q(z_n)=q(xz_n)=q(yz_m^k)=q(y)q(z_m)\le q(z_m)$$and hence $n=m$ by Claim~\ref{cl2:antichain}(1).
Then $xz_n=yz_m^k=yz_n^k$ and hence $$z_n=sz_n=x^{-1}xz_n=x^{-1}yz^k_n.$$
Consider the homomorphism $h:H_s\to H_{\pi(z_n)}$, $h:g\mapsto g\pi(z_n)$. By Claim~\ref{cl2:e<s} and Lemma~\ref{l:Tamura}, the homomorphism $h$ is well-defined. 

Assuming that $y\in {\Uparrow}s$, we conclude that $x^{-1}y=x^{-1}sy\in H_s$ and 
$h(x^{-1}y)=x^{-1}y\pi(z_k)=z_n^{1-k}\in Z(X)\cap H_{\pi(z_n)}\subseteq Z(H_{\pi(z_n)})$.
The choice of $p_s$ guarantees that $(x^{-1}y\pi(z_n))^{p_s}=\pi(z_n)$. Taking the equality $z_n=xy^{-1}\pi(z_n)z_n^k$ to the power $p_s$, we obtain
$$z_n^{p_s}=(xy^{-1}\pi(z_n))^{p_s}z_n^{kp_s}=\pi(z_n)z_n^{kp_s}=z_n^{kp_s}.$$
Since $p_s<2p_s=l_yp_s\le kp_s$, Claim~\ref{cl2:antichain}(2) implies that $kp_s>n$. On the other hand, we have $k<\frac1{p_s}m=\frac1{p_s} n$ and hence $p_sk<n$.

This contradiction shows that $y\notin{\Uparrow}s$. Then $n=m\in U_s\cap U_y$ and $\pi(y)z_n\ne z_n$, by the definition of $U_y$. On the other hand, $z_n=x^{-1}yz_n^k$ implies 
$$z_n=x^{-1}yz_n^k=x^{-1}yz_nz_n^{k-1}=(x^{-1}y)^2z_n^kz_n^{k-1}=(x^{-1}y)^2z_nz_n^{2(k-1)}.$$Proceeding by induction, we can show that $z_n=(x^{-1}y)^jz_nz_n^{j(k-1)}$ for every $j\in\IN$. Since the semigroup $X$ is eventually Clifford, there exists $j\in\IN$ such that $(x^{-1}y)^j\in H_{\pi(x^{-1}y)}$ and hence $(x^{-1}y)^j=\pi(x^{-1}y)(x^{-1}y)^j$. Since $\pi(x^{-1}y)\lesseq x^{-1}y\lesseq y\lesseq\pi(y)$, we can apply Lemma~\ref{l:pi2} and conclude that $\pi(x^{-1}y)=\pi(y)\pi(x^{-1}y)$. Then 
$$
z_n=(x^{-1}y)^jz_nz_n^{j(k-1)}=\pi(x^{-1}y)(x^{-1}y)^jz_nz_n^{j(k-1)}=\pi(x^{-1}y)z_n=\pi(y)\pi(x^{-1}y)z_n=\pi(y)z_n,
$$
which contradicts the inclusion $n\in U_y$.
\end{proof}

\begin{definition}\label{d2:Tau}
Let $\Tau$ be the topology on the set $Y=X\cup\{x z_\U:x\in H_s\}$ consisting of sets $W\subseteq Y$ that satisfy the following conditions:
\begin{enumerate}
\item for every $x\in W\cap X$ with $xe=x\notin H_e$ there exist $V\in\mathcal H[e)$ and such that $\Lambdae(x;V)\setminus{\Uparrow}e\subseteq W$;
\item for every $x\in W\cap H_e$ there exist $V\in\mathcal H[e)$, $U\in\U$ and $\e\in\IR_+$ such that $\Lambda^e(x;V)\cap\nabla(U,\e)\subseteq W$;
\item for any $x\in H_s$ with $xz_\U\in W$ there exists $U\in\U$ such that $xz_U\subseteq W$.
\end{enumerate}
\end{definition}

It is clear that $X$ is a nonclosed dense subset in $(Y,\Tau)$. It remains to prove that the topological space $(Y,\Tau)$ is Hausdorff and zero-dimensional, and the binary operation $*:Y\times Y\to Y$ is continuous with respect to the topology $\Tau$. This will be done in the following two lemmas.

\begin{lemma}\label{l2:T-zerodim} The topological space $(Y,\Tau)$ is Hausdorff and zero-dimensional.
\end{lemma}

\begin{proof} The following claim can be proved by analogy with Claim~\ref{cl:singleton-T}.

\begin{claim}\label{cl2:singleton-T} For every $a\in X$ with $a\ne ae$ the singleton $\{a\}$ is $\Tau$-clopen.
\end{claim}

For every $x\in H_s$ and $U\in\U$, consider the subset
$$\Lambda(xz_\U;U)\defeq \{x z_\U\}\cup \{xz_n:n\in U\cap U_s\}$$
of $Y$. The following claim can be proved by analogy with Claim~\ref{cl:ultra-T}.

\begin{claim}\label{cl2:ultra-T} For every $a\in H_s$, the family $$\mathcal B_{az_\U}\defeq \{\Lambda(az_\U;U):U\in\U\}$$ consists of $\Tau$-clopen sets and is a neighborhood base of the topology $\Tau$ at $az _\U$.
\end{claim}

The definition of the topology $\Tau$ and Theorem~\ref{t:TS} imply the following  

\begin{claim}\label{cl2:notHe-T} For every $x\in X$ with $x=xe\notin H_e$ the family
$$\mathcal B_x=\{\Lambda^e(x;V)\setminus{\Uparrow}e:V\in\mathcal H[e)\}$$ consists of $\Tau$-clopen sets and is a neighborhood base of the topology $\Tau$ at $x$.
\end{claim}

\begin{claim}\label{cl2:He-T} For every $x\in H_e$ the family
$$\mathcal B_x=\{\Lambda^e(x;V)\cap\nabla(U,\e):V\in\mathcal H[e),\;U\in\U,\;\e\in (0,\tfrac1{p_s}]\}$$ consists of $\Tau$-clopen sets and is a neighborhood base of the topology $\Tau$ at $x$.
\end{claim}

\begin{proof} Fix any $V\in\Phi$, $U\in\U$ and $\e\in \big(0,\tfrac1{p_s}\big]$. By Definition~\ref{d:remote} and Lemma~\ref{l:Lamb}(2), $\Lambdae(x;V)\cap\nabla(U,\e)\subseteq \Lambdae(x;\frac ee)\subseteq \frac{xe}{e}\subseteq{\Uparrow}e$. Observe that for every $y\in H_e\cap \frac{xe}e$, we have $y=ye=xe=x$. This observation and the definition of the topology $\Tau$ imply that the set $ \Lambdae(x;V)\cap\nabla(U,\e)$ is $\Tau$-open. To show that this set is $\Tau$-closed, consider its complement $W\defeq Y\setminus ( \Lambdae(x;V)\cap\nabla(U,\e))$ in $Y$. By Theorem~\ref{t:TS}(3) and Example~\ref{ex:He}, the set $W\cap X$ is $\Tau_{\mathcal H[e)}$-open, which implies that $W$ satisfies the conditions (1), (2) of Definition~\ref{d2:Tau}. 

To check the condition (3), fix any $y\in H_s$ with $yz_\U\in W$. By Claim~\ref{cl2:nabla}, $\nabla(\w,\e)\cap \Lambda(yz_\U;U_s)=\emptyset$ and hence 
$\Lambda(yz_\U;U_s)\subseteq Y\setminus \nabla(\w,\e)\subseteq W$.
\smallskip

Therefore, the set $\Lambdae(x;V,F)\cap\nabla(U,\e)$ is $\Tau$-clopen. The definition of the topology $\Tau$ ensures that $\mathcal B_x$ is a neighborhood base of the topology $\Tau$ at $x$.
\end{proof}

Claims~\ref{cl2:singleton-T}--\ref{cl2:He-T} imply that the topology $\Tau$ satisfies the separation axiom $T_0$ and has a base consisting of $\Tau$-clopen sets. This implies that the topological space $(Y,\Tau)$ is Hausdorff and zero-dimensional.
\end{proof}

\begin{lemma}\label{l2:conti} The binary operation $*:Y\times Y\to Y$ is continuous.
\end{lemma}

\begin{proof} Fix any elements $a,b\in Y$ and a neighborhood $O_{ab}\in\Tau$ of the element $a*b$. We should find neighborhoods $O_a,O_b\in\Tau$ of $a,b$ such that $O_a*O_b\subseteq O_{ab}$. According to the definition of the operation $*$, we will consider six cases.
\smallskip

1. First assume that $a,b\in X$. Then $a*b=ab$. This case has three subcases.
\smallskip

1.1. If $ab\ne abe$, then $ae\ne a$ and $be\ne b$ and hence $a,b$ are isolated points of $(X,\Tau)$ according to Claim~\ref{cl2:singleton-T}. Then  the neighborhoods $O_a=\{a\}$ and $O_n=\{b\}$ have the desired property: $O_a*O_b=\{a*b\}\subseteq O_{ab}$. 
\smallskip

1.2. Next, assume that $ab=abe$ and $ab\notin H_e$. Lemma~\ref{l:Tamura} implies that $ab\notin{\Uparrow}e$ and hence $a\notin{\Uparrow}e$ or $b\notin{\Uparrow}e$.
By Claim~\ref{cl2:notHe-T}, there exists a subsemigroup $V\in\mathcal H[e)$ such that $\Lambdae(ab;V)\setminus{\Uparrow}e\subseteq O_{ab}$. 

1.2.1. If $a\notin{\Uparrow}e$, then by Claim~\ref{cl2:notHe-T} and Lemma~\ref{l:Lamb}(8), for the neighborhoods $O_a\defeq\Lambdae(a;V)\setminus{\Uparrow}e\in\Tau$ and $O_b\defeq\Lambdae(b;V)\in\Tau$ we have $O_a*O_b\subseteq\Lambdae(ab;V)\setminus {\Uparrow}e\subseteq O_{ab}.$ 

1.2.2. If $b\notin{\Uparrow}e$, then by Claim~\ref{cl2:notHe-T} and Lemma~\ref{l:Lamb}(8), for the neighborhoods $O_a\defeq\Lambdae(a;V)\in\Tau$ and $O_b\defeq\Lambdae(b;V)\setminus{\Uparrow}e\in\Tau$ we have $O_a*O_b\subseteq\Lambdae(ab;V)\setminus {\Uparrow}e\subseteq O_{ab}.$ 
\smallskip

1.3. Finally, assume that $ab\in H_e$. In this case, by Claim~\ref{cl2:He-T}, there exist $V\in\mathcal H[e)$, $U\in\U$ and $\e\in\big(0,\frac1{2p_s}]$ such that $\Lambdae(ab;V)\cap\nabla(U,2\e)\subseteq O_{ab}$ and $U\subseteq U_a\cap U_b\subseteq U_s$.

If $a\ne ae$, then $O_a\defeq\{a\}$ is a $\Tau$-open neighborhood of $a$, by Claim~\ref{cl2:singleton-T}. If $a=ae$, then $ab\in H_e$ implies $a\in{\Uparrow}e$ and hence $a=ae\in H_e$ by Lemma~\ref{l:Tamura}. In this case, Claim~\ref{cl2:He-T} ensures that $$O_a\defeq \Lambdae(a;V)\cap\nabla(U,\e)$$is a $\Tau$-open neighborhood of $a$.

By analogy, define the $\Tau$-open neighborhood
$$
O_b\defeq\begin{cases}\{b\},&\mbox{if $b\ne be$};\\
\Lambdae(b;V)\cap\nabla(U,\e),&\mbox{otherwise},\\
\end{cases}
$$
of $b$.

We claim that $O_a*O_b\subseteq O_{ab}$. Given any elements $x\in O_a$ and $y\in O_a$, we should prove that $xy\in O_{ab}$. By Lemma~\ref{l:Lamb}(8), $$xy\in O_aO_b\subseteq \Lambdae(a;V)\cdot\Lambdae(b;V)\subseteq \Lambdae(ab;V).$$ 
If $xy\in I$, then 
$$xy\in \Lambdae(ab;V)\cap I\subseteq \Lambdae(ab;V)\cap\nabla(U,2\e)\subseteq O_{ab}$$ and we are done. So, we assume that $xy\notin I$. In this case $x,y\notin I$.

Next, we consider four subcases.
\smallskip

1.3.1. If $a\ne ae$ and $b\ne be$, then $O_a=\{a\}$, $O_b=\{b\}$, and $xy=ab\in O_{ab}$.
\smallskip

1.3.2. Assume that $a\ne ae$ and $b=be$. Then $O_a=\{a\}$ and $x=a\notin H_e$. Also $y\in O_b=\Lambdae(b;V)\cap\nabla(U,\e)$. It follows from $y\in \nabla(U,\e)\setminus I$ that $y=tz_n^k$ for some $t\in X$, $n\in U\cap U_t$, and $k\in[l_t,\e n)$. 
\smallskip

1.3.2.1. If $t\in {\Uparrow}s$, then $U_t=U_s$ and $l_t=2$. 
\smallskip

1.3.2.1.1. If $a\in{\Uparrow}s$, then $xt=at\in {\Uparrow}s$, $n\in U\cap U_t=U\cap U_s=U\cap U_{xt}$, $l_{xt}=2=l_t\le k<\e n<2\e n$ and hence 
$$xy=xtz_n^k\in \Lambdae(ab;V)\cap \nabla(U,2\e)\subseteq O_{ab}.$$
\smallskip

1.3.2.1.2. If $a\notin{\Uparrow}s$, then $xt=at\notin{\Uparrow}s$, $l_{at}=1$ and $n\in U\subseteq U_a$. The definition of the set $U_a$ implies $\pi(a)z_n\ne z_n$. 
We claim that $\pi(at)z_n\ne z_n$. Indeed, assuming that $\pi(at)z_n=z_n$ and applying Lemma~\ref{l:pi2}, we conclude that $z_n=\pi(at)z_n=\pi(a)\pi(at)z_n=\pi(a)z_n$, which contradicts $n\in U_a$. This contradiction shows that $\pi(at)z_n\ne z_n$ and hence $n\in U\cap U_{at}$ and $xy=atz_n^k\in \Lambdae(ab;V)\cap\nabla(U,2\e)\subseteq O_{ab}$ as $l_{at}=1\le k<\e n<2\e n$.
\smallskip

1.3.2.2. If $t\notin{\Uparrow}s$, then $l_t=1$ and $n\in U_t$ implies $\pi(t)z_n\ne z_n$. Repeating the argument from the case 1.3.2.1.2, we can show that $\pi(at)z_n\ne z_n$. Then $at\notin{\Uparrow}s$, $n\in U\cap U_{at}$ and $l_{at}=1=l_t\le k<\e n<2\e n$. Now the definition of $\nabla(U,2\e)$ ensures that $xy=atz_n^k\in \Lambdae(ab;V)\cap\nabla(U,2\e)\subseteq O_{ab}$.
\smallskip

1.3.3. The case $a=ae$ and $b\ne be$ can be considered by analogy with the case 1.3.2.
\smallskip

1.3.4. Finally, assume that $a=ae$ and $b=be$. In this case the assumption $ab\in H_e$ of case 1.3 implies that $a,b\in{\Uparrow}e$ and hence $a=ae\in H_e$ and $b=be\in H_e$, by Lemma~\ref{l:Tamura}. The choice of the neighborhoods $O_a,O_b$ ensures that $x\in O_a=\Lambdae(a;V)\cap\nabla(U,\e)$ and $y\in O_b=\Lambdae(b;V)\cap\nabla(U,\e)$. The inclusions $x,y\in\nabla(U,\e)\setminus I$ imply that $x=tz_n^k$ for some $t\in X$, $n\in U\cap U_t$ and $k\in[l_t,\e n)$, and $y=t'z_{n'}^{k'}$ for some $t'\in X$, $n'\in U\cap U_{t'}$ and $k'\in[l_{t'},\e n')$. If $n\ne n'$, then by Claim~\ref{cl2:nmI},  $xy=tz_n^kt'z_{n'}^{k'}=tt'z_n^kz_{n'}^{k'}\in I$, which contradicts our assumption. So, $n=n'$ and $2\le k+k'<2\e n$.
\smallskip

1.3.4.1. If $t,t'\in{\Uparrow}s$, then $tt'\in {\Uparrow}s$ and $l_{tt'}=2=l_{t}=l_{t'}$ and $n=n'\in U\cap U_t\cap U_{t'}=U\cap U_s=U\cap U_{tt'}$. Then 
$xy=tz_n^kt'z_n^{k'}=tt'z_n^{k+k'}\in \Lambdae(ab;V)\cap\nabla(U,2\e)\subseteq O_{ab}.$
\smallskip

1.3.4.2. If $t\notin{\Uparrow}s$, then $tt'\notin{\Uparrow}s$ and $n\in U_t$ imply $\pi(t)z_n\ne z_n$ and $\pi(tt')z_n\ne z_n$. Then $n\in U\cap U_{tt'}$ and $xy=tz_n^kt'z_n^{k'}=tt'z_n^{k+k'}\in\Lambdae(ab;V)\cap\nabla(U,2\e)\subseteq O_{ab}.$
\smallskip

1.3.4.3. The case $t'\notin{\Uparrow}s$ can be treated by analogy with the case 1.3.4.2.

This completes the analysis of the case 1.
\smallskip

2. Assume that $a\in{\Uparrow}s$ and $b=yz_\U$ for some $y\in H_s$. In this case $a*b=ayz_\U$ and by Claim~\ref{cl2:ultra-T}, there exist a set $U\in\U$ such that $\Lambda(ayz_\U;U)\subseteq O_{ab}$. It follows from $a\in{\Uparrow}s$ and $e<s$ that $ae\ne a$ and hence $O_a\defeq\{a\}$ is a $\Tau$-open neighborhood of $a$.  Consider the neighborhood $O_b\defeq \Lambda(yz_\U;U)$ of $b$ and observe that $O_a*O_b=\Lambda(ayz_\U,U)\subseteq O_{ab}$.
\smallskip

3. Assume that $a\in X\setminus{\Uparrow}s$ and $b=yz_\U$ for some $y\in H_s$. In this case $a*b=aye\in X$. By Claims~\ref{cl2:notHe-T} and \ref{cl2:He-T}, there exist $V\in\Phi$, $U\in\U$ and $\e\in(0,\frac1{2p_s}]$ satisfying the following conditions:
\begin{itemize}
\item $U\subseteq U_a$;
\item if $aye\notin H_e$, then $\Lambdae(aye;V)\setminus{\Uparrow}e\subseteq O_{ab}$;
\item if $aye\in H_e$, then $\Lambdae(aye;V)\cap\nabla(U,2\e)\subseteq O_{ab}$.
\end{itemize}
By Claim~\ref{cl2:zn-converge}, the set $U'\defeq\{n\in U:n\e>1,\;z_n\in V\}$ belongs to the ultrafilter $\U$.
Replacing  the set $U$ by $U'$, we can assume that $U=U'$. Then for every $n\in U=U',$ the element $yz_n\in\Lambda(yz_\U;U)$ belongs to the set $\frac{ye}e\cdot V\subseteq \Lambdae(ye;V)$. 

Let $O_b\defeq\Lambda(yz_\U;U)$ and $$O_a\defeq \begin{cases}\{a\}&\mbox{if $a\ne ae$};\\
\Lambdae(a;V)\setminus{\Uparrow}e&\mbox{if $a=ae\notin H_e$};\\
\Lambdae(a;V)\cap \nabla(U,\e)&\mbox{if $a\in H_e$}.
\end{cases}
$$
By Claims~\ref{cl2:singleton-T}--\ref{cl2:He-T}, the sets $O_a,O_b$ are $\Tau$-open. To show that $O_a*O_b\subseteq O_{ab}$, take any elements $a'\in O_a$ and $b'\in O_b$, and consider two cases.
\smallskip

3.1. Assume that $b'=yz_\U$ and consider two subcases.

3.1.1. If $aye\notin H_e$, then $aye\notin{\Uparrow}e$, by Lemma~\ref{l:Tamura}. It follows from $ye\in H_se\subseteq H_e$ and $aye\notin{\Uparrow}e$ that $a\notin{\Uparrow}e$. Then $a'*b'\in O_aye\subseteq(\Lambda(a;V)\setminus{\Uparrow}e)ye\subseteq \Lambdae(aye;V)\setminus {\Uparrow}e\subseteq O_{ab}$, by Lemma~\ref{l:Lamb}(7). 
\smallskip

3.1.2. If $aye\in H_e$, then $a'*b'=a'ye\in (\Lambdae(a;V)ye)\cap{\Downarrow}e\subseteq\Lambdae(aye;V)\cap I\subseteq\Lambdae(aye;V)\cap\nabla(U,2\e)\subseteq O_{ab}$.

3.2. Next, assume that $b'\ne yz_\U$ and hence $b'=yz_n$ for some $n\in U$. It follows from $n\in U=U'$ that $1<n\e$ and $z_n\in V$.

3.2.1. If $aye\notin H_e$, then $aye\notin{\Uparrow}e$ and  $a\notin{\Uparrow}e$. Then $a'*b'\in O_ayz_n\subseteq(\Lambdae(a;V) \setminus{\Uparrow}e)\cdot\Lambdae(ye;V)\subseteq \Lambdae(aye;V)\setminus {\Uparrow}e\subseteq O_{ab}$. 
\smallskip

3.2.2. If $aye\in H_e$ and $a'*b'\in I$, then $a'*b'=a'yz_n\in \big(\Lambdae(a;V)\cdot\Lambdae(ye;V)\big)\cap I\subseteq\Lambdae(aye;V)\cap I\subseteq\Lambdae(aye;V)\cap\nabla(U,2\e)\subseteq O_{ab}$.
\smallskip

3.2.3. Finally, assume that $aye\in H_e$ and $a'*b'\notin I$. Then $a'\notin I$. It follows from $aye\in H_e$ that $a\in{\Uparrow}e$. 

3.2.3.1. If $a\ne ae$, then $a'\in O_a=\{a\}$ and hence $a'*b'=ayz_n$. It follows from $a\notin{\Uparrow}s$ that $ay\notin{\Uparrow}s$ and $l_{ay}=1$. Also $a\notin{\Uparrow}s$ and $n\in U\subseteq U_a$ imply $\pi(a)z_n\ne z_n$. Using Lemma~\ref{l:pi2}, we can show that $\pi(ay)\le\pi(a)$ and hence $\pi(a)\pi(ay)=\pi(ay)$. Assuming that $\pi(ay)z_n=z_n$, we conclude that $z_n=\pi(ay)z_n=\pi(a)\pi(ay)z_n=\pi(a)z_n$, which contradicts $n\in U_a$. This contradiction shows that $\pi(ay)z_n\ne z_n$ and hence $n\in U_{ay}$. Then $a'*b'=ayz_n\in(a\Lambdae(ye;V))\cap\nabla(U,\e)\subseteq \Lambdae(aye;V)\cap\nabla(U,2\e)\subseteq O_{ab}$ as $n\in U\cap U_{ay}$ and $l_{ay}=1<n\e<2n\e$.
\smallskip

3.2.3.2. If $a=ae$, then $a=ae\in({\Uparrow}e)e\subseteq H_e$ by Lemma~\ref{l:Tamura} and hence $a'\in O_a=\Lambdae(a;V)\cap\nabla(U,\e)$. It follows from $a'\in \nabla(U,\e)\setminus I$ that $a'=x'z_{n'}^{k'}$ for some $x'\in X$, $n'\in U\cap U_{x'}$ and $k'\in [l_{x'},\e n')$. If $n\ne n'$, then $a'*b'=x'z_{n'}^{k'}yz_n=x'yz_{n'}^{k'}z_n\in I$ by Claim~\ref{cl2:nmI}, which contradicts or assumption. This contradiction shows that $n=n'$.
\smallskip

We claim that $n\in U_{x'y}$. Indeed, if $x'\in{\Uparrow}s$, then $x'y\in{\Uparrow}s$ and hence $n=n'\in U_{x'}=U_s=U_{yx'}$.
If $x'\notin{\Uparrow}s$, then $x'y\notin{\Uparrow}s$ and hence $n=n'\in U_{x'}$ implies $\pi(x')z_n\ne z_n$. Lemma~\ref{l:pi2} ensures that $\pi(x'y)=\pi(x')\pi(x'y)$. Assuming that $\pi(x'y)z_n=z_n$, we conclude that $z_n=\pi(x'y)z_n=\pi(x')\pi(x'y)z_n=\pi(x')z_n$, which contradicts the inclusion $n\in U_{x'}$. This contradiction shows that $\pi(x'y)z_n\ne z_n$ and hence $n\in U_{x'y}$. 

Then $a'*b'=x'z_{n'}^{k'}yz_n=x'yz_n^{k'+1}\in\nabla(U,2\e)$ as $n\in U\cap U_{x'y}$ and $l_{x'y}\le 2\le k'+1<\e n'+1=\e n+1<\e n+\e n=2\e n$. Finally, $a'*b'=a'yz_n\in (\Lambdae(a;V)\cdot\Lambdae(ye;V))\cap\nabla(U,2\e)\subseteq \Lambdae(aye;V)\cap\nabla(U,2\e)\subseteq O_{ab}.$
\smallskip

4. Assume that $a=xz_\U$ for some $x\in H_s$ and $b\in {\Uparrow}s$. This case can be considered by analogy with case 2.
\smallskip

5. Assume that $a=x z_\U$ for some $x\in H_s$ and $b\in X\setminus{\Uparrow}s$.
 This case can be considered by analogy with case 3.
\smallskip

6. Finally, assume that $a=xz_\U$ and $b=yz_\U$ for some $x,y\in H_s$. In this case $a*b=xeye=xye\in H_e$. By Claim~\ref{cl2:He-T}, there exist $V\in\mathcal H[e)$, $U\in\U$ and $\e\in\big(0,\frac1{2p_s}\big]$ such that $\Lambdae(xye;V)\cap\nabla(U,2\e)\subseteq O_{ab}$.  By Claim~\ref{cl2:zn-converge}, there exists a set $U'\in\U$ such that for every $n\in U'$ we have $1<\e n$ and $z_n\in V$.  Replacing the set $U$ by $U\cap U'\cap U_s$, we can assume that $U\subseteq U'\cap U_s$. Then for every $n\in U\subseteq U'$ and $g\in H_s$ we have
$gz_n\in\tfrac{ge}e\cdot V\subseteq\Lambdae(ge;V)$.

By Claim~\ref{cl2:ultra-T}, the sets $O_a\defeq\Lambda(x z_\U;U)$ and $O_b\defeq\Lambda(y z_\U;U)$ are $\Tau$-open. We claim that $O_a*O_b\subseteq O_{ab}$. Given any elements $a'\in O_a$ and $b'\in O_b$, we should show that $a'*b'\in O_{ab}$. Consider four cases.

6.1. If $a'=x z_\U$ and $b'=y z_\U$, then $a'*b'=a*b\in O_{ab}$.

6.2. If $a'=xz_\U$ and $b'\ne yz_\U$, then $b'=yz_n$ for some $n\in U\subseteq  U'\cap U_s$. By the definition of the binary operation $*$ and Lemma~\ref{l:Lamb}(6), 
$$a'*b'=xeyz_n\in (xe\Lambdae(ye;V))\cap I\subseteq \Lambdae(xeye;V)\cap\nabla(U,2\e)\subseteq O_{ab}.$$

6.3. The case of $a'\ne xz_\U$ and $b'=yz_\U$ can be considered by analogy with the case 6.2.

6.4. If $a'\ne xz_\U$ and $b'\ne yz_\U$, then $a'=xz_n$ and $b'=yz_m$ for some $n,m\in U\subseteq U_s\cap U'$. If $n\ne m$, then $a'*b'=xz_nyz_m\in I$ by Claim~\ref{cl2:nmI} and hence
$$a'*b'=xz_nyz_m\in \big(\Lambdae(xe;V)\cdot\Lambdae(ye;V)\big)\cap I\subseteq\Lambdae(xeye;V)\cap\nabla(U,2\e)\subseteq O_{ab}.$$
If $n=m$, then $a'*b'=xz_nyz_m=xyz_n^2$. Since $n\in U=U\cap U_s=U\cap U_{xy}\subseteq U'$, we have  $l_{xy}=2<2\e n$ and hence 
$$a'*b'=xz_nyz_m=xyz_n^2\in \big(\Lambdae(xe;V)\cdot\Lambdae(ye;V)\big)\cap\nabla(U,2\e)\subseteq \Lambdae(xeye;V)\cap\nabla(U,2\e)\subseteq O_{ab}.$$
\end{proof} 

\begin{lemma}\label{l2:non-IC} $(Y,\Tau)$ is a  Hausdorff zero-dimensional topological semigroup containing $X$ as a nonclosed dense subsemigroup.
\end{lemma}

\begin{proof} By Lemma~\ref{l2:T-zerodim}, the space $(Y,\Tau)$ is Hausdorff and zero-dimensional and by Lemma~\ref{l2:conti}, the binary operation $*:Y\times Y\to Y$ is continuous. The density of $X$ in the Hausdorff topological space $(Y,\Tau)$ and the associativity of the semigroup operation $*{\restriction}_{X\times X}$ of $X$ imply that the binary operation $*$ is associative and hence $(Y,\Tau)$ is a topological semigroup containing $X$ as a dense subsemigroup.
\end{proof}

Lemma~\ref{l2:non-IC} implies that the semigroup $X$ is not injectively $\mathsf{T_{\!z}S}$-closed, which contradicts our assumption. This contradiction shows that the semigroup $Z(X)\cap H(X)$ is bounded and completes the long proof of Proposition~\ref{p2:vsim-grobikam-grobik}.
\end{proof}

Propositions~\ref{p:vsim-grobikam-grobik} and \ref{p2:vsim-grobikam-grobik} imply the following 

\begin{corollary}\label{c:bounded} Let $X$ be an injectively $\mathsf{T_{\!z}S}$-closed semigroup. If the poset $E(X)$ is well-founded and the semigroup $X$ is viable, nonsingular, eventually Clifford and group-bounded, then its center $Z(X)$ is bounded.
\end{corollary}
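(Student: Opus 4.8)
The plan is to assemble the corollary from the two boundedness propositions just established, with no further topological work. First I would verify that the extra hypothesis needed for Proposition~\ref{p2:vsim-grobikam-grobik} comes for free. The assumption that $X$ is group-bounded means that every subgroup of $X$ is bounded; in particular, for each idempotent $e\in E(X)$ the maximal subgroup $H_e$ is a bounded group. Since every bounded semigroup is projectively $Z$-bounded (as noted right after the definition of projective $Z$-boundedness), each $H_e$ is projectively $Z$-bounded. Thus all maximal subgroups of $X$ are projectively $Z$-bounded, while the remaining hypotheses of Proposition~\ref{p2:vsim-grobikam-grobik} — injective $\mathsf{T_{\!z}S}$-closedness, well-foundedness of the poset $E(X)$, viability, and eventual Cliffordness — are exactly those granted in the corollary.

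Next I would invoke the two propositions to produce two separate bounds. Proposition~\ref{p2:vsim-grobikam-grobik} gives that the semigroup $Z(X)\cap H(X)$ is bounded, so I can fix $m\in\IN$ such that $w^m\in E(X)$ for every $w\in Z(X)\cap H(X)$. Independently, Proposition~\ref{p:vsim-grobikam-grobik} applies verbatim (its hypotheses, which now also require nonsingularity, are all present), yielding $l\in\IN$ with $Z(X)\subseteq\korin{l}{H(X)}$.

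The final step is to glue these two bounds into a single exponent for the whole center. Given any $z\in Z(X)$, I would use that the center $Z(X)$ is a subsemigroup to get $z^l\in Z(X)$, and the inclusion $Z(X)\subseteq\korin{l}{H(X)}$ to get $z^l\in H(X)$. Hence $z^l\in Z(X)\cap H(X)$, and by the choice of $m$ the element $(z^l)^m=z^{lm}$ is an idempotent. Since $z\in Z(X)$ was arbitrary, the single number $lm$ witnesses that $Z(X)$ is bounded, as desired.

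There is essentially no obstacle confined to this corollary: the entire difficulty resides inside Propositions~\ref{p:vsim-grobikam-grobik} and \ref{p2:vsim-grobikam-grobik}, where the nonclosed Hausdorff zero-dimensional semigroups are constructed. The only point in the present argument that requires a moment's care is the elementary observation that $z^l$ lands in the \emph{central} Clifford part $Z(X)\cap H(X)$ rather than merely in $H(X)$, since this is precisely what allows the two independently obtained exponents to be composed into one bound valid on all of $Z(X)$.
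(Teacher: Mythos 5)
Your proposal is correct and follows essentially the same route as the paper's own proof: each maximal subgroup is bounded hence projectively $Z$-bounded, Proposition~\ref{p2:vsim-grobikam-grobik} bounds $Z(X)\cap H(X)$, Proposition~\ref{p:vsim-grobikam-grobik} gives $Z(X)\subseteq\korin{\ell}{H(X)}$, and the two exponents compose via $z^{\ell}\in Z(X)\cap H(X)$. The observation you flag as the one delicate point (that $z^{\ell}$ lies in the \emph{central} Clifford part) is exactly the step the paper uses as well.
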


\begin{proof} Assume that the poset $E(X)$ is well-founded and the semigroup $X$ is viable, nonsingular, eventually Clifford and group-bounded. Then each maximal subgroup $H_s$ of $X$ is bounded and hence $H_e$ is projectively $Z$-bounded. By Proposition~\ref{p2:vsim-grobikam-grobik}, the semigroup $Z(X)\cap H(X)$ is bounded and by Proposition~\ref{p:vsim-grobikam-grobik}, there exists $\ell\in \IN$ such that $Z(X)\subseteq\korin{\ell}{H(X)}$. Since $Z(X)\cap H(X)$ is bounded, there exists $b\in\IN$ such that $Z(X)\cap H(X)\subseteq\korin{b}{E(X)}$. Then for every $z\in Z(X)$ we have $z^\ell\in Z(X)\cap H(X)$ and $z^{\ell b}=(z^\ell)^b\in E(X)$, which means that the semigroup $Z(X)$ is bounded.
\end{proof}

\section{The Clifford-nonsingularity of an injectively $\mathsf{T_{\!z}S}$-closed commutative semigroup}\label{s:Cs}

In this section we find conditions ensuring that an injectively $\mathsf{T_{\!z}S}$-closed   semigroup $X$ has Clifford-nonsingular {\em ideal center}
$$I\!Z(X)\defeq\{z\in Z(X):zX\subseteq Z(X)\}.$$
Observe that $I\!Z(X)$ is the largest ideal of $X$, which is contained in $Z(X)$. If a semigroup $X$ is commutative, then $I\!Z(X)=Z(X)=X$.

\begin{proposition}\label{p:vsim-grobikam-grobik2} Let $X$ be an injectively $\mathsf{T_{\!z}S}$-closed semigroup. If the poset $E(X)$ is well-founded and the semigroup $X$ is viable and eventually Clifford, then the semigroup $I\!Z(X)$ is not Clifford-singular.
\end{proposition}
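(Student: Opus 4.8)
The plan is to prove the contrapositive by the method of Propositions~\ref{p:vsim-grobikam-grobik} and~\ref{p2:vsim-grobikam-grobik}: assuming that $I\!Z(X)$ is Clifford-singular, I would embed $X$ as a dense, non-closed subsemigroup of a Hausdorff zero-dimensional topological semigroup, contradicting injective $\mathsf{T_{\!z}S}$-closedness. By Lemma~\ref{l:+1} I may assume $X$ is unital, and I keep the maps $\pi\colon X\to E(X)$ and $q\colon X\to X/_{\Updownarrow}$. Since injective $\mathsf{T_{\!z}S}$-closedness implies $\mathsf{T_{\!z}S}$-closedness, Theorem~\ref{t:Z}(1,4) gives that $Z(X)$ is chain-finite, periodic, nonsingular and group-finite; in particular $EZ(X)$ is chain-finite. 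Group-finiteness is used at once to identify the two relevant Clifford parts: if $a\in I\!Z(X)\cap H(X)$ then $a$ lies in a finite subgroup of $Z(X)$, so its inverse is a power of $a$ and stays in the ideal $I\!Z(X)$; hence $H(X)\cap I\!Z(X)=H(I\!Z(X))$. Consequently a Clifford-singular witness $A\subseteq I\!Z(X)\setminus H(I\!Z(X))$ with $AA\subseteq H(I\!Z(X))$ is also an infinite set $A\subseteq I\!Z(X)\setminus H(X)$ with $AA\subseteq H(X)$.

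I would then bring $A$ to the combinatorial form of Claims~\ref{cl:antichain} and~\ref{cl:nmI}. Colouring pairs by the comparability type of $\{\pi(a),\pi(a')\}$ and applying the Ramsey Theorem~\cite{Ramsey}, chain-finiteness of $EZ(X)$ excludes an infinite chain of $\pi$-values, while the constant case $\pi(a_n)\equiv u$ is impossible because there the finiteness of the group $H_u\cap Z(X)$ and invertibility in it force, after one more Ramsey/pigeonhole refinement, an infinite $B\subseteq A$ with $BB$ a singleton, i.e.\ $Z(X)$ singular, against Theorem~\ref{t:Z}(1). So $\{\pi(a_n)\}_{n\in\w}$ may be assumed an infinite antichain. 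Using well-foundedness of $EZ(X)$ I would choose $e\in EZ(X)$ maximal with $\pi(a_n)\ge e$ for infinitely many $n$; since iterated meets stabilise in a well-founded semilattice, this maximality yields that every idempotent $f\not\le e$ satisfies $\pi(a_n)\ge f$ for only finitely many $n$. After passing to the corresponding infinite subsequence (discarding the at most one index with $\pi(a_n)=e$) I have $\pi(a_n)>e$, $a_n\notin H(X)$, $a_n^2\in H(X)$, and, exactly as in Claim~\ref{cl:nmI}, $a_na_m\in I$ for $n\ne m$, where $I\defeq X\setminus\bigcup_n{\Uparrow}a_n$.

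With this data I would run the construction of Proposition~\ref{p:vsim-grobikam-grobik}: form the free central ultrafilter $z_\U$ generated by the tails $\{a_n:n\in U\}$, select through a free ultrafilter on $\w$ a minimal idempotent $s\in E_\U$ (so that $sz_\U=z_\U$ and, as in Claim~\ref{cl:e<s}, $e<\pi(a_n)<s$ on a set of $\U$), put $Y\defeq X\cup\{xz_\U:x\in H_s\}$ with the six-case operation $*$ and the topology $\Tau$ modelled on Definition~\ref{d:Tau}, and then verify the analogues of Lemmas~\ref{l:T-zerodim} and~\ref{l:conti}, namely that $(Y,\Tau)$ is Hausdorff, zero-dimensional, and that $*$ is continuous. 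The identities $a_n^2\in H(X)$ and $a_na_m\in I$ ($n\ne m$) are precisely those that drive the continuity bookkeeping there, so the six-case analysis transfers; the sequence $(a_n)$ then accumulates at $z_\U\notin X$, witnessing that $X\setminus H(X)$ is not closed and producing the desired contradiction.

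The two places where this differs essentially from Proposition~\ref{p:vsim-grobikam-grobik}, and which I expect to be the hard part, are the choice of the $e$-base and the treatment of the $e$-shadows. Because nonsingularity is \emph{not} assumed, the base $\mathcal Z[e)$ of Example~\ref{ex:Ze} is unavailable, and because the $a_n$ are non-Clifford the Clifford base $\mathcal H[e)$ of Example~\ref{ex:He} cannot carry them; I would instead build a $\pi$-filtered $e$-base whose members are the subsemigroups $\big((I\!Z(X)\cup H(X))\cap Z(X)\cap\tfrac ee\big)\setminus\pi^{-1}[{\uparrow}F]$, proving convergence of $(a_n)$ from the maximality of $e$ and proving regularity directly: the case $\pi(b)\not\le e$ is handled uniformly by the monotonicity $\pi(xv)\le\pi(v)$ (Lemmas~\ref{l:quasiorder} and~\ref{l:pi2}) on taking $F\ni\pi(b)$, while the case $\pi(b)\le e$ with $b\ne be$ is reduced, via the fact that $I\!Z(X)$ is an ideal, to excluding central witnesses, where nonsingularity of $Z(X)$ is used. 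Finally, unlike in Proposition~\ref{p:vsim-grobikam-grobik}, the relation $a_n^2\in H(X)$ forbids normalising $a_n$ into $\tfrac ee$ by high powers; instead I would use group-finiteness of $Z(X)$ to make the shadows $a_ne\in H_e\cap Z(X)$ constant along $\U$ and absorb this constant into the definitions of $*$ and $\Tau$. Carrying out this regularity argument and shadow accounting without a global nonsingularity hypothesis is the main obstacle.
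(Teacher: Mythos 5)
You reproduce the paper's preliminary reductions faithfully (the Ramsey antichain, the maximal idempotent $e$, the ideal $J=X\setminus{\Uparrow}A$), but the core of your plan --- running the six-case construction of Propositions~\ref{p:vsim-grobikam-grobik} and~\ref{p2:vsim-grobikam-grobik} with new points $xz_\U$, $x\in H_s$, only, and with the products $a*(yz_\U)$ for $a\notin{\Uparrow}s$ defined \emph{algebraically} through the shadow $g=a_ne$ --- cannot work, and the ingredient you omit entirely, namely the set $M$, its minimal idempotent $\mu$, and Lemma~\ref{l:mu-fin}, is exactly what is needed at this point. In Sections~\ref{s:b1} and~\ref{s:b2} the six-case scheme is legitimate only because of Claim~\ref{cl:zn-converge}: the elements $z_n$ lie in $Z(X)\cap\tfrac ee$ and eventually belong to every member of a regular $e$-base, which forces every translate $wz_\U$ with $w\notin{\Uparrow}s$ to converge to the shadow value, so the algebraic formula for $*$ agrees with the topological limit. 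For a Clifford-singular witness no analogue of this exists: since $a_n\notin H(X)$, Lemma~\ref{l:C-ideal} gives $a_n\pi(a_n)\in H(X)$, hence $a_n\ne a_n\pi(a_n)$, so neither the $a_n$ nor their translates factor through members of any $e$-base (only the Clifford shadows $a_n\pi(a_n)$ do --- this is the role of the paper's ultrafilter $\U'$), and a translate $ba_n$ with $b\notin{\Uparrow}s$ need not converge to $bg$ at all: it can be eventually constant at a non-Clifford point $v$ with $v\ne ve$, a value that no shadow formula (which always outputs an element fixed by $e$) can produce.

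Concretely, let $X$ be the commutative periodic semigroup with identity $1$ and absorbing element $w_0$, built on the semilattice $\{1\}\cup\{u_n:n\in\w\}\cup\{0,w_0\}$ (where $u_nu_m=0$ for $n\ne m$ and $0w_0=w_0$) by adjoining elements $a_n$ with $a_n^2=u_n$, $a_nu_n=u_n$, $a_na_m=a_nu_m=0$ for $n\ne m$, $a_n0=0$, and elements $b,v$ with $ba_n=v$ for all $n$ and with every other product involving $b$, $v$ or $w_0$ equal to $w_0$. One checks this is associative, $E(X)$ is chain-finite, all subgroups are trivial, $X=Z(X)=I\!Z(X)$ is nonsingular, and $A=\{a_n\}_{n\in\w}$ witnesses Clifford-singularity; so $X$ satisfies every hypothesis your construction must handle. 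Your data become $e=0$, $g=a_n0=0$, $s=1$, $H_s=\{1\}$, $b\notin{\Uparrow}s$, and your formula gives $b*z_\U=bg=w_0$. But $ba_n=v$ for \emph{every} $n$, where $v\ne w_0$ and $v\notin H(X)$; since your topology must be Hausdorff and must make $a_n\to z_\U$, continuity of $*$ at $(b,z_\U)$ would force $b*z_\U=\lim_n ba_n=v\ne w_0$. So on this $X$ your operation is discontinuous (or your space is not Hausdorff) --- the correct value $v$ is one your construction never produces. This is precisely the phenomenon the paper's proof is organized around: there the operation is $\F*\E\defeq\lim\F\E$ whenever the product filter converges, and the minimality of $\mu$, through Lemmas~\ref{l:mu-fin} and~\ref{l:cdiv}(5), guarantees that every translate $x\U$ with $x\notin{\Uparrow}\mu$ is principal (in the example, $b\U$ is principal with value $v$), while the genuinely divergent translates --- the set $D\subseteq{\Uparrow}\mu$, in general not exhausted by an $H_s$-orbit --- are the adjoined points. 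A secondary gap: your proposed $e$-base contains non-Clifford elements of $I\!Z(X)$, so neither clause of Theorem~\ref{t:TS}(5) gives its regularity (the paper instead takes the base $\nabla(U,F)$ consisting of idempotents, regular by Theorem~\ref{t:TS}(5a)); but even granting a regular base, the architecture fails for the reason above.
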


\begin{proof}By Lemma~\ref{l:+1}, we lose no generality assuming that the injectively $\mathsf{T_{\!z}S}$-closed semigroup $X$ is unital. By Theorem~\ref{t:Z}, the center $Z(X)$ of $X$ is periodic, nonsingular, chain-finite, and group-finite.

Let $\pi:X\to E(X)$ be the map assigning to each $x\in X$ the unique idempotent $\pi(x)$ such that $x^\IN\cap H_{\pi(x)}\ne\emptyset$. Since $X$ is eventually Clifford, the map $\pi$ is well-defined.

Let $q:X\to X/_{\Updownarrow}$ be the quotient homomorphism of $X$ to its semilattice reflection. Since the semigroup $X$ is viable and eventually Clifford, the restriction $q{\restriction}_{E(X)}:E(X)\to X_{\Updownarrow}$ is bijective and for every $x\in X$ we have
$$\textstyle{\Updownarrow}x=q^{-1}(q(x))=\pi^{-1}(\pi(x))=\korin{\IN}{H_{\pi(x)}}.$$

To derive a contradiction assume the ideal center $I\!Z(X)$ is Clifford-singular and hence there exists an infinite subset $A\subseteq I\!Z(X)\setminus H(I\!Z(X))$ such that $AA\subseteq H(I\!Z(X))$. Since the semigroup $Z(X)$ is periodic, $H(Z(X))=H(X)\cap Z(X)$ and $H(I\!Z(X))=H(X)\cap I\!Z(X)$.

Let $M$ be the set of idempotents $u\in E(X)$ for which there exists an infinite set $A\subseteq {\Downarrow}u\cap I\!Z(X)\setminus H(X)$ such that $AA\subseteq H(X)$. The set $M$ contain the unit of $X$ and hence is not empty. Since the poset $E(X)$ is well-founded, there exists an idempotent $\mu\in M$ such that $M\cap {\downarrow}\mu=\{\mu\}$. Fix an infinite subset $A\subseteq {\Downarrow}\mu\cap I\!Z(X)\setminus H(X)$ such that $AA\subseteq H(X)$.

\begin{lemma}\label{l:mu-fin} For every $b\in X\setminus{\Uparrow}\mu$, the set $bA\setminus H(X)$ is finite.
\end{lemma}

\begin{proof} To derive a contradiction, assume that for some $b\in X\setminus{\Uparrow}\mu$ the set $bA\setminus H(X)$ is infinite. Choose an infinite set $A'\subseteq A$ such that $bA'\subseteq bA\setminus H(X)$ and the map $A'\to bA'$, $a\mapsto ba$, is injective.

\begin{claim}\label{cl:bn-infty} For every $n\in\IN$ and infinite subset $J\subseteq A'$ the set $b^nJ$ is infinite.
\end{claim}

\begin{proof} To derive a contradiction, assume that for some $n\in\IN$ and infinite set $J\subseteq A'$ the set $b^nI$ is finite. We can assume that $n$ is the smallest number for which there exists an infinite set $J\subseteq A'$ such that $b^nJ$ is finite.  The injectivity of the map $A'\to bA'$, $a\mapsto ba$, ensures that $n>1$.  Let $J\subseteq A'$ be an infinite set such that $b^{n}J$ is finite. By the Pigeonhole Principle, there exists an infinite set $J'\subseteq J$ such that $b^{n}J'$ is a singleton. The minimality of $n$ ensures that the set $b^{n-1}J'$ is infinite. The inclusion $J'\subseteq I\!Z(X)$ implies $b^{n-1}J'\subseteq I\!Z(X)\subseteq Z(X)$. Fix any element $j\in J'$ and observe that  for any elements $a,a'\in J'\subseteq A\subseteq Z(X)$ we have
$$b^{n-1}ab^{n-1}a'=b^{n}ab^{n-2}a'=b^{n}jb^{n-2}a'=b^{n-2}jb^{n}a'=b^{n-2}jb^{n}j=b^{2n-2}j^2.$$ Therefore, $(b^{n-1}J')^2=\{b^{2n-2}j^2\}$, which contradicts the nonsingularity of $Z(X)$. This contradiction completes the proof of the claim.
\end{proof}

Observe that $bA'\subseteq bA\subseteq {\Downarrow}b\cdot{\Downarrow}\mu\subseteq{\Downarrow}b\mu={\Downarrow}\pi(b\mu)$. It follows from $b\notin{\Uparrow}\mu$ that  $\pi(b\mu)\in{\Downarrow}\mu\setminus{\Uparrow}\mu$ and hence $\pi(b\mu)<\mu$, by Lemma~\ref{l:pi2}. The minimality of $\mu$ in $M$ ensures that $\pi(b\mu)\notin M$.

Since the semigroup $X$ is eventually Clifford, there exists $n\in \IN$ such that $b^n\in H_{\pi(b)}$. Then $(b^nA')^2\subseteq (b^n A)^2=b^{2n}A^2\subseteq H(X)$, see Lemma~\ref{l:CH}. Observe that $b^nA'\subseteq X\cdot I\!Z(X)\subseteq  I\!Z(X)$, and hence $b^nA'\subseteq{\Downarrow}\pi(b\mu)\cap I\!Z(X)\setminus H(X)$. Since $\pi(b\mu)\notin M$, the set $b^nA'\setminus H(X)$ is finite. 
By Claim~\ref{cl:bn-infty},  the set $A''=\{a\in A':b^na\notin H(X)\}$ is finite and hence the set $A'\setminus A''$ is infinite. 

Let $k\in\IN$ be the smallest number for which there exists an infinite set $J\subseteq A'\setminus A''$ such that $b^{k}J\subseteq H(X)$. Since $b^n(A'\setminus A'')\subseteq H(X)$, the number $k$ is well-defined and $k\le n$. Taking into account that  $bA'$ is an infinite subset of $X\setminus H(X)$, we conclude that $k>1$. Let $J$ be an infinite subset of $A'\setminus A''$ such that $b^{k}J\subseteq H(X)$. By the minimality of $k$,  the set $\{a\in J:b^{k-1}a\in H(X)\}$ is finite. Replacing $J$ by its infinite subset $\{a\in J:b^{k-1}a\notin H(X)\}$, we can additionally assume that $b^{k-1}J\subseteq X\setminus H(X)$. By Claim~\ref{cl:bn-infty}, the set $b^{k-1}J$ is infinite.  Observe that for every $a\in J\subseteq A\subseteq I\!Z(X)$ we have $b^{k}a\in H(X)\cap Z(X)$ and $\pi(b^ka)\Updownarrow b^ka\lesseq b\Updownarrow\pi(b)$ and hence $\pi(b^ka)\le \pi(b)$ by Lemma~\ref{l:pi2}. Then $$b^{k}a=b^{k}a\,\pi(b^{k}a)=b^{k}a\,\pi(b^{k}a)\,\pi(b)=b^{k}a\,\pi(b)=\pi(b)b^ka.$$ 
By Lemma~\ref{l:pi-well-defined}, the inclusion $b^n\in H_{\pi(b)}$ implies that $b^{n+1},b^{n+2}\in H_{\pi(b)}$. 
Let $g$ be the inverse element to $b^{n+2}$ in the group $H_{\pi(b)}$.
Then for any $a,a'\in J$ we have
\begin{multline*}
b^{k-1}ab^{k-1}a'=b^{k}ab^{k-2}a'=\pi(b)b^{k}ab^{k-2}a'=\pi(b)b^{k-2}ab^{k}a'=
\pi(b)b^{k-2}a\pi(b)b^{k}a'=\\
gb^{n+2}b^{k-2}agb^{n+2}b^{k-1}a'=gb^{n}b^{k}agb^{n+1}b^{k}a'.
\end{multline*}
It follows from $gb^n\in H(X)$ and $b^ka\in H(X)\cap Z(X)$ that  $gb^nb^ka\in H(X)$, see Lemma~\ref{l:CH}. By analogy we can show that $gb^{n+1}b^ka'\in H(X)$. Taking into account that $a\in J\subseteq A\subseteq I\!Z(X)$, we conclude that $gb^nb^ka\in Z(X)$ and hence  $b^{k-1}ab^{k-1}a'=gb^nb^kagb^nb^ka'\in H(X)$, by Lemma~\ref{l:CH}.  
 
Therefore, $b^{k-1}J$ is an infinite subset of ${\Downarrow}\pi(b\mu)\cap I\!Z(X)\setminus H(X)$ such that $(b^{k-1}J)^2\subseteq H(X)$. But the existence of such a set contradicts $\pi(b\mu)\notin M$. This completes the proof of Lemma~\ref{l:mu-fin}.
\end{proof}

Since the semilattice $EZ(X)$ is chain-finite and $A\subseteq Z(X)\subseteq\bigcup_{e\in EZ(X)}{\Uparrow}e$, there exists an idempotent $e\in EZ(X)$ such that $A\cap{\Uparrow}e$ is infinite but for any  idempotent $f\in EZ(X)$ with $e<f$, the set $A\cap{\Uparrow}f$ is finite.

\begin{claim}\label{cl:Af-fin} For every idempotent $f\in E(X)$ the set $A\cap{\Updownarrow}f=A\cap\korin{\IN}{H_f}$ is finite.
\end{claim}

\begin{proof} %First we prove that the set $A\cap{\Updownarrow}f$ is finite. To derive a contradiction, assume that $A\cap{\Updownarrow}f$ is infinite. By Lemma~\ref{l:T-ideal}, the subgroup $H_f$ is an ideal in ${\Uparrow}f$. Then $af\in H_f$ for all $a\in  A\cap{\Updownarrow}f$. Since the group $H_f$ is finite and $ A\cap{\Updownarrow}f$ is infinite, there exists $g\in H_f$ such that the set $B=\{x\in A\cap{\Updownarrow}f:af=g\}\subseteq A$ is infinite. By Lemma~\ref{l:Aalpha}(4), the set $\alpha B$ is infinite.  We claim that $(\alpha B)^2=\{(\alpha g)^2\mu\}$. Indeed, by Lemma~\ref{l:Aalpha}(2),  $(\alpha B)^2\subseteq (\alpha A)^2\subseteq H(X)$. Then for every $a,b\in B$ we have $\alpha a\alpha b\in H_{\mu f}$ and hence $\alpha a\alpha b=\alpha a\alpha b\mu f=\alpha af\alpha bf\mu=\alpha g\alpha g\mu$. So, $(\alpha B)^2=\{(\alpha g)^2\mu\}$ is a singleton, which contradicts the nonsingularity of the semigroup $X$.
%\smallskip
%
%Next, we show that the set $\alpha A\cap{\Uparrow}f$ is finite.
To derive a contradiction, assume that $A\cap{\Updownarrow}f$ is infinite. By Lemma~\ref{l:Tamura}, the subgroup $H_f$ is an ideal in ${\Uparrow}f$. Then $af\in H_f\cap Z(X)$ for all $a\in A\cap{\Updownarrow}f$. Since $Z(X)$ is group-finite and periodic, the intersection $H_f\cap Z(X)$ is a finite subgroup of $H_f$. By the Pigeonhole Principle, there exists $g\in H_f\cap Z(X)$ such that the set $B=\{a\in  A\cap{\Updownarrow}f:af=g\}\subseteq A$ is infinite.  We claim that $B^2=\{g^2\}$. Indeed,  for every $a,b\in B$ we have $ab\in AA\cap{\Updownarrow}f\subseteq H(X)\cap{\Updownarrow}f=H_f$ and hence $ ab=abff=afbf=gg$. So, $BB=\{gg\}$ is a singleton, which contradicts the nonsingularity of the semigroup $Z(X)$.
\end{proof}

Since the set $A\cap{\Uparrow} e$ is infinite and the sets $A\cap{\Updownarrow}e$ and $A\cap{\Updownarrow}\mu$ are finite, we can replace the infinite set $A$ by the set $A\cap {\Uparrow}e\setminus({\Updownarrow}e\cup{\Updownarrow}\mu)$ and assume that $$A\subseteq ({\Uparrow}e\cap{\Downarrow}\mu)\setminus({\Updownarrow}e\cup{\Updownarrow}\mu).$$ Such a choice of $A$ and Lemma~\ref{l:pi2} imply

\begin{claim}\label{cl3:e<mu} $e<\pi(a)<\mu$ for every $a\in A$.
\end{claim}

\begin{claim}\label{cl:AU-fin} For every idempotent $f\in E(X)\setminus{\Downarrow}e$ the set $A\cap {\Uparrow}f$ is finite.
\end{claim}

\begin{proof} Since the semigroup $Z(X)$ is chain-finite, the nonempty subsemilattice $EZ(X)\cap{\uparrow}e\cap{\uparrow}f\ni 1$ of $Z(X)$ contains the smallest element $e_f$. Then $e_f\in{\uparrow}e$ and $e\le e_f$. Assuming that $e=e_f$, we conclude that $f\in{\downarrow}e_f={\downarrow}e\subseteq{\Downarrow}e$, which contradicts the choice of $f$. Therefore, $e<e_f$. Now the maximality of $e$ and the inclusion $A\subseteq Z(X)\cap {\Uparrow}e$  ensure that the set
$$A\cap{\Uparrow}f=A\cap{\Uparrow}e\cap{\Uparrow}f\subseteq A\cap\pi^{-1}[EZ(X)\cap{\uparrow}e\cap{\uparrow}f]\subseteq A\cap\pi^{-1}[{\uparrow}e_f]=A\cap{\Uparrow}e_f$$is finite.
\end{proof}

\begin{claim}\label{cl:antichain2} There exists a sequence $\{a_k\}_{k\in\w}\subseteq A$ such that $\pi(a_k)\not\le \pi(a_n)$ for any distinct numbers $k,n\in\w$.
\end{claim}

\begin{proof} Choose any sequence $(x_k)_{k\in\w}$ of pairwise distinct elements of the infinite set $A$. Let $[\w]^2$ be the family of two-element subsets of $\w$. Consider the function $\chi:[\w]^2\to\{0,1,2\}$ defined by
$$\chi(\{n,m\})=\begin{cases}0&\mbox{if $\pi(x_n)=\pi(x_m)$};\\
1&\mbox{if $\pi(x_n)<\pi(x_m)$ or $\pi(x_m)<\pi(x_n)$};\\
2&\mbox{otherwise}.
\end{cases}
$$
By the Ramsey Theorem 5 \cite{Ramsey}, there exists an infinite set $\Omega\subseteq \w$ such that $\chi[[\Omega]^2]=\{c\}$ for some $c\in\{0,1,2\}$. If $c=0$, then the set $\{\pi(x_n)\}_{n\in\Omega}$ contains a unique idempotent $u$ and hence $\{x_k\}_{k\in\Omega}$ is an infinite subset of $A\cap {\Updownarrow}u$, which contradicts Claim~\ref{cl:Af-fin}.  Therefore, $c\ne 0$. If $c=1$, then the set $\{\pi(x_k)\}_{k\in\Omega}$ is an infinite chain in $EZ(X)$ which is not possible as $Z(X)$ is chain-finite. Therefore, $c=2$ and hence $\{\pi(x_k)\}_{k\in\Omega}$ is an infinite antichain in $E(X)$. Write the infinite set $\Omega$ as $\{n_k\}_{k\in\w}$ for some strictly increasing sequence $(n_k)_{k\in\w}$. For every $k\in\w$ put $a_k=x_{n_k}$. Then the sequence $(a_k)_{k\in\w}$ has the required property.
\end{proof}

Let $(a_n)_{n\in\w}$ be the sequence given by Claim~\ref{cl:antichain2}. Replacing the set $A$ by $\{a_n\}_{n\in\w}$, we can assume that $A=\{a_n\}_{n\in\w}$ and hence $\pi(a)\not\le\pi(b)$ for any distinct elements $a,b\in A$.

Consider the ideal
$$J\defeq X\setminus{\Uparrow} A$$
in the semigroup $X$.

\begin{claim}\label{cl:J} For any distinct elements $x,y\in A$ we have $xy\in J$ and $\pi(x)\pi(y)\in J$.
\end{claim}

\begin{proof} Assuming that $xy\notin J$, we can find $a\in A$ such that $xy\in{\Uparrow}a$. Then $\pi(a)\Updownarrow a\lesseq xy\lesseq x\Updownarrow\pi(x)$ and hence $\pi(a)\le\pi(x)$, by Lemma~\ref{l:pi2}. By Clam~\ref{cl:antichain2}, $x=a$. By analogy we can prove that $y=a$ and hence $x=y$, which contradicts the choice of $x,y$.
This contradiction shows that $xy\in J$. By analogy we can prove that $\pi(x)\pi(y)\in J$.
\end{proof}

Fix any free ultrafilter $\U$ on $X$ such that $A\in\U$.
For every $U\in\U$ and $F\in[EZ(X)\setminus{\downarrow}e]^{<\w}$, consider the set
$$\nabla(U,F)\defeq \big((J\cap E(I\!Z(X))\cap\tfrac ee)\cup\pi[U\cap A]\big)\setminus {\uparrow} F.$$ The periodicity of $Z(X)$ and Claim~\ref{cl:J} imply that $\nabla(U,F)$ is a subsemilattice of the semilattice $E(I\!Z(X))\cap{\uparrow}e$.

It follows that the family
$$\Phi\defeq\big\{\nabla(U,F):U\in\U,\;F\in[EZ(X)\setminus{\downarrow}e]^{<\w}\big\}$$is an $e$-base for $X$. Let $\Tau_\Phi$ be the topology on $X$, generated by the $e$-base $\Phi$. By Lemma~\ref{l:base-e} and Theorem~\ref{t:TS}(5a), $\Phi$ is a regular $e$-base, $(X,\Tau_\Phi)$ is a Hausdorff zero-dimensional topological semigroup and for every $x\in X$, the family
$$\mathcal B_x\defeq\{\Lambdae(x;V):V\in\Phi\}$$is a neighborhood base of the topology $\Tau_{\Phi}$ at $x$.

Let $\Tau_\Phi'$ be the topology on $X$, generated by the subbase $\Tau_\Phi\cup\{X\setminus{\Uparrow}e\}\cup\{\{x\}:x\in X\setminus I\!Z(X)\}$. Observe that for every $x\in I\!Z(X)\cap{\Uparrow}e$, the family $\mathcal B_x=\{\Lambdae(x;V):V\in\Phi\}$ remains a neighborhood base at $x$ in the topological space $(X,\Tau'_\Phi)$. By Theorem~\ref{t:TS}(4), the topological space $(X,\Tau'_\Phi)$ is Hausdorff and zero-dimensional.

\begin{claim} $(X,\Tau'_\Phi)$ is a topological semigroup.
\end{claim}

\begin{proof} The continuity of the semigroup operation will follow as soon as for any $a,b\in X$ and a neighborhood $O_{ab}\in\Tau_\Phi\cup\{X\setminus{\Uparrow}e\}\cup\{\{x\}:x\in X\setminus I\!Z(X)\}$ of $ab$, we find open sets $O_a,O_b\in\Tau'_\Phi$ such that $a\in O_a$, $b\in O_b$ and $O_aO_b\subseteq O_{ab}$. If $O_{ab}\in\Tau_\Phi$, then such open sets exist by the continuity of the semigroup operation in the topology $\Tau_\Phi$. If $O_{ab}=X\setminus{\Uparrow}e$, then $ab\notin {\Uparrow}e$ and hence $a\notin{\Uparrow}e$ or $b\notin{\Uparrow}e$. 

If $a\notin{\Uparrow}e$, then put $O_a\defeq X\setminus{\Uparrow}e$ and $O_b\defeq X$. If $b\notin{\Uparrow}e$, then put $O_a\defeq X$ and $O_b\defeq X\setminus{\Uparrow}e$. In both cases we will have $a\in O_a\in\Tau'_\Phi$, $b\in O_b\in\Tau'_\Phi$ and $O_aO_b\subseteq X\setminus{\Uparrow}e=O_{ab}$.

If $O_{ab}=\{ab\}$ and $ab\notin I\!Z(X)$, then $a,b\notin I\!Z(X)$ because $I\!Z(X)$ is an ideal and then the $\Tau'_\Phi$-open sets $O_a=\{a\}$ and $O_b=\{b\}$ has the required property: $O_aO_b\subseteq O_{ab}$.
\end{proof}

%Let $r:A\to Z(X)\cap H(X)$ be the map assigning the each $a\in A$ the element $a\pi(a)\in\korin{\IN}{H_{\pi(a)}}\cdot H_{\pi(a)}\subseteq H_{\pi(a)}$, see Lemma~\ref{l:C-ideal}. Let $\U'$ be the ultrafilter generated by the base $\{r[A\cap U]:U\in\U\}$.

We use the topology $\Tau'_{\Phi}$ to define the notions of convergent and divergent filters on $X$.

A filter $\F$ on $X$ will be called {\em convergent} if there exists a  point $\lim\F\in X$ such that every neighborhood $U\in\Tau'_{\Phi}$ of $\lim\F$ belongs to the filter $\F$. Since the space $(X,\Tau'_\Phi)$ is Hausdorff, the point $\lim\F$ is unique. It is clear that for every $x\in X$ the principal ultrafilter $\F_x\defeq\{F\subseteq X:x\in F\}$ converges to $x$. A filter $\F$ on $X$ is called {\em divergent} if it is not convergent.

Let $\varphi(X)$ be the semigroup of filters on $X$. For two filters $\F,\mathcal E\in\varphi(X)$ their product in $\varphi(X)$ is the filter $\F\mathcal E$ generated by the base $\{FE:F\in\F,\; E\in\mathcal E\}$. Since $A\in\U$ and $A\subseteq Z(X)$, the ultrafilter $\U$ is {\em central} in the sense that $x\U=\U x$ for all $x\in X$. %Taking into account that $A\subseteq \{z\in Z(X):z\mu=\mu\}$, we conclude that $\mu\U=\U$.

Let $\U'$ be the ultrafilter generated by the base $\big\{\{a\pi(a):a\in U\}:U\in \U\big\}$.
%Consider the semilattice $$E_{\U'}=\{x\in E(X):x\U'=\U'\},$$which contains the unit of $X$ and hence is not empty. Since the semilattice $E(X)$ is chain-finite, the semilattice $E_\U$ has the smallest element $s$.

\begin{lemma}\label{l:cdiv} \begin{enumerate}
\item The ultrafilter $\U'$ is convergent and $\lambda\defeq \lim\U'\in  I\!Z(X)\cap H_e$.
\item The filter $\U\U$ is convergent to $\lambda^2$.
\item For any $x,y\in X$ the filter $x\U y\U=xy\U\U$ is convergent to $xy\lambda^2$.
\item For every $x\in X$ with $H(X)\in x\U$, the ultrafilter $x\U$ is convergent.
\item For any $x\in X\setminus{\Uparrow}\mu$ the ultrafilter $x\U$ is principal.
\item If for some $x\in{\Uparrow}\mu$ the ultrafilter $x\U$ is convergent, then $\lim x\U=x\lambda\in I\!Z(X)\cap H_e$ and $x\U=x\U'$.
\item The ultrafilter $\U$ is divergent.
\item For two elements $x,y\in{\Uparrow}\mu$ the ultrafilters $x\U$ and $y\U$ are equal if and only if $\{a\in A:xa=ya\}\in \U$ if and only if $xA\in y\U$.
%\item For any $x\in{\Uparrow}\mu$ the ultrafilter $x\U$ is divergent.
\end{enumerate}
\end{lemma}

\begin{proof} 1. Since $Z(X)$ is periodic and group-finite, the set $I\!Z\!H_e\defeq I\!Z(X)\cap H_e$ is a finite subgroup of $I\!Z(X)$. We claim that some point of  $I\!Z\!H_e$ is the limit of the ultrafilter $\U'$. Assuming the opposite, we conclude that for every $g\in I\!Z\!H_e$ there exists a set $V_g\in\Phi$ such that  $\Lambdae(g;V_g)\notin\U'$ and hence $\bigcup_{g\in I\!Z\!H_e}\Lambdae(g;V_g)\notin\U'$. Consider the semilattice $V=\bigcap_{g\in I\!Z\!H_e}V_g\in\Phi$ and find sets $U\in\U$ and $F\in[EZ(X)\setminus{\downarrow}e]^{<\w}$ such that $V=\nabla(U,F)$. By Claim~\ref{cl:AU-fin} and Lemma~\ref{l:pi2}, the set $A\cap{\Uparrow}F$ is finite. Since  $\bigcup_{g\in I\!Z\!H_e}\Lambdae(g;V)\notin\U'$ and $\{a\pi(a):a\in A\}\in\U'$, there exists $a\in A\cap U$ such that $a\notin{\Uparrow}F$ and $a\pi(a)\notin \bigcup_{g\in I\!Z\!H_e}\Lambdae(g;V_g)$. Then $\pi(a)\in\pi[A\cap U]\setminus{\Uparrow}F\subseteq \nabla(U,F)=V$. On the other hand, Lemma~\ref{l:Tamura} and Claim~\ref{cl3:e<mu} imply that $ae=a\pi(a)e\in I\!Z(X)\cap(H_{\pi(a)}H_e)\subseteq I\!Z(X)\cap H_e=I\!Z\!H_e$ and $a\pi(a)\in \frac{ae}{e}\cdot \pi(a)\in\frac{ae}{e}\cdot V\subseteq \Lambdae(ae;V)\subseteq\Lambdae(ae;V_{ae})$ which contradicts the choice of $a$.
\smallskip

2.  By Lemma~\ref{l:cdiv}(1), the ultrafilter $\U'$ converges to some point $\lambda\defeq\lim\U'\in H_e$. Since $(X,\Tau'_\Phi)$ is a topological semigroup, for every neighborhood $W\in\Tau'_{\Phi}$ of $\lambda^2$, there exists a  neighborhood $V\in\Tau'_{\Phi}$ of $\lambda$ such that $VV\subseteq W$. It follows that $V\in\U'$ and hence there exists a set $B\in\U$ such that $B\subseteq A$ and $\{\pi(a)a:a\in B\}\subseteq V$.  The choice of the set $A$ ensures that $BB\subseteq AA\subseteq H(X)$. By the periodicity of $Z(X)$, for every $a,b\in B\subseteq Z(X)$ we have $\pi(a)\in a^\IN\subseteq Z(X)$ and $\pi(b)\in b^\IN\subseteq Z(X)$. Since $\pi(a)$ and $\pi(b)$ are commuting idempotents, their product $\pi(a)\pi(b)$ is an idempotent. Observe that $$q(\pi(a)\pi(b))=q(\pi(a))q(\pi(b))=q(a)q(b)=q(ab)=q(\pi(ab))$$ and hence $\pi(a)\pi(b)=\pi(ab)$ by the injectivity of the restriction $q{\restriction}_{E(X)}$.
Then for every $a,b\in B$ we have $ab=ab\pi(ab)=ab\pi(a)\pi(b)=a\pi(a)b\pi(b)\in VV$. It follows from $BB\in\U\U$ and $BB\subseteq VV\subseteq W$ that $W\in\U\U$, which means that the filter $\U\U$ converges to $\lambda^2$.
\smallskip

3. By the preceding statement, the filter $\U\U$ converges to $\lambda^2$ in the topological space $(X,\Tau'_\Phi)$. Since the ultrafilter $\U$ is central, for every $x,y\in X$, the filter $x\U y\U=xy\U\U$ converges to $xy\lambda^2$ in the topological semigroup $(X,\Tau'_{\Phi})$.
\smallskip

%4. Assuming that the ultrafilter $\U$ is convergent in the topological semigroup $(X,\Tau_{\Phi})$, we conclude that $(\lim\U)^2=\lim \U\U=(\lim\U')^2\in H_e$. Let $\lambda\defeq\lim\U$. Since the free ultrafilter $\U$ converges to $\lambda$ in the space $(X,\Tau_{\Phi})$, for every $V\in\Phi$ and $F\in[E(X)]^{<\w}$, the set $\Lambdae(\lambda;V,F)$ belongs to the ultrafilter $\U$ and hence is infinite. Then the set $\frac{\lambda}e$ is not empty and hence $\lambda=\lambda e\in \korin{2}{H_e}\cdot H_e\subseteq H_e$ by Lemma~\ref{l:C-ideal}. Consider the set $V\defeq \nabla(\w,\emptyset)$. Since $\{a_n\}_{n\in\w}\cap\Lambdae(\lambda;V,\emptyset)\in\U$, there exists $n\in\w$ such that $a_n\in\Lambdae(\lambda;V,\emptyset)$. It follows from $a_n\in A\subseteq X\setminus {\Downarrow}e$ and $a_n\ne \lambda$. Since $a_n\in\Lambdae(\lambda;V,\emptyset)$, there exist $x\in\frac{\lambda}e$ and $v\in V$ such that $a_n=xv$. It follows from $a_n\notin I$ that $v\notin I$ and hence $v=\pi(a_m)$ for some $m\in\w$. Applying the homomorphism $\pi$ to the equality $a_n=xv=x\pi(a_m)$, we conclude that $\pi(a_n)=\pi(x\pi(a_m))=\pi(x)\pi(a_m)\le\pi(a_m)$. Now Claim~\ref{cl:antichain2} ensures that $n=m$ and then $a_n=x\pi(a_m)=x\pi(a_n)=x\pi(a_n)\pi(a_n)=a_n\pi(a_n)\in H(X)$, which contradicts the choice of $a_n\in A\subseteq X\setminus H(X)$.
%\smallskip

4. Let $x\in X$ be such that $H(X)\in x\U$. Since $(X,\Tau'_\Phi)$ is a topological semigroup, for every neighborhood $W\in\Tau'_{\Phi}$ of $x\lambda$ there exists a neighborhood $W'\in\Tau'_\Phi$ of $\lambda$ such that $xW'\subseteq W$. Since $\lambda=\lim\U'\in W'$, there exists a set $U\in\U$ such that $\{\pi(a)a:a\in U\}\subseteq W'$ and hence $\{x\pi(a)a:a\in U\}\subseteq W$. Since $H(X)\in x\U$, we can replace $U$ by a smaller set in $\U$ and assume that $U\subseteq A$ and  $xU\subseteq H(X)$. Then for every $a\in U$ we have $xa\in H(X)$. Since $a\in A\subseteq Z(X)$, we can apply Lemma~\ref{l:pi2} and conclude that $\pi(xa)=\pi(xa)\pi(a)$. Then $xa=xa\pi(xa)=xa\pi(xa)\pi(a)=xa\pi(a)\in W$, which implies that $W\in x\U$ and means that the ultrafilter $x\U$ converges to $x\lambda$.
\smallskip

5. By Lemma~\ref{l:mu-fin}, for every $x\in X\setminus{\Uparrow}\mu$ the set $xA$ is finite. Since $A\in\U$, for some $g\in xA$ the set $A_g=\{a\in A:xa=g\}$ belongs to the ultrafilter $\U$. Then $\{g\}=xA_g\in x\U$ and the ultrafilter $x\U$ is principal.
\smallskip

6. Let $x\in{\Uparrow}\mu$ and assume that the ultrafilter $x\U$ is convergent to some element $y\in X$. For every $a\in A$ we have $e\le \pi(a)\le\mu\le \pi(x)$ and hence $\pi(xa)=\pi(a)\in{\Uparrow}e$, by Lemma~\ref{l:pi2}. For distinct elements $a,b\in A$, Claim~\ref{cl:antichain2} ensures that $\pi(xa)=\pi(a)\ne\pi(b)=\pi(xb)$ and hence $xa\ne xb$. This implies that the ultrafilter $x\U$ is free. If $y\ne ye$ or $y\notin I\!Z(X)$, then $y$ is an isolated point of $(X,\Tau'_\Phi)$ and hence $y\ne \lim x\U$. Therefore, $y=ye\in I\!Z(X)$. If $y\notin{\Uparrow}e$, then the neighborhood $X\setminus{\Uparrow}e\in\Tau'_\Phi$ of $y$ is disjoint with the set $xA\subseteq{\Uparrow}e$ and hence $y\ne\lim x\U$. Therefore, $y=ye\in{\Uparrow}e$ and $y=ye\in({\Uparrow}e)e\in H_e$ by Lemma~\ref{l:Tamura}.

To prove that $x\U=x\U'$, take any set $U'\in \U'$ and find a set $U\in\U$ such that $U\subseteq A$ and $\{a\pi(a):a\in U\}\subseteq U'$.
Since $\lim x\U=y\in H_e$, the neighborhood $\Lambdae(y;\nabla(U,\emptyset))\in\Tau_\Phi\subseteq\Tau'_\Phi$ of $y$ belongs to the ultrafilter $x\U$. Consequently, there exists a set $U''\in\U$ such that $U''\subseteq A$ and 
$xU''\subseteq \Lambdae(y;\nabla(U,\emptyset))$. Then for every $a\in U''$, there exists $y'\in\frac{y}e$ and $v\in\nabla(U,\emptyset)$ such that $xa=y'v$. It follows from $x\in{\Uparrow}\mu$ that $\pi(a)\le\mu\le\pi(x)$ and hence $\pi(a)=\pi(xa)=\pi(y'v)\le \pi(v)$. It follows from $\pi(y'v)=\pi(xa)=\pi(a)\notin J={\Updownarrow}J$ that $y'v\notin J$ and $v\notin J$. Then $v\in\nabla(U,\emptyset)\setminus J\subseteq \pi[U]$ and hence $v=\pi(a')$ for some $a'\in U\subseteq A$. It follows from $\pi(a)\le\pi(v)=\pi(a')$ that $a=a'$, see Claim~\ref{cl:antichain2}. Then $$xa=y'v=y'\pi(a')=y'\pi(a')\pi(a')=y'v\pi(a')=xa\pi(a')=xa'\pi(a')\in xU'$$ and hence $xU''\subseteq xU'$, which implies that $x\U'\subseteq x\U$ and $x\U'=x\U$ by the maximality of the ultrafilter $x\U'$. 

Since the ultrafilter $\U'$ converges to $\lambda\in I\!Z(X)\cap H_e$ and $x\lambda\in({\Uparrow}\mu)H_e\subseteq({\Uparrow}e)H_e\subseteq H_e$, the ultrafilter $x\U=x\U'$ converges to $x\lambda\in I\!Z(X)\cap H_e$.
\smallskip

7. Assuming that $\U$ converges and applying the preceding statement, we would conclude that $\U=\U'$, which is not true (as $H(X)\in\U'$ and $H(X)\notin\U$).
\smallskip

8. Given any elements $x,y\in{\Uparrow}\mu$, we should prove the equivalence of the following conditions:
\begin{itemize}
\item[{\rm(i)}] $x\U=y\U$;
\item[{\rm(ii)}] $xA\in y\U$;
\item[{\rm(iii)}] $\{a\in A:xa=ya\}\in \U$.
\end{itemize}

The implication (i)$\Ra$(ii) follows from $xA\in x\U=y\U$.

To prove that (ii)$\Ra$(iii), assume that $aA\in y\U$ and hence there exists a set $U\in\U$ such that $U\subseteq A$ and $yU\subseteq xA$. Then for every $u\in U$ there exists $a\in A$ such that $yu=xa$. It follows from $x,y\in{\Uparrow}\mu$  and $u,a\in{\Downarrow}\mu$ that $\pi(u)=\pi(yu)=\pi(xa)=\pi(a)$. By Claim~\ref{cl:antichain2}, $a=u$ and hence $U\subseteq \{z\in A:xz=yz\}$.

To prove that (iii)$\Ra$(i), assume  that the set $U=\{a\in A:xa=ya\}$ belongs to the ultrafilter $\U$. To derive a contradiction, assume that $x\U\ne y\U$. Then there exists a set $V\in\U$ such that $xV\cap yV=\emptyset$. Since $U\cap V\in\U$, there exists a point $a\in V\cap U$, for which $xa=ya\in xV\cap yV=\emptyset$, which is a contradiction completing the proof.
\end{proof}

Let $D$ be the set of elements $x\in X$ for which the ultrafilter $x\U$ diverges in the topological semigroup $(X,\Tau'_\Phi)$. By Lemma~\ref{l:cdiv}(6,7),
$$D=\{x\in{\Uparrow}\mu:x\U\ne x\U'\}\subseteq {\Uparrow}\mu.$$

\begin{lemma}\label{l:Ux} For every $x\in D$ there exists a set $U_x\in\U$ such that
$U_x\subseteq A$ and $xU_x\cap\Lambdae(g;V)=\emptyset$ for every $g\in I\!Z(X)\cap H_e$ and $V\in\Phi$.
\end{lemma}

\begin{proof} Since the filter $x\U$ is divergent in the topological semigroup $(X,\Tau'_\Phi)$, for every $g\in I\!Z\!H_e\defeq I\!Z(X)\cap H_e$ there exist a set $V_g\in\Phi$ such that $\Lambdae(g;V_g)\notin x\U$. Since the group $I\!Z\!H_e$ is finite, there exists an element $V_x\in\Phi$ such that $V_x\subseteq\bigcap_{g\in I\!Z\!H_e}V_g$. By the definition of the $e$-base $\Phi$, for the set $V_x\in\Phi$, there exist sets $U_x\in\U$ and $F_x\in[EZ(X)\setminus{\downarrow}e]^{<\w}$ such that $\nabla(U_x,F_x)\subseteq V_x$.  By Claim~\ref{cl:AU-fin}, the set $A\cap {\Uparrow} F_x$ is finite. Replacing the set $U_x$ by $U_x\cap A\setminus{\Uparrow}F_x$, we can assume that $U_x\subseteq A\setminus{\Uparrow}F_x$. Since $\Lambdae(I\!Z\!H_e;V_x)\defeq\bigcup_{g\in I\!Z\!H_e}\Lambdae(g;V_x)\notin x\U$, we can replace $U_x$ by a smaller set in $\U$ and assume that $xU_x\cap\Lambdae(I\!Z\!H_e;V_x)=\emptyset$.

It remains to check that $xU_x\cap \Lambdae(g;V)=\emptyset$ for any $g\in I\!Z\!H_e$ and $V\in\Phi$. To derive a contradiction, assume that for some $a\in U_x$, $g\in I\!Z\!H_e$ and $V\in\Phi$ we have $xa\in \Lambdae(g;V)$. It follows from $x\in D\subseteq{\Uparrow}\mu$ and $A\subseteq({\Downarrow}\mu\cup{\Uparrow}e)\setminus{\Updownarrow}e$ that $e<\pi(a)\le \mu\le\pi(x)$ and hence
$\pi(xa)=\pi(a)\notin {\Downarrow}e$ and $xa\ne g$. Then $xa=yv$ for some $y\in\frac{g}e$ and  $v\in V\subseteq \nabla(A,\emptyset)$. It follows from $\pi(xa)=\pi(a)\notin J={\Updownarrow}J$ that $yv=xa\notin J$ and hence $v\in\nabla(A,\emptyset)\setminus J=\pi[A]$. Then $v=\pi(a_v)$ for some $a_v$. Since $\pi(a)=\pi(xa)=\pi(yv)\le\pi(v)=v=\pi(a_v)$,  Claim~\ref{cl:antichain2} implies the equality $a=a_v$. The choice of $U_x\subseteq A\setminus{\Uparrow}F_x$ ensures that $a_v=a\in U_x\subseteq A\setminus {\Uparrow}F_x$ and hence $v=\pi(a_v)\in\nabla(U_x,F_x)\subseteq V_x$ and $xa=yv\in \frac ge\cdot V_x\subseteq \Lambdae(g;V_x)$. On the other hand, $a\in U_x$ and hence $xa=yv\in xU_x\cap\Lambdae(g;V_x)=\emptyset$, by the choice of $U_x$. This contradiction completes the proof of the lemma.
\end{proof}

In the semigroup of filters $\varphi(X)$, consider the subset
$$Y\defeq X\cup\{x\U:x\in D\}.$$
Here we identify $X$ with the subset of $\varphi(X)$ consisting of principal ultrafilters.

Lemma~\ref{l:cdiv}(3) implies that the binary operation $*:Y\times Y \to Y$ assigning to any filters $\F,\mathcal E\in Y$ the filter $$
\F*\mathcal E\defeq\begin{cases}
\lim \F\mathcal E&\mbox{if $\F\mathcal E$ is convergent};\\
\F\mathcal E&\mbox{if $\F\mathcal E$ is divergent};
\end{cases}
$$
is well-defined.

\begin{definition}\label{d3:Tau}
Let $\Tau$ be the topology on $Y$ consisting of the sets $W\subseteq Y$ satisfying two conditions:
\begin{enumerate}
\item $X\cap W\in\Tau'_\Phi$;
\item for any $x\in D$ with $x\U\in W$ there exists a set $U\in\U$ such that $xU\subseteq W$.
\end{enumerate}
\end{definition}

The definition of the topology $\Tau$ implies that  $(X,\Tau'_\Phi)$ is an open dense nonclosed subspace in $(Y,\Tau)$.

The following four claims describe neighborhood bases of the topology $\Tau$ at points of the set $Y$.

\begin{claim}\label{cl3:isolated} For every $x\in X$ with $x\ne xe$ or $x\notin I\!Z(X)$ the singleton $\{x\}$ is $\Tau$-clopen.
\end{claim}

\begin{proof} If $x\ne xe$, then $\{x\}=\Lambdae(x,\frac ee)$ belongs to the topology $\Tau_\Phi$ by Lemmas~\ref{l:Lamb}(3) and \ref{l:base-e}. Since $\Tau_\Phi\subseteq\Tau_\Phi'\subseteq\Tau$, the singleton $\{x\}$ is $\Tau$-open. If $x\notin I\!Z(X)$, then $\{x\}\in\Tau'_\Phi\subseteq\Tau$ by the definition of the topologies $\Tau'_\Phi$ and $\Tau$. To see that $\{x\}$ is $\Tau$-closed, consider the set $W=Y\setminus\{x\}$. Since the topology $\Tau'_\Phi$ is Hausdorff, $X\cap W=X\setminus\{x\}\in\Tau'_\Phi$. On the other hand, for every $d\in D$ the divergent ultrafilter $d\U$ is free and hence $X\setminus\{x\}\in d\U$ and there is a set $U\in\U$ such that $dU\subseteq X\setminus\{x\}\subseteq W$. By Definition~\ref{d3:Tau}, the set $W=X\setminus\{x\}$ is $\Tau$-open and hence $\{x\}$ is $\Tau$-clopen.
\end{proof} 

For every $x\in D$ and $U\in\U$, consider the set
$$\Lambda(x\U;U)\defeq\{x\U\}\cup xU.$$
By Lemma~\ref{l:Ux}, there exists a set $U_x\in\U$ such that $U_x\subseteq A$ and $xU_x\cap\bigcup_{g\in I\!Z(X)\cap H_e}\Lambdae(g;\nabla(A,\emptyset))=\emptyset$.

\begin{claim}\label{cl3:ultra} For every $x\in D$ the family
$$\mathcal B_{x\U}\defeq\{\Lambda(x\U;U):U\in\U,\; U\subseteq U_x\}$$consists of $\Tau$-clopen sets and is a neighborhood base of the topology $\Tau$ at $x\U$.
\end{claim}

\begin{proof} Since $x\in D\subseteq {\Uparrow}\mu$ and $A\subseteq{\Downarrow}\mu\setminus{\Downarrow}e$, for every $U\in\U$ with $U\subseteq U_x$, the set $$X\cap\Lambda(x\U;U)\subseteq xU_x\subseteq xA\subseteq {\Downarrow}\mu\setminus{\Downarrow}e$$ consists of isolated points of  the space $(X,\Tau'_\Phi)$ and hence is $\Tau'_\Phi$-open. Now fix any $d\in D$ with $d\U\in\Lambda(x\U;U)$. Then $d\U=x\U$ and there exists $U'\in\U$ such that $dU'\subseteq x(U\cap U_x)\subseteq \Lambda(x\U;U)$. By Definition~\ref{d3:Tau}, the set $\Lambda(x\U;U)$ is $\Tau$-open.

To show that it is $\Tau$-closed, fix any $b\in Y\setminus\Lambda(x\U;U)$. 
If $b\notin X$, then $b=y\U$ for some $y\in D$. By Lemma~\ref{l:cdiv}(8), $x\U\ne y\U$ implies $xU\subseteq xU_x\subseteq xA\notin y\U$ and thus there exists a set $V\in\U$ such that $xU\cap yV=\emptyset$. Then $\Lambda(y\U;V)$ is a $\Tau$-open neigborhood of $b=y\U$, disjoint with the set $\Lambda(x\U;U)$.

Now assume that $b\in X$. If $be\ne b$ or $b\notin I\!Z(X)$, then by Claim~\ref{cl3:isolated}, $\{b\}$ is $\Tau$-open neighborhood of $b$, disjoint with the set $\Lambda(x\U;U)$. If $b=be\notin{\Uparrow}e$, then the set $X\setminus{\Uparrow}e\in\Tau'_\Phi\subseteq\Tau$ is a $\Tau$-open neighborhood of $b$, which is disjoint with the set $\Lambda(x\U;U)\subseteq{\Uparrow}e$.

 It remains to analyze the case of $b=be\in{\Uparrow}e\cap I\!Z(X)$. By Lemma~\ref{l:Tamura}, the group $H_e$ is an ideal in ${\Uparrow}e$. Then $b=be\in I\!Z(X)\cap H_e$. By the choice of the set $U_x\in\Phi$, the set $\Lambdae(b;\nabla(A,\emptyset))\in\Tau_\Phi\subseteq\Tau'_\Phi\subseteq\Tau$ is a $\Tau$-open neighborhood of $b$, which is disjoint with the set $\Lambda(x\U;U)$.

By the definition of the topology $\Tau$, the family $\mathcal B_{x\U}$ is a neighborhood base of $\Tau$ at $x\U$.
\end{proof}

\begin{claim}\label{cl3:Tau-base-e} For every $x\in I\!Z(X)\cap H_e$ the family $$\mathcal B_x\defeq\{\Lambdae(x;V):V\in\Phi\}$$consists of $\Tau$-clopen sets and is a neighborhood base of the topology $\Tau$ at $x$.
\end{claim}

\begin{proof} By Lemma~\ref{l:base-e}, the family $\mathcal B_x$ consists of $\Tau_\Phi$-open sets and is a neighborhood base of the topology $\Tau_\Phi$ at $x$. By the definition of the topologies $\Tau'_\Phi$ and $\Tau$, the family $\mathcal B_x$ remains a neighborhood base at $x$ in the topologies $\Tau'_\Phi\subseteq\Tau$.
By Theorem~\ref{t:TS}(5a,3), the family $\mathcal B_x$ consists of $\Tau_\Phi$-clopen sets. Since $\Tau_\Phi\subseteq\Tau_\Phi'$, the sets in the family $\mathcal B_x$ remain $\Tau_\Phi'$-clopen. To see that they are $\Tau$-clopen, take any set $\Lambdae(x;V)\in\mathcal B_x$. Given any point $y\in Y\setminus\Lambdae(x;V)$, we should find a neigbhorhood $O_y\in\Tau$ of $y$ such that $O_y\cap\Lambdae(x;V)=\emptyset$. If $y\in X$, then such neighborhood exists by the $\Tau_\Phi$-closedness of the set $\Lambdae(x;V)$. If $y\notin X$, then $y=d\U$ for some $d\in D$ and then the neighborhood $O_y\defeq\Lambda(d\U;U_d)$ is disjoint with $\Lambdae(x;V)$ by Lemma~\ref{l:Ux}.
\end{proof}

\begin{claim}\label{cl3:Tau-base} For every $x=xe\in I\!Z(X)\setminus{\Uparrow}e$ the family $$\mathcal B_x\defeq\{\Lambdae(x;V)\setminus{\Uparrow}e:V\in\Phi\}$$consists of $\Tau$-clopen sets and is a neighborhood base of the topology $\Tau$ at $x$.
\end{claim}

\begin{proof} By Lemma~\ref{l:base-e} and the definition of the topology $\Tau'_\Phi$, the family $\mathcal B_x$ consists of $\Tau'_\Phi$-open sets and is a neighborhood base of the topology $\Tau'_\Phi$ at $x$. By the definition of the topology $\Tau$, the family $\mathcal B_x$ remains a neighborhood base at $x$ in the topology $\Tau$.
By Theorem~\ref{t:TS}(5a,3), the family $\mathcal B_x$ consists of $\Tau_\Phi$-closed sets. Since $\Tau_\Phi\subseteq\Tau_\Phi'$, the sets in the family $\mathcal B_x$ are  $\Tau_\Phi'$-clopen. To see that they are $\Tau$-clopen, take any set $\Lambdae(x;V)\setminus{\Uparrow}e\in\mathcal B_x$. Given any point $y\in Y\setminus(\Lambdae(x;V)\setminus{\Uparrow}e)$, we should find a neighborhood $O_y\in\Tau$ of $y$ such that $O_y\cap(\Lambdae(x,V)\setminus{\Uparrow}e)$. If $y\in X$, then such neighborhood exists by the $\Tau'_\Phi$-closedness of the set $\Lambdae(x;V)\setminus{\Uparrow}e$. If $y\notin X$, then $y=d\U$ for some $d\in D$ and then the neighborhood $O_y\defeq\Lambda(d\U;U_d)$ is disjoint with $\Lambdae(x;V)\setminus{\Uparrow}e\subseteq\Lambdae(x;V)$ by Lemma~\ref{l:Ux}.
\end{proof}

Claims~\ref{cl3:isolated}--\ref{cl3:Tau-base} imply the following 

\begin{lemma}\label{l3:zero-dim} The topological space $(Y,\Tau)$ is Hausdorff and zero-dimensional.
\end{lemma}

\begin{lemma}\label{l:conti2} The binary operation $*:Y\times Y\to Y$ is continuous.
\end{lemma}

\begin{proof} The continuity of the semigroup operation of $X$ with respect to the topology $\Tau'_\Phi$ implies that the binary operation $*$ is continuous on the open subspace $X\times X$ of $Y\times Y$. It remains to check the continuity of the operation $*$ at each pair $(a,b)\in(Y\times Y)\setminus(X\times X)$. Given any neighborhood $O_{ab}\in\Tau$ of $a*b$, we should find $\Tau$-open neighborhoods $O_a,O_b$ of $a,b$ such that  $O_a*O_b\subseteq O_{ab}$. Since the space $(Y,\Tau)$ is zero-dimensional, we lose no generality assuming that the set $O_{ab}$ is $\Tau$-clopen.

Depending on the location of the points $a,b$, we consider three cases.
\smallskip

1. First assume that $a$ and $b$ do not belong to $X$. Then $a=x\U$ and $b=y\U$ for some $x,y\in D$. By Lemma~\ref{l:cdiv}, the filter $x\U y\U=xy\U\U$ converges to $xy\lambda^2=a*b\in O_{ab}$. Since $O_{ab}\cap X$ is a neighborhood of $xy\lambda^2$ in the topological space $(X,\Tau'_\Phi)$, there exists a set $U\in\U$ such that $xUyU\subseteq O_{ab}$ and $U\subseteq A$.  Consider the $\Tau$-open sets $O_a\defeq\Lambda(x\U;U)$ and $O_b\defeq\Lambda(y\U;U)$. We claim that $O_a*O_b\subseteq O_{ab}$. Take any elements $a'\in O_a$ and $b'\in O_b$.
\smallskip

1.1. If $a',b'\notin X$, then $a'=x\U=a$ and $b'=y\U=a$ and hence $a'*b'=a*b\in O_{ab}$.
\smallskip

1.2. If $a',b'\in X$, then $a'*b'=a'b'\in xUyU\subseteq O_{ab}$ and we are done.
\smallskip

1.3. Next, assume that $a'\in X$ and $b'\notin X$. Then $b'=b=y\U$. Since $a'\in X\cap \Lambda(x\U;U)=xU$, there exists $u\in U\subseteq A$ such that $a'=xu$. Lemma~\ref{l:pi2} and Claim~\ref{cl3:e<mu} imply  $\pi(a'y)\le\pi(a')\le\pi(u)<\mu$ and hence $a'y\notin{\Uparrow}\mu$. By Lemma~\ref{l:cdiv}(5), the ultrafilter $a'y\U$ is principal. Then  $a'*b'=xu*y\U\in xuyU\subseteq xUyU\subseteq  O_{ab}$.
\smallskip

1.4. The case $a'\notin X$ and $b'\in X$ can be considered by analogy with case 1.3.
\smallskip

2. Next, assume that $a\in X$ and $b\notin X$. Then $b=y\U$ for some $y\in D$. Now consider two cases.
\smallskip

2.1. First assume that $ay\notin{\Uparrow}\mu$. By Lemma~\ref{l:cdiv}(5), $ay\U$ is a principal ultrafilter and hence $ayU'=\{ay\U\}\subseteq X$ for some $U'\in\U$. If $a\ne ae$ or $a\notin I\!Z(X)$, then $O_a\defeq\{a\}\in\Tau$ by Claim~\ref{cl3:isolated}. By Claim~\ref{cl3:ultra}, $O_b\defeq\Lambda(y\U;U')\in \Tau$. Then $O_a*O_b=a\Lambda(y\U;U')=\{ay\U\}\cup ayU'=\{ay\U\}=\{a*b\}\subseteq O_{ab}$. So, assume that $a=ae\in I\!Z(X)$.

Choose any $u'\in U'\cap A$ and observe that $ayU'=\{ayu'\}=\{ay\U\}$. It follows  from $a=ae\in I\! Z(X)$ and $e\in EZ(X)$ that $ayu'e=ayu'\in I\! Z(X)$. 
\smallskip

2.1.1. If $ayu'\in{\Uparrow}e$, then $ayu'=ayu'e\in({\Uparrow}e)e\subseteq H_e$ by Lemma~\ref{l:Tamura}. By Claim~\ref{cl3:Tau-base-e}, there exists $V\in\Phi$ such that $\Lambdae(ayu';V)\subseteq O_{ab}$.  Find sets $U\in\U$ and $F\in[EZ(X)\setminus{\downarrow}e]^{<\w}$ such that $U\subseteq U'\cap A$ and $\nabla(U,F)\subseteq V$. By Claims~\ref{cl3:Tau-base-e} and \ref{cl3:Tau-base}, $O_a\defeq \Lambdae(a;\nabla(U,F))$ is a neighborhood of $a$ in the topological space $(Y,\Tau)$.

We claim that for any $a'\in O_a$ we have $a'yU\subseteq O_{ab}$.  If $a'=a$, then $a'yU=ayU=\{ayu'\}\subseteq\Lambdae(ayu';V)\subseteq O_{ab}$. So, we assume that $a'\ne a$ and hence $a'=x'v$ for some $x'\in\frac{a}e$ and $v\in \nabla(U,F)\subseteq Z(X)$. Then for every $u\in U$ we have $a'yu=x'vyu=x'yuv$. Observe that $x'yue=x'eyu=ayu=ayu'$ and hence $a'yu=x'yuv\in \frac{ayu'}e{\cdot}\nabla(U,F)\subseteq\Lambdae(ayu';\nabla(U,F))\subseteq \Lambdae(ayu';V)\subseteq O_{ab}$.

By the definition of the topology $\Tau$, the ultrafilter $a'y\U$ belongs to the closure of the set $a'yU\subseteq O_{ab}$ in the topological space $(Y,\Tau)$. Since the set $O_{ab}$ is $\Tau$-closed, $a'y\U\subseteq O_{ab}$. Then for the neighborhood $O_b\defeq\Lambda(y\U;U)$ we have $O_a*O_b=\bigcup_{a'\in O_a}\big(\{a'y\U\}\cup a'yU\big)\subseteq O_{ab}$. 
\smallskip

2.1.2. If $ayu'\notin{\Uparrow}e$, then by Claim~\ref{cl3:Tau-base}, there exists $V\in\Phi$ such that $\Lambdae(ayu;V)\setminus {\Uparrow}e\subseteq O_{ab}$. Find sets $U\in\U$ and $F\in[EZ(X)\setminus{\downarrow}e]^{<\w}$ such that $U\subseteq U'\cap A$ and $\nabla(U,F)\subseteq V$. It follows from $y\in D\subseteq{\Uparrow}\mu\subseteq{\Uparrow}e$, $u'\in U'\cap A\subseteq{\Uparrow}e$, and $ayu'\notin{\Uparrow}e$ that $a\notin{\Uparrow}e$. By Claim~\ref{cl3:Tau-base}, $O_a\defeq \Lambdae(a;\nabla(U,F))\setminus{\Uparrow}e$ is a neighborhood of $a$ in the topological space $(Y,\Tau)$. By Claim~\ref{cl3:ultra}, $O_b\defeq \Lambda(y\U;U)$ is a neighborhood of $b=y\U$ in $(Y,\Tau)$.

By analogy with the case 2.1.1 we can show that $O_a*O_b\subseteq O_{ab}$.
\smallskip

2.2. Next, assume that $ay\in{\Uparrow}\mu$. Then $a\in{\Uparrow}\mu$ and $a\ne ae$ by Claim~\ref{cl3:e<mu}. By Claim~\ref{cl3:isolated}, $O_a\defeq\{a\}\in\Tau$. 
\smallskip

2.2.1. If $ay\in D$, then $a*b=ay\U$ and by Claim~\ref{cl3:ultra}, there exists $U\in\U$ such that $\Lambda(ay\U;U)\subseteq O_{ab}$. Then for the neighborhood $O_b\defeq\Lambda(y\U;U)$ we have $O_a*O_b=a\Lambda(y\U;U)=\{ay\U\}\cup ayU=\Lambda(ay\U;U)\subseteq O_{ab}$. 
\smallskip

2.2.2. If $ay\notin D$, then the ultraflter $ay\U$ is convergent. By Lemma~\ref{l:cdiv}(6), $ay\U=ay\U'$ and $a*b=\lim ay\U=ay\lambda$. By Lemma~\ref{l:cdiv}(1), $\lambda\in I\!Z(X)\cap H_e$. It follows from $ay\in{\Uparrow}\mu\subseteq{\Uparrow}e$ and $\lambda\in I\!Z(X)\cap H_e$ that $ay\lambda\in I\!Z(X)\cap H_e$, see Lemma~\ref{l:Tamura}.

By Claim~\ref{cl3:Tau-base-e}, there exist a set $V\in\Phi$ such that  and  $\Lambdae(ay\lambda;V)\subseteq O_{ab}$. Since the ultrafilter $ay\U$ converges to $ay\lambda\in \Lambdae(ay\lambda;V)\in\Tau'_\Phi$, there exists $U\in \U$ such that $ayU\subseteq \Lambdae(ay\lambda;V)$.
Then for the neighborhood $O_b\defeq\Lambda(y\U;U)$ we have $O_a*O_b=a*\Lambda(y\U;U)=\{a*y\U\}\cup ayU=\{ay\lambda\}\cup ayU\subseteq\Lambdae(ay\lambda;V)\subseteq O_{ab}$. 
\smallskip

3. The case $a\notin X$ and $b\in X$ can be considered by analogy with the case 2.
\end{proof}

\begin{lemma}\label{l:final-iC} $(Y,\Tau)$ is a Hausdorff zero-dimensional topological semigroup containing $X$ as a nonclosed dense subsemigroup.
\end{lemma}

\begin{proof} By Lemma~\ref{l3:zero-dim}, the space $(Y,\Tau)$ is Hausdorff and zero-dimensional and by Lemma~\ref{l:conti2}, the binary operation $*:Y\times Y \to Y$ is continuous. The density of $X$ in the Hausdorff topological space $(Y,\Tau)$ and the associativity of the semigroup operation $*{\restriction}_{X\times X}$ of $X$ implies that the binary operation $*$ is associative and hence $(Y,\Tau)$ is a topological semigroup containing $X$ as a nonclosed dense subsemigroup.
\end{proof}

Lemma~\ref{l:final-iC} implies that the semigroup $X$ is not injectively $\mathsf{T_{\!z}S}$-closed, which contradicts our assumption and completes the proof of Proposition~\ref{p:vsim-grobikam-grobik2}.
\end{proof}

\begin{corollary} Let $X$ be an injectively $\mathsf{T_{\!z}S}$-closed eventually Clifford nonsingular viable semigroup. If the poset $E(X)$ is well-founded and all maximal subgroups of $X$ are projectively $Z$-bounded, then the ideal center $I\!Z(X)$ of $X$ is injectively $\mathsf{T_{\!2}S}$-closed.
\end{corollary}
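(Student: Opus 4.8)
The plan is to derive the corollary directly from the characterization Theorem~\ref{t:main-iC}, applied to the class $\C=\mathsf{T_{\!2}S}$. Since $I\!Z(X)$ is the largest ideal of $X$ contained in the center $Z(X)$, it is a subsemigroup of $Z(X)$ and hence a commutative semigroup, so Theorem~\ref{t:main-iC} becomes available once the five inner conditions are verified for $I\!Z(X)$: boundedness, chain-finiteness, group-finiteness, nonsingularity and non-Clifford-singularity.

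First I would harvest three of these from the center via results already proved. Because $X$ is injectively $\mathsf{T_{\!z}S}$-closed it is in particular $\mathsf{T_{\!z}S}$-closed, so Theorem~\ref{t:Z}(1) gives that $Z(X)$ is chain-finite and nonsingular, while Theorem~\ref{t:Z}(4) gives that $Z(X)$ is group-finite. Each of these three properties is hereditary for subsemigroups, since any infinite subset of $I\!Z(X)$ witnessing a failure would already witness it inside $Z(X)$; hence all three descend to $I\!Z(X)\subseteq Z(X)$.

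The remaining two conditions are where the work of the preceding sections is used. Non-Clifford-singularity of $I\!Z(X)$ is exactly the content of Proposition~\ref{p:vsim-grobikam-grobik2}, whose hypotheses — injective $\mathsf{T_{\!z}S}$-closedness, well-foundedness of $E(X)$, viability and eventual Cliffordness of $X$ — are precisely those assumed here. For boundedness I would show that the whole center $Z(X)$ is bounded and then restrict: Proposition~\ref{p:vsim-grobikam-grobik} furnishes $\ell\in\IN$ with $Z(X)\subseteq\korin{\ell}{H(X)}$, and Proposition~\ref{p2:vsim-grobikam-grobik} shows that $Z(X)\cap H(X)$ is bounded, say $w^b\in E(X)$ for all $w\in Z(X)\cap H(X)$. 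Then for any $z\in Z(X)$ the element $z^\ell$ is central and lies in $H(X)$, so $z^\ell\in Z(X)\cap H(X)$ and $z^{\ell b}=(z^\ell)^b\in E(X)$; thus $Z(X)$ is bounded, and with it the subsemigroup $I\!Z(X)$.

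Once all five conditions are assembled, the conclusion is immediate from Theorem~\ref{t:main-iC}. I do not expect a serious obstacle: the genuinely hard topological constructions are already packaged inside Propositions~\ref{p:vsim-grobikam-grobik}, \ref{p2:vsim-grobikam-grobik} and \ref{p:vsim-grobikam-grobik2}. The one point needing care is the boundedness step, which combines the two boundedness propositions exactly as in the proof of Corollary~\ref{c:bounded}; here the hypothesis that all maximal subgroups are projectively $Z$-bounded is what feeds Proposition~\ref{p2:vsim-grobikam-grobik}, and it is weaker than the group-boundedness assumed in Corollary~\ref{c:bounded}, so that corollary cannot be cited verbatim and the argument must be rerun.
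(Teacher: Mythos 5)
Your proposal is correct and takes essentially the same route as the paper's own proof: Theorem~\ref{t:Z} plus heredity of chain-finiteness, group-finiteness and nonsingularity to the subsemigroup $I\!Z(X)$, Proposition~\ref{p:vsim-grobikam-grobik2} for non-Clifford-singularity, Propositions~\ref{p:vsim-grobikam-grobik} and \ref{p2:vsim-grobikam-grobik} combined for boundedness of $Z(X)$, and then the sufficiency half of the characterization (the paper cites Proposition~\ref{p:if} directly, you cite Theorem~\ref{t:main-iC}, whose ``if'' part is exactly that proposition). Your explicit rerun of the boundedness combination, together with the remark that Corollary~\ref{c:bounded} cannot be quoted verbatim because projective $Z$-boundedness of the maximal subgroups is weaker than the group-boundedness assumed there, is a correct and slightly more careful rendering of what the paper leaves implicit in the phrase ``By Propositions~\ref{p:vsim-grobikam-grobik} and \ref{p2:vsim-grobikam-grobik}, the semigroup $Z(X)$ is bounded.''
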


\begin{proof} By Theorem~\ref{t:Z}, the center $Z(X)$ of $X$ is chain-finite, group-finite, periodic, and nonsingular, and so is the ideal center $I\!Z(X)$ of $X$. By Proposition~\ref{p:vsim-grobikam-grobik2}, the semigroup $I\!Z(X)$ is not Clifford-singular. By Propositions~\ref{p:vsim-grobikam-grobik} and \ref{p2:vsim-grobikam-grobik}, the semigroup $Z(X)$ is bounded and so is the semigroup $I\!Z(X)$. Therefore the semigroup $I\!Z(X)$ is chain-finite, group-finite, bounded, nonsingular and not Clifford-singular. By Proposition~\ref{p:if}, $I\!Z(X)$ is injectively $\mathsf{T_{\!2}S}$-closed.
\end{proof}

\section{Proof of Theorem~\ref{t:main-iC}}\label{s:pf}

Given any commutative semigroup $X$, we should prove the equivalence of the following conditions:
\begin{enumerate}
\item $X$ is injectively $\mathsf{T_{\!2}S}$-closed;
\item $X$ is injectively $\mathsf{T_{\!z}S}$-closed;
\item $X$ is chain-finite, group-finite, bounded, nonsingular and not Clifford-singular.
\end{enumerate}

The implication $(1)\Ra(2)$ is trivial.
\smallskip

To prove that $(2)\Ra(3)$, assume that $X$ is injectively $\mathsf{T_{\!z}S}$-closed. By Theorem~\ref{t:Z}, the semigroup $Z(X)=X$ is chain-finite, periodic, nonsingular and group-finite. Being commutative and periodic, the semigroup $X$ is viable and eventually Clifford. Since the semigroup $X$ is chain-finite, the poset $E(X)$ is well-founded.
By Corollary~\ref{c:bounded}, the semigroup $Z(X)=X$ is bounded. By Proposition~\ref{p:vsim-grobikam-grobik2}, the semigroup $IZ(X)=X$ is not Clifford-singular.
\smallskip

The implication $(3)\Ra(1)$ follows from Proposition~\ref{p:if}.

\section{Properties of the semigroup from Example~\ref{ex:main}}\label{s:ex}

In this section we establish the properties (1)--(6) of the semigroup $X$ from Example~\ref{ex:main}. We recall that $X$ is the set $\w$ endowed with the binary operation
$$x*y=\begin{cases}
1&\mbox{if $x,y\in \{2n+3:n\in\w\}$ and $x\ne y$};\\
2n&\mbox{if $\{x,y\}\subseteq\{2n,2n+1\}$ for some $n\in\w$};\\
0&\mbox{otherwise}.
\end{cases}
$$
It is easy to check that the operation $*$ is commutative and associative, so $X$ indeed is a commuative semigroup.
\smallskip

1. By Theorem~\ref{t:main-iC}, the injective $\mathsf{T_{\!2}S}$-closedness of $X$ will follow as soon as we check that $X$ is chain-finite, group-finite, bounded, nonsingular and not Clifford-singular.

Observe that the set of idempotents $E(X)$ of $X$ coincides with the set $\{2n:n\in\w\}$ of even numbers. Each chain in $E(X)$ is countained in a doubleton $\{0,2n\}$, which implies that the semigroup $X$ is chain-finite. 

Each subgroup of $X$ is trivial, so $X$ is group-finite.

For every $x\in X$ we have $x^2\in E(X)$, which means that $X$ is bounded.

To show that $X$ is not singular, take any infinite set $A\subseteq X$. If $A$ contains two distinct even numbers $2n,2m$, then $\{2n,2m\}=\{2n*2n,2m*2m\}\subseteq AA$ and hence $AA$ is not a singeton. So, we assume that $A$ contains at most one even number. Then we can find two distinct numbers $n,m\ge 1$ such that $2n+1,2m+1\in A$ and conclude that $\{2n,2m\}=\{(2n+1)*(2n+1),(2m+1)*(2m+1)\}\subseteq AA$, so $AA$ is not a singleton and hence $X$ is nonsingular.

To see that $X$ is not Clifford-singular, take any infinite set $A\subseteq X\setminus H(X)=\{2n+1:n\in\w\}$ and find two distinct numbers $n,m\ge 1$ such that $2n+1,2m+1\in A$. Since $(2n+1)*(2m+1)=1\notin H(X)$, the set $AA$ is not contained in $H(X)$. So, $X$ is not Clifford-singular.

By Theorem~\ref{t:main-iC}, the semigroup $X$ is injectivey $\mathsf{T_{\!2}S}$-closed.
\smallskip

2. Being injectively $\mathsf{T_{\!2}S}$-closed, the semigroup $\mathsf{T_{\!2}S}$-closed and hence $\mathsf{T_{\!1}S}$-closed by Theorem~\ref{t:C}. 
\smallskip

3. Since the Clifford part $H(X)=E(X)=\{2n:n\in\w\}$ of the semigroup $X$ is infinite, $X$ is not injectively $\mathsf{T_{\!1}S}$-closed by Theorem~\ref{t:iT1}.
\smallskip

4. The definition of the semigroup operation $*$ ensures that $I\defeq\{0,1\}$ is an ideal in $X$.
\smallskip

5. Repeating the argument from the item 1, one can show that the quotient semigroup $Y=X/I$ is chain-finite, nonsingular and bounded. By Theorem~\ref{t:C}, $Y$ is $\mathsf{T_{\!1}S}$-closed. On the other hand, for the infinite set $A=\{2n+3:n\in\w\}=Y\setminus H(Y)$ we have $AA\subseteq H(Y)$, which means that $Y$ is Clifford-singular. By Theorem~\ref{t:main-iC}, $Y=X/I$ is not injectively $\mathsf{T_{\!z}S}$-closed.
\smallskip

6. It is easy to see that $J=I\cup E(X)=\{1\}\cup\{2n:n\in\w\}$ is an ideal in $X$ and for the infinite set $A=X\setminus J$ in the quotient semigroup $X/J$ we have $AA=\{J\}$, which means that the semigroup $X/J$ is singular and hence $X/J$ is not $\mathsf{T_{\!z}S}$-closed by Theorem~\ref{t:C}.

\section{Acknowledgement}

The author express his sincere thanks to Serhii Bardyla for many fruitful discussions on possible characterizations of injectively $\C$-closed semigroups, which eventually have led the author to finding the characterization Theorems~\ref{t:iT1} and \ref{t:main-iC}.

\end{document}